\numberwithin{equation}{section}
\theoremstyle{plain}
\newtheorem{lem}[equation]{Lemma}
\newtheorem{prop}[equation]{Proposition}
\newtheorem{thm}[equation]{Theorem}
\newtheorem{cor}[equation]{Corollary}
\newtheorem{conj}[equation]{Conjecture}
\theoremstyle{definition}
\newtheorem{definition}[equation]{Definition}
\newtheorem{remark}[equation]{Remark}
\newtheorem{claim}[equation]{Claim}
\newtheorem*{claim*}{Claim}
\newcommand{\type}{\operatorname{Type}}
\newcommand{\cq}{\mathcal{Q}}
\newcommand{\od}{\widehat{\Sigma}}
\newcommand{\Si}{\Sigma}
\newcommand{\bC}{\mathsf C}
\newcommand{\cS}{\mathcal S}
\newcommand{\prj}{\operatorname{Proj}} 
\newcommand{\s}{\operatorname{Sal}} 
\newcommand{\vertex}{\operatorname{Vert}} 
\newcommand{\lk}{\operatorname{lk}}
\newcommand{\cp}{\mathcal {P}}
\newcommand{\Ga}{\Gamma}
\newcommand{\wtP}{\widetilde P}
\newcommand{\wtQ}{\widetilde Q}
\newcommand{\whC}{\widehat C}
\newcommand{\act}{\curvearrowright}
\newcommand{\Sii}{\Sigma^{(1)}}
\newcommand{\supp}{\operatorname{Supp}}
\newcommand{\id}{\operatorname{Id}}
\newcommand{\length}{\operatorname{length}}
\newcommand{\w}{\operatorname{wd}}
\newcommand{\aut}{\operatorname{Aut}}
\newcommand{\PM}{\operatorname{PMod}}
\begin{document}

\title{On spherical Deligne complexes of type $D_n$}
\author{Jingyin Huang}
\maketitle
\begin{abstract}
Let $\Delta$ be the Artin complex of the Artin group of type $D_n$. This complex is also called the spherical Deligne complex of type $D_n$. 
We show certain types of 6-cycles in the 1-skeleton of $\Delta$ either have a center, which is a vertex adjacent to each vertex of the 6-cycle, or a quasi-center, which is a vertex adjacent to three of the alternating vertices of the 6-cycle. This will be a key ingredient in proving $K(\pi,1)$-conjecture for several classes of Artin groups in a companion article.

As a consequence, we also deduce that certain 2-dimensional relative Artin complex inside the $D_n$-type Artin complex, endowed with the induced Moussong metric, is CAT$(1)$.
\end{abstract}

\section{Introduction}

\subsection{Background and motivation}
The \emph{Artin group} (or Artin-Tits group) with generating set $S=\{s_1,\ldots,s_n\}$, denoted by $A_S$, is a group with the following presentation:
$$
A_S=\langle s_1,\ldots, s_n\mid  \underbrace{s_is_js_i\cdots}_{m_{ij}}=\underbrace{s_js_is_j\cdots}_{m_{ij}}\rangle 
$$
where $i\neq j$ and $m_{ij}$ is either an integer $\ge 2$ or $\infty$. When $m_{ij}=\infty$, it means there is no relation between $s_i$ and $s_j$. The associated \emph{Coxeter group}, denoted by $W_S$, is the quotient of $A_S$ with extra relation $s^2_i=1$ for $1\le i\le n$. An Artin group is \emph{spherical} if the associated Coxeter group is finite. For example, the braid group on $n$-strand is a spherical Artin group, with the associated Coxeter group being the symmetry group on $n$ letters. 

Classical examples of Coxeter groups come from taking a geodesic triangle in $\mathbb S^2,\mathbb E^2$ or $\mathbb H^2$ with its angles being $\pi$ divided by an integer $\ge 2$, and considering the group generated by reflections along the three sides of the triangle. Geometrically, this leads to a tilling of $\mathbb S^2,\mathbb E^2$ or $\mathbb H^2$ by geodesic triangles. The underlying simplicial complex of this tilling is called the \emph{Coxeter complex}, which encodes fundamental combinatorial and geometric properties of Coxeter groups and can be defined for all Coxeter groups (even outside the classical cases) in a purely group theoretical way as follows. Given $s\in S$, let $W_{\hat s}$ be the subgroup of $W$ generated by $S\setminus\{s\}$. Then vertices of the Coxeter complex are in 1-1 correspondence with left cosets of form $\{g W_{\hat s}\}_{g\in W_S,s\in S}$, and a collection of vertices span a simplex if the corresponding collection of left cosets has a non-empty common intersection.

For an Artin group $A_S$, there is an analogues complex, called the \emph{Artin complex} and was defined in \cite{CharneyDavis}, whose vertices correspond to left cosets $\{g A_{\hat s}\}_{g\in A_S,s\in S}$, and simplices are defined in the same way as before. 
We say an Artin complex is \emph{spherical} if the associated Artin group is spherical.
In a Coxeter complex, each codimensional one face is contained in exactly two top-dimensional simplices (corresponding to the generators having order two); however, in an Artin complex, each codimensional one face is contained in finitely many top-dimensional simplices (corresponding to the generators having order $\infty$). In general, the geometry of Artin complexes is much more intricate.

While Artin groups and Coxeter groups have similar presentations, our knowledge of Artin groups is much more sparse compared to the Coxeter groups side. Actually, very basic questions on Artin groups remain widely open \cite{MR3203644}. 

Artin groups arise as fundamental groups of certain complex hyperplane arrangement complements, and a central conjecture in the study of Artin groups, due to Arnol'd, Brieskorn, Pham and Thom, predicts that these arrangement complements are $K(\pi,1)$-spaces for Artin groups. We refer to the survey article for more details \cite{paris2014k}. The $K(\pi,1)$-conjecture is also widely open. Deligne settled this conjecture for spherical Artin groups \cite{deligne}, where spherical Artin complexes play a key role in his work, so these complexes are also called the \emph{spherical Deligne complexes} by other authors.

Charney and Davis \cite{charney1995k} proved that if we know the Artin complexes associated with \emph{spherical} Artin groups are CAT$(1)$ with respect to a naturally defined metric, then the $K(\pi,1)$-conjecture holds true for \emph{all} Artin groups. Thus, it is of great interest to understand the geometry of spherical Artin complexes. However, there are no good methods of showing CAT$(1)$ in higher dimensional complexes. To circumvent this difficulty, we exploit different notions of curvature in simplicial complexes and proposed a strategy in \cite{huang2023labeled} of reducing the $K(\pi,1)$-conjecture for Artin groups to understanding short cycles in the 1-skeleton of spherical Artin complexes. In particular, to treat fairly general families of Artin groups, we need to understand how cycles of length $\le 6$ in spherical Artin complexes can be filled in the 2-skeleton.

In a previous article \cite{huang2023labeled}, we were able to understand the minimal filling of all 4-cycles in spherical Artin complexes; they follow a very simple pattern:
\begin{thm}
	\label{thm:4cycle}
	Suppose $A_S$ is an irreducible spherical Artin group. Then any embedded 4-cycles in the associated Artin complex can be filled in one of the following two ways in Figure~\ref{fig:4cycle}, one with two 2-simplices, and another with four 2-simplices.
\end{thm}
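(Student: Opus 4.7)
My plan is to exploit the coset model for the Artin complex, stratify the argument by the types of the four vertices, and reduce to a low-rank computation via van der Lek's theorem on parabolic intersections in spherical Artin groups. Label an embedded 4-cycle cyclically as $v_1,v_2,v_3,v_4$, with each $v_i$ a left coset of a maximal standard parabolic $A_{\hat t_i}$, so the cycle carries a type sequence $(t_1,t_2,t_3,t_4)$. Since two distinct cosets of the same maximal parabolic are disjoint, the edge relation forces $t_i\neq t_{i+1}$ (indices mod~4), and each edge $v_iv_{i+1}$ corresponds to a coset of $A_{S\setminus\{t_i,t_{i+1}\}}$, namely the intersection $v_i\cap v_{i+1}$. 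Normalizing by the $A_S$-action I would assume $v_1=A_{\hat t_1}$.

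Next I would cut down the rank. Set $T:=\{t_1,t_2,t_3,t_4\}\subseteq S$, so $|T|\le 4$. Iterated use of van der Lek allows the whole 4-cycle to be translated into the sub-Artin complex associated with $A_T$, and both conditions ``a diagonal exists'' and ``a center exists'' are preserved under such translations. Since $A_T$ is itself a spherical Artin group, this reduces the statement to the finite list of irreducible spherical types of rank at most $4$. Moreover, the two fillings in Figure~\ref{fig:4cycle} are the only ones that can arise from an embedded 4-cycle in a 2-complex whose 2-simplices come from triples of pairwise intersecting cosets, so it suffices to show that at least one of (a) a diagonal $v_1v_3$ or $v_2v_4$ exists, or (b) some coset intersects all four $v_i$.

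I would then split on $|T|$. When $|T|=2$, so $t_1=t_3$ and $t_2=t_4$, the 4-cycle lies in the link of a rank-$(|S|-2)$ face and is governed by the dihedral group $A_{\{t_1,t_2\}}$; a direct inspection of this link produces a center of some third type. When $|T|=3$, one pair of opposite vertices has repeated type and the other does not; a short computation with coset intersections in $A_T$ shows that the corresponding diagonal is always present. The core case is $|T|=4$: one represents each edge by a coset of a rank-$2$ parabolic and records the closing-up of the cycle as a word equation in $A_T$. Using the Garside/Deligne normal form for spherical Artin groups, this equation forces either $v_1\cap v_3\neq\emptyset$ (or $v_2\cap v_4\neq\emptyset$), giving a diagonal, or produces a common element lying in all four cosets, giving a center.

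The hard step is the $|T|=4$ case. Turning the closing-up relation into either a diagonal or a center requires sharp control over left- and right-divisibility in $A_T$, and I expect this to proceed by a case-by-case check over the rank-$4$ irreducible spherical types $A_4,B_4,D_4,F_4,H_4$. The two fillings in the theorem correspond, respectively, to the two ways in which the closing-up word can split with respect to the Garside element; the content of the theorem is that no third pattern arises. This rigidity is what ultimately distinguishes the spherical setting and should fail outside of it.
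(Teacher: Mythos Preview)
First, a framing remark: the paper does not actually prove Theorem~\ref{thm:4cycle}. It is quoted from prior work and reappears in Section~\ref{sec:prelim} as Theorem~\ref{thm:4 wheel}, attributed to \cite[Proposition~2.8]{huang2023labeled}. So there is no in-paper argument to compare against, and your proposal has to stand on its own.

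Your proposal has a genuine gap at the reduction step. You claim that ``iterated use of van der Lek allows the whole 4-cycle to be translated into the sub-Artin complex associated with $A_T$'', where $T=\{t_1,t_2,t_3,t_4\}$. But the 4-cycle corresponds (as in Definition~\ref{def:ncycle}) to an equation $w_1w_2w_3w_4=1$ with $w_i\in A_{\hat t_i}$, and $A_{\hat t_i}=A_{S\setminus\{t_i\}}$ is a parabolic of rank $|S|-1$ in $A_S$, not a subgroup of the small group $A_T$. Van der Lek's theorem gives you injectivity of parabolics and control over intersections of standard parabolics, but it does not let you replace each $w_i$ by an element of $A_{T\setminus\{t_i\}}$, nor does it produce a map $\Delta_S\to\Delta_T$ sending your 4-cycle to a 4-cycle. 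The entire difficulty of the theorem is precisely that the $w_i$ live in the big parabolics; there is no rank-reduction of the kind you describe. (Note also that $A_T$ is typically reducible even when $A_S$ is irreducible, so ``finite list of irreducible spherical types of rank at most $4$'' is not the right target even if the reduction existed.)

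There is a second gap in your $|T|=2$ case. You assert that the 4-cycle ``lies in the link of a rank-$(|S|-2)$ face''. This is false in general: the four edges of the cycle are four \emph{a priori distinct} cosets of $A_{S\setminus\{t_1,t_2\}}$, so the cycle need not be contained in the link of any single vertex. Indeed, producing a common vertex adjacent to all four $v_i$ is exactly the content of the theorem in this case, not its hypothesis. The actual proof in \cite{huang2023labeled} proceeds quite differently, via retractions of the oriented Davis complex and a careful analysis of projections between standard subcomplexes (of the flavor of Definition~\ref{def:retraction} and Lemma~\ref{lem:projection location} in the present paper), rather than by reducing the ambient rank.
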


\begin{figure}[h]
	\centering
	\includegraphics[scale=0.9]{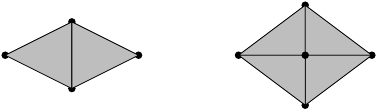}
	\caption{Filling 4-cycles.}
	\label{fig:4cycle}
\end{figure} 

However, the method in \cite{huang2023labeled} completely breaks down for filling longer cycles. The goal of this article is to develop new methods to fill 6-cycles in Artin complexes associated with the type $D_n$ Artin groups, which will be used as important ingredients in a companion article \cite{huang2024} to settle new cases of $K(\pi,1)$-conjectures. This companion article also contains results on filling 6-cycles in other types of spherical Artin complexes. However, we separate the case $D_n$ here as the method used in this case is of a different flavor compared to other cases.

\begin{figure}[h]
	\centering
	\includegraphics[scale=1]{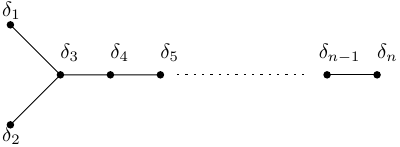}
	\caption{Dynkin diagram of type $D_n$.}
	\label{fig:D}
\end{figure}	

\subsection{Main results}
Let $\Lambda$ be the Dynkin diagram of the Artin group of type $D_n$; see Figure~\ref{fig:D}, with the generating set $S=\{\delta_1,\ldots,\delta_n\}$. Recall that $\Lambda$ encodes the presentation of $A_S$ in the following way: vertices of $\Lambda$ are in one to one correspondence with generators of $A_S$; $\delta_i$ and $\delta_j$ are adjacent in $\Lambda$ if there is a relation of the form $\delta_i\delta_j\delta_i=\delta_j\delta_i\delta_j$; and $\delta_i$ and $\delta_j$ are not adjacent if $\delta_i$ and $\delta_j$ commute. Let $\Delta_\Lambda$ be the Artin complex of type $D_n$, i.e. the Artin complex associated with the $D_n$-type Artin group. We assume $n\ge 3$. Then $\Delta_\Lambda$ is simply-connected, hence each 6-cycle in the 1-skeleton can be filled by a union of triangles in the 2-skeleton, though a priori the filling could be quite complicated. Our goal is to prove that, similar to Theorem~\ref{thm:4cycle}, for certain types of 6-cycles in $\Delta_\Lambda$, it is possible to construct fillings of them with a very simple combinatorial pattern. 

A vertex of $\Delta_\Lambda$ is of type $\hat \delta_i$ if it corresponds to a left coset of form $gA_{\hat \delta_i}$. Due to the limited symmetry of the Dynkin diagram of type $D_n$, the types of vertices in a 6-cycle encode subtle geometric information about this 6-cycle, and the combinatorial structure of minimal filling of 6-cycles is very sensitive to the types of its vertices.

It is natural to focus our attention on 6-cycles in $\Delta_\Lambda$ that are embedded and \emph{induced}, i.e., if two vertices of the 6-cycle are not adjacent in the 6-cycle, then they are not adjacent in $\Delta_\Lambda$; otherwise we are reduced to Theorem~\ref{thm:4cycle}. A \emph{center} of an $n$-cycle in $\Delta_\Lambda$ is a vertex that is adjacent to each vertex of this cycle.
Theorem~\ref{thm:4cycle} can be reformulated as each embedded and induced 4-cycle in $\Delta_\Lambda$ has a center. It is natural to ask whether the same holds for 6-cycles. While the answer is no in general, we identify the following classes of 6-cycles that do have a center. 

\begin{thm}(=Theorem~\ref{thm:weakflagD})
	\label{thm:weakflagDintro}
	Let $\Lambda$ be the Dynkin diagram of type $D_n$ with vertex set as in Figure~\ref{fig:ad}. Then any 6-cycle $\omega$ in the Artin complex $\Delta_\Lambda$ with its vertices alternating between types $\hat \delta_1$ and $\hat \delta_3$ has a center of type $\hat \delta_2$.
\end{thm}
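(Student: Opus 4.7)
My plan is to combine the transitive action of $A_S$ on vertices of type $\hat\delta_1$ with a careful coset analysis, using Theorem~\ref{thm:4cycle} as the main inductive lever. First I would normalize so that one vertex of the 6-cycle is $v_1 = A_{\hat\delta_1}$. Labeling the cycle $v_1, w_1, v_2, w_2, v_3, w_3$ with $v_i$ of type $\hat\delta_1$ and $w_i$ of type $\hat\delta_3$, each edge encodes a non-empty coset intersection, so the cycle is captured by a word $g_1 h_1 g_2 h_2 g_3 h_3$ with $g_i \in A_{\hat\delta_1}$, $h_i \in A_{\hat\delta_3}$, whose product lies in $A_{\hat\delta_1}$ by the closure condition. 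Finding a type-$\hat\delta_2$ center then amounts to producing a single $c \in A_{\hat\delta_1}$ whose coset $c A_{\hat\delta_2}$ meets each of the six cosets, equivalently to solving five further compatibility conditions inside products of standard parabolics.

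For each edge of the 6-cycle, I would first extract a local type-$\hat\delta_2$ vertex adjacent to its two endpoints. Such local centers exist in abundance because every edge extends to a top-dimensional simplex of $\Delta_\Lambda$; the $\hat\delta_2$-neighbors of $v_1$ are parametrized by the coset space $A_{\hat\delta_1}/A_{\widehat{\{\delta_1,\delta_2\}}}$, with analogous parametrizations at the other vertices. The heart of the proof is then to show that these six local choices can be coherently aligned to a single global vertex. To this end I plan to apply Theorem~\ref{thm:4cycle} to 4-cycles formed by two adjacent edges of the 6-cycle together with their local candidate centers: the 4-cycle filling theorem tightly constrains how two consecutive local centers may differ, and propagating the resulting constraints around the full cycle should force the existence of one consistent global center.

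The principal technical tool is the theorem of van der Lek identifying intersections of standard parabolics in Artin groups, which yields $A_{\hat\delta_1} \cap A_{\hat\delta_3} = A_{\widehat{\{\delta_1,\delta_3\}}}$ and $A_{\hat\delta_1} \cap A_{\hat\delta_2} = A_{\widehat{\{\delta_1,\delta_2\}}}$. Combined with the commutation $\delta_1 \delta_3 = \delta_3 \delta_1$ (valid because $\delta_1$ and $\delta_3$ are non-adjacent in $\Lambda$), this provides a normal form for elements of the product set $A_{\hat\delta_1} A_{\hat\delta_3}$, and should reduce the coherence conditions to a tractable system of equations inside the low-rank standard parabolic $A_{\widehat{\{\delta_1,\delta_2,\delta_3\}}}$ and its close relatives.

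I expect the main obstacle to be this coherence step: showing that the three ``$v_i w_i$'' local centers and the three ``$w_i v_{i+1}$'' local centers can simultaneously be chosen to coincide. The asymmetry between $\delta_1$ and $\delta_3$ relative to the rest of $\Lambda$ blocks a purely symmetric argument, and will likely force a careful case analysis of the residue of the product $g_1 h_1 \cdots g_3 h_3$ modulo the relevant small parabolic. An induction on $n$ looks natural: the base case $n=3$, where $D_3 \cong A_3$, should be checkable directly inside the classical Deligne complex of the 4-strand braid group, while the inductive step aims to reduce to a $D_{n-1}$-type subcomplex of $\Delta_\Lambda$ appearing in the link of a suitably chosen vertex.
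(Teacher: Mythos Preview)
Your plan has a genuine gap at exactly the point you flag as ``the main obstacle'': you have no mechanism to make the six local type-$\hat\delta_2$ centers cohere into a single one. Applying Theorem~\ref{thm:4cycle} to 4-cycles built from two adjacent edges and their local centers only tells you that each such 4-cycle has \emph{some} center of a constrained type; it does not pin that center down relative to the local centers you started with, nor relate centers across non-adjacent edges. ``Propagating the constraints around the full cycle'' is not an argument but a restatement of the theorem, and the paper's introduction is explicit that the 4-cycle method ``completely breaks down'' for 6-cycles. The van der Lek intersection facts are correct but only describe pairwise intersections of standard parabolics; they give no normal form for the alternating triple products $A_{\hat\delta_1}A_{\hat\delta_3}A_{\hat\delta_1}$ that actually govern a 6-cycle, and the hoped-for reduction to equations inside $A_{\widehat{\{\delta_1,\delta_2,\delta_3\}}}$ cannot work as stated, since that parabolic is generated by $\{\delta_4,\dots,\delta_n\}$ and sees none of the vertices in play. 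Your induction on $n$ also lacks an inductive step: the 6-cycle has no reason to sit in the link of any vertex, so there is no evident way to pass to a $D_{n-1}$ subcomplex.

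The paper's proof uses two substantial ingredients absent from your plan. First (Proposition~\ref{prop:tight 6-cycle}), a projection argument via the Godelle--Paris retractions $\Pi_{\widehat F}$ on the oriented Davis complex: one studies the image $\pi(\omega)$ of the 6-cycle in the Coxeter complex, and whenever $\pi(\omega)$ is more than a single edge, projecting the associated null-homotopic loop to a standard subcomplex and controlling the supports via Lemma~\ref{lem:projection location} produces the desired center. Second, for the residual case where $\pi(\omega)$ collapses, the paper exploits the Crisp--Paris isomorphism $A(D_n)\cong F_{n-1}\rtimes_\rho A(A_{n-1})$: after normalising the cycle word to the form $w_1\delta_1^{k_1}w_2\delta_1^{k_2}w_3\delta_1^{k_3}$, projecting to the free factor $F_{n-1}$ yields an equation of the shape $x^{k_1}=y^{-k_3}z^{-k_2}$ in a free group. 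The Lyndon--Sch\"utzenberger theorem then forces either some $|k_i|=1$, which feeds back into the projection argument, or forces $w_1\beta_1w_1^{-1}$ to be a power of $\beta_1$, which is ruled out via minimal parabolic closures (\cite{cumplido2019parabolic,cumplido2019minimal,godelle2003normalisateur}). These two ideas---retraction to a standard subcomplex with explicit support control, and the free-group equation via the semidirect product structure---are the content of the proof; neither is visible in your outline.
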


A conjecture of Charney and Davis asserts that the $D_n$-type $\Delta_\Lambda$ endowed with the Moussong metric is CAT$(1)$ \cite{charney1995k,charney2004deligne}. Theorem~\ref{thm:weakflagDintro} has the following consequence, providing positive evidence towards this conjecture by showing a non-trivial cobounded subcomplex is CAT$(1)$.
\begin{cor}(=Corollary~\ref{cor:cat1})
	\label{cor:cat(1)dintro}
	Let $\Lambda$ be the Dynkin diagram of type $D_n$. Let $\Delta'\subset \Delta_\Lambda$ be the full subcomplex of $\Delta_\Lambda$ spanned by vertices whose types are in $\{\hat \delta_1,\hat \delta_2,\hat \delta_3\}$. Then $\Delta'$ with the induced Moussong metric  is CAT$(1)$.
\end{cor}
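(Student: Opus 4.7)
The plan is to verify CAT$(1)$ of $\Delta'$ via the link criterion for two-dimensional piecewise spherical simplicial complexes, with Theorem~\ref{thm:weakflagDintro} supplying the essential combinatorial input at $\hat\delta_2$-vertices. First, $\Delta'$ is two-dimensional because only three vertex types appear, and the subdiagram of $\Lambda$ spanned by $\{\delta_1,\delta_2,\delta_3\}$ is the path $\delta_1-\delta_3-\delta_2$ with $m_{12}=2$ and $m_{13}=m_{23}=3$, giving a Coxeter system of type $A_3$. Thus each 2-simplex of $\Delta'$ carries the Moussong metric of the fundamental $A_3$ Coxeter chamber, a spherical triangle with angles $\pi/3$, $\pi/3$, $\pi/2$ at its $\hat\delta_1$-, $\hat\delta_2$-, $\hat\delta_3$-vertices respectively.

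By the standard Moussong--Bowditch criterion in dimension two, CAT$(1)$ reduces to simple-connectedness of $\Delta'$ (inherited from $\Delta_\Lambda$ via a retraction argument onto the $\{\hat\delta_1,\hat\delta_2,\hat\delta_3\}$-subcomplex) together with systole $\ge 2\pi$ of every vertex link. Each link $\lk(v)$ in $\Delta'$ is a bipartite metric graph whose edges have length equal to the angle of the adjacent 2-simplex at $v$. At a $\hat\delta_3$-vertex the edges have length $\pi/2$ and bipartite-ness forces girth $\ge 4$, so systole $\ge 2\pi$ is automatic. At a $\hat\delta_1$- or $\hat\delta_2$-vertex the edges have length $\pi/3$, and girth $\ge 6$ must be verified. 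The decisive case is at $\hat\delta_2$-vertices: a $6$-cycle in $\lk(v_2)$ corresponds exactly to a $6$-cycle of alternating types $\hat\delta_1,\hat\delta_3$ in $\Delta_\Lambda$ with $v_2$ as a center, and Theorem~\ref{thm:weakflagDintro} ensures such $6$-cycles realize length exactly $2\pi$, so no shorter cycle through $v_2$ obstructs CAT$(1)$. Ruling out $4$-cycles in $\lk(v_2)$ amounts to showing no $K_{2,2}$ of alternating types $\hat\delta_1,\hat\delta_3$ in $\Delta_\Lambda$ admits a $\hat\delta_2$-center; I would combine Theorem~\ref{thm:4cycle} with a Dynkin-specific argument in $D_n$, or equivalently invoke the hexagonal residue structure of the bipartite graph on types $\hat\delta_1,\hat\delta_3$ inside the rank-$(n-1)$ Artin complex $\lk(v_2)$ (which is of type $A_{n-1}$, since removing $\delta_2$ from the $D_n$-diagram leaves a path). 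An analogous analysis at $\hat\delta_1$-vertices, now exploiting the $A_{n-1}$-Artin complex structure of $\lk(v_1)$, handles the remaining case.

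The main obstacle is the $4$-cycle exclusion: Theorem~\ref{thm:weakflagDintro} delivers centers for $6$-cycles but does not by itself preclude shorter cycles in links, and a single $4$-cycle of length $4\pi/3$ in a link would immediately violate CAT$(1)$. The critical step is therefore to combine Theorem~\ref{thm:4cycle} with the specific $D_n$-Dynkin structure to show the offending $K_{2,2}$ configurations cannot occur -- i.e.\ that a $4$-cycle of alternating types $\hat\delta_1,\hat\delta_3$ in $\Delta_\Lambda$ can never admit a $\hat\delta_2$-center. Once that is in place, the Moussong--Bowditch criterion yields CAT$(1)$ of $\Delta'$ with its induced Moussong metric.
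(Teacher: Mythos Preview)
Your stated criterion is not sufficient: for piecewise spherical complexes, simple-connectedness together with vertex links of systole $\ge 2\pi$ does \emph{not} imply CAT$(1)$. The Cartan--Hadamard globalization fails for positive curvature bounds; by Bowditch's criterion one needs in addition that $\Delta'$ itself contain no closed geodesic of length $<2\pi$ (equivalently, every loop of length $<2\pi$ is null-homotopic through short loops). This global systole condition is exactly where Theorem~\ref{thm:weakflagDintro} enters, not the link condition. A $6$-cycle in $\Delta'$ alternating between types $\hat\delta_1$ and $\hat\delta_3$ bounds a candidate short closed curve in the Moussong metric, and the existence of a $\hat\delta_2$-center allows one to cone it off; without this, such a cycle could produce an unshrinkable loop even when all vertex links are large. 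Your sentence ``Theorem~\ref{thm:weakflagDintro} ensures such $6$-cycles realize length exactly $2\pi$'' is misplaced: a $6$-cycle in $\lk(v_2)$ has length $6\cdot\pi/3=2\pi$ automatically, independent of any theorem, so the theorem is doing nothing there.

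Conversely, the $4$-cycle exclusion in links that you single out as the main obstacle is actually routine from Theorem~\ref{thm:4 wheel}. A $4$-cycle in $\lk(v_2,\Delta')$ is, via Lemma~\ref{lem:link}, an induced $4$-cycle of alternating types $\hat\delta_1,\hat\delta_3$ in the Artin complex of type $A_{n-1}$; the labeled $4$-wheel condition then forces a center whose type lies on the Dynkin segment between $\delta_1$ and $\delta_3$, hence of type $\hat\delta_1$ or $\hat\delta_3$, contradicting that adjacent vertices in an Artin complex have distinct types. The same argument handles $\lk(v_1,\Delta')$. The paper does not run this link-and-systole analysis by hand; it simply invokes \cite[Theorem~5.2]{goldman2023cat}, a criterion that takes as input precisely the labeled $4$-wheel condition (Theorem~\ref{thm:4 wheel}) and the $6$-cycle center property (Theorem~\ref{thm:weakflagD}) and outputs CAT$(1)$ for the relative complex with the induced Moussong metric.
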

The $n=3$ case of this corollary is the main theorem of Charney \cite{charney2004deligne}.

An unexpected ingredient in the proof of Theorem~\ref{thm:weakflagDintro} is a result by Lyndon and Sch{\"u}tzenberger \cite{MR0162838} in the 1960s on solving certain types of equations on free groups.

Now we handle more general 6-cycles. A vertex $y$ is a \emph{quasi-center} for a 6-cycle with consecutive vertices $\{x_i\}_{i\in \mathbb Z/6\mathbb Z}$ in $\Delta_\Lambda$, if there exists $i\in \mathbb Z/6\mathbb Z$ such that $y$ is adjacent to each of $\{x_i,x_{i+2},x_{i+4}\}$. A quasi-center of a 6-cycle breaks up the 6-cycle into three 4-cycles. In view of Theorem~\ref{thm:4cycle}, we are done with producing a filling if we are able to find a quasi-center, and such filling is often optimal in an appropriate sense.

In general, there are 6-cycles in $\Delta_\Lambda$ that do not have a quasi-center. However, for studying the $K(\pi,1)$-conjecture, we only need to deal with a particular kind of 6-cycles, called zigzag 6-cycles. We postpone the somewhat technical definition of zigzag 6-cycles to Definition~\ref{def:zigzag}, and simply mention that conjecturally all the zigzag 6-cycles have a quasi-center (this is a reformulation of a conjecture of Haettel, see Section~\ref{sec:remark}), and this enables us to produce fillings that are efficient enough for application to the $K(\pi,1)$-conjecture. Cycles in Theorem~\ref{thm:weakflagDintro} are the simplest type of zigzag 6-cycles. We are able to handle more complicated zigzag 6-cycles when $n=4$.

\begin{thm}(=Theorem~\ref{thm:flagD4})
	\label{thm:flagD4intro}
	Let $\Lambda$ be the Dynkin diagram of type $D_4$ with a chosen leaf vertex $c$. Given a 6-cycle $\omega$ in the Artin complex $\Delta_\Lambda$ with its vertices alternating between having type $\hat c$ and not having type $\hat c$. Then there exists a quasi-center of $\omega$ that is adjacent to each of the type $\hat c$ vertices of $\omega$.
\end{thm}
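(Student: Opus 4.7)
My plan is to prove the theorem by a case analysis on the types $\hat s_1, \hat s_3, \hat s_5$ of the non-$\hat c$ vertices $x_1, x_3, x_5$ of $\omega$, invoking Theorem~\ref{thm:weakflagDintro} for one family of cases and performing a direct coset analysis inside $A_S$ for the remaining ones.

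First I would reduce to the case that $\omega$ is embedded and induced: otherwise, $\omega$ has a chord which either directly yields a $4$-cycle to which Theorem~\ref{thm:4cycle} applies, giving a center that serves as a quasi-center, or can be eliminated by iteration. Denote the central node of the $D_4$ Dynkin diagram by $m$ and the two leaves other than $c$ by $\delta, \delta'$, so that the types $\hat s_i$ lie in $\{\hat m, \hat \delta, \hat \delta'\}$. After translating, take $x_0 = A_{\hat c}$; then the successive adjacencies around $\omega$ give coset representatives $x_1 = g_1 A_{\hat s_1}$, $x_2 = g_1 h_1 A_{\hat c}$, $x_3 = g_1 h_1 g_3 A_{\hat s_3}$, and so on, with $g_i \in A_{\hat c}$ and $h_i \in A_{\hat s_i}$. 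The closure $x_6 = x_0$ becomes the explicit identity $g_1 h_1 g_3 h_3 g_5 h_5 \in A_{\hat c}$. I would then case-split on the multiset $\{\hat s_1, \hat s_3, \hat s_5\}$, cutting down the list via the $\mathbb Z/2$-symmetry of $\Lambda$ swapping $\delta, \delta'$ together with the dihedral symmetry of the 6-cycle.

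In the homogeneous case $\hat s_1 = \hat s_3 = \hat s_5 = \hat m$, the cycle $\omega$ alternates between the leaf type $\hat c$ and the central type $\hat m$, and Theorem~\ref{thm:weakflagDintro} directly supplies a center, which is a fortiori a quasi-center. The other homogeneous case, where all $\hat s_i$ equal a single leaf type $\hat \delta \neq \hat c$, is not covered by Theorem~\ref{thm:weakflagDintro}; it is however amenable because $c$ and $\delta$ are non-adjacent in $\Lambda$, so the subgroup $\langle c, \delta \rangle$ is free abelian, which constrains the closure relation enough to produce the quasi-center by direct factorization. The main obstacle is the mixed case, where the $\hat s_i$ take at least two distinct values. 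Here I would propose a candidate type for the quasi-center depending on the specific multiset, and reduce its existence to a three-factor coset equation inside $A_{\hat c}$, which is an Artin group of type $A_3$ (the braid group on four strands). I expect this equation to be solvable by a three-factor variant of the Lyndon--Sch\"utzenberger-style argument used in the proof of Theorem~\ref{thm:weakflagDintro}, with the conclusion translated to a geometric adjacency in $\Delta_\Lambda$ via the standard theorem on intersections of parabolic subgroups of Artin groups. The restriction to $n = 4$ enters essentially at this step: only $D_4$ admits the $S_3$-triality of its Dynkin diagram, and this extra symmetry is what makes the case analysis terminate and the resulting braid equations remain tractable.
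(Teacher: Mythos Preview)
Your case split is reasonable and the first homogeneous case (non-$\hat c$ vertices all of type $\hat m$, the central node) is indeed handled by Theorem~\ref{thm:weakflagDintro} together with the $S_3$-triality of $D_4$. The reduction in the paper's Lemma~\ref{lem:center of 6cycle} is in fact the converse of yours: it replaces any $\hat d$-type (central) vertex by a leaf-type vertex, so after using triality one is left with exactly two cases, called Type~I (all three non-$\hat c$ vertices have the same leaf type~$\hat a$) and Type~II (two have type~$\hat a$, one has type~$\hat b$). These are precisely your ``homogeneous leaf'' case and (a normal form of) your ``mixed'' case.

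The genuine gap is in your proposed treatment of these two remaining cases. The claim that the homogeneous leaf case follows by ``direct factorization'' from $\langle c,\delta\rangle\cong\mathbb Z^2$ is unsubstantiated: the relevant equation lives in $A_{\hat c}\cong A(A_3)$ and the commutation of $c$ and $\delta$ gives almost no traction there. More importantly, your plan to settle both cases by a ``three-factor Lyndon--Sch\"utzenberger-style argument'' is exactly the approach the paper reports as failing: see the discussion in Section~1.3, where it is stated that for Theorem~\ref{thm:flagD4intro} the converted free-group equations become intractable. The paper therefore abandons the algebraic route entirely and instead invokes Soroko's identification of $A(D_4)$ with the pure mapping class group of a genus-one surface with two punctures and one boundary component. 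Vertices of type $\hat a$ and $\hat c$ become homotopy classes of arcs, adjacency becomes disjointness, and the bulk of the proof (Sections~\ref{sec:D4} and~\ref{sec:abccycle}) is a long and delicate analysis of how traces of these six arcs can intersect on the cut-open surface, using a subsurface-projection-type a~priori bound and then a case-by-case combinatorial argument to force the existence of the quasi-center. None of this machinery appears in your plan, and there is no known shortcut around it.
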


\begin{cor}(=Corollary~\ref{cor:zigzag})
	\label{cor:zigzagintro}
	Let $\Lambda$ be the Dynkin diagram of type $D_n$ with $n=3,4$. Then any zigzag 6-cycle $\omega$ in  $\Delta_\Lambda$ has a quasi-center that is adjacent to each of the local max vertices of $\omega$. 
\end{cor}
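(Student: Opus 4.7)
The plan is to reduce the corollary directly to Theorems~\ref{thm:weakflagDintro} and \ref{thm:flagD4intro} by unpacking what zigzag 6-cycles look like in the two small cases $n=3$ and $n=4$. The crucial observation is that, by Definition~\ref{def:zigzag}, the three local max vertices of a zigzag 6-cycle occupy three alternating positions; a careful inspection in the small cases $D_3$ and $D_4$ should show that they can be arranged (after applying a symmetry of the Dynkin diagram) to share a single common type, so that one of the two theorems applies verbatim.

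For $n=3$: I would first enumerate the possible type sequences of a zigzag 6-cycle in $\Delta_\Lambda$, using that the Dynkin diagram of $D_3$ is a path. The outcome should be that the vertices alternate between two types, with the local max vertices at one type and the local min vertices at the other. Matching this pattern against Theorem~\ref{thm:weakflagDintro} (alternation between types $\hat\delta_1$ and $\hat\delta_3$), possibly after relabeling the Dynkin diagram by a symmetry, yields a center of type $\hat\delta_2$. Since a center is adjacent to all six vertices, it is in particular adjacent to the three local max vertices, hence is a quasi-center of the required form.

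For $n=4$: Using the $S_3$ symmetry of the $D_4$ Dynkin diagram permuting its three leaves, I would reduce to the case where the local max vertices are all of a fixed leaf type $\hat c$. Now the hypothesis of Theorem~\ref{thm:flagD4intro} is satisfied: the vertices of the cycle alternate between having type $\hat c$ (the local max positions) and not having type $\hat c$ (the local min positions, whose types need not coincide). The theorem then produces a quasi-center adjacent to each type $\hat c$ vertex of $\omega$, which is exactly a quasi-center adjacent to each local max vertex.

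The main obstacle is the bookkeeping step of checking that Definition~\ref{def:zigzag}, once specialized to $n=3,4$, really does force the local max vertices to sit at a single type (up to a Dynkin symmetry) and that all possible zigzag type sequences are covered by the two theorems. A subtlety to watch out for is whether the local max type in the $n=3$ analysis is literally $\hat\delta_1$ or whether an auxiliary relabeling is needed before invoking Theorem~\ref{thm:weakflagDintro}; once this combinatorial check is complete, no new geometric or algebraic input is required.
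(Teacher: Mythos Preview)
Your reduction does not go through in either case because the type patterns of zigzag 6-cycles are more varied than you assume.

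For $n=4$: in Definition~\ref{def:zigzag} the partial order is $\{\hat\delta_1,\hat\delta_2\}<\hat\delta_3<\hat\delta_4$, so a local max vertex can have type $\hat\delta_3$ \emph{or} $\hat\delta_4$. The $S_3$ symmetry of the $D_4$ diagram permutes the three leaves $\delta_1,\delta_2,\delta_4$ and fixes the central vertex $\delta_3$; it therefore cannot turn a local max of type $\hat\delta_3$ into one of leaf type, and it does not preserve the partial order in any case. Theorem~\ref{thm:flagD4intro} (with $c=\delta_4$) handles only the sub-case where all three local max vertices have type $\hat\delta_4$. The paper's proof treats separately the case where all $y_i$ have type $\hat\delta_3$ (reducing via a link argument and Lemma~\ref{lem:special4cycle} to Theorem~\ref{thm:weakflagD}) and the two mixed cases (two of one type, one of the other), each of which requires a nontrivial bootstrapping argument that produces auxiliary vertices and feeds a modified 6-cycle back into the already-established cases. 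None of this is captured by a single application of Theorem~\ref{thm:flagD4intro}.

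For $n=3$: the local max vertices are all of type $\hat\delta_3$, but the local min vertices can independently be of type $\hat\delta_1$ or $\hat\delta_2$, so the cycle need not alternate between just two types. Theorem~\ref{thm:weakflagDintro} only covers the case where all local min vertices share a type; when they do not, one again needs a short extra argument (analogous to the all-$\hat\delta_3$ case for $n=4$) using that vertices of types $\hat\delta_1$ and $\hat\delta_2$ in the link of a type $\hat\delta_3$ vertex are automatically adjacent. Your proposal acknowledges this as ``the main obstacle'' but the combinatorial check you describe would fail: the claim that the local max vertices can be arranged to share a single leaf type up to Dynkin symmetry is false.
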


Corollary~\ref{cor:zigzagintro} will be used as a key ingredient in \cite{huang2024} to answer a question of J. McCammond on the $K(\pi,1)$-conjecture of Artin groups with a complete bipartite Dynkin diagram. Theorem~\ref{thm:weakflagDintro} will be used as a key ingredient in \cite{huang2024} to settle the $K(\pi,1)$-conjecture for some hyperbolic type Artin groups.

\subsection{Discussion of proofs}
Let $\Lambda$ be as in Figure~\ref{fig:D}.
Let $\Delta_\Lambda=\Delta_S$ be the type $D_n$ Artin complex. Given a 6-cycle $\omega$ in $\Delta_\Lambda$ with consecutive vertices $\{x_i\}_{i\in \mathbb Z/6\mathbb Z}$ such that $x_i$ has type $\hat \delta_{f(i)}$ where $f:\mathbb Z/6\mathbb Z\to \{1,\ldots, n\}$, the information of this 6-cycle corresponds to the following equation:
\begin{equation}
	\label{eq}
	w_1w_2\cdots w_6=1,
\end{equation}
where $w_i\in A_i$ and $A_i$ is the subgroup generated by $S\setminus \{\delta_{f(i)}\}$.
Producing a minimal filling of this 6-cycle corresponds to describing the solution sets of this equation in a particular way. 

To prove both Theorem~\ref{thm:weakflagDintro} and Theorem~\ref{thm:flagD4intro}, we first try to establish a coarser ``a priori estimation'' to bound the complexity of each term of \eqref{eq}, or bound the complexity of other combinatorial information of \eqref{eq}. Then we carry out a fine combinatorial analysis to obtain the desired statement.

The idea to establish a priori estimation is to use projections/retractions in $A_S$ and $\Delta_S$. Think of \eqref{eq} as a loop in the Cayley graph of $A_S$, such that each term of the equation corresponds to a subpath of the loop that is contained in a left $A_{i}$-coset. Suppose the first term $w_1$ corresponds to the identity coset $A_{1}$ in $A_S$, and there is a (not necessarily group theoretic) retraction $r:A_S\to A_{1}$ such that the $r$-images of other cosets are ``small enough'' in $A_{1}$ and can be computed very explicitly, then we will gain control of the term $w_1$. 
There are several kinds of retraction $r:A_S\to A_1$ defined in the literature, either using non-positive curvature, or Garside structure, or some other group theoretic structure \cite{brady2000three,altobelli1998word,godelle2012k,charney2014convexity,blufstein2023parabolic}. One advantage of the retraction in \cite{godelle2012k,charney2014convexity,blufstein2023parabolic,godelle2023parabolic} is that it is easier to carry out a very explicit computation of the retraction image. However, the retraction image of other cosets in $A_{ab}$ is not always small enough, so sometimes such retraction does not give useful information. 

In the case of Theorem~\ref{thm:weakflagDintro}, it turns out that either the above retraction is enough to give enough information to solve the equation \eqref{eq} (see Proposition~\ref{prop:tight 6-cycle}), or we carry out a finer analysis as follows. 
Recall that the type $D_n$ Artin group can be written as a semi-direct product of a braid group and a free group; see, e.g., \cite{allcock2002braid,crisp2005artin}. So solving an equation like \eqref{eq} can be converted to finding an explicit set of solutions for certain equations in free groups. We showed that in the case that the retraction method does not work, one can convert \eqref{eq} into an equation of form $x^M=y^Nz^P$ in a finitely generated free group for $M,N,P\ge 2$, whose solution must be the obvious one by work of Lyndon and Sch{\"u}tzenberger \cite{MR0162838}; or \eqref{eq} reduces to a somewhat degenerate equation with only four terms, and the method in our previous work \cite{huang2023labeled} applies.

We have difficulty proving Theorem~\ref{thm:flagD4intro} using similar methods, due to the fact that converted version of \eqref{eq} involves term in both the free group and its automorphism group, which is much harder to solve.
Alternatively, besides the interpretation of type $D_n$-Artin groups as subgroups of Aut$(F_n)$ \cite{bardakov2007representations} as in the previous paragraph, they can also be interpreted as orbifold braid groups \cite{allcock2002braid} and subgroups of mapping class groups of surfaces \cite{perron1996groupe,crisp2005artin,soroko2020linearity}. 
We use the last interpretation: in the special case $n=4$ it was shown by Soroko \cite{soroko2020linearity} that $A_S$ is isomorphic to the pure mapping class groups of a surface $\cS$ which is the torus with two punctures and one boundary component. Using this, we can realize certain types of 6-cycles in the Artin complex $\Delta_S$ as 6-cycles in the arc complex of $\cS$. We caution the reader that the desired property we want for 6-cycles in $\Delta_S$, is not true for general 6-cycles in arc complexes, as well as curve complexes (it actually fails terribly for arc complexes \cite{webb2020contractible}). This does not mean the property we want to prove is not true, as we only need to deal with specific types of 6-cycles in the arc complex.

Given a vertex $x\in \Delta_S$, 
as a starting point, we can invoke either the subsurface projection \cite{masur2000geometry} or the Hatcher flow \cite{hatcher1991triangulations} to define a map from a suitable subcomplex $\Delta_S$ to the link of $x$ in $\Delta_S$. This gives us some control over each term in \eqref{eq}. The control is much weaker than what we want, which is not surprising because of the discussion in the previous paragraph. Though this gives a somewhat useful a priori estimation. 

The majority of the proof is to strengthen this control to the form we want, by using the finer combinatorial properties of these arcs to show that if some of these 6-arcs have complicated intersections, then it will force other arcs to have relatively few intersections. 
 
 \subsection{Speculation for filling more general $n$-cycles in $\Delta_S$} We end the introduction by formulating a conjecture on what should be the structure of minimal filling of more general $n$-cycles in more general spherical Artin complexes (regardless of whether we need these cycles in the proof of the $K(\pi,1)$-conjecture or not), based on the evidence we have from the previous results, from \cite{appel1983artin,charney2004deligne,haettel2021lattices,huang2023labeled,huang2024} and from an unpublished work of Crisp and McCammond on lattices of cut curves (which is explained in \cite{haettel2021lattices}).
 Let $A_S$ be an irreducible spherical Artin group. Let $\Delta_S$ be the associated Artin complex. The \emph{type sequence} of an $n$-cycle in $\Delta_S$, is defined to be the sequence of types of consecutive vertices in this $n$-cycle.
 It is natural to ask, given an $n$-cycle $\omega$ in $\Delta_\Lambda$, before we prove anything, how would we even guess what could be the possible combinatorial structure of a minimal filling disk for $\omega$ in the 2-skeleton of $\Delta_\Lambda$. To this end, we propose the following principle/conjecture.
 
 Let $\mathsf C_S$ be the associated Coxeter complex, with the natural action of $W_S$. A \emph{wall} of $\mathsf C_S$ is the fix point set of a reflection of $W_S$. Topologically, $\mathsf C_S$ is homeomorphic to a sphere, and a wall is homeomorphic to a codimension 1 sphere. So the complement of each wall has two connected components, called the \emph{open halfspaces} associated with this wall. An $n$-cycle $\omega$ is \emph{admissible} if all the $n$-cycles in $\mathsf C_S$ with the same type sequence as $\omega$ are contained in some open halfspaces. For example, any 4-cycle in $\Delta_S$ is admissible, and any zigzag 6-cycle in a type $D_n$ Artin complex is admissible.
 
 \begin{conj}
 	For any admissible $n$-cycle $\omega$ of $\Delta_S$, there is a companion $n$-cycle $\omega'$ in $\mathsf C_S$ with the same type sequence as $\omega$ such that the minimal disk filling $\omega$ in the 2-skeleton of $\Delta_S$ is no more complicated than the minimal disk filling $\omega'$ in the 2-skeleton of $\mathsf C_S$.
 \end{conj}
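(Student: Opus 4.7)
The natural starting point is the canonical simplicial map $\pi\colon \Delta_S\to \mathsf C_S$ induced by the quotient $A_S\to W_S$, which sends a left coset $gA_{\hat s}$ to the left $W_{\hat s}$-coset containing the image of $g$ and preserves types. Under $\pi$, any admissible $n$-cycle $\omega\subset \Delta_S$ pushes forward to a closed edge-path of the same type sequence in $\mathsf C_S$. The first step is to extract from this pushforward a genuine embedded cycle $\omega'\subset \mathsf C_S$ to play the role of the companion, and to verify that admissibility forces $\omega'$ to lie in a single open halfspace $H$ of $\mathsf C_S$ bounded by a wall $M$.

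The central tool I would try to exploit is a \emph{halfspace retraction} in the Artin complex: to each wall $M$ of $\mathsf C_S$ with halfspace $H$, one seeks a combinatorial folding retraction $r\colon \Delta_S\to \Delta_H$ onto the full subcomplex $\Delta_H\subset \Delta_S$ lying over the closed halfspace $\overline H$, in the spirit of the parabolic retractions of Godelle and of Charney--Blufstein. Granting this, one would show that $\pi$ restricted to $\Delta_H$ is a simplicial isomorphism onto $\overline H\subset \mathsf C_S$. This would allow one to transport any filling disk of $\omega'$ in $\overline H\subset \mathsf C_S$ back to $\Delta_H$, and hence to a filling disk of $\omega$ in $\Delta_S$ of no greater combinatorial complexity.

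I would then run an induction on the number of 2-simplices in a fixed minimal filling disk $D'\subset \mathsf C_S$ of $\omega'$. The base case, a 4-cycle or a single triangle, is handled either trivially or by Theorem~\ref{thm:4cycle}. The inductive step peels off an outermost 2-simplex $T'$ of $D'$ sharing an edge $e'$ with $\omega'$, producing a shorter admissible companion cycle $\omega'_1\subset \overline H$, and lifts this peeling to an operation on $\omega$ in $\Delta_S$ by applying Theorem~\ref{thm:4cycle} to the 4-cycle in $\Delta_S$ obtained from $T'$ together with the corresponding piece of $\omega$.

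The hard part will be constructing and controlling the halfspace retraction $r\colon \Delta_S\to \Delta_H$: the existing parabolic retractions are built for \emph{standard} parabolic subgroups, that is, stars of a type-$\hat s$ vertex, whereas a halfspace in $\mathsf C_S$ is associated with a reflection whose fixed set is typically not the star of a standard type. Even granting the existence of $r$, one has to verify that admissibility propagates to the peeled cycle $\omega'_1$; this requires a delicate combinatorial argument about how walls interact with prescribed type sequences. A secondary technical point is to fix the precise meaning of ``no more complicated than''---most naturally the number of 2-simplices in a minimal filling disk---so that the induction has an unambiguous monovariant. These two difficulties together are probably the reason the statement is offered only as a conjecture.
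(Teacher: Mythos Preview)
The paper does not prove this statement: it is explicitly labeled a \emph{conjecture} and is presented in the introduction only as a guiding principle, supported by the evidence of Theorems~\ref{thm:4cycle}, \ref{thm:weakflagDintro}, \ref{thm:flagD4intro} and related results. There is therefore no ``paper's own proof'' to compare against; you are sketching an attack on an open problem.

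Regarding your sketch itself, there is a structural error in the proposed mechanism. You propose a retraction $r\colon \Delta_S\to \Delta_H$ onto the preimage of a closed halfspace $\overline{H}\subset \mathsf C_S$, and then assert that $\pi|_{\Delta_H}$ would be a simplicial isomorphism onto $\overline{H}$. This cannot hold: for spherical $A_S$ the Coxeter complex $\mathsf C_S$ is finite (a triangulated sphere), while $\Delta_S$ is infinite, so $\pi$ has infinite fibers over every vertex, and in particular $\Delta_H=\pi^{-1}(\overline{H})$ is infinite. No restriction of $\pi$ to any nonempty $\pi$-saturated subcomplex can be injective. The Godelle--Paris retractions you invoke live on the oriented Davis complex and target standard subcomplexes (corresponding to faces of $\Sigma_S$, not halfspaces of $\mathsf C_S$); transporting them to produce a finite-image retraction of $\Delta_S$ over a halfspace is not something the existing machinery provides.

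Your diagnosis of the difficulty is thus understated: it is not merely that constructing $r$ is ``hard'', but that the target $\Delta_H$ you name is the wrong object, and the isomorphism you need is false on cardinality grounds. The actual arguments in the paper for the special cases (Proposition~\ref{prop:tight 6-cycle}, Sections~\ref{sec:D4}--\ref{sec:abccycle}) do not attempt any global retraction of this kind; they instead perform case analyses on the image $\pi(\omega)$, use the Godelle--Paris retraction to project the associated loop in $\widehat\Sigma_S$ onto individual standard subcomplexes, and then invoke type-specific structure (free-group equations, arc complexes) term by term. That is a very different shape of argument from the one you outline.
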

 
 \subsection{Structure of the article}
 In Section~\ref{sec:prelim} we collect some preliminaries. In Section~\ref{sec:AD}, we prove Theorem~\ref{thm:weakflagDintro} and Corollary~\ref{cor:cat(1)dintro}. In Section~\ref{sec:6cycle}, Section~\ref{sec:D4} and Section~\ref{sec:abccycle}, we prove Theorem~\ref{thm:weakflagD}. In Section~\ref{sec:remark}, we deduce Corollary~\ref{cor:zigzagintro} and discuss some ending remarks.
 
 \subsection{Acknowledgment}
 
 We thank Mladen Bestvina, Nathan Broaddus, Piotr Przytycki, and Nick Salter for teaching the author many things about arcs and curves on surfaces and for helpful and inspiring conversations. We thank Ruth Charney and Mike Davis for their helpful conversation on Artin groups and spherical Deligne complexes. We thank Thomas Haettel for explaining to the author his conjecture (Conjecture~\ref{conj:h}), which has been a source of inspiration. This article takes considerable influence from Charney's article \cite{charney2004deligne}, which we acknowledge with pleasure.
 
 The author is partially supported by a Sloan fellowship and NSF DMS-2305411. 
 The author thanks the Centre de Recherches Mathématiques in Montreal for the hospitality where part of the work was done.

\setcounter{tocdepth}{1}
\tableofcontents
\section{Preliminary}
\label{sec:prelim}
\subsection{Artin groups and Coxeter groups}
\label{subsec:Artin}
We use $A_S$ to denote the Artin group with generating set $S=\{s_1,\ldots,s_n\}$. Recall that the associated Dynkin diagram $\Lambda$ of $A_S$ is a simplicial graph with edge labeling such that its vertex set is $S$, moreover, 
\begin{enumerate}
	\item $s_i$ and $s_j$ are joined by an edge labeled by $m_{ij}$, if $\underbrace{s_is_js_i\cdots}_{m_{ij}}=\underbrace{s_js_is_j\cdots}_{m_{ij}}$ and $4\le m_{ij}<\infty$;
	\item $s_i$ and $s_j$ are joined by an edge without any label if $s_is_js_i=s_js_is_j$;
	\item $s_i$ and $s_j$ are joined by an edge labeled by $\infty$ if there are no relations between $s_i$ and $s_j$ in the standard presentation.
\end{enumerate}
It follows from the definition of Dynkin diagram that if two vertices in the diagram are not adjacent, then the associated generators commute.

We will also use $A_\Lambda$ to denote $A_S$.
For $S'\subset S$, we use $A_{S'}$ to denote the subgroup of $A_S$ generated by $S'$. Then $A_{S'}$ is also an Artin group \cite{lek}. We use $W_S$ to denote the Coxeter group associated with $A_S$. For $S'\subset S$, we define $W_{S'}$ in a similar way.
\subsection{Davis complexes}
\label{subsec:complex}
By a cell, we always mean a closed cell unless otherwise specified.

\begin{definition}[Davis complex]
	Given a Coxeter group $W_S$, let $\mathcal{P}$ be the poset of left cosets of spherical standard parabolic subgroups in $W_S$ (with respect to inclusion) and let $b\Si_S$ be the geometric realization of this poset (i.e.\ $b\Si_S$ is a simplicial complex whose simplices correspond to chains in $\mathcal{P}$). Now we modify the cell structure on $b\Si_S$ to define a new complex $\Sigma_S$, called the \emph{Davis complex}. The cells in $\Sigma_S$ are induced subcomplexes of $b\Si_S$ spanned by a given vertex $v$ and all other vertices which are $\le v$ (note that vertices of $b\Si_S$ correspond to elements in $\mathcal{P}$, hence inherit the partial order).
\end{definition}

Suppose $W_S$ is finite with $n$ generators. Then there is a canonical faithful orthogonal action of $W_S$ on the Euclidean space $\mathbb E^n$. Take a point in $\mathbb E^n$ with trivial stabilizer, then the convex hull of the orbit of this point under the $W_S$ action (with its natural cell structure) is isomorphic to $\Sigma_S$. In such case, we call $\Sigma_S$ a \emph{Coxeter cell}. In general Davis complex is a union of Coxeter cells.

The 1-skeleton of $\Sigma_S$ is the unoriented Cayley graph of $W_S$ (i.e.\ we start with the usual Cayley graph and identify the double edges arising from $s^2_i$ as single edges), and $\Sigma_S$ can be constructed from the unoriented Cayley graph by filling Coxeter cells in a natural way. Each edge of $\Sigma_S$ is labeled by a generator of $W_S$. We endow $\Sigma^{(1)}_S$ with the path metric with edge length $1$. 

A \emph{reflection} of $W_S$ is a conjugate of one of its generators. There is a natural action of $W_S$ on $b\Si_\Ga$ by simplicial automorphisms. The fix point set $H$ of a reflection $r$ is a subcomplex of $b\Si_\Ga$ (also viewed as a subset of $\Si_\Ga$), which is called a \emph{wall}. Then $\Si_\Ga\setminus H$ has exactly two connected components, exchanged by the action of $r$. Two vertices of $\Sigma_\Ga$ are \emph{separated} by a wall $H$ if they are in different connected components of $\Sigma_S\setminus H$. The distance between any two vertices with respect to the path metric on $\Sigma^{(1)}_S$ is the number of walls separating these two vertices. A wall is \emph{dual} to an edge if the wall and the edge have nonempty intersection. Two edges are \emph{parallel} if they are dual to the same wall.

If $S'\subset S$ is an induced subgraph, then $W_{S'}\to W_{S}$ induces an embedding $\Sigma_{S'}\to \Sigma_{S}$. The image of this embedding and their left translations are called \emph{standard subcomplexes} of type $S'$. There is a one to one correspondence between standard subcomplexes of type $S'$ in $\Sigma_{S}$ and left cosets of $W_{S'}$ in $W_{S}$.

Lemma~\ref{lem:gate} and Lemma~\ref{lem:pair gate} below are standard, see e.g.\ \cite{bourbaki2002lie} or \cite{davis2012geometry}, also \cite{dress1987gated}. We will use $d$ to denotes the path metric on the 1-skeleton of $\Si_\Ga$, with each edge having length 1.
Lemma~\ref{lem:gate} describes nearest point projection into the vertex set of a standard subcomplex.
\begin{lem}
	\label{lem:gate}
	Let $F$ be a standard subcomplex of $\Si_\Ga$ and let $x\in \Si_S$ be a vertex. Then there exists a unique vertex $x_F\in F$ such that $d(x,x_F)\le d(x,y)$ for any vertex $y\in F$, where $d$ denotes the path metric on the 1-skeleton of $\Sigma_S$. The vertex $x_F$ is called the \emph{projection} of $x$ to $F$, and is denoted $\prj_F(x)$. 	Moreover, let $\vertex F$ be the vertex set of a face $F$ of $\Si_S$. Let $E$ be another face of $\Si_S$. Then $\prj_E(\vertex F)=\vertex E'$ for some face $E'\subset E$. In this case we write $E'=\prj_E(F)$.
\end{lem}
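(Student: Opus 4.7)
The plan is to exploit the transitive action of $W_S$ on the set of standard subcomplexes of a given type to normalize the setup, then appeal to the basic theory of minimal-length coset representatives in Coxeter groups. The first half (existence and uniqueness of the gate) is a direct consequence of a well-known fact about parabolic subgroups, while the second half (that faces project to faces) requires a more careful wall analysis.

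For the first part, I would translate by an element of $W_S$ so that $F$ becomes the standard subcomplex corresponding to $W_{S'}$ for some induced subgraph $S' \subset S$. Identifying vertices of $\Si_S$ with elements of $W_S$, the distance is $d(x,y) = \ell(x^{-1}y)$, so minimizing $d(x,v)$ over $v \in W_{S'}$ amounts to finding the shortest element of the right coset $W_{S'} \cdot x$. The classical structure theorem for parabolic subgroups of Coxeter groups guarantees a unique shortest representative $w_0$ in this coset, and this yields the unique projection $\prj_F(x) = x w_0^{-1}$. Uniqueness of the shortest coset representative directly gives uniqueness of $x_F$.

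For the second part, I would use the wall description of distance: the walls separating $x$ from $\prj_F(x)$ are precisely those walls that lie ``outside'' $F$ in the sense that $F$ is contained in a single halfspace of each such wall. Given a spherical face $F$ corresponding to a coset $gW_{S''}$, I want to show that $\prj_E(\vertex F)$ spans a face of $E$. The key step is to verify that if $x,y \in \vertex F$ are joined by an edge of $F$ with dual wall $H$, then either $\prj_E(x) = \prj_E(y)$, which happens exactly when $H$ does not separate any two vertices of $E$, or $\prj_E(x)$ and $\prj_E(y)$ are joined by an edge of $E$ parallel to the original. Iterating this across the spanning set of edges of the Coxeter cell $F$ shows that $\prj_E$ restricted to $\vertex F$ factors through a quotient map corresponding to collapsing a sub-parabolic, and the image is the vertex set of a sub-Coxeter cell $E' \subset E$ whose type $S''' \subset S' \cap (gS''g^{-1}$-part$)$ records which reflections survive.

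The main obstacle is making the wall-behavior argument precise, specifically showing that projection ``commutes'' with a reflection $r_H$ whose wall $H$ separates pairs of vertices of both $F$ and $E$: concretely, $\prj_E(r_H \cdot x) = r_H \cdot \prj_E(x)$ whenever $H$ cuts $E$. This is where one leverages convexity of standard subcomplexes and the fact that reflections preserve the path metric. Once this commutation is in hand, the identification of $\prj_E(F)$ with a genuine face of $E$ follows because faces of $\Si_S$ correspond bijectively to cosets of spherical standard parabolic subgroups, and the wall-collapse procedure described above respects this coset structure. The hypothesis that $F$ is spherical is essential so that $\vertex F$ is genuinely a full Coxeter cell rather than an infinite subcomplex, and this keeps the combinatorial bookkeeping finite.
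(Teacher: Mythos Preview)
The paper does not actually prove this lemma; it simply records it as a standard fact with the remark ``Lemma~\ref{lem:gate} and Lemma~\ref{lem:pair gate} below are standard, see e.g.\ \cite{bourbaki2002lie} or \cite{davis2012geometry}, also \cite{dress1987gated}.'' Your approach is correct and is essentially the standard argument found in those references: the first part is exactly the minimal-length coset representative theorem for parabolic subgroups, and the second part via wall/reflection commutation is the usual way one verifies that gated subsets behave well under projection. One small clarification: in the second half of the lemma statement, $F$ and $E$ are already declared to be \emph{faces} (Coxeter cells), so sphericity is automatic and your closing remark about needing $F$ spherical is not an additional hypothesis but simply a restatement of what ``face'' means here.
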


\begin{definition}
	\label{def:projection1}
	Let $F$ be a fact of $\Si_S$. Lemma~\ref{lem:gate} gives a map $\pi:\vertex\Si_S\to\vertex F$ which extends to a retraction $\Pi_F:\Si_S\to F$ as follows. Note that for each face $E$ of $\Si_S$, $\pi(\vertex E)$ is the vertex set of a face $E'\subset F$. Then we extends $\pi$ to a map $\pi'$ from the vertex set of $b\Si_S$ to the vertex set of $bF$, by sending the barycenter of $E$ to the barycenter of $E'$. As $\pi'$ map vertices in a simplex to vertices in a somplex, it extends linearly to a map $\Pi_F:b\Si_S\cong \Si_S\to bF\cong F$.
\end{definition}

Now we consider the properties of nearest point sets between two faces.
\begin{lem}
	\label{lem:pair gate}
	Let $E$ and $F$ be faces of $\Si_S$. Define $$X=\{x\in \vertex E\mid d(x,\vertex F)=d(\vertex E,\vertex F)\}$$ and $$Y=\{y\in \vertex F\mid d(y,\vertex E)=d(\vertex E,\vertex F)\}.$$ Then 
	\begin{enumerate}
		\item there are faces $E'\subset E$ and $F'\subset F$ such that $X=\vertex E'$ and $Y=\vertex F'$;
		\item $\prj_E(\vertex F)=X$  and $\prj_F(\vertex E)=Y$;
		\item $\prj_E|_{\vertex F'}$ and $\prj_F|_{\vertex E'}$ gives a bijection and its inverse between $E'$ and $F'$;
		\item if $\mathcal W(E')$ is the collection of walls dual to an edge in $E'$, then $\mathcal W(E')=\mathcal W(F')=\mathcal W(E)\cap  \mathcal W(F)$;
		\item if $\mathcal W(E)=\mathcal W(F)$, then $E=E'$ and $F=F'$.
	\end{enumerate}
	In the situation of this lemma we will write $E'=\prj_E(F)$.
\end{lem}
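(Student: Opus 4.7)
The plan is to translate the statement into combinatorics of walls, relying on two facts implicit in Lemma~\ref{lem:gate}: $d(u,v)$ equals the number of walls separating the vertices $u$ and $v$, and $\prj_E(v)$ is characterized as the unique vertex of $E$ lying on the same side of every wall $H\in \mathcal{W}(E)$ as $v$. Set $E''=\prj_E(F)$ and $F''=\prj_F(E)$; these are subfaces of $E$ and $F$ by Lemma~\ref{lem:gate}, and the claim is that they are the faces $E'$ and $F'$ demanded by the statement. I would prove (4) first, since the wall identity then drives the rest.

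For (4), the nontrivial inclusion is $\mathcal{W}(E'')\subset \mathcal{W}(F)$: if $H\in \mathcal{W}(E'')\setminus \mathcal{W}(F)$ then $F$ lies entirely on one side of $H$, so by the characterization of $\prj_E$ each vertex of $E''=\prj_E(F)$ lies on that same side of $H$, contradicting $H\in \mathcal{W}(E'')$. Conversely, if $H\in \mathcal{W}(E)\cap \mathcal{W}(F)$, two vertices of $F$ on opposite sides of $H$ project to opposite sides of $H$ within $E''$, so $H\in \mathcal{W}(E'')$; by symmetry $\mathcal{W}(F'')$ equals the same intersection. For (1) and (2), decompose the set of walls separating a pair $(x,y)\in \vertex E\times \vertex F$ into four groups according to the partition by $\mathcal{W}(E)$ and $\mathcal{W}(F)$: walls in neither contribute a constant $N_0$ to every $d(x,y)$, while the other three contributions can be independently forced to zero by choosing $x$ on the $F$-side of every $H\in \mathcal{W}(E)\setminus \mathcal{W}(F)$, $y$ on the $E$-side of every $H\in \mathcal{W}(F)\setminus \mathcal{W}(E)$, and matching $x$ and $y$ on every $H\in \mathcal{W}(E)\cap \mathcal{W}(F)$. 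This identifies $d(\vertex E,\vertex F)=N_0$, gives $X=\vertex E''$ and $Y=\vertex F''$, and also produces (3): for $x\in \vertex E''$ the projection $\tilde y=\prj_F(x)$ automatically satisfies the orientation constraints defining $F''$, and $\prj_E(\tilde y)$ agrees with $x$ on every wall of $\mathcal{W}(E)$ and hence equals $x$.

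Finally (5) follows from (4): if $\mathcal{W}(E)=\mathcal{W}(F)$ then $\mathcal{W}(E')=\mathcal{W}(E)$, and since a proper subface of the Coxeter cell $E$ is cut out by a strictly larger set of wall-orientation constraints, hence crossed by strictly fewer walls, one must have $E'=E$, and likewise $F'=F$. The main obstacle is the wall-orientation compatibility step inside the argument for (1)--(3): one must verify that the three groups of one-sided constraints on a candidate pair $(x,y)$ can be simultaneously realized by actual vertices of $E$ and $F$. This uses the Coxeter-cell structure of each face, namely that restricting the orientation on any set of walls crossing a Coxeter cell always cuts out a nonempty subface of that cell.
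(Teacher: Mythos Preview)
The paper does not actually prove this lemma; it is stated as standard with references to Bourbaki, Davis, and Dress. So there is no ``paper's proof'' to compare against. Your wall-combinatorics approach is exactly the standard one and the overall structure is sound: establish (4) for $E''=\prj_E(F)$ first, then use the four-part decomposition of separating walls to pin down $X$ and $Y$, and read off (3) and (5).

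There is one genuine imprecision in your stated resolution of the flagged obstacle. The claim that ``restricting the orientation on any set of walls crossing a Coxeter cell always cuts out a nonempty subface'' is false as written: already in the $A_2$ hexagon one can prescribe sides of the three walls so that no vertex satisfies all three. What makes \emph{your} constraints satisfiable is not a general orientation fact but that each bundle of constraints has the form ``agree with a fixed vertex $v$ on every wall in some set,'' and such a vertex is produced by $\prj$ (Lemma~\ref{lem:gate}). Concretely: for any $y\in\vertex F$, the vertex $\prj_E(y)$ already lies on the $F$-side of every $H\in\mathcal W(E)\setminus\mathcal W(F)$; and for any fixed $x\in\vertex E$, the vertex $\prj_F(x)$ lies on the $x$-side of every $H\in\mathcal W(F)$, which simultaneously handles the $E$-side constraints on $\mathcal W(F)\setminus\mathcal W(E)$ and the matching constraints on $\mathcal W(E)\cap\mathcal W(F)$. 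You already invoke $\prj$ in your argument for (3); the fix is simply to cite it here as well and drop the general orientation assertion. Relatedly, the step $\{x: x\text{ on the $F$-side of every }H\in\mathcal W(E)\setminus\mathcal W(F)\}=\vertex E''$ deserves one sentence: it follows from the sign-vector description of faces of a Coxeter cell, namely that the vertices of the face $E''$ are exactly those vertices of $E$ agreeing with any chosen $x_0\in\vertex E''$ on every wall in $\mathcal W(E)\setminus\mathcal W(E'')$.
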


In the situation of Lemma~\ref{lem:pair gate} (5), we will say $E$ and $F$ are \emph{parallel}. In this case, the bijection between $\vertex E$ and $\vertex F$ given by $\prj_E|_{\vertex F}$ and $\prj_F|_{\vertex E}$ are called \emph{parallel translation} between $E$ and $F$.

\begin{definition}
	\label{def:adj}
	Parallel faces $F$ and $F'$ of $\Si_S$ are \emph{adjacent} if $F\neq F'$ and if they are contained in a face $F_0$ with $\dim(F_0)=\dim(F)+1$.
\end{definition}

\begin{definition}
	\label{def:elementary segment}
	Let $\cq_S$ be the collection of all possible intersection of walls in $\Si_S$. Given an element $B\in \cq_S$, we say a face $F$ of $\Si_S$ is \emph{dual} to $B$ if $F\cap B$ is the barycenter of $F$. Note that two faces dual to the same element in $\cq_S$ are parallel.
	
	Let $B\in\cq_S$. Let $F$ and $F'$ be two adjacent parallel faces of $\Si_S$ that are dual to $B$. An \emph{elementary $B$-segment}, or an \emph{$(F,F')$-elementary $B$-segment} is a minimal positive path from a vertex $x\in F$ to $x'=p(x)\in F'$, where $p:F\to F'$ is parallel translation.
\end{definition}

\subsection{Oriented Davis complexes and Salvetti complexes}
\label{subsec:Sal}
Let $\mathcal P$ be the poset of faces of $\Si_S$ (under containment), and let $V$ be the vertex set of $\Si_S$. We now define the \emph{oriented Davis complex} $\widehat\Si_S$ as follows.
Consider the set of pairs $(F,v)\in \cp \times V$.  Define  an equivalence relation $\sim$ on this set by $$(F,v)\sim (F,v') \iff F=F' \text{\ and\ } \prj_F(v') = \prj_F(v).$$
Denote the equivalence class of $(F,v')$ by $[F,v']$.   Note  that each equivalence class $[F,v']$ contains a unique representative of the form $(F,v)$, with $v\in \vertex F$.  The \emph{oriented Davis complex} $\widehat\Si_S$ is defined as the regular CW complex given by taking  $\Si_S\times V$ (i.e., a disjoint union of copies of $\Si_S$) and then identifying faces $F\times v$ and $F\times v'$ whenever $[F,v]=[F,v']$, i.e.,
\begin{equation}
	\widehat\Si_S=( \Si_S\times V)/ \sim \ .
\end{equation}
For example, for each edge $F$ of $\Si_S$ with endpoints $v_0$ and $v_1$, we get two $1$-cells $[F,v_0]$ and $[F,v_1]$ of $\widehat\Si_S$ glued together along their endpoints $[v_0,v_0]$ and $[v_1,v_1]$.  So, the $0$-skeleton of $\widehat\Si_S$ is equal to the $0$-skeleton of $\widehat\Si_S$ while its $1$-skeleton is formed from the $1$-skeleton of $\widehat\Si_S$ by doubling each edge.  
There is a natural map $\pi:\widehat\Si_S\to\Si_S$ defined by ignoring the second coordinate. 

The definition of oriented Davis complex traced back to work of Salvetti \cite{s87}, so it is also called Salvetti complex by many other authors. The naming ``oriented Davis complex'' comes from an article of J. McCammond \cite{mccammond2017mysterious}, clarifying the relation between Salvetti's work and Davis complex, which suits better for our latter discussion. We will reserve the term ``Salvetti complex'' for a quotient of the oriented Davis complex.

Each edge of $\od_S$ has a natural orientation, namely, if $F=\{v_0,v_1\}$ is an edge of $\Si_S$, then $[F,v_0]$ is oriented so that $[v_0,v_0]$ is its initial vertex and $[v_1,v_1]$ is its terminal vertex.  An edge path in the $\od_S$ is \emph{positive} if each of its edges is positively oriented.

\begin{definition}
	\label{def:label}
	As the 1-skeleton of $\Si_S$ can be identified with the unoriented Cayley graph of $W_S$, each edge of $\Si_S$ is labeled by an element in the generating set $S$.
	We pull back the edge labeling from $\Si_S$ to $\od_S$ via the map $\pi:\od_S\to \Si_S$. For a subset $E$ in $\Sigma_S$ or $\od_S$, we define $\supp(E)$ to be the collection of labels of edges in $E$. Let $u$ be an edge path in $\Sii$ or a positive path in $\od_S$. Then reading off labels of edges of $u$ gives a word in the free monoid generated by $S$, which we denote by $\w(u)$. If $u$ is an arbitrary edge path in $\od_S$, then when an edge travels opposite to its orientation, we read off the inverse of the associated label. Then $\w(u)$ gives a word in the free group on $S$. 
\end{definition}

For each subcomplex $Y$ of $\Si_S$, we write $\widehat Y=p^{-1}(Y)$ and call $\widehat Y$ the subcomplex of $\od_S$ associated with $Y$.
A \emph{standard subcomplex} of $\widehat\Si_S$ is a subcomplex of $\od_S$ associated with a standard subcomplex of $\Si_S$. In other words, if $F\subset \Si_S$ is a standard subcomplex, then $\widehat F$ is the union of faces of form $E\times v$ in $\widehat \Si_S$ with $E\subset F$ and $v$ ranging over vertices in $\Si_S$.

\begin{lem}
	\label{lem:compactible}
	Let $E$ be a face of $\Si_S$ and let $F$ be a standard subcomplex of $\Si_S$.
	If $[E,v_1]=[E,v_2]$, then $[\prj_F(E),v_1]=[\prj_F(E),v_2]$.
\end{lem}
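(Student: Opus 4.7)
The plan is to reformulate the conclusion as an implication between projections. Setting $E'=\prj_F(E)$, by the definition of the equivalence relation $\sim$, the hypothesis $[E,v_1]=[E,v_2]$ says $\prj_E(v_1)=\prj_E(v_2)$, and the conclusion $[\prj_F(E),v_1]=[\prj_F(E),v_2]$ says $\prj_{E'}(v_1)=\prj_{E'}(v_2)$. So it suffices to show
\[
\prj_E(v_1)=\prj_E(v_2)\;\Longrightarrow\;\prj_{E'}(v_1)=\prj_{E'}(v_2).
\]

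My approach is to use the wall characterization of face projections in a Coxeter complex: for any face $G$ of $\Sigma_S$ and vertex $v$, the projection $\prj_G(v)$ is the unique vertex of $G$ lying on the same side as $v$ of every wall in $\mathcal{W}(G)$. Consequently, $\prj_G(v_1)=\prj_G(v_2)$ holds if and only if $v_1$ and $v_2$ lie on the same side of every wall in $\mathcal{W}(G)$. In this language, it suffices to establish the inclusion
\[
\mathcal{W}(E')\subset \mathcal{W}(E),
\]
since then the same-side condition for $\mathcal{W}(E)$ automatically implies the one for $\mathcal{W}(E')$.

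To prove the inclusion, I would reduce to the two-face setting of Lemma~\ref{lem:pair gate}, which does not apply directly because $F$ is a standard subcomplex rather than a single face. Choose a top-dimensional face (a Coxeter cell) $F_0$ of $F$ that contains $E'$. For each vertex $x\in \vertex E$, the projection $\prj_F(x)$ already lies in $\vertex E'\subseteq \vertex F_0\subseteq \vertex F$, and since $F_0\subseteq F$ this forces $\prj_{F_0}(x)=\prj_F(x)$; hence $\prj_{F_0}(E)=E'$. Now applying Lemma~\ref{lem:pair gate}(4) to the pair of faces $(E,F_0)$ gives $\mathcal{W}(E')=\mathcal{W}(E)\cap \mathcal{W}(F_0)\subset \mathcal{W}(E)$, as required.

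The main obstacle is the initial bookkeeping with projections to standard subcomplexes: one needs that $\prj_F(\vertex E)$ really is the vertex set of a single face $E'$ of $F$, and that some Coxeter cell of $F$ contains $E'$, so that Lemma~\ref{lem:pair gate} becomes applicable. I expect both facts to follow from the wall picture, since the projection of $\vertex E$ into $F$ is parametrized by sides of walls in $\mathcal{W}(E)\cap \mathcal{W}(F)$ restricted to $F$, and any such choice cuts out a face contained in a single Coxeter cell of $F$. Once this reduction is in place, the remaining argument closes in a couple of lines.
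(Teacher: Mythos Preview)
Your proposal is correct and follows essentially the same route as the paper: both use the wall characterization of the equivalence $[E,v_1]=[E,v_2]$ and reduce to the inclusion $\mathcal{W}(\prj_F(E))\subset\mathcal{W}(E)$. The paper's proof simply asserts this inclusion in one sentence (``for each wall $H$ dual to $\prj_F(E)$, $v_1$ and $v_2$ are in the same side of $H$''), whereas you take the extra care of passing to a Coxeter cell $F_0\subset F$ containing $E'$ so that Lemma~\ref{lem:pair gate}(4) applies verbatim; this is a reasonable way to make the implicit step explicit, and your bookkeeping concerns about $E'$ being a genuine face contained in some Coxeter cell are easily dispatched as you anticipate.
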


\begin{proof}
	Note that $[E,v_1]=[E,v_2]$ if and only if for each wall $H$ with $H\cap E\neq\emptyset$, $v_1$ and $v_2$ are in the same side of $H$. Thus for each wall $H$ dual to $\prj_F(E)$, $v_1$ and $v_2$ are in the same side of $H$. Now the lemma follows.
\end{proof}

We will be make use of the following important construction of Godelle and Pairs in \cite{godelle2012k}.
\begin{definition}
	\label{def:retraction}
	Let $F$ be a face in $\Si_S$. Then there is a retraction map $\Pi_{\widehat F}:\widehat\Si_S\to \widehat F$ defined as follows. Recall that $\widehat\Si_S=( \Si_S\times V)/ \sim$. For each $v\in V$, let $(\Si_S)_v$ be the union of all faces in $\widehat\Si_S$ of form $E\times v$ with $E$ ranging over faces of $\Si_S$. By Definition~\ref{def:projection1}, there is a retraction $(\Pi_F)_v:(\Si_S)_v\to F\times v$ for each $v\in V$. It follows from Lemma~\ref{lem:compactible} that these maps $\{(\Pi_F)_v\}_{v\in V}$ are compatible in the intersection of their domains. Thus they fit together to define a retraction $\Pi_{\widehat F}:\widehat\Si_S\to \widehat F$.
\end{definition}
The following is a direct consequence of the definition.
\begin{lem}
	\label{lem:retraction property}
	Take standard subcomplexes $E,F\subset \Si_S$. Then $\Pi_{\widehat F}(\widehat E)=\widehat{\Pi_F(E)}$.
\end{lem}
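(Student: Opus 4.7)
The plan is to argue the two inclusions directly, exploiting the fact that $\Pi_{\widehat F}$ is built from $\Pi_F$ fiberwise over the vertex set $V$.

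First, I would record the fundamental compatibility $\pi\circ\Pi_{\widehat F}=\Pi_F\circ\pi$, where $\pi:\widehat\Si_S\to\Si_S$ is the forgetful projection. This is immediate from Definition~\ref{def:retraction}: on each fiber $(\Si_S)_v$, the map $\Pi_{\widehat F}$ restricts to $(\Pi_F)_v$, which under the identification of $(\Si_S)_v$ with $\Si_S$ given by $\pi$ is exactly $\Pi_F$. Given this, the inclusion $\Pi_{\widehat F}(\widehat E)\subset \widehat{\Pi_F(E)}$ is formal: since $\widehat E=\pi^{-1}(E)$, we get
\[
\pi\bigl(\Pi_{\widehat F}(\widehat E)\bigr)=\Pi_F\bigl(\pi(\widehat E)\bigr)=\Pi_F(E),
\]
so $\Pi_{\widehat F}(\widehat E)\subset\pi^{-1}(\Pi_F(E))=\widehat{\Pi_F(E)}$.

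For the reverse inclusion, I would check surjectivity face by face. An arbitrary face of $\widehat{\Pi_F(E)}$ is represented by a pair $(G',v)$ with $G'\subset \Pi_F(E)=\prj_F(E)$ and $v\in V$. I need to exhibit a face $G\subset E$ together with some vertex $v'\in V$ so that $\Pi_{\widehat F}([G,v'])=[G',v]$. The existence of such $G$ at the unoriented level follows from Lemma~\ref{lem:pair gate}: if we set $E'=\prj_E(F)\subset E$ and $F'=\prj_F(E)\subset F$, then $E'$ and $F'$ are parallel, and parallel translation $E'\to F'$ restricts to a bijection between their faces. Lifting $G'\subset F'$ through this bijection produces $G\subset E'\subset E$ with $\prj_F(G)=G'$, i.e. $\Pi_F(G)=G'$. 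Taking $v'=v$ in the same fiber, fiberwise application of $\Pi_F$ sends $[G,v]$ to $[G',v]$, so $[G',v]\in\Pi_{\widehat F}(\widehat E)$.

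The only thing I would need to be slightly careful about is that the construction is well-defined modulo the equivalence relation $\sim$ defining $\widehat\Si_S$, but this is precisely the content of Lemma~\ref{lem:compactible}, so no new work is required. I do not anticipate any serious obstacle: the statement is essentially saying that a retraction defined by applying $\Pi_F$ on each sheet commutes with the sheet-preserving preimage $\widehat{(\cdot)}=\pi^{-1}(\cdot)$, and both inclusions reduce to the analogous unoriented identity $\Pi_F(E)=\prj_F(E)$ together with the parallel-translation bijection of Lemma~\ref{lem:pair gate}.
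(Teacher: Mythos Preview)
Your argument is correct. The paper offers no proof beyond the line ``direct consequence of the definition,'' and your first inclusion via the fiberwise identity $\pi\circ\Pi_{\widehat F}=\Pi_F\circ\pi$ is exactly the intended observation.

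For the reverse inclusion, however, you work harder than necessary. You lift faces through the parallel-translation bijection of Lemma~\ref{lem:pair gate}, but none of that is needed: since $\widehat E$ is the image of $E\times V$ under the quotient $\Si_S\times V\to\widehat\Si_S$ and $\Pi_{\widehat F}$ acts on each slice $E\times\{v\}$ as $\Pi_F$, one has directly
\[
\Pi_{\widehat F}(\widehat E)=\bigcup_{v\in V}\Pi_F(E)\times\{v\}\big/\!\sim\;=\;\widehat{\Pi_F(E)}.
\]
Equivalently at the point level: any $[(y,v)]\in\widehat{\Pi_F(E)}$ has $y\in\Pi_F(E)$, so there is $x\in E$ with $\Pi_F(x)=y$, and then $[(x,v)]\in\widehat E$ maps to $[(y,v)]$. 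The face-by-face lifting via Lemma~\ref{lem:pair gate} is valid but superfluous for an equality of subsets; it would only be relevant if you needed the stronger statement that every \emph{cell} of $\widehat{\Pi_F(E)}$ is the $\Pi_{\widehat F}$-image of a cell of $\widehat E$.
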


The action of $W_S$ on $\Si_S$ gives a free action of $W_S$ on $\Si_S$, whose quotient complex is denoted by $\s_S$. The fundamental group of $\s_S$ is $A_S$ and its 2-skeleton is the presentation complex of $A_S$, see e.g. \cite{paris2012k}. The fundamental group of $\od_S$ is the \emph{pure Artin subgroup} of $A_S$, i.e. the kernel of $A_S\to W_S$, as $\od_S$ is a regular cover of $\s_S$ corresponding such a subgroup.

\subsection{Artin complexes and Coxeter complexes}
\label{subsec:Coxeter}
Let $A_S$ be an Artin group with generating set $S$ and Dynkin diagram $\Lambda$. Let $W_S$ (or $W_\Lambda$) be the associated Coxeter group.
Recalled that the \emph{Artin complex}, introduced in \cite{CharneyDavis} and further studied in \cite{godelle2012k,cumplido2020parabolic}, defined as follows. For each $s\in S$, let $A_{\hat s}$ be the standard parabolic subgroup generated by $S\setminus\{s\}$. Let $\Delta_S$ be the simplicial complex whose vertex set is corresponding to left cosets of $\{A_{\hat s}\}_{s\in S}$. Moreover, a collection of vertices span a simplex if the associated cosets have nonempty common intersection. Then the complex $\Delta_S$ is called the \emph{Artin complex} associated with $A_S$. We will also write $\Delta_\Lambda$ for Artin complex. It follows from \cite[Proposition 4.5]{godelle2012k} that $\Delta_S$ is a flag complex.

 The Artin complex is an analogue of \emph{Coxeter complex} in the setting of Artin group. The definition of a Coxeter complex $\bC_S$ (or $\bC_\Lambda$) of a Coxeter group $W_S$ is almost identical to Artin complex, except one replaces $A_{\hat s}$ by $W_{\hat s}$, which is the standard parabolic subgroup of $W_S$ generated by $S\setminus \{s\}$. 
 
 Each vertex of $\bC_S$ or $\Delta_S$ corresponding a left coset of $W_{\hat s}$ or $A_{\hat s}$ has a \emph{type}, which is defined to be $\hat s=S\setminus \{s\}$. The \emph{type} of each face of $\bC_S$ or $\Delta_S$ is defined to be the subset of $S$ which is the intersection of the types of the vertices of the face. In particular, the type of each top-dimensional simplex is the empty set.

We record the following description of the Artin complex $\Delta_S$ in terms of $\od_S$, which will be used later.
\begin{remark}
	\label{rmk:alternative}
	Let $X$ be the universal cover of $\od_S$. A \emph{lift} of a standard subcomplex in $\od_S$ is a connected component of the inverse image of this subcomplex under the map $X\to\od_S$.
	Vertices of $\Delta_S$ are in 1-1 correspondence with lifts standard subcomplexes of $\od_S$ of type $\hat s$ for some $s\in S$. A collection of vertices span a simplex if their associated lifts have non-trivial common intersection. 
\end{remark}
We need to following procedure of interpreting a cycle in a Artin complex as a sequence of words in the associated Artin groups.
\begin{definition}
	\label{def:ncycle}
	Suppose $\Delta=\Delta_\Lambda$ is the Artin complex of the Artin group $A_\Lambda$ with Dynkin diagram $\Lambda$ and generating set $S$.
	A \emph{chamber} in $\Delta$ is a top-dimensional simplex in $\Delta$. There is a 1-1 correspondence between chambers in $\Delta$ and elements in $A_\Lambda$.	
	Let $\{x_n\}_{i=1}^4$ be consecutive vertices of an $n$-cycle $\omega$ in $\Delta$ and suppose $x_i$ has type $\hat a_i$ with $a_i\in \Lambda$. 
	For each edge of $\omega$, take a chamber of $\Delta$ containing this edge. We name these chambers by $\{\Theta_i\}_{i=1}^n$ with $\Theta_1$ containing the edge $\overline{x_1x_2}$. Each $\Theta_i$ gives an element $g_i\in A_\Lambda$. Then for $i\in \mathbb Z/n\mathbb Z$, $g_i=g_{i-1}w_{i}$ for $w_i\in A_{\hat a_i}$ (recall that $A_{\hat a_i}$ is defined to be $A_{S\setminus\{a_i\}}$). Thus $w_1w_2\cdots w_n=1$.
	The word $w_1\cdots w_n$ depends on the choice of $\{\Theta_i\}_{i=1}^n$. A different choice would lead to a word of form $u_1\cdots u_n$ such that there exist elements $q_i\in A_{S\setminus\{a_i,a_{i+1}\}}$ such that $u_i=q^{-1}_{i-1}w_i q_i$ for $i\in\mathbb Z/n\mathbb Z$. In this case we will say the words $u_1\cdots u_n$ and $w_1\cdots w_n$ are equivalent. 
\end{definition}

\subsection{Relative Artin complexes}
We recall the following notion from \cite{huang2023labeled}.
\begin{definition}
	Let $A_S$ be an Artin group with Dynkin diagram $\Lambda$. Let $S'\subset S$. The \emph{$(S,S')$-relative Artin complex $\Delta_{S,S'}$} is defined to be the induced subcomplex of the Artin complex $\Delta_S$ of $A_S$ spanned by vertices of type $\hat s$ with $s\in S'$. In other words, vertices of $\Delta_{S,S'}$ correspond to left cosets of $\{A_{\hat s}\}_{s\in S'}$, and a collection of vertices span a simplex if the associated cosets have nonempty common intersection.
	
	Let $\Lambda'$ be the induced subgraphs of $S$ spanned by $S'$. Then we will also refer an $(S,S')$-relative Artin complex as $(\Lambda,\Lambda')$-relative Artin complex, and denote it by $\Delta_{\Lambda,\Lambda'}$.
\end{definition}

The links of vertices in relative Artin complexes can be computed via the following simple observation \cite[Lemma 6.4]{huang2023labeled}.

\begin{lem}
	\label{lem:link}
	Let $\Delta$ be the $(\Lambda,\Lambda')$-relative Artin complex, and let $v\in \Delta$ be a vertex of type $\hat s$ with $s\in \Lambda'$. Let $\Lambda_s$ and $\Lambda'_s$ be the induced subgraph of $\Lambda$ and $\Lambda'$ respectively spanned all the vertices which are not $s$. 
	Then the following are true.
	\begin{enumerate}
		\item There is a type-preserving isomorphism between $\lk(v,\Delta)$ and the $(\Lambda_s,\Lambda'_s)$-relative Artin complex.
		\item Let $I_s$ be the union of connected components of $\Lambda_s$ that contain at least one component of $\Lambda'_s$. Then $\Lambda'_s\subset I_s$ and there is a type-preserving isomorphism between $\lk(v,\Delta)$ and the $(I_s,\Lambda'_s)$-relative Artin complex.
		\item Let $\{I_i\}_{i=1}^k$ be the connected components of $I_s$. Then $\lk(v,\Delta)=K_1*\cdots*K_k$ where $K_i$ is the induced subcomplex of $\lk(v,\Delta)$ spanned by vertices of type $\hat t$ with $t\in I_i$.
	\end{enumerate}
\end{lem}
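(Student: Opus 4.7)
The plan is to prove the three parts in order, each reducing to explicit computations with cosets of standard parabolic subgroups, with the only non-formal input being the result of van der Lek that intersections of standard parabolic subgroups in $A_\Lambda$ are again standard parabolic, i.e.\ $A_{S_1}\cap A_{S_2}=A_{S_1\cap S_2}$. Throughout, I would use the type-preserving action of $A_\Lambda$ on $\Delta$ to reduce to the canonical vertex $v=A_{\hat s}$.

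For part (1), a vertex $w$ of $\lk(v,\Delta)$ of type $\hat t$ (with $t\in\Lambda'_s$) is a coset $gA_{\hat t}$ meeting $A_{\hat s}$ non-trivially, so a representative $g\in A_{\hat s}$ may be chosen. Two such representatives $a_1,a_2\in A_{\hat s}$ give the same vertex iff $a_1^{-1}a_2\in A_{\hat s}\cap A_{\hat t}=A_{S\setminus\{s,t\}}$, which is a standard parabolic subgroup of $A_{\hat s}$. This identifies vertices of $\lk(v,\Delta)$ of type $\hat t$ with vertices of type $\hat t$ in $\Delta_{\Lambda_s,\Lambda'_s}$. For higher simplices, a collection $\{a_iA_{\hat t_i}\}_{i}$ with $a_i\in A_{\hat s}$ spans a simplex containing $v$ iff $A_{\hat s}\cap \bigcap_i a_iA_{\hat t_i}\neq\emptyset$; the identity $a_iA_{\hat t_i}\cap A_{\hat s}=a_iA_{S\setminus\{s,t_i\}}$ shows this is equivalent to $\bigcap_i a_iA_{S\setminus\{s,t_i\}}\neq\emptyset$ inside $A_{\hat s}$, which is exactly the simplex condition in $\Delta_{\Lambda_s,\Lambda'_s}$. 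Since $\Delta_S$ is a flag complex, the bijection on vertices plus on edges suffices.

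For part (2), the containment $\Lambda'_s\subset I_s$ is immediate from the definition of $I_s$. Since $I_s$ and $\Lambda_s\setminus I_s$ are unions of distinct connected components of $\Lambda_s$, the generators commute across the two pieces and one has the internal direct product $A_{\Lambda_s}=A_{I_s}\times A_{\Lambda_s\setminus I_s}$. For every $t\in\Lambda'_s\subset I_s$ the parabolic splits as $A_{\Lambda_s\setminus\{t\}}=A_{I_s\setminus\{t\}}\times A_{\Lambda_s\setminus I_s}$, so every coset appearing in $\Delta_{\Lambda_s,\Lambda'_s}$ has the full factor $A_{\Lambda_s\setminus I_s}$ in its second coordinate. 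Projection to the first coordinate therefore descends to a type-preserving bijection on vertices, and the simplex condition on intersections is unchanged because the second coordinate contributes trivially; this yields the isomorphism $\lk(v,\Delta)\cong\Delta_{I_s,\Lambda'_s}$ via part (1).

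For part (3), use the further product decomposition $A_{I_s}=\prod_{i=1}^k A_{I_i}$ coming from the connected components. For $t\in I_j$, one has $A_{I_s\setminus\{t\}}=A_{I_j\setminus\{t\}}\times\prod_{i\neq j}A_{I_i}$. Hence for a vertex of type $\hat t$ with $t\in I_j$, the only non-trivial coordinate in the coset decomposition sits in the $j$-th factor $A_{I_j}$. The crucial check is that any two vertices from distinct factor-subcomplexes $K_{i_1},K_{i_2}$ are automatically adjacent: their cosets always intersect because the product of the two associated parabolics fills all of $A_{I_s}$. By induction on the number of factors (or directly, using flagness), this means every collection of simplices $\sigma_i\in K_i$ assembles into a simplex of $\Delta_{I_s,\Lambda'_s}$, giving the join decomposition $\lk(v,\Delta)=K_1*\cdots*K_k$.

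The argument is essentially bookkeeping, and no step is substantively difficult; the main obstacle is keeping the notation consistent across the three nested decompositions (removing $s$, restricting to $I_s$, then splitting along the $I_i$). The entire proof rests on van der Lek's intersection theorem together with the commutation relations read directly off $\Lambda$, and the fact noted in the preliminaries that $\Delta_S$ (and hence each relative Artin complex) is a flag complex, which lets us work with vertices and edges alone.
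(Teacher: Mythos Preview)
Your argument is correct. The paper does not actually prove this lemma: it is quoted from \cite[Lemma 6.4]{huang2023labeled} and used as a black box, so there is no in-paper proof to compare against. Your approach---reducing to the base vertex $A_{\hat s}$, invoking van der Lek's theorem $A_{S_1}\cap A_{S_2}=A_{S_1\cap S_2}$ to identify vertices and simplices of the link with those of $\Delta_{\Lambda_s,\Lambda'_s}$, then using the direct product decompositions $A_{\Lambda_s}=A_{I_s}\times A_{\Lambda_s\setminus I_s}$ and $A_{I_s}=\prod_i A_{I_i}$ together with flagness---is the standard route and each step is sound. One minor stylistic point: in part (1) you already verify the simplex condition directly via the intersection of cosets, so the subsequent appeal to flagness there is redundant (though harmless); the flagness is genuinely needed only in part (3), and there your use of it is correct since $H_1H_2=A_{I_s}$ for parabolics coming from distinct components.
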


\begin{definition}
	\label{def:labeled 4-wheel}
	Let $\Lambda$ be a Dynkin diagram which is a tree, with its vertex set $S$. Let $Z$ be a simplicial complex of type $S$.
	Let $X$ be the 1-skeleton of $Z$ with its vertex types as explained above. We say $Z$ satisfies the \emph{labeled 4-wheel condition} if for any induced 4-cycle in $X$ with consecutive vertices being $\{x_i\}_{i=1}^4$ and their types being $\{\hat s_i\}_{i=1}^4$, there exists a vertex $x\in X$ adjacent to each of $x_i$ such that the type $\hat s$ of $x$ satisfies that $s$ is in the smallest subtree of $\Lambda'$ containing all of $\{s_i\}_{i=1}^4$.
\end{definition}

\begin{thm}(\cite[Proposition 2.8]{huang2023labeled})
	\label{thm:4 wheel}
	Suppose $A_S$ is an irreducible spherical Artin group. Then $\Delta_S$ satisfies the labeled 4-wheel condition.
\end{thm}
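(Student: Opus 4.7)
The plan is to strengthen Theorem~\ref{thm:4cycle} (existence of a center) by tracking types, using the Godelle--Paris retraction of Definition~\ref{def:retraction} as the core tool. I will translate the 4-cycle into a group-theoretic equation, project it into the parabolic subgroup of $A_S$ corresponding to the target subtree $T$, apply Theorem~\ref{thm:4cycle} inside $A_T$, and then pull back the resulting center.

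\emph{Step 1: Encoding.} By Definition~\ref{def:ncycle}, the induced 4-cycle with consecutive vertices $\{x_i\}_{i=1}^{4}$ of types $\{\hat s_i\}_{i=1}^{4}$ corresponds to a word equation $w_1 w_2 w_3 w_4 = 1$ in $A_S$ with $w_i \in A_{\hat s_i}$, well-defined up to the equivalence relation on such words. Let $T \subset \Lambda$ be the smallest subtree containing $\{s_1,\dots,s_4\}$; such a tree exists and is unique because every irreducible spherical Dynkin diagram is a tree. The target is a center of type $\hat s$ with $s \in T$.

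\emph{Step 2: Retraction to $A_T$.} Let $F \subset \Sigma_S$ be the standard subcomplex of type $T$ based at the identity, and consider the retraction $\Pi_{\widehat F} \colon \widehat \Sigma_S \to \widehat F$. By Lemma~\ref{lem:retraction property}, for any standard subcomplex $E$ of type $\hat s_i$ with $s_i \in T$, one has $\Pi_{\widehat F}(\widehat E) = \widehat{\Pi_F(E)}$, and the latter has type $T\setminus\{s_i\}$ inside $\widehat F$. Passing to universal covers and invoking the dictionary of Remark~\ref{rmk:alternative}, this yields a type-preserving simplicial retraction from the full subcomplex of $\Delta_S$ spanned by vertices whose type lies in $T$ (which contains our 4-cycle) onto the Artin complex $\Delta_T$ of $A_T$. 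The original 4-cycle therefore descends to a (possibly degenerate) 4-cycle in $\Delta_T$.

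\emph{Step 3: Apply Theorem~\ref{thm:4cycle} in $\Delta_T$, then lift.} If the projected 4-cycle is still induced, Theorem~\ref{thm:4cycle} furnishes a center in $\Delta_T$ of type $\hat s$ for some $s \in T$ automatically. A section of the retraction compatible with the chamber choices of Definition~\ref{def:ncycle} transports this center back to $\Delta_S$; tracking the factorization of each $w_i$ across the retraction confirms adjacency to all four $x_i$. The main obstacle I foresee is the degenerate case in which the projected 4-cycle collapses (for example, two projected vertices coincide). There Theorem~\ref{thm:4cycle} does not apply directly, and one must likely appeal to the link decomposition of Lemma~\ref{lem:link} to isolate the ``stray'' generators in $S \setminus T$ responsible for the degeneracy, then show the star of a candidate vertex in $T$ already swallows the 4-cycle. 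A secondary technical nuisance is that the Godelle--Paris retraction lives on the Salvetti side $\widehat\Sigma_S$, so one must verify via Lemma~\ref{lem:compactible} that it descends in a type-preserving manner to $\Delta_S$; this is essentially the content of Remark~\ref{rmk:alternative} but requires bookkeeping for chamber lifts in the universal cover of $\widehat\Sigma_S$.
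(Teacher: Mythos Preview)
This theorem is not proved in the present paper; it is quoted from \cite[Proposition 2.8]{huang2023labeled}, so there is no in-paper argument to compare against. Your task is effectively to reconstruct that earlier proof.

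The outline has a real gap at Step~4, more serious than the issues you flag. Even granting a well-defined projection to $\Delta_T$ and a center $\bar z = h A_{T\setminus\{s\}}$ there with $h\in A_T$, the natural lift $z = h A_{\hat s}\subset A_S$ need not be adjacent to the original $x_i = g_i A_{\hat s_i}$. Adjacency in $\Delta_S$ means $h^{-1}g_i \in A_{\hat s}A_{\hat s_i}$; what the center in $\Delta_T$ gives is only $h^{-1}r(g_i) \in A_{T\setminus\{s\}}A_{T\setminus\{s_i\}}$ for the retraction $r$. The retraction discards precisely the part of $g_i$ lying outside $A_T$, and nothing forces that discarded part into $A_{\hat s}A_{\hat s_i}$. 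A center of the projected cycle therefore need not lift to a center of the original one: retractions collapse, they do not section. Your phrase ``a section of the retraction compatible with the chamber choices'' is doing all the work, and no such section is supplied.

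Step~2 is also not correctly formulated. The Godelle--Paris map $\Pi_{\widehat F}$ is a retraction of $\widehat\Sigma_S$ onto one \emph{fixed} standard subcomplex $\widehat F$, not a simplicial map $\Delta_{S,T}\to\Delta_T$; distinct type-$\hat s_i$ cosets can have the same $\Pi_{\widehat F}$-image, so there is no induced map on vertices of the Artin complex. What one actually extracts from $\Pi_{\widehat F}$ (and this is exactly how the present paper uses it, e.g.\ in the proof of Proposition~\ref{prop:tight 6-cycle}) is control over a \emph{single} term $w_j$ of the cycle equation, obtained by projecting the entire null-homotopic loop into one $\widehat C$ and reading off that $w_j$ is homotopic rel endpoints to something with restricted support. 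This is a local estimate on one piece of the equation, not a global reduction to a smaller Artin complex. The argument in \cite{huang2023labeled} works directly with the equation $w_1w_2w_3w_4=1$ via such local projections together with Garside-theoretic input; a clean ``project, solve in $\Delta_T$, lift back'' strategy is not available.
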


\section{Six-cycles of type $(\hat \delta_1,\hat\delta_2)$}
\label{sec:AD}
The goal of this section is to prove weakly flagness for certain relative Artin complexes associated to Artin groups of type $D_n$, see Theorem~\ref{thm:weakflagD}.
\subsection{A criterion for filling certain 6-cycles}
\begin{prop}
	\label{prop:tight 6-cycle}
	Suppose $\Lambda$ is an irreducible spherical Dynkin diagram. Let $\Lambda'\subset \Lambda$ be a linear subgraph of type $A_3$ with its consecutive vertices being $\{t_1,t_2,t_3\}$. Let $V$ be the vertex set of the relative Artin complex $\Delta_{\Lambda,\Lambda'}$.  Take vertices $\{x_i\}_{i=1}^3$ in $V$ of type $\hat t_1$ and $\{y_i\}_{i=1}^3$ of type $\hat t_2$. Suppose $y_i$ is adjacent to both $x_i$ and $x_{i+1}$.  Let $\omega$ be the 6-cycle $x_1y_1x_2y_2x_3y_3$ in $\Delta_{\Lambda,\Lambda'}$, and let $\pi:\Delta_\Lambda\to\mathsf C_\Lambda$ be the map induced by quotienting $\Delta_\Lambda$ by the action of the pure Artin group. If $\pi(\omega)$ is not a single edge in $\Delta_\Lambda$, then $\omega$ has a center of type $\hat t_3$.
\end{prop}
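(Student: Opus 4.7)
The plan is to realise $\omega$ as a word equation in $A_\Lambda$, apply the Godelle--Paris retraction onto the standard subcomplex of $\Sigma_\Lambda$ corresponding to $\Lambda'$ to push the equation into the type $A_3$ Artin group $A_{\Lambda'}$, locate the $\hat t_3$ center inside the spherical Deligne complex $\Delta_{\Lambda'}$ (where Charney's CAT$(1)$ theorem is available), and finally lift the center back to $\Delta_\Lambda$, using the hypothesis on $\pi(\omega)$ to rule out the degenerate case.

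First, by Definition~\ref{def:ncycle}, after choosing chambers along $\omega$ and normalising so that $x_1$ corresponds to $A_{\hat t_1}$, the 6-cycle gives an equation
\[ u_1 v_1 u_2 v_2 u_3 v_3 = 1 \]
in $A_\Lambda$ with $u_i \in A_{\hat t_1}$ and $v_i \in A_{\hat t_2}$, whose vertices are the cosets $A_{\hat t_1}, u_1 A_{\hat t_2}, u_1 v_1 A_{\hat t_1}, \ldots, u_1 v_1 u_2 v_2 u_3 A_{\hat t_2}$. A $\hat t_3$ center is a coset $h A_{\hat t_3}$ meeting each of these; equivalently, each partial product of the $u_i, v_j$'s must lie in the appropriate double coset $A_{\hat t_r} A_{\hat t_3}$ with $r\in\{1,2\}$. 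Now take $F\subset\Sigma_\Lambda$ to be the standard subcomplex of type $\Lambda'=\{t_1,t_2,t_3\}$ and apply the retraction $\Pi_{\widehat F}:\widehat\Sigma_\Lambda\to\widehat F$ of Definition~\ref{def:retraction}. Since $\hat t_1\cap\Lambda'=\{t_2,t_3\}$ and $\hat t_2\cap\Lambda'=\{t_1,t_3\}$, Lemma~\ref{lem:retraction property} implies that the lift corresponding to $A_{\hat t_1}$ projects to the lift of the type $\{t_2,t_3\}$ standard subcomplex inside $\widehat F$, and similarly for $A_{\hat t_2}$. In group-theoretic language, this yields a retracted equation
\[ \tilde u_1 \tilde v_1 \tilde u_2 \tilde v_2 \tilde u_3 \tilde v_3 = 1 \]
in $A_{\Lambda'}=A_{A_3}$ with $\tilde u_i\in A_{\{t_2,t_3\}}$ and $\tilde v_i\in A_{\{t_1,t_3\}}$, encoding a closed 6-walk $\omega'$ in the rank-three Deligne complex $\Delta_{\Lambda'}$ of type $(\hat t_1,\hat t_2)$.

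Since $\Delta_{\Lambda'}$ with its Moussong metric is CAT$(1)$ by Charney~\cite{charney2004deligne}, any non-degenerate induced 6-cycle of type $(\hat t_1,\hat t_2)$ in $\Delta_{\Lambda'}$ bounds a triangle and therefore admits a $\hat t_3$ center $z'=\tilde h A_{\Lambda',\hat t_3}$; the only alternative is that $\omega'$ degenerates fully, which corresponds exactly to $\pi(\omega)$ collapsing to a single edge in $\mathsf C_\Lambda$ and is excluded by hypothesis. Setting $z:=\iota(\tilde h)A_{\hat t_3}\in\Delta_\Lambda$ via van der Lek's canonical inclusion $\iota:A_{\Lambda'}\hookrightarrow A_\Lambda$, the final step is to verify that $z$ is adjacent to each $x_i$ and $y_j$. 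This verification, which I expect to be the main obstacle, amounts to translating the double-coset memberships from the parabolic $A_{\Lambda'}$ back up to $A_\Lambda$, combining Lemma~\ref{lem:compactible} with the Coxeter-level non-degeneracy afforded by the hypothesis on $\pi(\omega)$: the retraction preserves these memberships in only one direction, and the assumption that $\pi(\omega)$ is not a single edge is precisely what rules out the ambiguity that would otherwise obstruct the lift.
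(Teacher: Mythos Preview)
Your approach has two essential gaps that I do not see how to repair.

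\textbf{The retraction to $\widehat F$ does not produce a 6-cycle in $\Delta_{\Lambda'}$.} You claim that $\Pi_{\widehat F}$ sends each standard subcomplex $\widehat C_{\bar x_i}$ of type $\hat t_1$ to a type $\{t_2,t_3\}$ subcomplex of $\widehat F$. This is correct for $i=1$ (where $C_{\bar x_1}\cap F\neq\emptyset$ because both contain the identity vertex), but for $i=2,3$ the projection $\Pi_F(C_{\bar x_i})$ is, by Lemma~\ref{lem:pair gate}, the face of $F$ whose wall-set equals $\mathcal W(F)\cap\mathcal W(C_{\bar x_i})$; this depends on the relative position of $C_{\bar x_i}$ and $F$ in $\Sigma_\Lambda$, not merely on their types. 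Lemma~\ref{lem:projection location}(2) is exactly an instance of this phenomenon: projections between parabolic cells can lose generators in ways not predicted by the types alone. Consequently the retracted words need not satisfy $\tilde u_i\in A_{\{t_2,t_3\}}$ and $\tilde v_i\in A_{\{t_1,t_3\}}$, so you do not obtain a $(\hat t_1,\hat t_2)$-type 6-cycle in $\Delta_{\Lambda'}$ to which Charney's theorem could apply.

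\textbf{The lift is not justified.} Even granting a center $z'=\tilde hA_{\{t_1,t_2\}}$ in $\Delta_{\Lambda'}$, adjacency there gives $\tilde h^{-1}\tilde g_i\in A_{\{t_1,t_2\}}A_{\{t_2,t_3\}}$ for the retracted prefixes $\tilde g_i$. To deduce that $\iota(\tilde h)A_{\hat t_3}$ meets the \emph{original} coset $g_iA_{\hat t_1}$ in $A_\Lambda$ you would need $g_iA_{\hat t_1}=\iota(\tilde g_i)A_{\hat t_1}$, i.e.\ that the retraction respects left $A_{\hat t_1}$-cosets. It does not: $\Pi_{\widehat F}$ is a cellular retraction, not a coset-preserving map, and the hypothesis on $\pi(\omega)$ is a purely Coxeter-level condition that gives no control over this Artin-level discrepancy. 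You flag this as ``the main obstacle'' but the sentence that follows is an assertion, not an argument.

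The paper's proof is organised very differently. It never retracts to the $\Lambda'$-cell. Instead it first reduces (via Theorem~\ref{thm:4 wheel} and Lemma~\ref{lem:link}) to finding a common neighbour of $\{x_1,x_2,x_3\}$, and then performs a case analysis on the combinatorial shape of $\pi(\omega)\subset\mathsf C_\Lambda$ (from two up to six edges). In each case it retracts onto a \emph{single cell of type $\hat t_1$ or $\hat t_2$} (for instance $\widehat C_{\bar x_1}$), and uses Lemma~\ref{lem:projection location} to control how the remaining five cells project there. This either produces a contradiction (ruling out that case) or shows that one path, say $P_1$, is homotopic rel endpoints to a concatenation $P_{11}P_{12}P_{13}$ whose supports avoid prescribed pairs among $\{t_1,t_2,t_3\}$; such a factorisation exhibits the $\hat t_3$ center directly in $\Delta_\Lambda$, with no lifting step required.
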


\begin{proof}
It suffices to show $\{x_1,x_2,x_3\}$ is adjacent in $\Delta_{\Lambda,\Lambda'}$ to a common vertex $z$ in $V$. Indeed, if $z$ is of type $\hat t_3$, then we are done by Theorem~\ref{thm:4 wheel}. If $z$ is of type $\hat t_2$, then we take $z'$ to be a vertex of type $\hat t_3$ which is adjacent to $z$. By Lemma~\ref{lem:link}, $z'$ is adjacent to each of $\{x_1,x_2,x_3\}$, reducing to the previous case.

	We assume $\{x_1,x_2,x_3\}$ is pairwise distinct, so is $\{y_1,y_2,y_3\}$, otherwise the proposition reduces to Theorem~\ref{thm:4 wheel}.
	We only prove (1) as (2) is similar. Let $\omega$ and $\pi$ be as defined before. Then we can identify the Coxeter complex $\mathsf C_\Lambda$ as the dual of $\Si_\Lambda$.
	Suppose $\bar x_i=\pi(x_i)$ and $C_{\bar x_i}$ be the top-dimensional cell in $\Si_\Lambda$ dual to $\bar x_i$. Let $\whC_{\bar x_i}$ be the associated standard subcomplex of $\od_\Lambda$. Similarly, we define $\bar y_i, C_{\bar y_i}$ and $\whC_{\bar y_i}$. Let $w=w_1u_1w_2u_2w_3u_3$ be the word arising from the 6-cycle $\omega$ as in Definition~\ref{def:ncycle}. As $w=\id$, we know $w$ gives a null-homotopic loop $P_1Q_1P_2Q_2P_3Q_3$ in $\od_\Lambda$, with $P_i\subset \whC_{\bar x_i}$ and $Q_i\subset \whC_{\bar y_i}$ for each $i$. 
	
	Recall that the support of a subset of $\Si_\Lambda$ or $\od_\Lambda$ is defined in Definition~\ref{def:label}.
	We claim it suffices to show $P_1$ is homotopic rel endpoints in $\whC_{\bar x_1}$ to a concatenation of three paths $P_{11}P_{12}P_{13}$ with $t_1,t_2\notin \supp(P_{11}\cup P_{13})$ and $t_1,t_3\notin \supp(P_{12})$. Indeed, $P_1Q_1P_2Q_2P_3Q_3$ lifts to a loop $\wtP_1\wtQ_1\wtP_2\wtQ_2\wtP_3\wtQ_3$ in the universal cover $\Theta$ of $\od_\Lambda$. At the same time, $P_{11}P_{12}P_{13}$ lifts to a path $\wtP_{11}\wtP_{12}\wtP_{13}$ sharing the same starting point and endpoint of $\wtP_1$. As $P_{1j}$ is contained in a standard subcomplex of type $\hat t_2$ for $j=1,3$, and $P_{12}$ is contained in a standard subcomplex of type $\hat t_3$. By Remark~\ref{rmk:alternative}, we know the loop $\wtP_{11}\wtP_{12}\wtP_{13}(\wtP_1)^{-1}$ gives a 4-cycle $x_{11}x_{12}x_{13}x_1$ in $\Delta_\Lambda$, with $x_{1j}$ being of type $\hat t_2$ for $j=1,3$ and $x_{12}$ being of type $\hat t_3$.
	For $j=1,3$, $\wtP_{1j}$ and $\wtQ_j$ are contained in a common standard subcomplex of $\Theta$ of type $\hat t_2$. Thus $x_{11}=y_1$ and $x_{13}=y_3$ by Remark~\ref{rmk:alternative}. As $y_3$ is adjacent to both $x_3$ and $x_{12}$, by Lemma~\ref{lem:link} (3) applying to $\lk(y_3,\Delta_\Lambda)$, we know $x_{12}$ is adjacent to $x_3$ in $\Delta_\Lambda$, hence also in $\Delta_{\Lambda,\Lambda'}$. Similarly by $y_1$ is adjacent to both $x_2$ and $x_{12}$, we know that $x_{12}$ is adjacent to $x_2$.  So $x_{12}$ is adjacent to each of $\{x_1,x_2,x_3\}$.

	Note that $\pi(\omega)$ is bipartite between type $\hat t_1$ vertices and $\hat t_2$ vertices. Thus $\pi(\omega)$ can not contain any 3-cycle, which leaves us the following cases to consider.
	
	\smallskip
	\noindent
	\underline{Case 1: $\pi(\omega)$ is two edges.} There are two subcases. First we consider $\pi(\omega)$ has two type $\hat t_1$ vertices and one type $\hat t_2$ vertex. We can assume without loss of generality that $\{\bar x_1,\bar y_1,\bar x_2\}$ are pairwise distinct, $\bar y_2=\bar y_1=\bar y_3$ and $\bar x_3=\bar x_1$. By Lemma~\ref{lem:projection location} below, $\Pi_{C_{\bar x_2}}(C_{\bar x_1})\subset C_{\bar y_1}\cap C_{\bar x_2}$. Moreover, as $C_{\bar y_1}\cap C_{\bar x_2}\neq\emptyset$, we know $\Pi_{C_{\bar y_1}}(C_{\bar x_2})=C_{\bar y_1}\cap C_{\bar x_2}$. Thus by Lemma~\ref{lem:retraction property}, these statements hold with $C$ replaced by $\whC$. Consider $\Pi_{\whC_{\bar x_2}}(P_1Q_1P_2Q_2P_3Q_3)$, which is null-homotopic in $\whC_{\bar x_2}$. Note that $$\Pi_{\whC_{\bar x_2}}(P_1Q_1)\cup \Pi_{\whC_{\bar x_2}}(Q_2P_3Q_3)\subset \whC_{\bar y_1}\cap \whC_{\bar x_2},$$ and $\Pi_{\whC_{\bar x_2}}(P_2)=P_2$. Thus $P_2$ is homotopic rel endpoints in $\whC_{\bar x_2}$ to a path in $\whC_{\bar y_1}\cap \whC_{\bar x_2}$. This implies $y_1=y_2$, which contradicts our assumption in the beginning. The case when $\pi(\omega)$ has two type $\hat t_2$ vertices and one type $\hat t_1$ vertex is similar - we deduce two of $\{x_1,x_2,x_3\}$ are identical, leading to a contradiction. Thus this case is ruled out.
	
	\smallskip
	\noindent
	\underline{Case 2: $\pi(\omega)$ is three edges sharing a common vertex $\bar x$.} Then either $\bar x=\bar x_1=\bar x_2=\bar x_3$, or $\bar x=\bar y_1=\bar y_2=\bar y_3$. We will only treat the former case as the latter is similar. By Lemma~\ref{lem:projection location} below, $\Pi_{C_{\bar y_1}}(C_{\bar y_i})\subset C_{\bar y_1}\cap C_{\bar x_1}$ for $i=2,3$.  By Lemma~\ref{lem:retraction property}, the same statement holds with $C$ replaced by $\whC$.
	Consider $\Pi_{\whC_{\bar y_1}}(P_1Q_1P_2Q_2P_3Q_3)$, which is null-homotopic in $\whC_{\bar y_1}$. Note that $$\Pi_{\whC_{\bar x_2}}(P_1)\cup \Pi_{\whC_{\bar x_2}}(P_2Q_2P_3Q_3)\subset \whC_{\bar y_1}\cap \whC_{\bar x_1},$$ and $\Pi_{\whC_{\bar y_1}}(Q_1)=Q_1$. Thus $Q_1$ is homotopic rel endpoints in $\whC_{\bar y_1}$ to a path in $\whC_{\bar y_1}\cap \whC_{\bar x_1}$, which implies that $x_1=x_2$, contradiction. Thus this case is ruled out.
	
	\smallskip
	\noindent
	\underline{Case 3: $\pi(\omega)$ is a path with three edges.} We assume without loss of generality that $\bar x_1\bar y_1\bar x_2\bar y_2$ is a path with three edges, and $\bar x_3=\bar x_2$, $\bar y_3=\bar y_1$. By Lemma~\ref{lem:projection location},
	\begin{enumerate}
		\item $\Pi_{C_{\bar x_1}}(C_{\bar y_1})=C_{\bar x_1}\cap C_{\bar y_1}$;
		\item $\Pi_{C_{\bar x_1}}(C_{\bar x_2})\subset C_{\bar x_1}\cap C_{\bar y_1}$;
		\item either $t_3\notin \supp(\Pi_{C_{\bar x_1}}(C_{\bar y_2}))$ or $t_2\notin \supp(\Pi_{C_{\bar x_1}}(C_{\bar y_2}))$.
	\end{enumerate}
	By Lemma~\ref{lem:retraction property}, the above three items hold true if we replace $C$ by $\whC$. 
	Consider $\Pi_{\whC_{\bar x_1}}(P_1Q_1P_2Q_2P_3Q_3)$, which is null-homotopic in $\whC_{\bar x_1}$. 
	Note that $\Pi_{\whC_{\bar x_1}}(Q_1P_2)\subset \whC_{\bar x_1}\cap \whC_{\bar y_1}$, hence $t_1,t_2\notin\supp(\Pi_{\whC_{\bar x_1}}(Q_1P_2))$. Similarly,
	$t_1,t_2\notin \supp(\Pi_{\whC_{\bar x_1}}(P_3Q_3))$. 
	\begin{itemize}
		\item If $t_3\notin \supp(\Pi_{C_{\bar x_1}}(C_{\bar y_2}))$, then $t_1,t_3\notin \supp(\Pi_{\whC_{\bar x_1}}(Q_2))$. Thus assumption of the claim before Case 1 is satisfied and we are done.
		\item If $t_2\notin \supp(\Pi_{C_{\bar x_1}}(C_{\bar y_2}))$, then $t_1,t_2\notin \supp(\Pi_{\whC_{\bar x_1}}(Q_2))$. Then $P_1$ is homotopic rel endpoints in $\whC_{\bar x_1}$ to a path in $\whC_{\bar x_1}\cap \whC_{\bar y_1}$, which implies $y_1=y_3$, contradiction.
	\end{itemize}
	
	\smallskip
	\noindent
	\underline{Case 4: $\pi(\omega)$ is made of four or five edges.} If $\pi(\omega)$ is made of four edges, as $\pi(\omega)$ can not contain 3-cycle, it is an embedded 4-cycle of type $\hat t_1\hat t_2\hat t_1\hat t_2$ in $\mathsf C_\Lambda$. However, Theorem~\ref{thm:4 wheel} implies that any 4-cycle in $\Delta_\Lambda$ of type $\hat t_1\hat t_2\hat t_1\hat t_2$ is degenerate. As there is an embedding $\mathsf C_\Lambda\to \Delta_\Lambda$ preserving type of vertices, we know $\pi(\omega)$ is a degenerate 4-cycle. Thus the case that $\pi(\omega)$ is made of four edges is ruled out. If $\pi(\omega)$ is made of five edges, then it is an embedded 4-cycle together with an additional edge glued to one vertex of this 4-cycle. By the same argument as before, the 4-cycle must be degenerate.

	\smallskip
	\noindent
	\underline{Case 5: $\pi(\omega)$ is made of six edges.} Then $\pi(\omega)$ is an embedded 6-cycle. By Lemma~\ref{lem:projection location},
	\begin{itemize}
		\item $\Pi_{C_{\bar x_1}}(C_{\bar y_1})=C_{\bar x_1}\cap C_{\bar y_1}$ and $\Pi_{C_{\bar x_1}}(C_{\bar y_3})=C_{\bar x_1}\cap C_{\bar y_3}$;
		\item $\Pi_{C_{\bar x_1}}(C_{\bar x_2})\subset C_{\bar x_1}\cap C_{\bar y_1}$ and $\Pi_{C_{\bar x_1}}(C_{\bar x_3})\subset C_{\bar x_1}\cap C_{\bar y_3}$;
		\item either $t_3\notin \supp(\Pi_{C_{\bar x_1}}(C_{\bar y_2}))$ or $t_2\notin \supp(\Pi_{C_{\bar x_1}}(C_{\bar y_2}))$.
	\end{itemize}
	We still consider the projection of $P_1Q_1P_2Q_2P_3Q_3$ to $\whC_{\bar x_1}$, and the discussion is identical to Case 3.
\end{proof}

\begin{lem}
	\label{lem:projection location}
	Let $\Si$ be the Davis complex of a finite Coxeter group generated by $S=\{s_1,\ldots,s_n\}$. We label edges of $\Si$ by elements in $S$. Let $\{s_i,s_{i+1},s_{i+2}\}$ be three generators such that they generates a Coxeter group of type $A_3$, and $s_i,s_{i+2}$ commute.

	Let $\{C_i\}_{i=1}^4$ be four pairwise distinct  faces of $\Si$ such that
	\begin{enumerate}
		\item $C_1$ and $C_3$ are of type $\hat s_i$, and $C_2$ and $C_4$ are of type $\hat s_{i+1}$;
		\item $C_i\cap C_{i+1}\neq\emptyset$ for $i=1,2,3$.
	\end{enumerate}
	Then the following are true:
	\begin{enumerate}
		\item $\Pi_{C_{1}}(C_{3})\subset C_{1}\cap C_{2}$, and $\Pi_{C_2}(C_4)\subset C_2\cap C_3$;
		\item either $s_{i+2}\notin \supp(\Pi_{C_{1}}(C_{4}))$ or $s_{i+1}\notin \supp(\Pi_{C_{1}}(C_{4}))$.
	\end{enumerate}
\end{lem}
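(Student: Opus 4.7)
The plan is to translate both statements into claims about reflection subgroups via Lemma~\ref{lem:pair gate}, then verify these using root-system analysis together with the $A_3$ substructure and the commutation $s_is_{i+2}=s_{i+2}s_i$. By conjugation and the intersection hypotheses, I may arrange $C_1=\Sigma_{\hat a}$, $C_2=\Sigma_{\hat b}$ (so $C_1\cap C_2=\Sigma_{\hat a\cap \hat b}$), $C_3=g_3\Sigma_{\hat a}$ with $g_3\in W_{\hat b}$, and $C_4=g_3h\Sigma_{\hat b}$ with $h\in W_{\hat a}$.

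For part (1), Lemma~\ref{lem:pair gate}(4) identifies $\mathcal W(\Pi_{C_1}(C_3))$ with the set of reflections in the parabolic subgroup $W_{\hat a}\cap g_3W_{\hat a}g_3^{-1}$; I aim to show every such reflection lies in $W_{\hat b}$, hence in $W_{\hat a\cap\hat b}=W_{\hat a}\cap W_{\hat b}$, which yields $\Pi_{C_1}(C_3)\subset C_1\cap C_2$. Concretely, such a reflection corresponds to a root $\alpha$ with $\alpha$ and $g_3^{-1}\alpha$ both in $\operatorname{span}(\hat a)$. Since $g_3\in W_{\hat b}$ fixes the one-dimensional orthogonal complement of $\operatorname{span}(\hat b)$, the $\alpha_b$-coefficient of $\alpha$ is preserved by $g_3^{-1}$; combining this with $g_3^{-1}\alpha\in\operatorname{span}(\hat a)$ and the $A_3$ commutation $(\alpha_a,\alpha_c)=0$, one forces the $\alpha_b$-coefficient of $\alpha$ to vanish, so $\alpha\in\operatorname{span}(\hat a\cap\hat b)$. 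The inclusion $\Pi_{C_2}(C_4)\subset C_2\cap C_3$ is the symmetric statement with $\hat a$ and $\hat b$ exchanged.

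For part (2), the same reasoning gives $\mathcal W(\Pi_{C_1}(C_4))$ as the reflections in $W_{\hat a}\cap g_4W_{\hat b}g_4^{-1}$ with $g_4=g_3h$; this parabolic subgroup has a well-defined type $T\subset\hat a$, which coincides with $\supp(\Pi_{C_1}(C_4))$. The desired dichotomy $b\notin T$ or $c\notin T$ is obtained by tracking the reduced expression of $h$ in the alphabet $\hat a$ and case-splitting on how its $b$- and $c$-content interact with $g_3$: because $a$ and $c$ commute, a reflection surviving in the intersection cannot have its root carry both a nonzero $\alpha_b$
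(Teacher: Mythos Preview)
Your root-system translation for part~(1) is a reasonable setup, but the central deduction is not valid as written. From the three ingredients you invoke --- that $g_3^{-1}\in W_{\hat b}$ preserves the $\alpha_b$-coefficient, that $\alpha$ and $g_3^{-1}\alpha$ both lie in $\operatorname{span}(\hat a)$, and that $(\alpha_a,\alpha_c)=0$ --- one \emph{cannot} conclude $c_b(\alpha)=0$: take $g_3\in W_{\hat a\cap\hat b}$ and $\alpha=\alpha_b$. This shows the pairwise-distinctness hypothesis (here, $g_3\notin W_{\hat a\cap\hat b}$) is essential, yet your argument never uses it; without it the conclusion is simply false. Even if you repair the wall inclusion $\mathcal W(\Pi_{C_1}(C_3))\subset\mathcal W(C_1\cap C_2)$, you still need the separate fact that $\Pi_{C_1}(C_3)$ contains a vertex of $C_1\cap C_2$ to get the containment of faces; this is not automatic from the wall condition alone. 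The paper handles (1) quite differently: it takes a geodesic $u$ from $C_1$ to $C_3$, notes $s_i\in\supp(u)$ (this is precisely where $C_1\neq C_3$ enters), and then uses \cite[Lemma~5.8]{huang2023labeled} to contradict $\supp(u)\perp I$ for any irreducible component $I$ of $\supp(\Pi_{C_1}(C_3))$ containing $s_{i+1}$.

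Your sketch for part~(2) is incomplete --- it breaks off mid-sentence --- and the outlined plan (``tracking the reduced expression of $h$ and case-splitting on how its $b$- and $c$-content interact with $g_3$'') is too vague to assess. The paper's argument here is considerably more structured: it introduces an auxiliary property~$(*)$ on words ($w=w_1w_2$ with $s_{i+1}\notin\supp(w_1)$ and $s_i\notin\supp(w_2)$), proves this property is stable under braid relations, constructs a minimal gallery from $C_1$ to $C_4$ whose word has property~$(*)$, and then reaches a contradiction by decomposing that gallery into elementary $B$-segments (Definition~\ref{def:elementary segment} and \cite[Lemma~5.4]{huang2023labeled}) and tracking how the support of the $2$-face dual to $B$ evolves. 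A direct root-coefficient argument does not obviously reproduce this, and your proposal gives no indication of how the dichotomy $s_{i+1}\notin T$ or $s_{i+2}\notin T$ would emerge.
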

\begin{proof}
In the following proof, we will write $S_1\perp S_2$ for subsets $S_1,S_2\subset S$ if $S_1\cap S_2=\emptyset$ and each element in $S_1$ commutes with every element in $S_2$.
	
	For (1), we only prove the first part, as the second part is similar. Let $u$ be a geodesic path in the 1-skeleton of $\Si$ connecting a vertex in $C_{1}$ and a vertex in $C_3$ such that $\length(u)=d(C_{1},C_3)$. As $u$ leaves $C_{1}$, $s_i\in \supp(u)$. Given a subset $S'$ of $S$, an \emph{irreducible component} of $S'$ is a maximal subset $S''$ of $S'$ such that $W_{S''}$ is irreducible.
	Let $I$ be an irreducible component of $\supp(\Pi_{C_{1}}(C_3))$ which is not contained in $\supp(C_2)$. Then $s_{i+1}\in I$. Note that both $C_{1}$ and $C_3$ have non-empty intersection with $C_{2}$, then we can apply \cite[Lemma 5.8 (2)]{huang2023labeled} to deduce $\supp(u)\perp I$. As $s_i\in \supp(u)$, this is a contradiction. So $\supp(\Pi_{C_{1}}(C_{3}))\subset \supp(C_{2})$. We also know $\Pi_{C_{1}}(C_{3})$ is a face of $\Si$ that contains a vertex in $C_1\cap C_2$ by \cite[Lemma 5.8 (1)]{huang2023labeled}. Then (1) follows. 
	
We prove two claims before going to (2). Define a word $w$ in the free monoid on $S$ has property $(*)$ if $w$ is a concatenation of $w_1$ and $w_2$ such that $s_{i+1}\notin \supp(w_1)$ and $s_i\notin \supp(w_2)$.  The first claim is that if two reduced words $w$ and $w'$ represent the same element of $W_\Lambda$ and $w$ has property $(*)$, then $w'$ has property $(*)$. To see the claim, we only need to consider the special case when $w'$ is obtained from $w$ by applying a single relation of $W_\Gamma$ of form $s_js_{j'}\cdots=s_{j'}s_j\cdots$. If $s_j$ and $s_{j'}$ commute, then either $s_i\notin \{s_j,s_{j'}\}$ or $s_{i+1}\notin \{s_j,s_{j'}\}$. By enlarging or shrinking $w_1$, we can assume the subword $s_js_{j'}$ is contained in one of $w_1$ or $w_2$ and the rest is clear. Now we assume $s_j$ and $s_{j'}$ do not commute. It only matters when the left side $w=s_js_{j'}\cdots$ of the relator do not appear as a subword of $w_1$ as well as $w_2$. If $w_1$ contains the first two letters of $w$, then we can enlarge $w_1$ so that $w$ is a subword of $w_1$. If $w_1$ only contains the first letter of $w$, then $w_2$ contains the last two letters of $w$ and we can enlarge $w_2$ so that it contains $w$ as a subword.

	The second claim is that there is an edge path $u$ from a vertex in $C_{1}$ to a vertex in $C_4$ such that $\length(u)=d(C_{1},C_4)$ and 
	$\w(u)$ satisfies property $(*)$ ($\w(u)$ is defined in Definition~\ref{def:label}). To see this, consider an edge path $\bar u=\bar u_1\bar u_2$ from a vertex in $C_{1}$ to a vertex in $C_4$ such that $\bar u_1\subset C_2$ and $\bar u_2\subset C_{3}$. Such $\bar u$ always exists. Let $\bar w=\w(\bar u)$ and $\bar w_i=\w(\bar u_i)$ for $i=1,2$. Then $\bar w$ satisfies property $(*)$.
	By the deletion condition for Coxeter groups, we can assume $\bar w$ is a reduced word, while maintaining property $(*)$. This means we replace $\bar u$ by a geodesic path (still denote $\bar u$) connecting the endpoints of $\bar u$. Let $e$ be the first edge of $\bar u$ such that $e$ is parallel to an edge in $C_{1}$. We write $\bar u=\bar u' e\bar u''$. Let $\hat u$ be the image of $\bar u'$ under the reflection along the hyperplane dual to $e$. Then $\w(\hat u)=\w(\bar u')$. We replace $\bar u$ by $\hat u\bar u''$ (still denoted by $\bar u$), which is still an edge path from a vertex of $C_{1}$ to a vertex of $C_4$. This has the effect of removing a letter from $\bar w$, thus the new $\bar w$ still has property $(*)$. Repeating this procedure will produce a geodesic edge path $u$ from a vertex of $C_{1}$ to a vertex of $C_4$ such that no edges of $u$ is parallel to an edge in $C_{1}$ or an edge in $C_4$, thus $\length(u)=d(C_{1},C_4)$. By construction, $\w(u)$ satisfies condition $(*)$, which proves the claim.
	
	Now we argue by contradiction that $s_{i+1},s_{i+2}\in \supp(\Pi_{C_{1}}(C_{4}))$. Let $u$ be as in the second claim with starting point $x\in C_1$ and endpoint $y\in C_4$. Let $e_j$ be the edge containing $x$ labeled by $s_j$ for $j=i+1,i+2$.
	Let $F$ be the 2-face spanned by $e_{i+1}$ and $e_{i+2}$. Let $B$ be the element in $\cq_S$ dual to $F$ (see Definition~\ref{def:elementary segment}).
	As $\Pi_{C_{1}}(C_{4})$ is a face of $C_{1}$, $F\subset \Pi_{C_{1}}(C_{4})$. By \cite[Lemma 5.4]{huang2023labeled}, there exists a geodesic path $u'$ such that
	\begin{enumerate}
		\item $u'$ and $u$ have the same endpoints;
		\item $u'=f_1f_2\cdots f_k$ where each $f_i$ is an elementary $B$-segment (see Definition~\ref{def:elementary segment}).
	\end{enumerate}
	As $\w(f_1)$ must start with $s_i$ and $W_{s_i,s_{i+1},s_{i+2}}$ is a Coxeter group of type $A_3$, we know $\w(f_1)=s_is_{i+1}s_{i+2}$. As $\w(u')$ satisfies condition $(*)$ by the first claim, we know $s_i\notin \supp(f_i)$ for $i\ge 2$. Let $F_i$ be the face parallel to $F$ containing the endpoint of $f_i$. Then $\supp(F_1)=\{s_i,s_{i+1}\}$. If the label of the initial edge of $f_2$ does not commute with at least one of $\{s_i,s_{i+1}\}$, then  $s_i\in \supp(f_2)$, which is a contradiction. Thus we have $\supp(f_2)\perp \{s_i,s_{i+1}\}$. Thus $\supp(F_2)=\supp(F_1)$. Repeating this argument, we know $\supp(F_k)=\supp(F_1)$. This is a contradiction as $F_k\subset C_4$, but $s_{i+1}\notin \supp(C_4)$ by the first assumption of the lemma. 
\end{proof}

\subsection{The $D_n$-type case}
We first review the process of treating $D_n$ as a semi-direct product of a free group $F_{n-1}$ of rank $n-1$ and an $n$-strand braid group, through the braid monodromy representation of the braid group into $\aut(F_{n-1})$ \cite{crisp2005artin}. 
Let $\{\delta_1,\ldots,\delta_n\}$ be generators of Artin group $A(D_n)$ of type $D_n$ as in Figure~\ref{fig:ad}. Let $\{\alpha_1,\ldots,\alpha_{n-1}\}$ be generators of Artin group $A(A_{n-1})$ of type $A_{n-1}$ as in Figure~\ref{fig:ad}. Let $\{\beta_1,\ldots,\beta_{n-1}\}$ be generators of a free group $F_{n-1}$ of rank $n-1$. 

\begin{figure}[h]
	\includegraphics[scale=1]{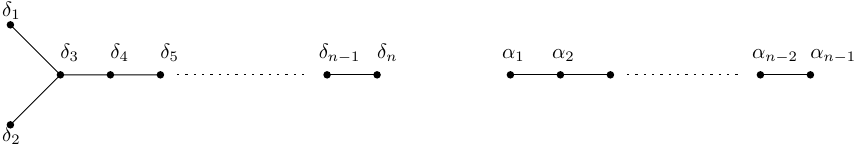}
	\caption{Dynkin diagram of type $D_n$ and $A_{n-1}$.}
	\label{fig:ad}
\end{figure}

Following \cite[Section 2]{crisp2005artin}, define a homomorphism $\rho:A_{n-1}\to \aut(F_{n-1})$ by:
\begin{equation*}
	\rho(\alpha_1): 
	\begin{cases}
		\beta_1\to \beta_1\\
	\beta_i\to \beta^{-1}_1\beta_i \ \ \ i\ge 2
	\end{cases}
\end{equation*}
and, for $2\le i\le n-1$,
\begin{equation*}
	\rho(\alpha_i): 
	\begin{cases}
		\beta_{i-1}\to \beta_i\\
		\beta_i\to \beta_i\beta^{-1}_{i-1}\beta_i\\
		\beta_j\to \beta_j \ \ \ j\neq i,i-1.
	\end{cases}
\end{equation*}
Define $\varphi: \{\delta_1,\ldots,\delta_n\}\to F_{n-1}\rtimes_\rho A(A_{n-1})$ by 
\begin{equation*}
	\begin{cases}
	\varphi(\delta_1)=\beta_1\alpha_1\\
	\varphi(\delta_i)=\alpha_{i-1} \ \ \ 2\le i\le n.
	\end{cases}
\end{equation*}
Define $\phi: \{\beta_1,\ldots,\beta_{n-1}\}\cup \{\alpha_1,\ldots,\alpha_{n-1}\}\to A(D_n)$ by
\begin{equation*}
	\begin{cases}
	\phi(\beta_i)=\delta_{i+1}\delta_i\cdots\delta_3\delta_1\delta^{-1}_2\delta^{-1}_3\cdots \delta^{-1}_i\delta^{-1}_{i+1}\ \ \ 1\le i\le n-1\\
	\phi(\alpha_i)=\delta_{i+1} \ \ \ 1\le i\le n-1.
	\end{cases}
\end{equation*}

\begin{prop}[\cite{crisp2005artin}]
	\label{prop:iso}
The map $\varphi$ extends to an isomorphism from $A(D_n)$ to $F_{n-1}\rtimes_\rho A(A_{n-1})$. The map $\phi$ extends to an isomorphism from $F_{n-1}\rtimes_\rho A(A_{n-1})$ to $A(D_n)$. The maps $\varphi$ and $\phi$ are inverses of each other.
\end{prop}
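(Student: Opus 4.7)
The approach has three steps: (i) check that $\varphi$ extends to a homomorphism $\widetilde\varphi\colon A(D_n)\to F_{n-1}\rtimes_\rho A(A_{n-1})$ by verifying that the $D_n$ Artin relators are sent to the identity, (ii) check analogously that $\phi$ extends to a homomorphism $\widetilde\phi$ in the opposite direction, and (iii) verify on generators that $\widetilde\phi\circ\widetilde\varphi$ and $\widetilde\varphi\circ\widetilde\phi$ each act as the identity. Together (i)--(iii) force both maps to be mutually inverse isomorphisms.

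For step (i), any defining relation of $A(D_n)$ living purely in $\{\delta_2,\ldots,\delta_n\}$ maps under $\varphi$ to a braid or commutation relation among $\alpha_1,\ldots,\alpha_{n-1}$, which holds in $A(A_{n-1})$. The relations involving $\delta_1$ are the commutations $\delta_1\delta_j=\delta_j\delta_1$ for $j=2$ and $j\ge 4$ and the braid relation $\delta_1\delta_3\delta_1=\delta_3\delta_1\delta_3$. I would verify these by direct calculation inside the semidirect product: push every $\alpha$-letter past each $\beta_1$ using $\rho(\alpha_1)(\beta_1)=\beta_1$, $\rho(\alpha_2)(\beta_1)=\beta_2$, $\rho(\alpha_k)(\beta_1)=\beta_1$ for $k\ge 3$, and $\rho(\alpha_1)(\beta_2)=\beta_1^{-1}\beta_2$. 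The two commutations collapse immediately, and the braid relation reduces, after a single $\beta_1\beta_1^{-1}$ cancellation, to the $A_{n-1}$ braid identity $\alpha_1\alpha_2\alpha_1=\alpha_2\alpha_1\alpha_2$.

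For step (ii), the defining relations of $F_{n-1}\rtimes_\rho A(A_{n-1})$ split into (a) braid relations among the $\alpha_i$'s, which map to braid relations among $\{\delta_2,\ldots,\delta_n\}$ inside the $A_{n-1}$-subdiagram of $D_n$ and therefore hold automatically, and (b) the conjugation relations $\alpha_i\beta_j\alpha_i^{-1}=\rho(\alpha_i)(\beta_j)$. Verifying (b) inside $A(D_n)$, where $\phi(\beta_j)$ is the long staircase $\delta_{j+1}\delta_j\cdots\delta_3\delta_1\delta_2^{-1}\cdots\delta_j^{-1}\delta_{j+1}^{-1}$, is the principal obstacle. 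The plan is to split by the position of $i+1$ in the staircase. When $\delta_{i+1}$ is disjoint from and non-adjacent in $\Lambda$ to every letter of $\phi(\beta_j)$, $\delta_{i+1}$ commutes with $\phi(\beta_j)$, matching $\rho(\alpha_i)(\beta_j)=\beta_j$. When $j=i-1$, prepending $\delta_{i+1}$ and appending $\delta_{i+1}^{-1}$ extends the staircase of $\phi(\beta_{i-1})$ by one step and literally produces $\phi(\beta_i)$, matching $\rho(\alpha_i)(\beta_{i-1})=\beta_i$. The diagonal case $j=i$, where $\rho(\alpha_i)(\beta_i)=\beta_i\beta_{i-1}^{-1}\beta_i$, is the most intricate: inside $\delta_{i+1}\phi(\beta_i)\delta_{i+1}^{-1}$ one inserts a $\delta_{i+1}^{-1}\delta_{i+1}$ pair at the center of the staircase and applies a single braid move $\delta_i\delta_{i+1}\delta_i=\delta_{i+1}\delta_i\delta_{i+1}$ to split off a copy of $\phi(\beta_{i-1})^{-1}$ between two copies of $\phi(\beta_i)$. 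The remaining cases, where $\delta_{i+1}$ sits interior to the staircase but away from its ends, reduce to the above by telescoping $\delta_{i+1}$ through successive adjacent letters using $D_n$ braid and commutation relations.

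For step (iii), the identity $\widetilde\phi(\widetilde\varphi(\delta_i))=\delta_i$ for $i\ge 2$ is just $\phi(\alpha_{i-1})=\delta_i$, and for $i=1$ it follows from the trailing $\delta_2^{-1}$ in $\phi(\beta_1)$ canceling $\phi(\alpha_1)=\delta_2$. The identity $\widetilde\varphi(\widetilde\phi(\alpha_i))=\alpha_i$ is immediate. The most substantive check is $\widetilde\varphi(\widetilde\phi(\beta_j))=\beta_j$: applying $\varphi$ to the long word $\phi(\beta_j)$ replaces each $\delta_{k+1}$ by $\alpha_k$ and the unique $\delta_1$ by $\beta_1\alpha_1$, and after collecting all $\alpha$-letters to the right via $\rho$, the resulting expression simplifies to $\rho(\alpha_j\alpha_{j-1}\cdots\alpha_2)(\beta_1)$, which equals $\beta_j$ by a short induction on $j$ from the defining rules for $\rho$.
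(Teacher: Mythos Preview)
The paper does not actually give a proof of this proposition: it is stated with the citation \cite{crisp2005artin} and no proof environment follows, so the result is simply imported from Crisp--Paris. Your proposal, by contrast, sketches a full direct verification---checking that the defining relations on both sides are respected and then checking the two composites on generators---which is the natural way to prove such a statement from scratch, and your outlined computations (the $\delta_1\delta_3\delta_1$ braid relation reducing to $\alpha_1\alpha_2\alpha_1=\alpha_2\alpha_1\alpha_2$ after a single $\beta_1\beta_1^{-1}$ cancellation, the staircase analysis for the conjugation relations, the telescoping $\rho(\alpha_j\cdots\alpha_2)(\beta_1)=\beta_j$) are all correct in shape. So there is nothing to compare against in this paper; what you have written is essentially a reconstruction of the Crisp--Paris argument and would stand on its own once the ``interior'' case of step~(ii) and the diagonal case $j=i$ are written out in full.
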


\begin{thm}
	\label{thm:weakflagD}
Let $\Lambda$ be the Dynkin diagram of type $D_n$ with vertex set $S=\{\delta_i\}_{i=1}^n$ as in Figure~\ref{fig:ad}. Given a 6-cycle $\omega$ with its vertices alternating between type $\hat \delta_1$ and $\hat \delta_3$. Then $\omega$ has a center  of type $\hat\delta_2$.
\end{thm}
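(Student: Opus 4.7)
The plan is to reduce Theorem~\ref{thm:weakflagD} to Proposition~\ref{prop:tight 6-cycle} in the ``generic'' case, and to handle the ``degenerate'' case by passing to the semi-direct product decomposition of $A(D_n)$ and invoking the Lyndon--Sch\"utzenberger theorem on equations in free groups.

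\emph{Step 1: Reduction to Proposition~\ref{prop:tight 6-cycle}.} Set $t_1=\delta_1$, $t_2=\delta_3$, $t_3=\delta_2$. In the $D_n$ Dynkin diagram, both $\delta_1$ and $\delta_2$ are leaves attached to $\delta_3$ with braid relations, while $\delta_1$ and $\delta_2$ commute. Hence the induced subgraph $\Lambda'=\{\delta_1,\delta_3,\delta_2\}$ is a linear subgraph of type $A_3$ with consecutive vertices $t_1,t_2,t_3$. The given 6-cycle $\omega$ sits inside the relative Artin complex $\Delta_{\Lambda,\Lambda'}$, with the three type $\hat\delta_1$ vertices playing the role of $\{x_i\}$ and the three type $\hat\delta_3$ vertices playing the role of $\{y_i\}$. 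If $\pi(\omega)$ is not a single edge of $\mathsf C_\Lambda$, Proposition~\ref{prop:tight 6-cycle} directly yields a center of type $\hat t_3=\hat\delta_2$, and we are done.

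\emph{Step 2: Setting up the degenerate case.} It remains to treat the case when $\pi(\omega)$ collapses to a single edge in $\mathsf C_\Lambda$. Using Definition~\ref{def:ncycle}, record $\omega$ by an equation $w_1w_2\cdots w_6=1$ with $w_1,w_3,w_5\in A_{\hat\delta_1}$ and $w_2,w_4,w_6\in A_{\hat\delta_3}$. After normalizing $g_0=1$, the collapse of $\pi(\omega)$ forces the images $\bar g_i\in W_\Lambda$ to remain in $W_{\hat\delta_1}\cup W_{\hat\delta_3}$, producing constraints on the $\bar w_i$ (roughly: each $\bar w_i$ lies in $W_{S\setminus\{\delta_1,\delta_3\}}$ up to factors absorbed into chamber choices).

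\emph{Step 3: Passage to the free group.} Apply Proposition~\ref{prop:iso} to rewrite everything inside $F_{n-1}\rtimes_\rho A(A_{n-1})$. Under $\varphi$, the subgroup $A_{\hat\delta_1}$ sits inside $A(A_{n-1})$ (no free-group contribution), while $A_{\hat\delta_3}$ decomposes as $\langle\beta_1\rangle\times\langle\alpha_1\rangle\times A(A_{n-3})$, so only $w_2,w_4,w_6$ can contribute to the free-group factor via powers of $\beta_1$. Projecting $\varphi(w_1\cdots w_6)=1$ onto $A(A_{n-1})$ gives a relation used in Step~5 below, and projecting onto $F_{n-1}$ (using the action $\rho$ to conjugate each $\beta_1$-contribution by the appropriate prefix of the $\alpha$-parts) produces an equation of the form
\[
x_1^{M_1}x_2^{M_2}x_3^{M_3}=1 \quad\text{in } F_{n-1},
\]
where $x_1,x_2,x_3$ are specific $\rho$-conjugates of $\beta_1$ and $M_i\in\mathbb Z$ are the $\beta_1$-exponents of $w_2,w_4,w_6$.

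\emph{Step 4: Lyndon--Sch\"utzenberger or degeneration.} Two subcases arise. If some $M_i=0$ (or other algebraic cancellation), the equation effectively has only four terms, and the result follows from Theorem~\ref{thm:4 wheel} applied to a shorter cycle, paralleling the method of \cite{huang2023labeled}. Otherwise $|M_1|,|M_2|,|M_3|\ge 2$, and after moving one term across the equality we have $x^M=y^Nz^P$ with $M,N,P\ge 2$ in a free group; the Lyndon--Sch\"utzenberger theorem \cite{MR0162838} then forces $x_1,x_2,x_3$ to lie in a common cyclic subgroup of $F_{n-1}$. The main obstacle of the proof lies in this step: the precise bookkeeping of how $\rho$ conjugates the $\beta_1$-terms, and the need to carefully identify the degeneration locus in which the equation collapses to four terms.

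\emph{Step 5: Constructing the center.} The conclusion of Step~4, combined with the $A(A_{n-1})$-projection equation, pins down the $w_i$ tightly enough to exhibit a common element $c\in A_\Lambda$ such that $c\in x_i$ (viewed as cosets) for $i=1,2,3$; the corresponding vertex $z=cA_{\hat\delta_2}$ is of type $\hat\delta_2$ and adjacent to each $x_i$. Applying Lemma~\ref{lem:link} (or equivalently Theorem~\ref{thm:4 wheel}) inside $\mathrm{lk}(z,\Delta_\Lambda)$ then promotes $z$ to a vertex adjacent to each $y_j$ as well, producing the desired center of type $\hat\delta_2$.
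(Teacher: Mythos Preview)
Your overall architecture matches the paper's: Proposition~\ref{prop:tight 6-cycle} handles the generic case, and the semidirect product decomposition of Proposition~\ref{prop:iso} together with Lyndon--Sch\"utzenberger handles the rest. The normalisation of the word to $w_1\delta_1^{k_1}w_2\delta_1^{k_2}w_3\delta_1^{k_3}$ and the resulting free-group equation are also essentially what the paper does.

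The genuine gap is in Steps~4--5. First, your dichotomy ``$M_i=0$'' versus ``$|M_i|\ge 2$'' skips the case $|M_i|=1$; in the paper this is exactly the case that feeds back into Proposition~\ref{prop:tight 6-cycle} (one shows $|k_i|=1$ forces $\bar x_i\ne\bar x_{i+1}$, so $\pi(\omega)$ is not a single edge). More seriously, when Lyndon--Sch\"utzenberger does apply and forces the three conjugates of $\beta_1$ into a common cyclic subgroup, the paper does \emph{not} construct a center as you propose in Step~5. Instead it derives a contradiction: from $w_1\beta_1 w_1^{-1}=\beta_1^{\pm1}$ one invokes the theory of minimal parabolic closures (results of Cumplido et al.\ \cite{cumplido2019parabolic,cumplido2019minimal}, Godelle \cite{godelle2003normalisateur}, Blufstein--Paris \cite{blufstein2023parabolic}, and \cite[Proposition~8.7]{huang2023labeled}) to conclude $w_1\in A_{\hat\delta_1}\cap A_{\hat\delta_3}$, whence $y_1=y_3$ and the cycle was not embedded. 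Your Step~5 as written cannot work: the claim that some $c$ lies in all three cosets $x_i=g_iA_{\hat\delta_1}$ would force $x_1=x_2=x_3$, contradicting distinctness. The missing ingredient is precisely this parabolic-subgroup argument, which is where the real work of the degenerate case lies.
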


\begin{proof}
Let consecutive vertices of $\omega$ be $x_1,y_1,x_2,y_2,x_3,y_3$ such that $\{x_i\}_{i=1}^3$ are of type $\hat\delta_1$ and $\{y_i\}_{i=1}^3$ are of type $\hat\delta_3$. We assume $\omega$ is an embedded 6-cycle,  otherwise the theorem reduces to Theorem~\ref{thm:4 wheel}. Let $w=w_1u_1w_2u_2w_3u_3$ be the word arising from the 6-cycle $\omega$ as in Definition~\ref{def:ncycle}. Then for each $i$, $w_i\in A_{\hat \delta_1}$ and $u_i\in A_{\hat \delta_3}$. As $A_{\hat \delta_3}=A_{\delta_1}\oplus A_{\delta_2}\oplus A_{\{\delta_4,\delta_5,\ldots,\delta_n\}}$, we know each $u_i$ can be written as a product two commuting elements, one of them is a power of $\delta_1$, another belongs to $A_{\hat \delta_1}$. Thus up to passing to an equivalent word as in the sense of Definition~\ref{def:ncycle}, we can assume $w=w_1\delta^{k_1}_1w_2\delta^{k_2}_1w_3\delta^{k_3}_1$. As $\{x_1,x_2,x_3\}$ is pairwise distinct, $k_i\neq 0$ for $1\le i\le 3$.

Through the isomorphism between $A(D_n)$ and $F_{n-1}\rtimes_\rho A(A_{n-1})$ as in Proposition~\ref{prop:iso}, we view $w_i$ and $\delta_1$ as elements in $F_{n-1}\rtimes_\rho A(A_{n-1})$. Each element in $F_{n-1}\rtimes_\rho A(A_{n-1})$ can be uniquely written as $a\cdot b$ with $a\in F_{n-1}$ and $b\in A(A_{n-1})$. Then $w_i=1\cdot w_i$ for $i=1,2,3$ and $\delta_1=\beta_1\cdot \alpha_1$. From $$w_1\delta^{k_1}_1w_2=\delta^{-k_3}_1w^{-1}_3\delta^{-k_2}_1,$$
we know that  
$$
w_1(\beta_1\alpha_1)^{k_1}w_2=(\beta_1\alpha_1)^{-k_3}w^{-1}_3(\beta_1\alpha_1)^{-k_2}.
$$
As $\beta_1$ and $\alpha_1$ commute (by looking at their image in $A(D_n)$), 
$$
(w_1\beta_1w^{-1}_1)^{k_1}\cdot w_1\alpha^{k_1}_1w_2= \beta^{-k_3}_1(\alpha^{-k_3}_1w^{-1}_3\beta_1w_3\alpha^{k_3}_1)^{-k_2}\cdot \alpha^{-k_3}_1w^{-1}_3\alpha^{-k_2}_1.
$$
Note that $w_1\beta_1w^{-1}_1$ and $\alpha^{-k_3}_1w^{-1}_3\beta_1w_3\alpha^{k_3}_1$ are non-trivial words in $F_{n-1}$, and the following equation holds true in $F_{n-1}$:
$$
(w_1\beta_1w^{-1}_1)^{k_1}=\beta^{-k_3}_1(\alpha^{-k_3}_1w^{-1}_3\beta_1w_3\alpha^{k_3}_1)^{-k_2}.
$$
By a result of Lyndon and Sch{\"u}tzenberger \cite{MR0162838}, at least one of the following two possibilities happen:
\begin{enumerate}
	\item there exists $1\le i\le 3$ such that $|k_i|=1$;
	\item both $w_1\beta_1w^{-1}_1$ and $\alpha^{-k_3}_1w^{-1}_3\beta_1w_3\alpha^{k_3}_1$ are nonzero powers of $\beta_1$.
\end{enumerate}
First suppose the first possibility happens. We assume without loss of generality that $|k_1|=1$. Let $P_1Q_1P_2Q_2P_3Q_3$ be the path as in the beginning of the proof of Proposition~\ref{prop:tight 6-cycle} corresponding to $w_1\delta^{k_1}_1w_2\delta^{k_2}_1w_3\delta^{k_3}_1$. Then $Q_1$ is a single edge labeled by $\hat\delta_1$. Let $\bar x_1,\bar x_2$, $\whC_{\bar x_1}$ and $\whC_{\bar x_2}$ be as in the proof of Proposition~\ref{prop:tight 6-cycle}. Then starting from a vertex of $\whC_{\bar x_1}$, crossing an edge labeled by $\hat\delta_1$ corresponding to $Q_1$, leading us to a vertex of $\whC_{\bar x_2}$. This means $\whC_{\bar x_1}\neq \whC_{\bar x_2}$. In particular $\bar x_1\neq \bar x_2$. Then we are done by Proposition~\ref{prop:tight 6-cycle}.
 
Now we consider the second possibility, namely both $w_1\beta_1w^{-1}_1$ and $\alpha^{-k_3}_1w^{-1}_3\beta_1w_3\alpha^{k_3}_1$ are nonzero powers of $\beta_1$. We first remind some facts about parabolic subgroups as follows. For each standard parabolic subgroup $A_{S'}$ of $A_S$, let $\Delta_{S'}$ be its Garside element, and let $c_{S'}$ be the smallest positive power of $\Delta_{S'}$ that is contained in the center of $A_{S'}$. If $ \mathsf{P}=gA_{S'}g^{-1}$ is a parabolic subgroup of $A_S$, we define $c_\mathsf{P}=gc_{S'}g^{-1}$. 	By \cite[Lemma 33]{cumplido2019minimal}, for $X,Y\subset S$, $g^{-1}A_Xg=g^{-1}A_Yg$ if and only if $g^{-1}C_Xg=c_Y$, in particular, if $gA_{S'}g^{-1}=g_1A_{S'}g^{-1}_1$, then $gc_{S'}g^{-1}=g_1c_{S'}g^{-1}_1$, hence $c_\mathsf{P}$ is well-defined.

As $w_1\beta_1w^{-1}_1$ is a primitive element in $F_{n-1}$, $w_1\beta_1w^{-1}_1=\beta^{\pm 1}_1$. Let $\mathsf{P}$ be the smallest parabolic subgroup of $A(D_n)$ containing $\beta_1$, whose existence is guaranteed by \cite{cumplido2019parabolic}. Note that $\mathsf{P}$ is also the smallest parabolic subgroup of $A(D_n)$ containing $\beta^{-1}$.
As $\beta=\delta_1\delta^{-1}_2$, $\mathsf{P}$ is contained in the standard parabolic subgroup $\mathsf{P}_{12}$ generated by $\delta_1$ and $\delta_2$. As $\mathsf{P}_{12}$ is abelian, we deduce from \cite{blufstein2023parabolic} that $\mathsf{P}=\mathsf{P}_{12}$. The minimality of $\mathsf{P}$ implies that $w_1\mathsf{P}_{12}w^{-1}=\mathsf{P}_{12}$. By \cite[Lemma 33]{cumplido2019minimal}, $w_1c_{\mathsf{P}}w^{-1}=c_{\mathsf{P}}$. Now let $\mathsf{P}'$ be the smallest parabolic subgroup of $w_1$. Then $\mathsf{P}'\subset A_{\hat \delta_1}$. Moreover, by the same argument as before, $c_{\mathsf{P}}c_{\mathsf{P'}}c^{-1}_{\mathsf{P}}=c_{\mathsf{P'}}$. As $c_{\mathsf{P'}}\in A_{\hat \delta_1}$ and $c_{\mathsf{P}}\in A_{\hat \delta_3}$, by \cite[Proposition 8.7]{huang2023labeled}, there exists $g\in A_{\hat \delta_1}\cap A_{\hat \delta_3}$ and $s\in \{\delta_1,\delta_3\}$ such that 
$$ gc_{\mathsf{P}}g^{-1}\in A_{\hat s}\ \  \ \mathrm{and}\  \ \  gc_{\mathsf{P}'}g^{-1}\in A_{\hat s}.
$$	
If $s=\delta_1$, then $c_{\mathsf{P}}\in g^{-1}A_{\hat\delta_1}g=A_{\hat \delta_1}$. On the other hand, however, this is impossible as $c_{\mathsf{P}}=\delta_1\delta_2$. If $s=\delta_3$, then $c_{\mathsf{P}'}\in g^{-1}A_{\hat \delta_3}g=A_{\hat\delta_3}$. By \cite[Proposition 2.1]{godelle2003normalisateur}, $w_1\in \mathsf{P}'\subset A_{\hat\delta_3}$. Thus $w_1\in A_{\hat \delta_3}\cap A_{\hat \delta_1}$, which implies that $y_1=y_3$, contradicting our assumption that $\{y_1,y_2,y_3\}$ are pairwise distinct. In summary $w_1\beta_1w^{-1}_1$ can not be a nonzero power of $\beta_1$. This finishes the proof.
\end{proof}
We record the following curious consequence of Theorem~\ref{thm:weakflagD}, Theorem~\ref{thm:4 wheel} and \cite[Theorem 5.2]{goldman2023cat}.
\begin{cor}
	\label{cor:cat1}
Let $\Lambda$ be the Dynkin diagram of type $D_n$ with vertex set $S$. Let $\Lambda'\subset\Lambda$ be the linear subgraph spanned by $\{\delta_1,\delta_2,\delta_3\}$ in Figure~\ref{fig:ad}. Then $\Delta_{\Lambda,\Lambda'}$ with induced Moussong metric from $\Delta_\Lambda$ is CAT$(1)$.
\end{cor}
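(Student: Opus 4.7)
The plan is to apply the combinatorial CAT$(1)$ criterion of Goldman and Haettel \cite[Theorem 5.2]{goldman2023cat} to the two-dimensional relative Artin complex $\Delta_{\Lambda,\Lambda'}$. Since $\Lambda'$ is the linear $A_3$ subgraph on $\{\delta_1,\delta_2,\delta_3\}$ and any simplex of $\Delta_\Lambda$ has vertices of pairwise distinct types, $\Delta_{\Lambda,\Lambda'}$ is two-dimensional; under the induced Moussong metric, each triangle is a chamber of the spherical $A_3$-Coxeter complex, a spherical right triangle with angles $\pi/3$, $\pi/2$, $\pi/3$ at the vertices of types $\hat\delta_1$, $\hat\delta_2$, $\hat\delta_3$ respectively. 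Goldman--Haettel's criterion reduces the CAT$(1)$ property of such a complex to two combinatorial filling conditions: a 4-wheel filling for induced 4-cycles, and a center-existence property for certain alternating 6-cycles.

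The 4-wheel condition follows from Theorem~\ref{thm:4 wheel}. Indeed, any induced 4-cycle in $\Delta_{\Lambda,\Lambda'}$ is also induced in $\Delta_\Lambda$ — two vertices of the same type in $\Delta_\Lambda$ are never adjacent, and $\Delta_{\Lambda,\Lambda'}$ is a full subcomplex — and the labeled 4-wheel condition produces a center whose type lies in the smallest subtree of $\Lambda$ containing the types appearing in the 4-cycle. Since those types already lie in the subtree $\Lambda'$, the center is a vertex of $\Delta_{\Lambda,\Lambda'}$. The 6-cycle condition is precisely what Theorem~\ref{thm:weakflagD} provides: every 6-cycle alternating between the two non-adjacent extremal types $\hat\delta_1$ and $\hat\delta_3$ of the $A_3$ subgraph admits a center of type $\hat\delta_2$, which again lies in $\Delta_{\Lambda,\Lambda'}$. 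Combining these two verifications with \cite[Theorem 5.2]{goldman2023cat} yields the CAT$(1)$ conclusion.

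The main obstacle is not performing either filling — they are already carried out in Theorem~\ref{thm:4 wheel} and Theorem~\ref{thm:weakflagD} — but rather checking that the combinatorial hypotheses in \cite[Theorem 5.2]{goldman2023cat} match exactly the filling data available here. Specifically, one must verify that the alternating-type 6-cycles required by the criterion are precisely those with vertex types alternating between $\hat\delta_1$ and $\hat\delta_3$, and that no additional longer-cycle or non-alternating-type conditions appear for this particular choice of $\Lambda'$. Once this compatibility is in place, the corollary is an immediate consequence of the combinatorial inputs assembled above.
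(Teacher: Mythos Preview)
Your proposal is correct and matches the paper's approach exactly: the paper records Corollary~\ref{cor:cat1} as an immediate consequence of Theorem~\ref{thm:weakflagD}, Theorem~\ref{thm:4 wheel}, and \cite[Theorem 5.2]{goldman2023cat}, without further argument. Your additional remarks on why the centers produced by the labeled 4-wheel condition remain in $\Delta_{\Lambda,\Lambda'}$ and on the shape of the Moussong triangles are correct elaborations of what the paper leaves implicit.
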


\section{A remark on 5-cycles and 6-cycles in the $D_4$ Artin complex}
\label{sec:6cycle}
In Section~\ref{sec:6cycle}, Section~\ref{sec:D4} and Section~\ref{sec:abccycle}, we will prove the following. 

\begin{thm}
	\label{thm:flagD4}
	Let $\Lambda$ be the Dynkin diagram of type $D_4$ with a chosen leaf vertex $c$. Given a 6-cycle $\omega$ in the Artin complex $\Delta_\Lambda$ with its vertices alternating between having type $\hat c$ and not having type $\hat c$. Then there exists a quasi-center of $\omega$ which is adjacent to each of the type $\hat c$ vertices of $\omega$.
\end{thm}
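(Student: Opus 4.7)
\medskip

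The plan is to invoke Soroko's identification $A(D_4)\cong \PM(\cS)$, where $\cS$ is a once-bordered twice-punctured torus, and translate the 6-cycle $\omega$ into a configuration of properly embedded arcs (and possibly simple closed curves) on $\cS$. Under this dictionary, each vertex of $\Delta_\Lambda$ corresponds to an isotopy class whose topological type is determined by its vertex type; in particular the three type-$\hat c$ vertices $x_1,x_2,x_3$ all represent arcs of one fixed topological type, while the three intermediate vertices $y_1,y_2,y_3$ represent arcs or curves drawn from the other three types. Adjacency in $\Delta_\Lambda$ becomes disjointness up to isotopy, so the theorem reduces to producing a single arc of the appropriate topological type disjoint from all three $x_i$.

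First, I would nail down the dictionary carefully: identify the topological type of the $x_i$, classify the possible types for the $y_j$, and use Lemma~\ref{lem:link} to describe the link $\lk(x_1,\Delta_\Lambda)$ as (an induced subcomplex of) the arc complex of the subsurface $\cS\setminus x_1$. Cutting $\cS$ along an arc of type $\hat c$ yields a surface of strictly smaller complexity whose pure mapping class group is of a recognizable Artin type, which lets us translate adjacencies at $x_1$ into concrete disjointness statements on $\cS\setminus x_1$.

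Second, for the a priori estimate, I would use either subsurface projection in the sense of Masur--Minsky or Hatcher's surgery flow to define a retraction from a suitable subcomplex of $\Delta_\Lambda$ onto $\lk(x_1,\Delta_\Lambda)$. Applied to the null-homotopic loop coming from the word $w_1u_1w_2u_2w_3u_3=1$ of Definition~\ref{def:ncycle}, this retraction gives quantitative bounds on the geometric intersection numbers of the retracted arcs with $x_2,x_3$ and with the $y_j$, and singles out an initial candidate $z$ for the quasi-center (for instance, a surgered image of $y_2$ in $\lk(x_1,\Delta_\Lambda)$).

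Third, the heart of the argument is the combinatorial surgery step that upgrades this candidate into an arc disjoint from all of $x_1,x_2,x_3$. I would proceed by a case analysis on the intersection patterns of the six arcs, exploiting the principle highlighted in the introduction: if certain of the $y_j$ have complicated intersections with the $x_i$ outside the prescribed adjacencies of $\omega$, then the small Euler characteristic of $\cS$ (genus one, two punctures, one boundary) forces the remaining arcs into a tightly constrained configuration, on which a bounded sequence of surgeries on the candidate $z$ removes the unwanted intersections.

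The main obstacle will be precisely this case analysis. Unlike the 4-cycle situation, which collapses to a local picture near a single arc via Theorem~\ref{thm:4 wheel}, or the 6-cycles of Theorem~\ref{thm:weakflagDintro}, where the free-group equation of Lyndon--Sch\"utzenberger pins the structure down, the 6-cycles here admit genuinely distinct topological realizations, and the desired quasi-center property fails for arbitrary 6-cycles in the arc complex of $\cS$. The subtle point, and the one that should consume the bulk of Sections~\ref{sec:D4} and \ref{sec:abccycle}, is to isolate the special topological features forced on us by the hypothesis that three alternating vertices all have type $\hat c$, and then to verify that every arc configuration compatible with those features admits the surgery above.
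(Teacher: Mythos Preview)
Your high-level strategy is exactly the paper's: use Soroko's identification $A(D_4)\cong\PM(\cS)$, translate vertices to isotopy classes of arcs, interpret adjacency as disjointness, and do a case analysis on intersection patterns on the cut-open surface. So the orientation is right; what you have written, however, is a roadmap rather than a proof, and several load-bearing ingredients of the paper's argument are absent from your outline.

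First, the paper does not attack the general 6-cycle directly. Lemma~\ref{lem:center of 6cycle} uses Lemma~\ref{lem:link} to replace any type-$\hat d$ intermediate vertex by a type-$\hat a$ one, reducing to two model shapes: \emph{type I} (intermediate vertices all of type $\hat a$) and \emph{type II} (two of type $\hat a$, one of type $\hat b$). The type I case (Section~\ref{sec:D4}) and the type II case (Section~\ref{sec:abccycle}) have rather different flavors, and type II is bootstrapped from type I via Proposition~\ref{prop:typeI} and Lemma~\ref{lem:filling 5 cycle}. Second, type-$\hat b$ vertices do \emph{not} fit directly into the arc-on-$\cS$ picture; the paper passes to the central quotient $\bar\Delta_0$, identifies it with an explicit arc complex $\bar\Delta'_0$ on the three-punctured torus (Lemma~\ref{lem:bar correspondence}), and controls the lifting ambiguity via degenerate versus non-degenerate 3-cycles (Lemma~\ref{lem:generator}). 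Your dictionary ``adjacency equals disjointness'' needs this extra layer before it applies to mixed-type cycles. Third, Theorem~\ref{thm:weakflagD} (the $(\hat\delta_1,\hat\delta_3)$ result proved earlier via Lyndon--Sch\"utzenberger) is used as a black box at several key junctures (e.g.\ end of Corollary~\ref{cor:goodpoint}, proof of Proposition~\ref{prop:typeI}, Lemma~\ref{lem:6cycle replace}); without it the case analysis does not close.

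Finally, the actual combinatorics is more structured than ``surgery on a candidate arc.'' The paper cuts $\cS$ along the arc $\tau_0$ of a chosen type-$\hat a$ vertex, studies the \emph{traces} of the remaining arcs on $\cS_0$, introduces the big/small dichotomy at a vertex (governed by whether the two neighbors share a type-$\hat d$ neighbor), and proves a cascade of lemmas (Lemma~\ref{lem:S3}, Corollary~\ref{cor:S4}, Lemma~\ref{lem:not big}, Lemmas~\ref{lem:not sequeeze 44}--\ref{lem:not squeeze 34}) pinning down when a good trace of $\tau_3$ can be found disjoint from $\tau_1$ and $\tau_5$. Your reference to subsurface projection or Hatcher flow is in the right spirit (the paper's introduction mentions them as inspiration), but the paper's proof does not literally invoke either; it works by hand with parallel classes of traces on the once-punctured torus $\cS'_0$. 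To turn your outline into a proof you would need to rebuild essentially this machinery.
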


\begin{definition}
	Let $\omega$ be an embedded 6-cycle in $\Delta_\Lambda$ with consecutive vertices being $\{x_i\}_{i=0}^5$. We say $\omega$ is of \emph{type I} if $\{x_0,x_2,x_4\}$ are of type $\hat a$ and $\{x_1,x_3,x_5\}$ are of type $\hat c$. We say $\omega$ is of \emph{type II} if $\{x_0,x_4\}$ are of type $\hat a$, $\{x_1,x_3,x_5\}$ are of type $\hat c$ and $x_2$ is of type $\hat b$. Suppose $\omega$ is either of type I or type II. We define $\omega$ satisfies property $(*)$ if there exists a vertex $z$ of type $\hat a$ or $\hat b$ such that $z$ is adjacent to each of $\{x_1,x_3,x_5\}$. 
\end{definition}

\begin{lem}
	\label{lem:center of 6cycle}
Suppose all type I and type II 6-cycles in the $D_4$ type Artin complex have property $(\ast)$. Then Theorem~\ref{thm:flagD4} holds.
\end{lem}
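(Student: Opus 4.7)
The plan is a case analysis on the types $\hat{a}_0, \hat{a}_2, \hat{a}_4$ of the three non-$\hat{c}$ vertices $x_0, x_2, x_4$ of the 6-cycle $\omega$ (with consecutive vertices $x_0, \ldots, x_5$ and $x_1, x_3, x_5$ of type $\hat{c}$). After a preliminary reduction to the case where $\omega$ is embedded and induced---any chord yields a $4$-cycle, to which Theorem~\ref{thm:4 wheel} applies and produces a vertex adjacent to all three type-$\hat{c}$ vertices of $\omega$---I examine the unordered multiset $\{a_0, a_2, a_4\} \subset S \setminus \{c\}$. Since $|S \setminus \{c\}| = 3$, only three configurations arise: all equal, exactly two equal, or all distinct.

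If all three agree, $\omega$ is by definition of type~I, and the hypothesis yields a vertex $z$ of type $\hat{a}_0$ adjacent to each of $x_1, x_3, x_5$, which is the desired quasi-center. If exactly two agree, cyclically rotating $\omega$ by $0$, $2$, or $4$ positions places the distinct vertex at index $2$, exhibiting $\omega$ as type~II, and the hypothesis again applies.

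The main case, and the main obstacle, is when $a_0, a_2, a_4$ are pairwise distinct, so $\{a_0, a_2, a_4\} = S \setminus \{c\}$. My strategy is to produce a type~II 6-cycle $\omega'$ sharing its three type-$\hat{c}$ vertices with $\omega$, by swapping one non-$\hat{c}$ vertex for an auxiliary. The subgraph $\Lambda \setminus \{c\}$ is a linear $A_3$ with a unique central vertex $m$ (the central vertex of $D_4$) and two leaves $l, l'$; a cyclic rotation of $\omega$ arranges $a_2 = m$. Lemma~\ref{lem:link}(3), applied at $x_2$, then identifies $\lk(x_2, \Delta_\Lambda)$ with a three-fold join $K_c \ast K_l \ast K_{l'}$, where $K_t$ consists of the vertices of type $\hat{t}$ in the link. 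I pick any $y \in K_{a_4}$, which exists since $K_{a_4}$ is infinite; the join structure makes $y$ adjacent to $x_1, x_3 \in K_c$, while inducedness of $\omega$ forces $x_4 \notin \lk(x_2, \Delta_\Lambda)$ so that $y \neq x_4$, and type comparison rules out $y$ coinciding with any other $x_i$.

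Then $\omega' := x_0 x_1 y x_3 x_4 x_5$ is an embedded 6-cycle whose non-$\hat{c}$ vertex types are $(\hat{a}_0, \hat{a}_4, \hat{a}_4)$, making it a type~II cycle after cyclically rotating to place $\hat{a}_0$ at index $2$. Its three type-$\hat{c}$ vertices are still $\{x_1, x_3, x_5\}$, so the hypothesis applied to $\omega'$ yields a vertex $z$ of type $\hat{a}_0$ or $\hat{a}_4$ adjacent to each of $x_1, x_3, x_5$, and this $z$ is the quasi-center required by Theorem~\ref{thm:flagD4}. The key insight---and the delicate point---is that the full three-fold join decomposition of $\lk(x_2, \Delta_\Lambda)$ occurs only when $a_2$ is the central vertex of $\Lambda \setminus \{c\}$, which is what dictates the cyclic rotation.
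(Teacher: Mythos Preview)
Your argument has a genuine gap. The claim that ``all three $a_i$ agree implies $\omega$ is of type I'' and ``exactly two agree implies (after rotation) $\omega$ is of type II'' is false whenever $d$ appears among the $a_i$. Recall that $S\setminus\{c\}=\{a,b,d\}$, where $d$ is the central vertex of the $D_4$ diagram, while types I and II are defined using \emph{only} the leaves $a$ and $b$ for the non-$\hat c$ vertices. So, for example, $(a_0,a_2,a_4)=(d,d,d)$ is neither type I nor type II, and no automorphism of $\Lambda$ fixing $c$ sends $d$ to a leaf; likewise $(a,a,d)$, $(d,d,a)$, $(b,b,d)$, etc.\ are not covered. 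Your case split therefore only works once every $\hat d$ vertex has already been eliminated. (A second, smaller omission: even after eliminating $\hat d$, the patterns $(\hat b,\hat b,\hat b)$ and $(\hat b,\hat a,\hat b)$ are not literally type I or II; you need to invoke the $a\leftrightarrow b$ automorphism of $\Lambda$, which you never state.)

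The fix is exactly the replacement trick you use in your ``all distinct'' case, but applied uniformly at the outset rather than only once. If some $x_{2i}$ has type $\hat d$, pick any $x'_{2i}$ of type $\hat a$ adjacent to $x_{2i}$; since $\lk(x_{2i},\Delta_\Lambda)$ is a three-fold join $K_a*K_b*K_c$ (Lemma~\ref{lem:link}(3)), $x'_{2i}$ is automatically adjacent to the type-$\hat c$ neighbours $x_{2i-1},x_{2i+1}$. Doing this for every $\hat d$ vertex yields a 6-cycle whose non-$\hat c$ vertices all have type $\hat a$ or $\hat b$ and whose type-$\hat c$ vertices are unchanged; now the $a\leftrightarrow b$ symmetry of $\Lambda$ reduces everything to type I or II. This is precisely the paper's (much shorter) argument. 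Your ``all distinct'' case is then not a separate obstacle at all --- it falls out of the uniform replacement.
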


\begin{proof}
Let $\omega$ be a 6-cycle as in Theorem~\ref{thm:flagD4} with consecutive vertices $\{x_i\}_{i\in \mathbb Z/6\mathbb Z}$. Suppose $x_1,x_3$ and $x_5$ are of type $\hat c$. If $x_2$ is of type $\hat d$, then let $x'_2$ a vertex of type $\hat a$ which is adjacent to $x_2$. By Lemma~\ref{lem:link} (3), $x'_2$ is adjacent to each of $\{x_1,x_3\}$. Thus up to possible replacement, we can assume $x_2,x_4,x_6$ are of type either $\hat b$ or $\hat a$. Now Theorem~\ref{thm:flagD4} follows by the symmetry of the Dynkin diagram.
\end{proof}

Thus to prove Theorem~\ref{thm:flagD4}, it suffices to prove type I cycles has property $(\ast)$, which is Proposition~\ref{prop:typeI} and type II cycles has property $(\ast)$, which is Corollary~\ref{cor:type II}. 

In the rest of this section, we collect several observations for preparation of the proof of Proposition~\ref{prop:typeI} and Corollary~\ref{cor:type II}. Throughout this section, $\Lambda$ will be the Dynkin diagram of type $D_4$ with its vertex set $\{a,b,c,d\}$ such that $\{a,b,c\}$ are leaf vertices. 


\begin{lem}
	\label{lem:special4cycle}
	Let $\{x_i\}_{i=1}^4$ be consecutive vertices of an embedded 4-cycle in $\Delta_\Lambda$ such that the type of each vertex in the cycle belongs to $\{\hat a,\hat b,\hat c\}$. Suppose $\type(x_1)=\type(x_3)$; and $\type(x_2),\type(x_3),\type(x_4)$ are mutually distinct. Then $x_2$ and $x_4$ are adjacent.
\end{lem}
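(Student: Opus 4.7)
The plan is to apply the labeled 4-wheel condition (Theorem~\ref{thm:4 wheel}) to the 4-cycle and then exploit the join decomposition of links provided by Lemma~\ref{lem:link}.

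After relabeling we may assume $\type(x_1)=\type(x_3)=\hat a$, $\type(x_2)=\hat b$, and $\type(x_4)=\hat c$, so that the three types appearing on the 4-cycle are exactly $\{\hat a,\hat b,\hat c\}$. First I would dispose of the case where $x_1x_2x_3x_4$ is not induced: two distinct vertices of $\Delta_\Lambda$ of the same type correspond to disjoint cosets of the same maximal standard parabolic, so they are never adjacent; in particular $x_1$ and $x_3$ are not joined, and the only candidate chord is the edge $x_2x_4$, whose presence already gives the conclusion. Hence we may assume the 4-cycle is induced.

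Next I would invoke Theorem~\ref{thm:4 wheel}, which produces a vertex $x$ adjacent to each $x_i$ whose type $\hat s$ satisfies $s\in T$, where $T$ is the smallest subtree of $\Lambda$ containing $\{a,b,c\}$. Since $a,b,c$ are the three leaves of the $D_4$ Dynkin diagram and each is joined only to the central vertex, $T$ is forced to contain the central vertex $d$ as well; thus $s\in\{a,b,c,d\}$. The non-adjacency observation from the previous paragraph rules out $s\in\{a,b,c\}$, since otherwise $x$ would share a type with one of $x_1,x_2,x_3,x_4$ and could not be adjacent to it. Therefore $s=d$.

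Finally, with $\type(x)=\hat d$, Lemma~\ref{lem:link} applies: the diagram $\Lambda_d$ obtained by deleting $d$ from $\Lambda$ consists of three isolated vertices $a,b,c$, so
$$\lk(x,\Delta_\Lambda)=K_a*K_b*K_c,$$
where $K_s$ is the subcomplex of $\lk(x,\Delta_\Lambda)$ spanned by vertices of type $\hat s$. Since $x_2\in K_b$ and $x_4\in K_c$ lie in distinct join factors, they are joined by an edge in $\lk(x,\Delta_\Lambda)$, hence in $\Delta_\Lambda$. There is no real obstacle in this argument — it is a direct structural consequence of Theorem~\ref{thm:4 wheel} combined with the join decomposition of links in Lemma~\ref{lem:link}, together with the elementary fact that vertices of the same type in $\Delta_\Lambda$ are never adjacent.
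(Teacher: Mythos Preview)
Your proposal is correct and follows essentially the same route as the paper's own proof: reduce to the induced case (ruling out an $x_1x_3$ chord since same-type vertices are never adjacent), apply Theorem~\ref{thm:4 wheel} to obtain a center, observe that the center is forced to have type $\hat d$, and then use the join decomposition of $\lk(\cdot,\Delta_\Lambda)$ from Lemma~\ref{lem:link}(3) to conclude that $x_2$ and $x_4$ are adjacent. The only difference is cosmetic --- you spell out the subtree computation and the elimination of $s\in\{a,b,c\}$ in slightly more detail than the paper does.
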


\begin{proof}
	By Theorem~\ref{thm:4 wheel}, either $x_2$ and $x_4$ are adjacent; or $x_1$ and $x_3$ are adjacent, which is impossible as adjacent vertices have different types; or there exists a vertex $z$ in $\Delta_\Lambda$ such that $z$ is adjacent to each of $\{x_1,x_2,x_3,x_4\}$. As adjacent vertices have different types, we now $z$ has type $\hat d$. Now $x_2$ and $x_4$ are adjacent by considering $\lk(z,\Delta_\Lambda)$ and applying Lemma~\ref{lem:link} (3).
\end{proof}

\begin{lem}
	\label{lem:typeIIblocal}
	Let $\omega$ be an embedded cycle of type II in $\Delta_\Lambda$, where $\Lambda$ is of type $D_4$. Then at least one of the following is true:
	\begin{enumerate}
		\item  there exists an edge path with consecutive vertices $x_1,y_1,y_2,y_3,x_3$ in $\lk(x_2,\Delta_\Lambda)$ such that $y_1$ and $y_3$ are of type $\hat a$ and $y_2$ is of type $\hat c$;
		\item there exists a vertex $y\in \Delta_\Lambda$ of type $\hat a$ such that $y$ is adjacent to each of $\{x_1,x_2,x_3\}$.
	\end{enumerate}
\end{lem}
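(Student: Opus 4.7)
Apply Lemma~\ref{lem:link} at $x_2$ (type $\hat b$) to identify the link $L := \lk(x_2,\Delta_\Lambda)$ with the spherical Deligne complex $\Delta_\Gamma$ of the sub-Dynkin diagram $\Gamma = \Lambda\setminus\{b\}$, which is the $A_3$-type path $a$-$d$-$c$. Under this identification, $x_1$ and $x_3$ become two distinct type-$\hat c$ vertices of the $A_3$-Artin complex $L$, and the lemma reduces to: in $L$, either $x_1,x_3$ admit a common type-$\hat a$ neighbor (giving case (2)), or there is an edge path $x_1,y_1,y_2,y_3,x_3$ in $L$ with $y_1,y_3$ of type $\hat a$ and $y_2$ of type $\hat c$ (giving case (1)).

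\textbf{Setting up candidates.} For $i=1,3$ pick a chamber $\Theta_i$ of $\Delta_\Lambda$ containing the edge $\overline{x_ix_2}$; as a $3$-simplex it has one vertex of each of the four types. Let $y_i\in L$ (resp.\ $z_i\in L$) denote its type-$\hat a$ (resp.\ type-$\hat d$) vertex, so that $y_i\sim x_i$, $z_i\sim x_i$, and $y_i\sim z_i$. If case (2) fails, then no type-$\hat a$ vertex of $L$ is simultaneously adjacent to $x_1$ and $x_3$; in particular $y_1\neq y_3$, $y_1\not\sim x_3$, and $y_3\not\sim x_1$.

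\textbf{The key join structure.} For every type-$\hat d$ vertex $z\in L$, Lemma~\ref{lem:link}(3) applied to $\Gamma\setminus\{d\}=\{a\}\sqcup\{c\}$ (two components) gives a join decomposition $\lk(z,L)=K_{\hat a}*K_{\hat c}$. Thus every type-$\hat c$ vertex in $\lk(z,L)$ is adjacent to every type-$\hat a$ vertex in $\lk(z,L)$. Using this together with the failure of case (2) one forces $z_1\neq z_3$, $z_1\not\sim x_3$, $z_3\not\sim x_1$, $y_1\not\sim z_3$, $y_3\not\sim z_1$ (otherwise the join property would produce a forbidden common type-$\hat a$ neighbor of $x_1,x_3$). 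Moreover, \emph{any} common type-$\hat c$ neighbor of $z_1$ and $z_3$ in $L$ is automatically adjacent to both $y_1$ and $y_3$ by the join, and by the same reasoning cannot coincide with $x_1$ or $x_3$; hence it supplies the desired $y_2$ and establishes case (1). The problem is therefore reduced to producing a common type-$\hat c$ neighbor of $z_1$ and $z_3$ in $L$.

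\textbf{Producing the common $\hat c$-neighbor and main obstacle.} For this final step I would exploit the ambient 6-cycle $\omega$, which already furnishes an \emph{external} length-$4$ path $x_1$-$x_0$-$x_5$-$x_4$-$x_3$ in $\Delta_\Lambda$ whose type pattern $\hat c,\hat a,\hat c,\hat a,\hat c$ is exactly the one sought in case (1), but whose interior vertices $x_0,x_5,x_4$ need not lie in $L$. The plan is to push this path into $L$ by applying the Godelle--Paris retraction $\Pi_{\widehat F}$ (Definition~\ref{def:retraction}, Lemma~\ref{lem:retraction property}) on the universal cover of $\od_\Lambda$, using the description of vertices of $\Delta_\Lambda$ as lifts of standard subcomplexes (Remark~\ref{rmk:alternative}) applied to the $\hat b$-subcomplex that represents $x_2$. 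The projected middle vertex is the candidate $y_2$; one then needs to check that the projection does not degenerate, and that the resulting vertex lies in $\lk(z_1,L)\cap\lk(z_3,L)$. This is the main obstacle: a generic pair of type-$\hat d$ vertices in $\Delta_{A_3}$ does \emph{not} admit a common type-$\hat c$ neighbor, so the argument must genuinely use both the failure of case (2) and the existence of the external zigzag in $\omega$. I anticipate that the verification will proceed by iteratively applying the $4$-wheel condition (Theorem~\ref{thm:4 wheel}) to auxiliary induced $4$-cycles in $\Delta_\Lambda$ spanned by $\{x_2,y_i,z_i,x_i\}$ and the intermediate projected vertices, controlling the types of the projected intermediates in a way that either produces the required common neighbor or doubles back to supply case (2) directly.
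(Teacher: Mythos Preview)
Your instinct to use the Godelle--Paris retraction onto the $\hat b$-standard subcomplex representing $x_2$ is exactly right, and this is what the paper does. However, your reduction to ``find a common type-$\hat c$ neighbor of the specific vertices $z_1,z_3$'' is an over-specification that makes the problem harder than it is and is not what the retraction naturally produces. The projected path has no reason to pass through $\lk(z_1,L)\cap\lk(z_3,L)$, so your anticipated fix-up via the $4$-wheel condition would be trying to repair a self-inflicted wound.

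The paper's argument is more direct and purely algebraic. Write the $6$-cycle as a word $w_0w_1\cdots w_5=1$ as in Definition~\ref{def:ncycle}, pass to the null-homotopic loop $P_0\cdots P_5$ in $\od_\Lambda$, and retract everything onto $\widehat C_2$ via $\Pi_{\widehat C_2}$. This expresses
\[
w_2=(w'_1)^{-1}(w'_0)^{-1}(w'_5)^{-1}(w'_4)^{-1}(w'_3)^{-1},
\]
and the whole point is to control the \emph{supports} of the $w'_i$. For $i=1,3$ one has $w'_i\in A_{a,d}$ because $\Pi_{C_2}(C_i)=C_2\cap C_i$ (Lemma~\ref{lem:projection location}). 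The key observation you are missing is that for $i=0,4,5$ the face $\Pi_{C_2}(C_i)$ has dimension $\le 2$: this holds because in the $D_4$ Coxeter complex the antipode of a type-$\hat b$ vertex is again of type $\hat b$, so no $C_i$ (whose type is $\hat a$ or $\hat c$) can be parallel to $C_2$. Hence each of $w'_0,w'_4,w'_5$ lies in one of $A_{a,d}$, $A_{c,d}$, $A_{a,c}$. A short case split on the support of $w'_5$ (absorbing $A_{a,c}$ factors using $[a,c]=1$) then yields either $w_2=u_1v_1v_2v_3u_3$ with $u_j\in A_{a,d}$, $v_j\in A_{c,d}$ (case~(1)) or $w_2=u_1u_2u_3$ with $u_1,u_3\in A_{a,d}$, $u_2\in A_{c,d}$ (case~(2)). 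No $4$-wheel argument and no intermediate $z_i$ are needed.
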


\begin{proof}
Let $\Si_\Lambda$ be the Davis complex. Then we can identify the Coxeter complex $\mathsf C_\Lambda$ as the dual to $\Si_\Lambda$. Let $\bar x_i$ be the image of $x_i$ under the map $\Delta_\Lambda\to \mathsf C_\Lambda$, and let $C_i$ be the top-dimensional cell in $\Si_\Lambda$ dual to $\bar x_i$. Let $\whC_i$ be the associated standard subcomplex of $\od_\Lambda$. Let $w=w_0w_1w_2w_3w_4w_5$ be the word arising from the 6-cycle $\omega$ as in Definition~\ref{def:ncycle}. As $w=\id$, we know $w$ gives a null-homotopic loop $P=P_0P_1P_2P_3P_4P_5$ in $\od_\Lambda$, with $P_i\subset\whC_i$ for each $i$.
	
	It suffices to show either $w_2=u_1v_1v_2v_3u_3$ where $u_1,v_2,u_3\in A_{a,d}$ and $v_1,v_3\in A_{c,d}$, which corresponds to Case 1 of the lemma; or $w_2=u_1u_2u_3$ with $u_1,u_3\in A_{a,d}$ and $u_2\in A_{c,d}$, which corresponds to Case 2 of the lemma. Here $A_{a,d}$ means the standard parabolic subgroups generated by $a$ and $d$.
	
	We consider $\Pi_{\whC_2}(P)$ and let $P'_i=\Pi_{\whC_2}(P_i)$. Note that $P'_2=P_2$. Let $w'_i$ be the word we read off from the path $P'_i$. As $P$ is null-homotopic in $\od_\Lambda$, we know $\Pi_{\whC_2}(P)$ is null-homotopic in $\whC_2$. Thus $w'_0w'_1w'_2w'_3w'_4w'_5$ represents the trivial element. Then
	$$
	w_2=(w'_1)^{-1}(w'_0)^{-1}(w'_5)^{-1}(w'_4)^{-1}(w'_3)^{-1}.
	$$
	By Lemma~\ref{lem:projection location} and Lemma~\ref{lem:retraction property}, $\Pi_{\whC_2}(\whC_i)=\whC_2\cap\whC_i$ for $i=1,3$. Thus $w'_i\in A_{a,d}$ for $i=1,3$. As $\bar x_2$ is of type $\hat b$ in $\mathsf C_\Lambda$, the antipodal point of $\bar x_2$ in $\mathsf C_\Lambda$ is  also of type $\hat b$ by the geometry of Coxeter group of type $D_4$. Thus none of $\bar x_i$ is antipodal to $\bar x_2$ in $\mathsf C_\Lambda$ for $0\le i\le 5$. Thus none of $C_i$ ($i\neq 2$) is parallel to $C_2$. Thus $\Pi_{C_2}(C_i)$ is a face of $C_i$ of dimension $\le 2$. This for $i=0,4,5$, either $w'_i\in A_{a,d}$ or $w'_i\in A_{c,d}$ or $w'_i\in A_{a,c}$. In each of the case, we can rewrite $(w'_1)^{-1}(w'_0)^{-1}$ as $u_1v_1$ with $u_1\in A_{a,d}$ and $v_1\in A_{c,d}$, using that $a$ and $c$ commute. Note that we do allow $u_1$ and $v_1$ to be the trivial element. Similarly, we can rewrite $(w'_4)^{-1}(w'_3)^{-1}$ as $v_3u_3$ with $u_3\in A_{a,d}$ and $v_3\in A_{c,d}$. If $w'_5\in A_{a,d}$, then we are in Case 1 of the lemma. If $w'_5\in A_{c,d}$, then $v_1(w'_5)^{-1}v_3\in A_{c,d}$ and we are in Case 2 of the lemma. If $w'_5\in A_{a,c}$, then we write $(w'_5)^{-1}=c^*a^*$, absorbing the $c^*$ into $v_1$ so $v_1$ is still in $A_{c,d}$ and define $v_2=a^*$, leading to Case 1 of the lemma.
\end{proof}
Note that the two possibilities in Lemma~\ref{lem:typeIIblocal} are not mutually exclusive.
The next lemma can be proved in the same way as Lemma~\ref{lem:typeIIblocal}.
\begin{lem}
	\label{lem:5cycleblocal}
	Let $\omega$ be an embedded cycle of $5$-cycle in $\Delta_\Lambda$ with its consecutive vertices being $\{x_i\}_{i=0}^4$. Suppose $x_0$ has type $\hat b$, $x_1$ and $x_3$ have type $\hat a$ ,and $x_2$ and $x_4$ have type $\hat c$. Then at least one of the following is true:
	\begin{enumerate}
		\item  there exists an edge path with consecutive vertices $x_4,y_1,y_2,x_1$ in $\lk(x_0,\Delta_\Lambda)$ such that $y_1$ is of type $\hat a$ and $y_2$ is of type $\hat c$;
		\item $x_4$ and $x_1$ are adjacent.
	\end{enumerate}
\end{lem}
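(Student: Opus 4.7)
The plan is to mirror the argument of Lemma~\ref{lem:typeIIblocal}, this time projecting onto the unique $\hat b$-type vertex $x_0$. Let $w_0 w_1 w_2 w_3 w_4 = 1$ be the word associated to the $5$-cycle via Definition~\ref{def:ncycle}, with $w_0 \in A_{\hat b}$, $w_1, w_3 \in A_{\hat a}$, and $w_2, w_4 \in A_{\hat c}$. Lift this equation to a null-homotopic loop $P = P_0 P_1 P_2 P_3 P_4$ in $\od_\Lambda$, where $P_i \subset \whC_i$ and $C_i$ is the top-dimensional cell of $\Si_\Lambda$ dual to the image $\bar x_i$ of $x_i$ in $\mathsf C_\Lambda$. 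Applying the retraction $\Pi_{\whC_0}$ yields a null-homotopic loop in $\whC_0$ with $\Pi_{\whC_0}(P_0) = P_0$; writing $w'_i = \w(\Pi_{\whC_0}(P_i))$, one obtains
\[
w_0 = (w'_4)^{-1} (w'_3)^{-1} (w'_2)^{-1} (w'_1)^{-1}
\]
as an identity in $A_{\hat b} = A_{\{a,c,d\}}$.

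The next step is to locate each $w'_i$. For the adjacent cells $i = 1, 4$, Lemma~\ref{lem:pair gate}(2) combined with Lemma~\ref{lem:retraction property} gives $\Pi_{\whC_0}(\whC_i) = \whC_0 \cap \whC_i$; computing supports, $C_0 \cap C_1$ has support $\{a,c,d\} \cap \{b,c,d\} = \{c,d\}$ while $C_0 \cap C_4$ has support $\{a,c,d\} \cap \{a,b,d\} = \{a,d\}$, hence $w'_1 \in A_{\{c,d\}}$ and $w'_4 \in A_{\{a,d\}}$. For $i = 2, 3$, I would use that the longest element of $W(D_4)$ acts as $-\id$ on the reflection representation, so antipodes in $\mathsf C_\Lambda$ preserve types. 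Since $\bar x_2, \bar x_3$ have types $\hat c, \hat a \neq \hat b$, neither is antipodal to $\bar x_0$, so $C_i$ is not parallel to $C_0$ and $\Pi_{C_0}(C_i)$ is a face of $C_0$ of dimension at most $2$ by Lemma~\ref{lem:pair gate}. Its support is therefore contained in one of $\{a,c\}, \{a,d\}, \{c,d\}$, and consequently $w'_i \in A_{\{a,c\}} \cup A_{\{a,d\}} \cup A_{\{c,d\}}$ for $i = 2,3$.

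It remains to perform a combinatorial case analysis, exactly in the style of Lemma~\ref{lem:typeIIblocal}. I would write $w_0 = \alpha \cdot u \cdot v \cdot \beta$ with $\alpha \in A_{\{a,d\}}$, $\beta \in A_{\{c,d\}}$, and $u, v$ running through the nine possible pairs from the previous paragraph, and rewrite $w_0$ either in the form $A_{\{a,d\}} \cdot A_{\{c,d\}}$ or in the form $A_{\{a,d\}} \cdot A_{\{c,d\}} \cdot A_{\{a,d\}} \cdot A_{\{c,d\}}$. The key simplification is that $a$ and $c$ commute in $A_{\hat b}$, so any element of $A_{\{a,c\}}$ splits as a product of an element of $A_{\{a\}} \subset A_{\{a,d\}}$ and an element of $A_{\{c\}} \subset A_{\{c,d\}}$; consecutive factors lying in the same rank-two subgroup can then be merged. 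To translate back, I would use the identification of $\lk(x_0, \Delta_\Lambda)$ with the $A_3$-type Artin complex of $A_{\hat b}$ provided by Lemma~\ref{lem:link}: a factorization $w_0 = \alpha \beta$ gives adjacency of $x_4$ and $x_1$ in the link (hence Case~2), while a factorization $w_0 = \alpha_1 \beta_1 \alpha_2 \beta_2$ produces intermediate vertices $y_1 = \alpha_1 A_{\{c,d\}}$ of type $\hat a$ and $y_2 = \alpha_1 \beta_1 A_{\{a,d\}}$ of type $\hat c$ realizing the required edge path of Case~1. The bookkeeping of the nine-case analysis is the main source of tedium rather than difficulty; the only conceptual input beyond the analogous proof of Lemma~\ref{lem:typeIIblocal} is the $D_4$-specific observation that antipodes preserve types, which rules out $C_2, C_3$ being parallel to $C_0$.
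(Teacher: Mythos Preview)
Your proposal is correct and follows precisely the approach the paper intends: the paper simply states that Lemma~\ref{lem:5cycleblocal} ``can be proved in the same way as Lemma~\ref{lem:typeIIblocal}'', and your write-up carries out exactly that adaptation. One small simplification you could note: since $\alpha u$ can always be rewritten in $A_{\{a,d\}}A_{\{c,d\}}$ and $v\beta$ likewise, the four-factor form (hence Case~1) always holds, so the nine-case split is not strictly necessary --- but your version is equally valid.
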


\begin{lem}
	\label{lem:filling 5 cycle}
Suppose every type I embedded 6-cycle in $\Delta_\Lambda$ satisfies property $(*)$ with $\Lambda$ being type $D_4$. Let $\omega$ be an embedded 5-cycle as in Lemma~\ref{lem:5cycleblocal}. Then either $x_4$ and $x_1$ are adjacent, or $x_0$ is adjacent to each $x_i$ for $1\le i\le 4$.
\end{lem}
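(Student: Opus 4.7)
The plan is to promote the 5-cycle, together with the link path supplied by Lemma~\ref{lem:5cycleblocal}, to a type I 6-cycle, and then push the conclusion of property $(\ast)$ back onto $x_0$ through the links of $\Delta_\Lambda$.

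By Lemma~\ref{lem:5cycleblocal} applied to $\omega$, I may assume conclusion (1) of that lemma, so there is an edge path with consecutive vertices $x_4, y_1, y_2, x_1$ inside $\lk(x_0,\Delta_\Lambda)$, with $y_1$ of type $\hat a$ and $y_2$ of type $\hat c$; otherwise $x_4\sim x_1$ and we are done. I further assume $x_4\not\sim x_1$. Splicing this path onto the arc $\overline{x_1 x_2}\,\overline{x_2 x_3}\,\overline{x_3 x_4}$ of $\omega$ yields a closed edge loop
$$
\omega' = x_1\,x_2\,x_3\,x_4\,y_1\,y_2
$$
in $\Delta_\Lambda$ whose six vertices alternate between types $\hat a$ and $\hat c$.

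I first dispose of the degenerate situations in which $\omega'$ fails to be embedded. The coincidences $y_1=x_1$ and $y_2=x_4$ each force $x_4\sim x_1$ via the link path, contradicting the standing assumption. If $y_1=x_3$ or $y_2=x_2$, then because $y_1,y_2\in\lk(x_0,\Delta_\Lambda)$ one already obtains $x_0\sim x_3$ or $x_0\sim x_2$, respectively; the complementary adjacency is then produced by a single application of Theorem~\ref{thm:4 wheel} and the labeled 4-wheel condition (Definition~\ref{def:labeled 4-wheel}) to the induced 4-subcycle of $\omega$ cut out by the newly-found adjacency.

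In the main case, $\omega'$ is an embedded type I 6-cycle, so by the standing hypothesis it satisfies property $(\ast)$, yielding a vertex $z\in\Delta_\Lambda$ of type $\hat a$ or $\hat b$ adjacent to each of $x_2, x_4, y_2$. I then establish $x_0\sim x_2$; the symmetric adjacency $x_0\sim x_3$ follows from the involution swapping $x_1\leftrightarrow x_4$, $x_2\leftrightarrow x_3$, $y_1\leftrightarrow y_2$, under which $\omega, \omega'$ and the link path are all preserved. To obtain $x_0\sim x_2$ I examine the 4-cycle $C = x_0\,y_2\,z\,x_4$, whose edges come from the link path, property $(\ast)$, and $\omega$. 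If $z=x_0$ (possible only when $z$ has type $\hat b$) then $x_0\sim x_2$ follows at once. Otherwise the opposite-vertex pairs $\{x_0,z\}$ and $\{y_2,x_4\}$ each consist of distinct vertices of equal type whenever $z$ has type $\hat b$, ruling out both diagonals of $C$ as edges, so $C$ is embedded and induced. Theorem~\ref{thm:4 wheel} together with the labeled 4-wheel condition then forces a common neighbor $w$ whose type lies in the smallest subtree of $\Lambda$ containing the four types appearing on $C$; the only admissible label is $\hat d$. By Lemma~\ref{lem:link}(3), $\lk(w,\Delta_\Lambda)$ is the join $K_a * K_b * K_c$ of three discrete sets, so every type-$\hat b$ vertex in $\lk(w)$ is adjacent to every type-$\hat c$ vertex in $\lk(w)$. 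Hence it suffices to verify $x_2\in\lk(w)$, i.e.\ $w\sim x_2$. The case of $z$ of type $\hat a$ is reduced to the previous one: either $z\in\lk(x_0)$, and the analogous labeled 4-wheel argument inside the $A_3$ Artin complex $\lk(x_0)$ (via Lemma~\ref{lem:link}) produces the required auxiliary type-$\hat d$ vertex, or $z\not\sim x_0$ and one runs the same 4-wheel argument with $C$.

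The \emph{main obstacle} is the final verification $w\sim x_2$: all adjacencies furnished by $z$ involve $x_2, x_4, y_2$, while those furnished by $w$ involve $x_0, x_4, y_2$, so bridging them to produce $w\sim x_2$ requires building an auxiliary short cycle inside $\lk(z,\Delta_\Lambda)$, which is an $A_3$ Artin complex by Lemma~\ref{lem:link}, and then applying Theorem~\ref{thm:4 wheel} inside that link, while simultaneously accommodating both possible types of $z$ and the remaining degenerate coincidences among $x_0, x_2, x_4, y_2, z, w$.
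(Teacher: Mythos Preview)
Your setup agrees with the paper's: form the type~I $6$-cycle $\omega'=x_1x_2x_3x_4y_1y_2$, invoke property~$(\ast)$ to get $z$ of type $\hat a$ or $\hat b$ adjacent to $\{x_2,x_4,y_2\}$, and treat the coincidences $y_1=x_3$, $y_2=x_2$ via Lemma~\ref{lem:special4cycle}. That part is fine.

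The gap is precisely what you flag as the ``main obstacle'': you never establish $w\sim x_2$, and the route you sketch (work inside $\lk(z,\Delta_\Lambda)$ to manufacture an auxiliary cycle) does not go through as stated. In $\lk(z,\Delta_\Lambda)$ you only know the vertices $w,x_2,x_4,y_2$, and among these the only edges you control are $w\sim x_4$ and $w\sim y_2$; there is no visible $4$-cycle through $w$ and $x_2$ to which the labeled $4$-wheel condition could apply. The missing ingredient is the vertex $x_1$, which you never use after forming $\omega'$.

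The paper's argument avoids $w$ entirely and instead chains two applications of Lemma~\ref{lem:special4cycle} through $x_1$. If $z$ has type $\hat a$, your cycle $C=x_0y_2zx_4$ already has the shape required by Lemma~\ref{lem:special4cycle}, so that lemma directly yields $x_0\sim z$; then the $4$-cycle $x_0x_1x_2z$ (types $\hat b,\hat a,\hat c,\hat a$) gives $x_0\sim x_2$ by a second application. If $z$ has type $\hat b$, apply Lemma~\ref{lem:special4cycle} to $x_1y_2zx_2$ (types $\hat a,\hat c,\hat b,\hat c$) to get $z\sim x_1$, and then to $x_0x_1zx_4$ (types $\hat b,\hat a,\hat b,\hat c$): either $x_0=z$ and $x_0\sim x_2$ follows immediately, or the lemma forces $x_1\sim x_4$, contradicting the standing assumption. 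So in the nondegenerate $\hat b$ case one reaches a contradiction rather than constructing the adjacency directly.

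Finally, your symmetry argument for $x_0\sim x_3$ is not needed (and in any case requires invoking the Dynkin automorphism swapping $a$ and $c$, which you do not mention). Once $x_0\sim x_2$ is in hand, one more application of Lemma~\ref{lem:special4cycle} to the $4$-cycle $x_0x_2x_3x_4$ (types $\hat b,\hat c,\hat a,\hat c$) gives $x_0\sim x_3$.
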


\begin{proof}
We will show if $x_4$ and $x_1$ are not adjacent, then $x_0$ is adjacent to each $x_i$ for $1\le i\le 4$.
By Lemma~\ref{lem:5cycleblocal}, we know there exists an edge path with consecutive vertices $x_1,y_1,y_2,x_4$ in $\lk(x_0,\Delta_\Lambda)$ such that $y_1$ is of type $\hat c$ and $y_2$ is of type $\hat a$. As the 6-cycle $x_1,y_1,y_2,x_4,x_3,x_2$ satisfies property $(*)$, there is a vertex $z$ of type $\hat a$ or $\hat b$ such that $z$ is adjacent to each of $\{y_1,x_4,x_2\}$. We first consider the case when $z$ has type $\hat a$.
Consider the 4-cycle $y_1,x_0,x_4,z$ of vertex type $\hat c,\hat b,\hat c,\hat a$. As $y_1\neq x_4$ (otherwise $x_1$ and $x_4$ are adjacent), this 4-cycle is embedded. Thus by Lemma~\ref{lem:special4cycle}, $x_0$ is adjacent to $z$. Now we consider the 4-cycle $x_0,x_1,x_2,z$ of type $\hat b,\hat a,\hat c,\hat a$. As $x_1\neq z$ (otherwise $x_1$ and $x_4$ are adjacent), this 4-cycle is embedded. Then by Lemma~\ref{lem:special4cycle}, $x_0$ is adjacent to $x_2$. By applying Lemma~\ref{lem:special4cycle} to the embedded 4-cycle $x_0x_2x_4x_3$, we know $x_0$ and $x_3$ are adjacent. Now we consider the case when $z$ has type $\hat b$. Consider the 4-cycle $x_1,y_1,z,x_2$ of type $\hat a,\hat c,\hat b,\hat c$. 
If $y_1=x_2$, then $x_0$ is adjacent to $x_2$ and we conclude that $x_0$ is adjacent to each $x_i$ with $1\le i\le 4$ as before. If $y_1\neq x_2$, then Lemma~\ref{lem:special4cycle}, $z$ is adjacent to $x_1$. Now consider the 4-cycle $x_0,x_1,z,x_4$ of type $\hat b,\hat a,\hat b,\hat c$. 
If $x_0=z$, then $x_0$ is adjacent to $x_2$, hence $x_0$ is adjacent to $x_i$ for $1\le i\le 4$. If $x_0\neq z$, then Lemma~\ref{lem:special4cycle}, $x_1$ is adjacent to $x_4$, which is ruled out by our assumption.
\end{proof}

\section{Six-cycles of type I in the $D_4$ complex}
\label{sec:D4}
Throughout this section, $\Lambda$ is the Dynkin diagram of type $D_4$ with its vertex set $\{a,b,c,d\}$ such that $\{a,b,c\}$ are leaf vertices.
The goal of this section is to prove Proposition~\ref{prop:typeI}, which asserts that each 6-cycle of type I in $\Delta_\Lambda$ has property $(\ast)$.

\subsection{Artin group of type $D_4$ as a mapping class group}
\label{subsec:translation}
Let $\cS$ be the surface with genus 1 with two punctures $p_a$ and $p_c$ and one boundary component, as in Figure~\ref{fig:11} (I). 
Let $\PM(\cS)$ be the pure mapping class group of $\cS$. Sometimes we will also think $p_a$ and $p_c$ as marked points rather than punctures.
By \cite[Corollary 9]{soroko2020linearity}, the Artin group $A_\Lambda$ is isomorphic to $\PM(\cS)$, where $a,b,c,d$ correspond to the Dehn twists along simple closed curves $a,b,c,d$ in Figure~\ref{fig:11} (I) (we slightly abuse the notation and use $a$ for both a generator of $A_\Lambda$ and a curve on $\cS$). 
Take $z_a\neq z_c\in \partial \cS$. 
Let $\tau_a$ (resp. $\tau_c$) be the arc from $p_a$ to $z_a$ (resp. $p_c$ to $z_c$) as in Figure~\ref{fig:11} (I). 
\begin{figure}[h]
	\centering
	\includegraphics[scale=1]{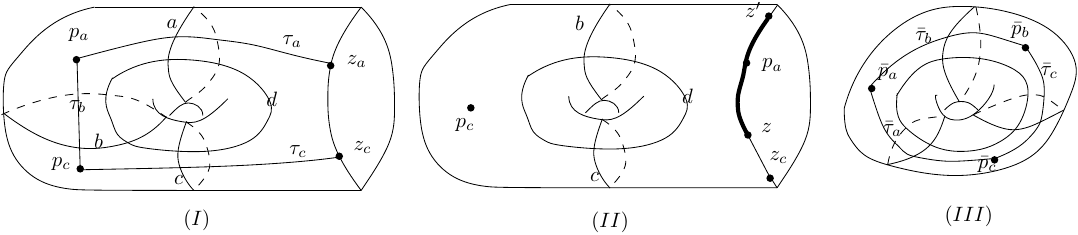}
	\caption{Surfaces associated to $D_4$.}
	\label{fig:11}
\end{figure}	

\begin{lem}
	\label{lem:correspondence}
The group $A_\Lambda\cong \PM(\cS)$ acts transitively on the set $\Omega_a$ of homotopy classes of simple arcs (rel endpoints) in $\cS$ from $p_a$ to $z_a$. Moreover, the stabilizer of $\tau_a$ (i.e. the collection of mapping classes in $\PM(\cS)$ preserving $\tau_a$ up to homotopy rel endpoints) is $A_{\hat a}$, i.e.  the subgroup of $A$ generated by vertices in $\Lambda\setminus\{a\}$.

Hence there is a 1-1 correspondence between left cosets of $A_{\hat a}$ in $A_\Lambda$ and elements in $\mathcal A$. More precisely, for each element in $\Omega_a$, the collection of all mapping classes in $A_\Lambda$ sending $\tau_a$ to this element is a left coset of $A_{\hat a}$.

The same statement holds if we replace $\Omega_a$ and $\tau_a$ by $\Omega_c$ and $\tau_c$.
\end{lem}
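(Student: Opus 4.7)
The plan is to use the change-of-coordinates principle for transitivity, analyze the stabilizer of $\tau_a$ via the cut surface, and conclude the coset bijection by orbit-stabilizer.

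For transitivity, any two simple arcs $\tau_1, \tau_2 \in \Omega_a$ have the same topological type (cutting $\cS$ along either produces a surface with matching Euler characteristic, genus, punctures, and boundary), so the standard change-of-coordinates principle supplies an orientation-preserving homeomorphism $\phi: \cS \to \cS$ with $\phi(\tau_1) = \tau_2$. Since $\phi$ preserves the endpoints of $\tau_1$, it fixes the puncture $p_a$ and the boundary point $z_a$; being a bijection of the two-element set $\{p_a, p_c\}$ that fixes $p_a$, it also fixes $p_c$. After post-composing with a homeomorphism supported in a collar neighborhood of $\partial \cS$ disjoint from $\tau_2$, one may further assume $\phi$ is the identity on $\partial \cS$, so $\phi \in \PM(\cS)$.

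For the stabilizer, the inclusion $A_{\hat a} \subseteq \operatorname{Stab}(\tau_a)$ is immediate, since Figure~\ref{fig:11}(I) realizes $b, c, d$ as simple closed curves disjoint from (a representative of) $\tau_a$, making their Dehn twists fix the homotopy class of $\tau_a$. The reverse inclusion is the main obstacle, and I would handle it via the cut-surface principle: every element of $\PM(\cS)$ fixing $[\tau_a]$ is represented by a homeomorphism that restricts to the identity on a regular neighborhood of $\tau_a$, and such representatives correspond bijectively (via isotopy extension) to elements of $\PM(\cS_a)$, where $\cS_a$ denotes $\cS$ cut along $\tau_a$ --- a torus with one puncture ($p_c$) and one boundary component. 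I would then identify $\PM(\cS_a) \cong A(A_3) = A_{\hat a}$ compatibly with Soroko's isomorphism, by checking the match on the three Dehn twist generators $b, d, c$, which survive as the natural non-peripheral simple closed curves on $\cS_a$.

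Combining the two ingredients, orbit-stabilizer gives the desired bijection $A_\Lambda / A_{\hat a} \leftrightarrow \Omega_a$, $gA_{\hat a} \mapsto [g(\tau_a)]$; the analogue for $\Omega_c$ and $\tau_c$ follows by the symmetry of the Dynkin diagram $D_4$ swapping the leaves $a \leftrightarrow c$. The delicate step is verifying that the abstract identification $\PM(\cS_a) \cong A(A_3)$ matches the Soroko embedding $A_{\hat a} \hookrightarrow A_\Lambda$ on the nose; this is bookkeeping at the level of Dehn twist generators rather than a new geometric input, but it is where care is required.
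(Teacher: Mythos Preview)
Your proof is correct and follows essentially the same approach as the paper: change-of-coordinates for transitivity, and the cut-surface identification $\operatorname{Stab}([\tau_a]) \cong \PM(\cS_a) \cong A_{\hat a}$ (the paper invokes \cite[Corollary~11]{soroko2020linearity} for the last isomorphism). The only cosmetic difference is that the paper separates the stabilizer step into an injection $\operatorname{Stab}([\tau_a]) \to \PM(\cS_a)$ and a lifting monomorphism $\PM(\cS_a) \hookrightarrow \PM(\cS)$ with image $A_{\hat a}$, rather than asserting the bijection in one stroke.
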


\begin{proof}
For any simple arc in $\cS$ from $p_a$ to $z_a$, if we cut $\cS$ open along this arc, the resulting surface is a torus with one boundary component and one puncture. Thus the transitivity of $A_\Lambda\act \Omega_a$ follows from the change of coordinate principle (cf. \cite[Chapter 1.3]{farb2011primer}). If a mapping class preserves $\tau_a$ up to homotopy rel endpoints, then we can find a representative in the class fixing $\tau_a$ pointwise (as homotopy between two arcs rel endpoints can be improved to isotopy, and this isotopy can be extended to an isotopy on $\cS$, see \cite[Chapter 1.2.7]{farb2011primer}). We scissor $\cS$ along $\tau_a$ to obtain the surface $\cS_a$ in Figure~\ref{fig:11} (II). The previous discussion gives a homomorphism Stab$_{\PM(\cS)}([\tau_a])\to \PM(\cS_a)$, which is injective. As $\PM(\cS_a)$ is generated by the Dehn twists around curves $b,c,d$ in Figure~\ref{fig:11} (II) and it is isomorphic to the subgroup $A_{\hat a}$ of $A_\Lambda$ generated by $\{b,c,d\}$ (\cite[Corollary 11]{soroko2020linearity}), this gives a lifting monomorphism $\PM(\cS_a)\to \PM(\cS)$, whose image is $A_{\hat a}$, and is contained in Stab$_{\PM(\cS)}([\tau_a])$. Thus the lemma follows.
\end{proof}

\begin{lem}
	\label{lem:disjoint}
Under the correspondence of Lemma~\ref{lem:correspondence},	
the two cosets $gA_{\hat a}$ and $hA_{\hat c}$ have non-empty intersection if and only if the corresponding two homotopy classes of arcs have disjoint representatives.
\end{lem}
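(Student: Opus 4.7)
The plan is to prove both directions by leveraging the explicit orbit description from Lemma \ref{lem:correspondence}, with the backward direction depending on the change of coordinates principle for surfaces (cf.\ \cite[Chapter 1.3]{farb2011primer}). The forward direction is essentially immediate from the fact that homeomorphisms preserve disjointness; the backward direction is where the real work is, and it consists of producing a single mapping class that realizes both prescribed arc classes simultaneously.

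For the forward direction, suppose $f \in gA_{\hat a} \cap hA_{\hat c}$. Since $g^{-1}f \in A_{\hat a}$ and $A_{\hat a}$ is the stabilizer of $[\tau_a]$, we have $f[\tau_a] = g[\tau_a]$, and analogously $f[\tau_c] = h[\tau_c]$. Because $\tau_a$ and $\tau_c$ are already disjoint in $\cS$ by construction (they go to different punctures and different boundary points), the images $f(\tau_a)$ and $f(\tau_c)$ are disjoint representatives of the two arc classes.

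For the backward direction, let $\alpha \in g[\tau_a]$ and $\gamma \in h[\tau_c]$ be disjoint simple arc representatives. I would produce an element $f \in \PM(\cS)$ with $f(\tau_a) = \alpha$ and $f(\tau_c) = \gamma$ as arcs. Granted such an $f$, the relations $f[\tau_a] = g[\tau_a]$ and $f[\tau_c] = h[\tau_c]$ together with Lemma \ref{lem:correspondence} give $f \in gA_{\hat a}$ and $f \in hA_{\hat c}$, proving the intersection is non-empty. To construct $f$, I invoke the change of coordinates principle: both $(\tau_a,\tau_c)$ and $(\alpha,\gamma)$ are ordered pairs of disjoint simple arcs with the same endpoint labelling (the first component runs from $p_a$ to $z_a$, the second from $p_c$ to $z_c$). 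Cutting $\cS$ along either pair yields a surface with the same genus, number of boundary components, and punctures, so by the classification of surfaces there is an orientation-preserving self-homeomorphism of $\cS$ sending one configuration to the other. Because the four endpoints $p_a,p_c,z_a,z_c$ are labelled and matched, and the arcs in either pair can be chosen non-separating as in Figure \ref{fig:11}, this homeomorphism can be arranged to fix each puncture and to be the identity on $\partial \cS$, hence represents an element of $\PM(\cS)$.

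The main obstacle lies in the careful bookkeeping needed for the change of coordinates: one must verify that the two cut-open surfaces are homeomorphic as surfaces with labelled boundary/puncture data, so that the resulting homeomorphism of $\cS$ is in the pure (rather than merely the full) mapping class group. This is a routine verification once one records the combinatorial data of which arc ends at which puncture and at which boundary marked point, combined with the fact that the homotopy classes $[\tau_a], [\tau_c]$ determine representatives up to isotopy fixing endpoints. I do not anticipate any deeper obstacle, so the proof will be short.
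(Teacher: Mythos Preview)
Your proposal is correct and follows essentially the same approach as the paper: both directions are handled identically, with the forward direction using that a common element carries the disjoint pair $(\tau_a,\tau_c)$ to disjoint representatives, and the backward direction using the change of coordinates principle to produce a pure mapping class sending $(\tau_a,\tau_c)$ to the given disjoint pair. The paper is slightly more explicit in noting that cutting $\cS$ along any such disjoint pair yields a torus with one boundary component, which is the concrete topological fact underpinning the change of coordinates step you invoke.
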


\begin{proof}
Take $[\tau_1]\in \Omega_a$ and $[\tau_2]\in\Omega_c$ such that these two homotopy classes are represented by disjoint simple arcs $\tau_1$ and $\tau_2$. If we scissor $\cS$ along $\tau_1$ and $\tau_2$, then we obtain a torus with one boundary component. Thus by the change of coordinate principle, there is a homeomorphism $\phi:\cS\to \cS$ fixing the boundary and the punctures pointwise such that $\phi(\tau_a)=\tau_1$ and $\phi(\tau_c)=\tau_2$. Thus $[\tau_1]$ corresponds to the coset $[\phi]A_{\hat a}$, and $[\tau_2]$ corresponds to the coset $[\phi]A_{\hat c}$. These two cosets have non-empty intersection as $A_{\hat a}\cap A_{\hat c}\neq\emptyset$. For the only if direction, if $gA_{\hat a}\cap hA_{\hat c}\neq\emptyset$, then we can assume $g=h$. Then the corresponding two arcs are the images of $\{\tau_a,\tau_c\}$ under $g=h$, hence are disjoint.
\end{proof}

\begin{lem}
	\label{lem:common adjacent}
Take $[\tau_1],[\tau_2]\in \Omega_c$.  Let $x_1,x_2$ be the associated vertices in $\Delta_\Lambda$. Let $\delta$ denotes the Dehn twist along $\partial 
\cS$. Then 
\begin{enumerate}
	\item  if $[\tau_2]=\delta^n[\tau_1]$ for $n\neq 0$, then $x_1$ and $x_2$ can not be adjacent to a common vertex of type $\hat a$;
	\item  if there exists $[\tau'_2]\in \Omega_c$ such that $[\tau'_2]=\delta^n[\tau_1]$ some $n$ and there are more than one elements in $\Omega_a$ containing representatives that are disjoint from both $\tau_1$ and $\tau'_2$, then $x_1$ and $x_2$ are adjacent to a common vertex of type $\hat a$ if and only if they are adjacent to a common vertex of type $\hat b$.
\end{enumerate}
\end{lem}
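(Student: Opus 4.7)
Via Lemma~\ref{lem:correspondence} and Lemma~\ref{lem:disjoint}, a common type-$\hat a$ neighbor of $x_1$ and $x_2$ corresponds to an arc class $\sigma \in \Omega_a$ whose representative is disjoint from representatives of both $\tau_1$ and $\tau_2$. Part (1) therefore amounts to showing no such $\sigma$ exists when $n \neq 0$. Assuming $\sigma$ exists, I would choose pairwise disjoint simple representatives of $\sigma, \tau_1, \tau_2$, with $\tau_1, \tau_2$ sharing endpoints $p_c, z_c$. The Dehn twist $\delta$ is supported in a collar neighborhood $N$ of $\partial\cS$ and fixes $\partial\cS$ pointwise, so $\tau_2 = \delta^n \tau_1$ agrees with $\tau_1$ outside $N$ and inside the support annulus of $\delta$ wraps around $\partial\cS$ an additional $n$ times near $z_c$. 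Placing $\sigma$ in boundary-radial normal form near $z_a$, the spiral of $\tau_2$ inside the support annulus crosses the radial segment of $\sigma$ in $|n|$ transverse points. To conclude these intersections are essential, I would pass to the universal cover $\widetilde{\cS}$ with a common lift $L$ of $\partial\cS$ containing lifted endpoints $\tilde z_a \neq \tilde z_c$: the lifted twist $\widetilde\delta^n$ fixes $L$ pointwise but shears the opposite side of the lifted collar strip by $n$ periods along $L$, forcing $\widetilde\delta^n \widetilde\tau_1$ and $\widetilde\sigma$ to cross inside the strip and yielding geometric intersection number $i(\sigma, \tau_2) \geq 1$, a contradiction.

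For part (2), part (1) immediately forces $n = 0$ in the hypothesis (otherwise no common $\hat a$-arc exists, let alone two), so $[\tau_2'] = [\tau_1]$ and the hypothesis reduces to: $\tau_1$ admits at least two distinct disjoint arcs in $\Omega_a$, equivalently $x_1$ has at least two type-$\hat a$ neighbors in $\Delta_\Lambda$. The plan is to prove the biconditional between common $\hat a$- and $\hat b$-neighbors using Lemma~\ref{lem:special4cycle} combined with the link structure. By Lemma~\ref{lem:link}, $\lk(x_1, \Delta_\Lambda)$ is the Artin complex of type $A_3$ on the vertex set $\{a,b,d\}$ and carries abundant vertices of each type. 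Given a common type-$\hat a$ neighbor $\sigma$ of $x_1, x_2$, one selects a type-$\hat b$ neighbor $y$ of $x_1$ and forms the 4-cycle $x_1\,\sigma\,x_2\,y$ of type pattern $\hat c\,\hat a\,\hat c\,\hat b$; Lemma~\ref{lem:special4cycle} (with position-1 and position-3 types equal and positions 2, 3, 4 mutually distinct) produces the edge $\sigma \sim y$. Iterating this 4-cycle argument inside $\lk(\sigma, \Delta_\Lambda)$, which is another $A_3$-Artin complex (on $\{b,c,d\}$), one obtains the edge $y \sim x_2$, yielding the desired common $\hat b$-neighbor. The alternate $\hat a$-neighbor $\sigma'$ supplied by the hypothesis is used to guarantee the 4-cycles are embedded in degenerate situations. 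The converse direction is symmetric, swapping the roles of $a$ and $b$ via the $S_3$-symmetry of the $D_4$ Dynkin diagram.

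The main obstacle will be in part (1): although the $|n|$ collar-level intersections are visible in normal form, showing they are essential under ambient isotopy requires carefully tracking the lifted Dehn-twist action in the universal cover, and in particular invoking the bigon criterion to preclude removal of these intersections. A secondary difficulty lies in part (2) in ensuring that the iterated 4-cycles are always embedded; a careful case analysis is needed, and the two-$\hat a$-neighbor hypothesis is used repeatedly to avoid degenerate configurations at each stage of the adjacency propagation.
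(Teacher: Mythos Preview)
Your approach to part~(1) could be made to work but is far heavier than needed. The paper dispatches it in one line: since $\tau_1$ and $\tau_2=\delta^n\tau_1$ coincide outside a collar of $\partial\cS$ while $\tau_2$ winds $n$ extra times around the boundary inside that collar, the union $\tau_1\cup\tau_2$ separates $z_a$ from $p_a$ in $\cS$; hence no arc in $\Omega_a$ can be disjoint from both, and by Lemma~\ref{lem:disjoint} there is no common type-$\hat a$ neighbor. No bigon criterion or universal-cover computation is required.

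For part~(2) there is a genuine gap. Your ``4-cycle $x_1\,\sigma\,x_2\,y$'' is not a cycle: you have edges $x_1\sigma$, $\sigma x_2$, and $yx_1$, but no edge $x_2y$, and producing that edge is exactly the goal. Lemma~\ref{lem:special4cycle} takes an embedded 4-cycle as \emph{input} and returns a diagonal; it cannot close a 3-edge path into a cycle. The iteration in $\lk(\sigma,\Delta_\Lambda)$ has the same defect. More fundamentally, your reading of the hypothesis (forcing $n=0$, so $\tau'_2=\tau_1$) severs any connection between the hypothesis and the vertex $x_2$, leaving you to prove the biconditional for \emph{arbitrary} $x_2$ of type $\hat c$; there is no reason to expect this, since the Dynkin automorphism $a\leftrightarrow b$ does not fix a generic type-$\hat c$ coset.

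The paper's argument is entirely different. The two $\Omega_a$-arcs disjoint from $\tau_1$ and $\tau'_2$ give, via Theorem~\ref{thm:4 wheel}, a type-$\hat d$ vertex adjacent to both $x_1$ and $x'_2$; extending to a top simplex forces $x_1=gA_{\hat c}$ and $x'_2=gc^mA_{\hat c}$ for some $g\in A_\Lambda$ and integer $m$. The Dehn-twist relation between $\tau_2$ and $\tau'_2$ then yields $x_2=g\delta^nc^mA_{\hat c}$. After translating by $g^{-1}$ so that $x_1=A_{\hat c}$ and $x_2=\delta^nc^mA_{\hat c}$, one observes that the automorphism of $A_\Lambda$ swapping $a$ and $b$ fixes both cosets setwise (it fixes $c$ and $\delta$, and stabilizes $A_{\hat c}=\langle a,b,d\rangle$), and therefore carries common $\hat a$-neighbors to common $\hat b$-neighbors and conversely.
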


\begin{proof}
For Assertion 1, if $x_1$ and $x_2$ are adjacent to a common vertex of type $\hat a$, then Lemma~\ref{lem:disjoint} implies that there is  $[\tau_3]\in \Omega_a$ such that $\tau_3\cap\tau_i=\emptyset$ for $i=1,2$. By our assumption on $\tau_1$ and $\tau_2$, $z_a$ and $p_a$ are not in the same connected component of $\cS\setminus (\tau_1\cup\tau_2)$, this is a contradiction.

For Assertion 2, let $x'_2\in \Delta_\Lambda$ be the vertex of type $\hat c$ associated with $[\tau'_2]$. Then there are two different vertices of type $\hat c$ that are adjacent to both $x_1$ and $x'_2$. By Theorem~\ref{thm:4 wheel}, there is a vertex $y_{\hat d}$ of type $\hat d$ which is adjacent to each of $x_1$ and $x'_2$. Let $y_{\hat b}$ and $y_{\hat a}$ be some vertices of type $\hat b$ and $\hat a$ respectively that are adjacent to $t_{\hat d}$. Then by Lemma~\ref{lem:link}, $\{x_1,y_{\hat d},y_{\hat b},y_{\hat a}\}$ and $\{x'_2,y_{\hat d},y_{\hat b},y_{\hat a}\}$ form vertex sets of two simplices in $\Delta_\Lambda$. Thus if $x_1$ corresponds to a coset of form $gA_{\hat c}$, then $x'_2$ corresponds to a coset of form $gc^m A_{\hat c}$. Hence $x_2$ corresponds to a coset of form $\delta ^ngc^mA_{\hat c}=g\delta^nc^mA_{\hat c}$. Up to a left translation, we can assume $x_1$ corresponds to the identity coset $A_{\hat c}$, and $x_2$ corresponding to the coset $\delta^nc^mA_{\hat c}$ where $\delta$ is the Garside element of $A_\Lambda$. By considering the automorphism of $A_\Lambda$ which exchanges $b$ and $a$ and fixes other generators and noting that this automorphism fix $\delta^n c^mA_{\hat c}$ setwise, 
we know there is a left $A_{\hat b}$-coset having nonempty intersection with both $A_{\hat c}$ and $\delta^n c^m  A_{\hat c}$ if and only if there is a left $A_{\hat a}$-coset having nonempty intersection with $A_{\hat c}$ and $\delta^n c^m A_{\hat c}$. 
\end{proof}

\subsection{A relative Artin complex as a complex of arcs}
\label{subsec:relarc}
Let $\Delta_0$ be the relative Artin complex $\Delta_{\Lambda,\{a,b,c\}}$. Note that the action of the center $Z_{A_\Lambda}$ on $\Delta$ is free, hence the same holds true for the action $Z_{A_\Lambda}\act \Delta_0$. Let $\bar \Delta_0$ be the quotient of this action, together with the covering map $\pi_Z:\Delta_0\to \bar \Delta_0$. As the action of $Z_{A_\Lambda}$ preserves the types of vertices in $\Delta$, each vertex in $\bar \Delta_0$ has a well-defined type, either $\hat a$ or $\hat b$ or $\hat c$.

Let $\bar \cS$ be a torus with three punctures $\{\bar p_a,\bar p_b,\bar p_c\}$, identified with the interior of $\cS$. This gives a surjective homomorphism $h:\PM(\cS)\to \PM(\bar \cS)$. The kernel of $h$ is exactly the center of $\PM(\cS)$, where the center is generated by $(abcd)^3$ under the isomorphism $\PM(\cS)\cong A_\Lambda$ (see \cite[Corollary 9]{soroko2020linearity}). Let $\bar A_\Lambda\cong \PM(\bar \cS)$ be the central quotient of $A_\Lambda$. Let $\{\bar a,\bar b,\bar c,\bar d\}$ be the image of the generators of $A_\Lambda$ in $\bar A_{\Lambda}$. Let $\bar A_{\hat a}$ (resp. $\bar A_{\hat b},\bar A_{\hat c}$) be the image of $A_{\hat a}$ (resp. $A_{\hat b},A_{\hat c}$) under the quotient. 
Let $\bar a$ be the simply closed curve on $\bar \cS$ corresponding to $a$ in $\cS$. Similarly we define $\bar b,\bar c,\bar \tau_a,\bar \tau_b,\bar\tau_c$. See Figure~\ref{fig:11} (III). Then under the identification $\bar A_\Lambda\cong \PM(\bar \cS)$, the generators $\{\bar a,\bar b,\bar c,\bar d\}$ corresponds to Dehn twists along the respective curves in $\bar S$. Let $\bar \Omega_a$ be the set of homotopy classes of simple arcs (rel endpoints) in $\bar\cS$ connecting two endpoints of $\bar \tau_a$. Similarly we define $\bar\Omega_b$ and $\bar\Omega_c$.

\begin{definition}
	\label{def:complex 3punctured}
Let $\bar \Delta'_0$ be the following 2-dimensional simplicial complex. The vertices of $\bar\Delta'_0$ are in 1-1 correspondence with elements in $\bar\Omega_a\sqcup \bar\Omega_b\sqcup \bar\Omega_c$. 
Two vertices are joined by an edge if they belong to different subsets of the partition $\bar\Omega_a\sqcup \bar\Omega_b\sqcup \bar\Omega_c$, and the corresponding two homotopy classes have representatives which are disjoint except at their endpoints. A 3-cycle in the 1-skeleton span a 2-face, if the concatenation of the associated three arcs form a homotopically non-trivial simple closed loop in $\bar \cS'$ which is defined to be the torus obtained from $\bar \cS$ by adding back its punctures.
\end{definition}

By a similar argument as in the proof of Lemma~\ref{lem:correspondence}, we know that $\bar A_\Lambda\cong \PM(\bar \cS)$ acts transitively on $\bar \Omega_a$, with the stabilizer of $\bar \tau_a$ be the subgroup $\bar A_{\hat a}$ generated by $\{\bar b,\bar c,\bar d\}$. This gives a 1-1 correspondence between vertices of type $\hat a$ in $\bar \Delta'_0$, and elements in $\bar\Omega_a$. By considering vertices of other types, we obtain a bijection $\Theta$ between the vertex set of $\bar\Delta_0$ and the vertex set of $\bar\Delta'_0$. 

\begin{lem}
	\label{lem:bar correspondence}
The bijection $\Theta$ extends to a simplicial isomorphism $\Theta:\bar\Delta_0\to \bar\Delta'_0$.
\end{lem}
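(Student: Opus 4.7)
The plan is to verify that $\Theta$ extends to a simplicial isomorphism by checking it preserves edges and 2-simplices in both directions. Since $\bar\Delta_0$ is a flag complex of dimension two (inherited from flagness of $\Delta_\Lambda$ via the free type-preserving action of $Z_{A_\Lambda}$) and $\bar\Delta'_0$ is explicitly 2-dimensional by Definition~\ref{def:complex 3punctured}, these two checks will suffice. Throughout, I exploit the isomorphism $\bar A_\Lambda\cong\PM(\bar\cS)$ together with the vertex-level bijection already built from the transitivity of $\PM(\bar\cS)$ on $\bar\Omega_a,\bar\Omega_b,\bar\Omega_c$.

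\textbf{Step 1 (edges).} This is the three-punctured torus analogue of Lemma~\ref{lem:disjoint}. If two vertices $v_1,v_2$ of distinct types $\hat s,\hat t\in\{\hat a,\hat b,\hat c\}$ correspond to cosets $g_1\bar A_{\hat s}, g_2\bar A_{\hat t}$ sharing an element $g$, then $\Theta(v_1)=g(\bar\tau_s)$ and $\Theta(v_2)=g(\bar\tau_t)$ are disjoint away from endpoints, since the reference arcs are disjoint (visible in Figure~\ref{fig:11}(III)). Conversely, given disjoint representatives $\alpha_s,\alpha_t$ with correct endpoints, cutting $\bar\cS$ along $\alpha_s\cup\alpha_t$ yields a surface of the same topological type (Euler characteristic, number of boundary components, distribution of punctures) as the cut along $\bar\tau_s\cup\bar\tau_t$; the change of coordinates principle then produces $\phi\in\PM(\bar\cS)$ carrying $(\bar\tau_s,\bar\tau_t)$ to $(\alpha_s,\alpha_t)$, so $[\phi]\in g_1\bar A_{\hat s}\cap g_2\bar A_{\hat t}$, and $v_1$ is adjacent to $v_2$.

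\textbf{Step 2 (triangles).} By flagness, a triple of vertices of types $\hat a,\hat b,\hat c$ spans a 2-simplex in $\bar\Delta_0$ iff it is pairwise adjacent, iff the three cosets share a common element $g$, iff $(\alpha_a,\alpha_b,\alpha_c)=(g\bar\tau_a, g\bar\tau_b, g\bar\tau_c)$ for some $g\in\PM(\bar\cS)$. I need to match this with the condition of Definition~\ref{def:complex 3punctured} that the concatenation form a homotopically nontrivial simple closed loop in $\bar\cS'$. The forward direction follows because any $g$ descends to a homeomorphism of $\bar\cS'$ preserving simplicity and homotopy class of loops, and direct inspection of Figure~\ref{fig:11}(III) confirms that the reference concatenation $\bar\tau_a\bar\tau_b\bar\tau_c$ is a nontrivial simple closed loop in the torus $\bar\cS'$. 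For the converse, given pairwise disjoint $\alpha_s$ whose concatenation is a nontrivial simple closed loop $\gamma$, cutting $\bar\cS'$ along $\gamma$ produces an annulus whose two boundary components receive the three punctures $\{\bar p_a,\bar p_b,\bar p_c\}$ in a combinatorial pattern dictated by how $\gamma$ decomposes into the three arcs; this pattern matches that of the reference triple, so a boundary- and puncture-respecting homeomorphism of the cut surfaces glues back to the desired $\psi\in\PM(\bar\cS)$ sending $(\bar\tau_a,\bar\tau_b,\bar\tau_c)$ to $(\alpha_a,\alpha_b,\alpha_c)$.

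\textbf{Main obstacle.} The hard part is the reverse direction in Step 2: pairwise disjointness of the $\alpha_s$ is strictly weaker than their arising from a single mapping class orbit of $(\bar\tau_a,\bar\tau_b,\bar\tau_c)$, and the nontrivial-simple-closed-loop condition must capture exactly the extra content. The potentially bad cases are configurations whose concatenation is nonsimple or nullhomotopic in $\bar\cS'$; I would rule these out by showing that in each such case the complement $\bar\cS'\setminus(\alpha_a\cup\alpha_b\cup\alpha_c)$ has a different number of components, a different Euler characteristic, or a different puncture distribution on its boundary than the reference, hence cannot be related by an element of $\PM(\bar\cS)$. This topological classification of pairwise-disjoint arc triples on the thrice-punctured torus with the prescribed endpoint data is the main combinatorial content that needs to be worked out in detail.
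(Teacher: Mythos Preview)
Your overall strategy is the same as the paper's, and your converse argument in Step~2 (cut along the loop, get an annulus, apply change of coordinates) is exactly what the paper does. But there is one genuine error and one unnecessary complication.

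\textbf{The flagness claim is false.} You assert that $\bar\Delta_0$ is flag, ``inherited from flagness of $\Delta_\Lambda$ via the free type-preserving action of $Z_{A_\Lambda}$.'' Quotients of flag complexes by free actions need not be flag, and in fact $\bar\Delta_0$ is not: the paper later exhibits \emph{degenerate} $3$-cycles in $\bar\Delta'_0\cong\bar\Delta_0$ (Lemma~\ref{lem:generator}), i.e.\ pairwise adjacent triples that do not span a $2$-simplex. So the chain ``$2$-simplex $\Leftrightarrow$ pairwise adjacent $\Leftrightarrow$ common element'' breaks at the first step. Fortunately you do not need flagness at all. A $2$-simplex in $\bar\Delta_0$ is by definition the image of a $2$-simplex in $\Delta_0$, and every $2$-simplex in $\Delta_0$ has vertex set $\{gA_{\hat a},gA_{\hat b},gA_{\hat c}\}$ for some $g\in A_\Lambda$; conversely any common element $\bar g$ lifts to give such a simplex upstairs. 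So ``$2$-simplex in $\bar\Delta_0$ $\Leftrightarrow$ the three cosets share a common element'' holds directly, with no appeal to pairwise adjacency.

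\textbf{The ``Main obstacle'' is illusory.} Once flagness is dropped, you only need the two implications (common element $\Rightarrow$ loop condition) and (loop condition $\Rightarrow$ common element). You already sketched both. The ``bad cases'' where the concatenation is nonsimple or nullhomotopic are simply not $2$-simplices in $\bar\Delta'_0$, and you are under no obligation to show they fail to be $2$-simplices in $\bar\Delta_0$; a simplicial isomorphism only needs to match simplices with simplices, not non-simplices with non-simplices. The paper dispatches the converse in one line: the complement in the torus $\bar\cS'$ of any homotopically nontrivial simple closed curve is an open annulus, so any two such configurations are related by a homeomorphism fixing the three marked points. No case analysis of arc triples is needed.
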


\begin{proof}
By a similar argument as in Lemma~\ref{lem:disjoint}, we know $\Theta$ extends to an isomorphism on the 1-skeleton. Let $T$ be the torus obtained from $\bar \cS$ by filling back its punctures. Note that the 2-face $F$ in $\bar\Delta_0$ with its vertices being the identity cosets $\bar A_{\hat a},\bar A_{\hat b},\bar A_{\hat c}$ corresponds to three arcs $\bar \tau_a,\bar\tau_b,\bar\tau_c$ in $\bar \cS$ whose concatenation is a homotopcially non-trivial simple closed loop in $T$, hence gives a 2-face in $\bar\Delta'_0$. As any 2-face $F'$ in $\bar\Delta_0$ is a translate of $F$, thus the vertex set of $F'$ corresponds to three acts whose concatenation is the image of the loop in the previous sentence under a homeomorphism. Thus $F'$ corresponds to a 2-face in $\bar\Delta'_0$. Conversely, given $[\tau_1]\in\bar\Omega_a$, $[\tau_2]\in\bar\Omega_b$ and $[\tau_3]\in\bar\Omega_c$ such that the concatenation of $\tau_1,\tau_2,\tau_3$ is a homotopically non-trivial simple closed loop in $T$. Note that $T$ with any homotopically non-trivial simple closed loop in $T$ removed is always an open annulus. Thus there is a homeomorphism $\phi$ of $T$ fixing $\{\bar p_a,\bar p_b,\bar p_c\}$ pointwise such that $\phi(\tau_a)=\bar\tau_1$, $\phi(\tau_b)=\bar\tau_2$ and $\phi(\tau_c)=\bar\tau_3$. Thus $\{[\tau_1],[\tau_2],[\tau_3]\}$ corresponds to $\{[\phi]\bar A_{\hat a},[\phi]\bar A_{\hat b},[\phi]\bar A_{\hat c}\}$, which spans a 2-face in $\bar\Delta_0$.
\end{proof}

As $\Delta_0$ is simply-connected (\cite[Lemma 6.2]{huang2023labeled}), the fundamental group of $\bar\Delta_0\cong \bar\Delta'_0$ is $\mathbb Z$. Now we display an explicit generating cycle for the fundamental group of $\bar\Delta'_0$. A 3-cycle in $\bar\Delta'_0$ is \emph{degenerate} if it does not span a 2-face in $\bar\Delta'_0$.

\begin{lem}
	\label{lem:generator}
Let $x$ be a vertex of a degenerate 3-cycle $\omega$ in $\bar\Delta'_0$. Then $[\omega]$ represents a generator in $\pi_1(x,\bar\Delta'_0)$.
\end{lem}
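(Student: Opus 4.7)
The plan is to identify $\pi_1(\bar\Delta'_0)$ via the covering $\pi_Z:\Delta_0\to\bar\Delta_0\cong\bar\Delta'_0$ and then compute $[\omega]$ by a symmetry reduction followed by one explicit surface-level computation. Since $\Delta_0$ is simply connected (\cite[Lemma 6.2]{huang2023labeled}) and $Z_{A_\Lambda}\cong\mathbb Z$, generated by $c_0=(abcd)^3$, acts freely on $\Delta_0$ with quotient $\bar\Delta_0$, the map $\pi_Z$ is the universal cover; hence $\pi_1(x,\bar\Delta'_0)\cong Z_{A_\Lambda}\cong\mathbb Z$, and the class of any loop is read off as the unique integer $n$ such that a lift starting at $\tilde x$ ends at $c_0^n\tilde x$. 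Lifting $\omega$ edge by edge determines such an $n$, and the task becomes showing $n=\pm 1$.

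First I would reduce to a single representative. By the change of coordinates principle, $\bar A_\Lambda\cong\PM(\bar\cS)$ acts transitively on ordered triples of pairwise disjoint simple arcs in $\bar\cS$ of the prescribed endpoint-pair types whose concatenation bounds a disk in $\bar\cS'$, since the complement of such a triple is always a once-holed torus with the three punctures on the hole boundary. Because $\bar A_\Lambda$ acts on the abelian group $\pi_1(\bar\Delta'_0)\cong\mathbb Z$ by automorphisms and $\aut(\mathbb Z)=\{\pm 1\}$, the integer $|n|$ is a universal invariant of degenerate 3-cycles, independent of the specific cycle or basepoint. Hence it suffices to verify $|n|=1$ for one convenient representative, which I would then do using the surface picture of Section~\ref{subsec:translation}.

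Fix a degenerate 3-cycle with arcs $\alpha_a,\alpha_b,\alpha_c\subset\bar\cS$ bounding a disk $D\subset\bar\cS'$ whose corners are $\bar p_a,\bar p_b,\bar p_c$, and lift its vertices to $\Delta_0$ via Lemma~\ref{lem:correspondence}: the arcs $\alpha_a,\alpha_c$ become arcs ending at the distinguished points $z_a,z_c\in\partial\cS$, while $\alpha_b$ becomes an arc from $p_a$ to $p_c$. Near the puncture $\bar p_b$, which is replaced by $\partial\cS$ in $\cS$, the disk $D$ opens up along one of the two complementary arcs of $\partial\cS$ between $z_a$ and $z_c$; these two choices differ by one full wind around $\partial\cS$, that is, by the boundary Dehn twist $\delta_\partial=c_0^{\pm 1}$ under Soroko's identification of $Z(A(D_4))$. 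This discrepancy is exactly the obstruction preventing the adjacency-forced lift of $\omega$ from closing up in $\Delta_0$, and it pins down $n=\pm 1$. The hard part will be this local step: one must confirm that the obstruction is exactly one boundary Dehn twist (rather than zero, which would give $n=0$, or a higher power, which would give $|n|>1$), which requires carefully tracking how the two sides of the corner of $D$ at $\bar p_b$ open up onto the two complementary arcs of $\partial\cS$ from $z_a$ to $z_c$.
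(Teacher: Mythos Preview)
Your setup and reduction match the paper's: identify $\pi_1(\bar\Delta'_0)\cong\mathbb Z$ via the universal cover $\Delta_0\to\bar\Delta_0$, then use transitivity of the $\bar A_\Lambda$-action on degenerate 3-cycles (plus $\aut(\mathbb Z)=\{\pm1\}$) to reduce to a single representative.

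The gap is in the explicit computation, which you yourself flag as the hard part. Your surface picture requires tracking the lift through a type $\hat b$ vertex of $\Delta_0$, i.e.\ a coset $gA_{\hat b}$, and there is no arc model in $\cS$ for such vertices: the stabilizer in $\PM(\cS)$ of an arc from $p_a$ to $p_c$ is $\PM$ of a genus-one surface with two boundary components, whose center contains $\mathbb Z^2$, so it is not isomorphic to $A_{\hat b}\cong A(A_3)$. The arc model for type $\hat b$ (Lemma~\ref{lem:bar correspondence}) lives only downstairs in $\bar\cS$. So while the picture of the disk $D$ opening along one of the two $z_a$--$z_c$ arcs of $\partial\cS$ is suggestive, it does not by itself compute the deck transformation---you still need to resolve the two edge-lifts through the $\hat b$ vertex at the level of cosets, and you have not said how.

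The paper avoids this by running the argument in reverse: rather than lifting a given degenerate 3-cycle, it \emph{constructs} a length-three path $x_1x_2x_3x_4$ in $\Delta_\Lambda$ of types $\hat a,\hat b,\hat c,\hat a$ using the explicit factorization $\delta=\delta_{\hat a}\cdot adc\cdot bda$ of the Garside element $\delta=(abcd)^3$. By construction $x_4=\delta\cdot x_1$, so the projected 3-cycle represents the generator; being nontrivial in $\pi_1$ it cannot bound a 2-face, hence is degenerate, and transitivity finishes. This algebraic route never needs an arc model for $\hat b$ in $\cS$.
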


\begin{proof}
Given $[\gamma_a],[\gamma'_a]\in \bar\Omega_a$, $[\gamma_b],[\gamma'_b]\in \bar\Omega_b$ and $[\gamma_c],[\gamma'_c]\in \bar\Omega_c$, such that the concatenation of $\gamma_a,\gamma_b,\gamma_c$ and the concatenation of $\gamma'_a,\gamma'_b,\gamma'_c$ form two homotopically trivial loops in $\bar\cS'$. There exists a self-homeomorphism $f$  of $\bar\cS$ preserving the punctures pointwise such that $f(\gamma_a)=\gamma'_a$, $f(\gamma_b)=\gamma'_b$ and $f(\gamma_c)=\gamma'_c$. So $\bar A_\Lambda$ acts transitively on the set of degenerate 3-cycles in $\bar\Delta'_0$. So it suffices to prove the lemma for one specific degenerate 3-cycle.

Let $\delta$ and $\delta_{\hat a}$ be the Garside element of $A_\Lambda$ and $A_{\hat a}$ respectively. 
Then $\delta=(abcd)^3$ and $\delta=\delta_{\hat a}\cdot adc\cdot bda$. Recall that there is a 1-1 correspondence between top-dimensional simplices in $\Delta_\Lambda$ and elements in $A_\Lambda$. Let $K_1,K_2,K_3,K_4$ be the top-dimensional simplices in $\Delta_\Lambda$ corresponding to identity, $\Delta_{bcd}$, $\Delta_{bcd}\cdot adc$ and $\Delta_\Lambda$ respectively. Then $K_1\cap K_2$ contains a vertex $x_1$ of type $\hat a$ represented by the identity left coset $A_{\hat a}$, $K_2\cap K_3$ contains a vertex $x_2$ of type $\hat b$ represented by $\Delta_{bcd}\cdot A_{\hat b}$, and $K_3\cap K_4$ contains a vertex $x_3$ of type $\hat c$ represented by $\Delta_{bcd}\cdot adc\cdot A_{\hat c}$. Let $x_4$ be the vertex of type $\hat a$ in $K_4$. Then $x_1,x_2,x_3,x_4$ forms an edge path $P$ in $\Delta_\Lambda$. As the action of the Garside element of $A_\Lambda$ brings $K_1$ to $K_4$, and it preserves type of vertices, this action also brings $x_1$ to $x_4$. Thus the path $P$ projects a 3-cycle in $\bar\Delta'_0\cong\bar\Delta_0$ which is a generator of the fundamental group. This 3-cycle must be degenerate.
\end{proof}

\subsection{6-cycles in terms of 6-arcs}

Let $\omega$ be an embedded 6-cycle of type I or II in $\Delta_\Lambda$ with consecutive vertices being $\{x_i\}_{i=0}^5$. 
Let $\{[\tau_i]\}_{i=0}^5$ be the homotopy classes of arcs in $\mathcal S$ associated with the vertices of $\omega$ - more precisely, if $x_i$ is of type $\hat a$ or $\hat c$, then we use Lemma~\ref{lem:correspondence}; if $x_i$ is of type $\hat b$, then we use Lemma~\ref{lem:bar correspondence} to obtain a class $[\bar\tau]\in \bar\Omega_b$ corresponding to the image $\bar x_i$ of $x_i$ under $\Delta_0\to \bar\Delta_0$, and $[\bar\tau]$ gives a homotopy class of arcs $[\tau]$ from $p_a$ to $p_c$ as $\bar\cS$ is identified with the interior of $\cS$.
We take a representative $\tau_i$ from each class, and assume the intersection number between any two of the six representatives are minimized (this is possible by putting a complete hyperbolic metric with geodesic boundary on $\cS$, and considering geodesic representatives). 
By Lemma~\ref{lem:correspondence}, Lemma~\ref{lem:bar correspondence} and our choice, we have:
\begin{lem}
Under the above assumption, $\tau_i\cap\tau_{i+1}=\emptyset$ for each $i\in \mathbb Z/6\mathbb Z$.
\end{lem}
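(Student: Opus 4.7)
My plan is to reduce the claim to the two existing correspondences (Lemma~\ref{lem:correspondence} and Lemma~\ref{lem:bar correspondence}) together with the standard principle that simple geodesic arcs on a complete hyperbolic surface pairwise realize the minimum geometric intersection number. Since $\omega$ is embedded, consecutive $x_i$ and $x_{i+1}$ are adjacent in $\Delta_\Lambda$ and hence of distinct types. I will verify case by case that $[\tau_i]$ and $[\tau_{i+1}]$ admit disjoint representatives in $\cS$; the absence of bigons between geodesics then forces our chosen minimizing $\tau_i$ and $\tau_{i+1}$ to be disjoint themselves.

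When both of $x_i,x_{i+1}$ have types in $\{\hat a,\hat c\}$, the associated cosets of $A_{\hat a}$ and $A_{\hat c}$ have nonempty intersection, so Lemma~\ref{lem:disjoint} directly exhibits disjoint representatives in $\cS$. When one vertex has type $\hat b$ (so the other, being of different type and sharing the property of belonging to $\Delta_0$, is of type $\hat a$ or $\hat c$), both vertices lie in $\Delta_0$. The adjacency descends under $\pi_Z:\Delta_0\to\bar\Delta_0$ because intersecting cosets in $A_\Lambda$ map to intersecting cosets in $\bar A_\Lambda$. Composing with the isomorphism $\Theta:\bar\Delta_0\to\bar\Delta'_0$ of Lemma~\ref{lem:bar correspondence} and using Definition~\ref{def:complex 3punctured}, the associated arc classes in $\bar\cS$ admit representatives meeting only at a common puncture. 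Viewing $\bar\cS$ as the interior of $\cS$, and extending the type $\hat a$ or $\hat c$ arc near the puncture replacing $\partial\cS$ so that it reaches $z_a$ or $z_c$ through a small collar without introducing intersections, these lift to disjoint representatives in $\cS$.

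The main bookkeeping is confirming compatibility of the two parametrizations of arcs through $\pi_Z$ and $\Theta$; I expect no serious obstacle, since $Z_{A_\Lambda}$ acts freely and type-preservingly, and under the identification $\bar\cS=\mathrm{int}(\cS)$ the boundary component of $\cS$ corresponds to the puncture $\bar p_b$. Given disjoint representatives in each case, the geodesicity of the $\tau_i$ yields $\tau_i\cap\tau_{i+1}=\emptyset$ for every $i\in \mathbb Z/6\mathbb Z$, as required.
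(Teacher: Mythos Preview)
Your proposal is correct and follows exactly the paper's approach—indeed the paper's proof is the single line ``By Lemma~\ref{lem:correspondence}, Lemma~\ref{lem:bar correspondence} and our choice,'' which you have merely unpacked. The compatibility you flag is harmless: since the $\hat b$ arc avoids $\partial\cS$, any two extensions of the adjacent $\hat a$ or $\hat c$ arc from $\bar\cS$ to $\cS$ differ by a Dehn twist along $\partial\cS$, which is supported in a collar disjoint from the $\hat b$ arc and therefore does not affect their intersection.
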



We scissor $\cS$ along $\tau_0$ to obtain the surface $\cS_0$, see Figure~\ref{fig:11} (II). The arc $\tau_a$ of $\cS$ gives two subsegments in $\partial \cS_0$, which we denote by $\overline{zp_a}$ and $\overline{p_az'}$. There is a homeomorphism $\phi:\overline{zp_a}\to \overline{z'p_a}$ sending $z$ to $z'$ so that we can obtain $\cS$ back from $\cS_0$ by gluing $\overline{zp_a}$ and $\overline{z'p_a}$ along $\phi$. Define $\pi_0$ to be the arc $\overline{z'p_a}\cup\overline{zp_a}$ in $\partial \cS_0$.

For $i=1,5$, $\tau_i\subset \cS$ gives a simple arc on $\cS_0$ from $p_a$ to $z_c$, which we still denoted by $\tau_i$. For $i=2,3,4$, as $\tau_i$ might intersect $\tau_a$ multiple times, $\tau_i$ gives a disjoint union of simple arcs on $\cS_0$. We will still denote the disjoint union of these simple arcs by $\tau_i$, and each connected component of $\tau_i$ is called a \emph{trace} of $\tau_i$. Note that if $\tau_3\subset \cS_0$ is not connected, then it must contain a trace going from $p_a$ to a point in $\pi_0$, and possible more than one traces starting and ending in $\pi_0$.

\begin{definition}
	\label{def:boundary parallel}
	A $\pi_0$-trace of $\tau_i$ is a trace with endpoints in $\pi_0$. 
	Let $\cS'_0$ be the torus with boundary component obtained by filling the puncture $p_c$ in $\cS_0$.
	A trace $\gamma$ with two endpoints $x,y$ in the boundary is \emph{boundary parallel} in $\cS_0$ (resp. $\cS'_0$), if this trace and an arc $\alpha$ in the boundary together bound a disk in $\cS_0$ (resp. $\cS'_0$). Note that there are two arcs in $\cS_0$ or $\cS'_0$ from $x$ to $y$; if $\gamma$ is a $\pi_0$-trace, then one such arc contains $z_c$, and another does not. A $\pi_0$-trace is \emph{$c$-boundary-parallel} (resp. \emph{$a$-boundary-parallel}) in $\cS_0$ if it bound a disk in $\cS_0$ together with an arc on the boundary passing through $z_c$ (resp. not passing through $z_c$). A \emph{good trace} in $\cS_0$ is a $\pi_0$-trace that is not boundary parallel in $\cS'_0$.
\end{definition}

Similarly, for $i\neq 0$ and $x_i$ of type $\hat a$ or $\hat c$, we can scissor $\cS$ along $\tau_i$ to obtain $\cS_i$. The arc $\tau_i$ gives two subarcs in $\partial \cS_i$, whose union is denoted by $\pi_i$. We can repeat the above discussion for traces of $\tau_j$ in $\cS_i$, for $j\neq i$. Moreover, we define boundary parallel and good trace in $\cS_i$, as well as the surface $\cS'_i$, in a similar way.

\begin{lem}
	\label{lem:boundary parallel}
Given four arcs $\tau_0,\tau_1,\tau_2,\tau_3$ in $\cS$ in minimal position such that adjacent arcs have trivial intersection. Suppose $[\tau_1],[\tau_3]\in\Omega_c$ and $[\tau_0]\in \Omega_a$.
\begin{enumerate}
	\item If $\tau_2$ goes from $p_a$ to $z_a$, then each trace of $\tau_2$ is not $a$-boundary parallel in $\cS'_0$, and it is not boundary parallel in $\cS_0$. Moreover, each $\pi_0$-trace of $\tau_3$ is not $a$-boundary-parallel in $\cS_0$. 
	\item If $\tau_2$ goes from $p_a$ to $p_c$, then each $\pi_0$-trace of $\tau_2$ is not $a$-boundary parallel in $\cS'_0$, and it is not boundary parallel in $\cS_0$. Moreover, each $\pi_0$-trace of $\tau_3$ is not $a$-boundary-parallel in $\cS_0$. 
\end{enumerate}		
\end{lem}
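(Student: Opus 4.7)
The plan is to argue by contradiction using minimal position together with the bigon criterion, plus an Euler characteristic count for the one case that the bigon criterion does not directly resolve. In each case, suppose the trace $\gamma$ in question is boundary parallel, so that $\gamma\cup\alpha$ bounds a disk $D$ in the appropriate surface ($\cS_0$ or $\cS'_0$) for some arc $\alpha\subset\partial\cS_0=\partial\cS'_0$.

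The first step is to pin down the possible shapes of $\alpha$. Since $\partial\cS_0$ is a single circle made of $\pi_0$ and the complementary arc $\partial\cS\setminus\{z_a\}$ (which contains $z_c$), and since both endpoints of $\gamma$ lie on $\pi_0$, the arc $\alpha$ is either the ``short way'' lying inside $\pi_0$, or the ``long way'' containing the entire complementary arc and hence $z_c$. For every $a$-boundary-parallel statement in the lemma, $\alpha$ avoids $z_c$ by definition, which instantly rules out the long way and reduces us to $\alpha\subset \pi_0$.

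The second step is to rule out the short-way situation. Let $f:\cS_0\to\cS$ be the gluing map that identifies the two copies of $\tau_0$ composing $\pi_0$. If $p_c\notin D$, then $f(D)$ is a puncture-free bigon in $\cS$ bounded by $\gamma\subset\tau_i$ (with $i=2$ or $3$ depending on which assertion we are treating) and $f(\alpha)\subset\tau_0$; this directly contradicts the minimal position hypothesis on $\tau_0$ and $\tau_i$ via the bigon criterion. If instead $p_c\in D$, which can only happen when the disk is taken in $\cS'_0$, I would exploit $\tau_1\in\Omega_c$: the arc $\tau_1$ runs from $p_c$ (inside $D$) to $z_c$ (outside $D$, because $\alpha$ avoids $z_c$), hence must cross $\partial D=\gamma\cup\alpha$; but $\tau_1\cap\alpha\subset\tau_1\cap\tau_0=\emptyset$ forces a crossing of $\gamma\subset\tau_2$, contradicting $\tau_1\cap\tau_2=\emptyset$. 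This closes the $\tau_2$-parts of Case~1 and Case~2 involving $\cS'_0$. For the $\pi_0$-traces of $\tau_3$, the disk automatically lives in $\cS_0$, so $p_c\notin D$ is forced and the bigon argument between $\tau_0$ and $\tau_3$ suffices.

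The last step addresses the only remaining possibility: the long-way $\alpha$ in the ``not boundary parallel in $\cS_0$'' clauses, where $\alpha$ is not required to avoid $z_c$. Here I would use a Euler characteristic count: $\chi(\cS_0)=-2$, and since $D$ is a disk with $\chi(D)=1$ meeting $R:=\overline{\cS_0\setminus D}$ along a circle, one gets $\chi(R)=-3$. In the long-way configuration, $\partial R$ is the single circle formed by $\gamma$ together with the short arc of $\pi_0$ between the endpoints of $\gamma$, and $R$ contains $p_c$ as its unique puncture. Writing $R$ as a surface of genus $g$, one boundary component, one puncture, the equation $2-2g-1-1=-3$ forces $g=3/2$, which is absurd.

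The main obstacle I anticipate is the bookkeeping: carefully tracking whether $p_c$ lies inside $D$, whether the disk is to be taken in $\cS_0$ versus $\cS'_0$, and making sure that the endpoints of $\tau_2$ (which in Case~1 both lie on $\pi_0$, and in Case~2 include $p_c$) are handled so that the relevant traces are genuine $\pi_0$-traces and the gluing $f$ produces an honest bigon with no puncture inside. Once these subtleties are organized, the three ingredients above (bigon criterion, $\tau_1$-obstruction, Euler characteristic) cover all the cases stated in the lemma.
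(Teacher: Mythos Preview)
Your overall plan captures the right shape of the argument, and your use of $\tau_1$ to rule out the case $p_c\in D$ (short way, disk in $\cS'_0$) is exactly the paper's argument. However, two of your three cases contain genuine errors, not just bookkeeping.

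\textbf{Step 2, the case $p_c\notin D$.} You claim that $f(D)$ is a puncture-free bigon. This fails precisely when the endpoints of $\gamma$ lie in different components of $\pi_0\setminus\{p_a\}$, so that the short arc $\alpha\subset\pi_0$ passes through $p_a$. In that situation the gluing $f$ folds the two halves of $\alpha$ onto overlapping subarcs of $\tau_0$, and the puncture $p_a$ ends up in the \emph{interior} of $f(D)$; you obtain a once-punctured disk between $\gamma$ and a subarc of $\tau_0$, which does \emph{not} violate minimal position. None of your three ingredients resolves this: $\tau_1$ has no reason to enter $D$, and Euler characteristic gives no obstruction to a boundary-parallel disk on the short side. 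The paper handles this case with an additional idea: pass to an \emph{innermost} such trace (the one whose endpoints are closest to $p_a$), and look at the adjacent trace of $\tau_i$ emanating from the $\phi$-partner of one endpoint; orientation forces it to lie inside $D$ and to cut off a smaller disk that no longer contains $p_a$, producing the forbidden bigon.

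\textbf{Step 3, the Euler characteristic count.} The computation is wrong because $D$ and $R=\overline{\cS_0\setminus D}$ do \emph{not} meet along a circle: interior points of $\alpha\subset\partial\cS_0$ have half-disk neighborhoods lying entirely in $D$, so $D\cap R=\gamma$, an arc with $\chi=1$. Inclusion--exclusion then gives $\chi(R)=\chi(\cS_0)-\chi(D)+\chi(\gamma)=-2-1+1=-2$, and with one boundary component and one puncture this yields $g=1$, which is perfectly consistent. So Euler characteristic does not obstruct the $c$-boundary-parallel configuration. The paper instead reuses the $\tau_1$-obstruction in reverse: here $p_c\notin D'$ while $z_c$ lies on the boundary arc $\gamma\subset\partial\cS_0\cap\partial D'$, so $\tau_1$ must approach $z_c$ from inside $D'$; but $\tau_1$ starts at $p_c$ outside and cannot cross $\tau$ or touch $\partial\cS_0$ except at $z_c$, a contradiction.
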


\begin{proof}
	We only prove Assertion 1, as Assertion 2 is similar. Let $i=2$ or $3$.
	Take a trace $\tau$ of $\tau_i$ from $x$ to $y$ with $\{x,y\}\subset \pi_0$ such that $\tau$ is $a$-boundary-parallel in $\cS_0$. We also view $\tau$ as an arc $\tau'$ in $\cS$ going from $x'\in \tau_0$ to $y'\in\tau_0$. Then in $\cS$, $\tau'$ and the subsegment $\overline{x'y'}$ of $\tau_0$ bound a disk $D$ in $\cS$. If $\{x,y\}\subset \overline{z'p_a}$ or $\{x,y\}\subset \overline{zp_a}$, then the interior of the disk $D$ does not contain $p_a$, hence $\tau'$ and $\tau_0$ form a bigon, which contradicts our assumption that $\tau_i$ and $\tau_0$ have minimal intersection. It remains to consider the case $x$ and $y$ are in the interior of $\overline{z'p_a}$ and $\overline{zp_a}$ respectively, in which case $p_a$ is contained in the interior of the disk $D$. We can also assume $\tau$ is among all such boundary parallel traces such that the pair $\{x',y'\}$ is closest to $p_a$ in $\tau_0$. Suppose $x'$ is closer to $p_a$ than $y'$. Let $\tau''\neq \tau$ be the subsegment of $\tau_i$ starting at $x'$ until it hits $\tau_0$ at $x''$ (it is possible that $x''=p_a$). Note that the interior of $\tau''$ is disjoint from the boundary of the disk $D$, we have $\tau''\subset D$ because of orientation consideration. Then $\tau''$ and $\tau_0$ bound a disk $D'\subset D$. Our choice of $\{x',y'\}$ implies that $p_a$ is not in the interior of $D'$, thus we reach a contradiction as before. This shows that each $\pi_0$-trace of $\tau_2$ or $\tau_3$ is not $a$-boundary-parallel in $\cS_0$.
	
Now we show each trace of $\tau_2$ is not $a$-boundary parallel in $\cS'_0$ and it is not boundary parallel in $\cS_0$. Suppose by contradiction that there is a trace $\tau$ of $\tau_2$ such that $\tau$ and a sub-arc of  $\pi_0$ bound a disk $D\subset \cS_0$ with puncture $p_c$ in $D$. Note that $\tau_1$ is a simple arc in $\cS_0$ from $p_c$ in $D$ to $z_c$ outside $D$. However, $\tau_1\cap (\tau_0\cup\tau_2)=\emptyset$, in particular, $\tau_1$ disjoint from the boundary of $D$. This is a contradiction. Thus each trace of $\tau_2$ is not $a$-boundary-parallel in $\cS'_0$. If $\tau$ is boundary parallel in $\cS_0$, then by previous discussion, the only possibility left is that $\tau$ and a sub-arc $\gamma\subset\partial\cS_0$ with $z_c\in \gamma$ bound a disk $D'\subset \cS_0$; moreover, $p_c$ is outside $D'$ (if $p_c\in D'$, then $\tau$ is not boundary parallel in $\cS_0$). However, this is impossible as $\tau_1$ is an arc from $p_c$ to $z_c$ avoiding $\tau$.
\end{proof}

Suppose $\omega$ is of type I or II. Take $i=2,3$ or $4$.
Let $\tau,\tau'\subset \cS_0$ be two traces from $\tau_i$ such that they connect two points on $\partial \cS_0$. 
We say $\tau$ and $\tau'$ are \emph{parallel}  if $\tau\cap \tau'=\emptyset$ and $\tau,\tau'$ together with two arcs on $\partial \cS_0$ bound a disk $D\subset \cS_0$, with possibly the puncture $p_c$ in $D$. Now assuming $\tau$ and $\tau'$ are parallel. We define points $\xi\in\partial \tau$ and $\xi'\in\partial\tau'$ are \emph{aligned}, if $\xi$ and $\xi'$ are contained in the same component of $\partial D\cap \partial \cS_0$. A subset $A$ of $\cS_0$ is \emph{squeezed} by $\tau$ and $\tau'$ if $A$ is contained in the interior of the disk $D$.


\begin{lem}
	\label{lem:S3}
Given four arcs $\tau_0,\tau_1,\tau_2,\tau_3$ in $\cS$ in minimal position such that adjacent arcs have trivial intersection. Suppose $[\tau_0],[\tau_2]\in\Omega_a$ and $[\tau_1],[\tau_3]\in \Omega_c$.	
Suppose $\tau_0\cap \tau_3\neq\emptyset$. Suppose $\tau_2$ has more than one traces on $\cS_0$, and all traces of $\tau_2$ are good. Then at least one of the following hold:
\begin{enumerate}
	\item there is a good trace $S_3$ of $\tau_3$ in $\cS_0$ such that $S_3\cap \tau_1=\emptyset$, and the two endpoints of $S_3$ are in different components of $\pi_0\setminus\{p_a\}$;
	\item there are traces $S_3,R_3$ of $\tau_3$ in $\cS_0$ such that $(S_3\cup R_3)\cap \tau_1=\{p_c\}$; $S_3$ is a good trace with its endpoints in the same component of $\pi_0\setminus\{p_a\}$; and $R_3$ is a trace from $p_c$ to a point $x\in \pi_0$ with $x$ and the endpoints of $S_3$ in different components of $\pi_0\setminus\{p_a\}$;
	\item there are traces $S_3,R_3$ of $\tau_3$ in $\cS_0$ such that $(S_3\cup R_3)\cap \tau_1=\{p_c,z_c\}$; $S_3$ goes from $z_c$ to  $x'\in \pi_0$ such that $S_3$ is not boundary parallel in $\cS'_0$; and $R_3$ goes from $p_c$ to  $x\in\pi_0$ such that $x$ and $x'$ are in different components of $\pi_0\setminus\{p_a\}$.
\end{enumerate}
\end{lem}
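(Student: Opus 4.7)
My plan is to analyze how the traces of $\tau_3$ in $\cS_0$ are constrained by the good traces of $\tau_2$, and then case-split based on whether the single arc $\tau_1$ meets a suitable candidate trace of $\tau_3$.

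First, I would decompose $\tau_3 \cap \cS_0$ into consecutive traces $T_0, T_1, \ldots, T_n$ read off along $\tau_3$ from $p_c$ to $z_c$: here $T_0$ begins at $p_c$, $T_n$ ends at $z_c$, and each intermediate $T_i$ is a $\pi_0$-trace. Since $\tau_0 \cap \tau_3 \neq \emptyset$, we have $n \geq 1$. Lemma~\ref{lem:boundary parallel} applied to the quadruple $(\tau_0, \tau_1, \tau_2, \tau_3)$ then forces every $\pi_0$-trace of $\tau_3$ to be not $a$-boundary-parallel in $\cS_0$. I would set $R_3 := T_0$ and search for $S_3$ among the remaining traces.

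Next I would select $S_3$. If $n=1$ (a single crossing of $\tau_0$ by $\tau_3$), the only viable choice is $S_3 := T_n$, which ends at $z_c$; the goodness of the traces of $\tau_2$ prevents $T_n$ from being boundary parallel in $\cS'_0$, and the resulting configuration lands in Case (3). If $n \geq 2$, I would identify good $\pi_0$-traces among $T_1, \ldots, T_{n-1}$: a $\pi_0$-trace of $\tau_3$ squeezed between two parallel good traces of $\tau_2$ (which exist since $\tau_2$ has multiple good traces disjoint from $\tau_3$) inherits the non-boundary-parallelism in $\cS'_0$, so it is good. I would pick $S_3$ to be, if possible, such a good trace whose endpoints straddle $p_a$ in $\pi_0$, and otherwise any good trace with both endpoints in the same component of $\pi_0 \setminus \{p_a\}$.

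Then I would carry out the case analysis on $\tau_1$. Recall that $\tau_1$ is a single arc in $\cS_0$ from $p_c$ to $z_c$ disjoint from $\pi_0$ and from every trace of $\tau_2$. If $S_3$ has its endpoints in different components of $\pi_0 \setminus \{p_a\}$, I would apply a bigon-exclusion argument, leveraging both the minimal-position hypothesis on $\tau_0,\tau_1,\tau_2,\tau_3$ and the separation provided by the good traces of $\tau_2$, to conclude $S_3 \cap \tau_1 = \emptyset$, placing us in Case (1). Otherwise every good $\pi_0$-trace of $\tau_3$ has its endpoints in a single component of $\pi_0 \setminus \{p_a\}$; tracing $\tau_3$ further, I would then locate either $R_3 = T_0$ ending on the opposite side of $p_a$ (yielding Case (2)) or the $z_c$-terminal trace $T_n$ with the side-switching property (yielding Case (3)), and verify that $(S_3 \cup R_3) \cap \tau_1$ has the required form.

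The hardest part will be verifying the tight intersection condition $(S_3 \cup R_3) \cap \tau_1 \subseteq \{p_c, z_c\}$ in Cases (2) and (3). Any additional intersection point of $\tau_1$ with $S_3$ or $R_3$ would, together with an innermost sub-arc of $\tau_1$, bound an innermost disk whose boundary uses part of $\pi_0$; to rule this out one must combine the goodness of the traces of $\tau_2$ (which prevent the disk from lying in a region $\tau_1$ cannot enter) with the minimal-intersection assumption on the quadruple $\tau_0,\tau_1,\tau_2,\tau_3$, and extract a contradiction from the resulting bigon between $\tau_1$ and a trace of $\tau_3$ or a piece of $\pi_0$.
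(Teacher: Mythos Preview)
Your outline has the right cast of characters but misses the central mechanism. The crucial claim is $S_3\cap\tau_1=\emptyset$ (or $\subset\{z_c\}$), and you propose to prove it by a ``bigon-exclusion argument'' using minimal position of $\tau_1$ and $\tau_3$. That does not work: minimal position only forbids \emph{inessential} intersections, and there is no a priori reason a good $\pi_0$-trace of $\tau_3$ should not meet $\tau_1$ essentially. The paper's argument is not bigon-based at all. It uses that $\tau_1\cap\tau_2=\emptyset$, so $\tau_1$ lies entirely in the component $D$ (or annulus $A$) of $\cS_0\setminus\tau_2$ containing $p_c$ and $z_c$; then $S_3$ is chosen to be the trace of $\tau_3$ \emph{immediately following} $R_3$ through the boundary identification $\phi:\overline{zp_a}\to\overline{z'p_a}$, and the point is that this next trace is forced into a \emph{different} component of $\cS_0\setminus\tau_2$. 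Topological separation by $\tau_2$, not minimal position, gives $S_3\cap\tau_1=\emptyset$.

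To make that separation argument go through one needs precise control over where $R_3$ terminates on $\partial\cS_0$ and where $\phi$ sends that endpoint, which is why the paper runs a four-way case split on the structure of $\tau_2$ (whether $p_c$ is squeezed by two parallel traces; otherwise one, two, or three parallel classes of good traces), with explicit bookkeeping of the boundary points $\xi^\pm_i,\eta^\pm_j,\zeta^\pm_\ell$. Your selection rule for $S_3$ --- pick any good $\pi_0$-trace of $\tau_3$, preferably one straddling $p_a$ --- does not guarantee it lies outside the component containing $\tau_1$, so the disjointness you need may simply fail for that choice. Similarly, your handling of the single-crossing case ($n=1$, landing in Case~(3)) asserts without argument that $T_n$ is not boundary parallel in $\cS'_0$; the paper obtains the analogous non-parallelism by again trapping $S_3$ between traces of $\tau_2$ and invoking the goodness hypothesis.
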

\begin{figure}[h]
	\centering
	\includegraphics[scale=0.9]{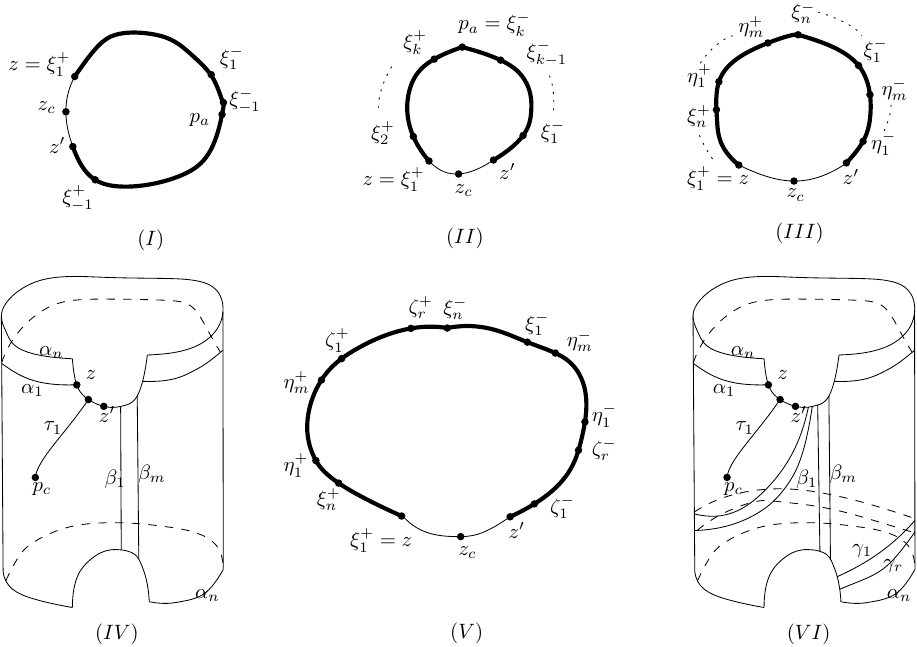}
	\caption{Proof of Lemma~\ref{lem:S3}.}
	\label{fig:12}
\end{figure}	
\begin{proof}
As there can be at most three different parallel classes of non-boundary-parallel traces that are mutually disjoint in a torus with one boundary component, and we assume all traces of $\pi_2$ are good, we know there are at most three different parallel classes of $\tau_2$. 

\smallskip
\noindent
\underline{Case 0: $p_c$ is squeezed by two parallel traces of $\tau_2$}. See Figure~\ref{fig:12} (I) for the following discussion. Let $\alpha_1$ and $\alpha_{-1}$ be the two ``innermost'' parallel traces of $\tau_2$ that squeezes $p_c$, i.e. there does not exist any other traces of $\tau_2$ which is squeezed by $\alpha_1$ and $\alpha_{-1}$. As $\tau_1\cap\tau_2=\emptyset$, we know $\tau_1$ is squeezed by $\alpha_1$ and $\alpha_{-1}$. As the arcs $\overline{z_cz}$ and $\overline{z_cz'}$ in $\partial\cS_0$ have empty intersection of with $\tau_2$ except possibly at $\{z,z'\}$, these two arcs (with $\{z,z'\}$ removed) are squeezed by $\alpha_1$ and $\alpha_{-1}$. Exactly one of $\{z,z'\}$ is contained in a trace $T'$ of $\tau_2$. If $T'\cap\alpha_i=\emptyset$ for $i=1,-1$, then $T'$ is squeezed by $\alpha_1$ and $\alpha_{-1}$, contradiction. So we can assume without loss of generality that $T'=\alpha_1$ and $z\in \alpha_1$. Let $\xi^\pm_i=\partial\alpha_i$. We assume $\xi^+_1$ and $\xi^+_{-1}$ are aligned. Let $D$ be the component of $\cS'_0\setminus \tau_2$ that contains $p_c$. Then $D$ is an open disk, its closure $\bar D$ in $\cS'_0$ is a  closed disk, and $\partial D$ is made of $\alpha_1,\alpha_{-1}$, and two arcs in $\cS_0$ which are $\gamma_1$ going from $\xi^+_{1}$ to $\xi^+_{-1}$ (passing through $z_c$ and $z'$) and $\gamma_2=\overline{\xi^-_1\xi^-_{-1}}$. Note that $p_a$ is not contained in the interior of $\gamma_2$, otherwise the trace of $\pi_2$ containing $p_a$ is squeezed by $\alpha_1$ and $\alpha_{-1}$, contradicting the ``innermost'' assumption.

 Suppose $\gamma_2$ and $\overline{z'\xi^+_{-1}}$ are in different components of $\pi_0\setminus\{p_a\}$. Let $R_3$ be the trace of $\tau_3$ starting from $p_c$. As $R_3$ and $\tau_1$ are two arcs from a point $p_c$ inside $D$ to some point in $\partial D$, $R_3\cap \tau_1=\{p_c\}$ as $\tau_1$ and $\tau_3$ have minimal intersection. As $\tau_0\cap \tau_3\neq\emptyset$, $R_3$ either ends in an interior point of $\overline{z'\xi^+_{-1}}$ or an interior point of $\gamma_2$. 
 
 If $R_3$ ends in $\overline{z'\xi^+_{-1}}$ at point $x$, then $x$ is identified with $x'\in \pi_0$ in different side of $p_a$ via the homeomorphism $\phi:\overline{zp_a}\to\overline{z'p_a}$. As $\xi^-_1\neq\xi^+_1$, we know $x'\notin \gamma_2$. Let $S_3$ be the trace of $\tau_3$ containing $x'$. Then $S_3$ is contained in a component $C$ of $\cS_0\setminus \tau_2$ different from $D$. As a point in $\partial C$ is either in a trace of $\tau_2$, or in $\partial\cS_0$, we deduce from $S_3\cap\tau_2=\emptyset$ and $\partial C\cap\partial \cS_0\subset\pi_0$ that two endpoints of $S_3$ are contained in $\pi_0$. 
 
 Now we show $S_3$ is a good trace. It suffices to show $S_3$ not boundary parallel in $\cS'_0$.
 If $S_3$ and an arc $A\subset \partial\cS'_0$ bound a disk $D'$ in $\cS'_0$, then any trace of $\tau_2$ has empty intersection with $D'$ except at $S_3\cap A$, otherwise $\tau_2$ will contain a trace which is boundary parallel in $\cS'_0$, contradiction. It follows that $D'\subset C$. In particular, $p_c\notin D'$ and $A\subset \pi_0$, thus $S_3$ is $a$-boundary-parallel in $\cS_0$. This contradicts
Lemma~\ref{lem:boundary parallel}. Hence $S_3$ is good.

 As $\tau_1\subset D$, we know $S_3\cap \tau_1=\emptyset$. Then we are in Lemma~\ref{lem:S3} (1) or (2) depending whether the two endpoints of $S_3$ are in different components of $\pi_0\setminus\{p_a\}$ or not. 
 
 If $R_3$ ends at $x\in\gamma_2$, then $x$ is identified via $\phi$ to a point $x'\in \overline{z'p_a}$. Note that $x'\notin \overline{z'\xi^+_{-1}}$. Thus the trace $S_3$ of $\tau_3$ containing $x'$ lies in a component of $\cS_0\setminus\tau_2$ which is different from $D$, and we can finish the argument as before. 
 
 It remains to consider the situation that $\gamma_2$ and $\overline{z'\xi^+_{-1}}$ are in the same side of $p_a\in\pi_0$. Let $R_3$ as before. Similarly in all cases the next trace $S_3$ of $\tau_3$ is contained in a component of $\cS_0\setminus\tau_3$ which is different from $D$. Then we finish as before.


\smallskip
\noindent
\underline{Case 1: there is only one parallel class of traces of $\tau_2$.} Because of Case 0, we assume $p_c$ is not squeezed by two traces of $\tau_2$. See Figure~\ref{fig:12} (II) for the following discussion. Order the traces of $\tau_2$ as $\{\alpha_i\}_{i=1}^k$ such that for each $1<i_0<k$, all of $\{\alpha_i\}_{i=i_0+1}^k$ are in one side of $\alpha_{i_0}$ in the sense that $\{\partial \alpha_i\}_{i=i_0+1}^k$ is contained in the same component of $\partial \cS_0\setminus \partial\alpha_{i_0}$, and all of  $\{\alpha_i\}_{i=1}^{i_0-1}$ are in one side of $\alpha_{i_0}$. Let $\xi^\pm_{i}=\partial \alpha_i$, and we assume $\{\xi^+_i\}_{i=1}^k$ are aligned in the sense defined before Lemma~\ref{lem:S3}. Hence $\{\xi^-_i\}_{i=1}^k$ are also aligned. We assume without loss of generality that $\xi^\pm_1$ is closest to $z_c$ in $\partial\cS_0$ among $\{\xi^\pm_{i}\}_{i=1}^k$. 

Note that in $\cS$, $\tau_2$ starts at $z_a$, and possibly hits the interior of $\tau_0$ several times before it ends at $p_a$. Thus the $2k$ points in $\{\xi^\pm_{i}\}_{i=1}^k$ must contain one point which is either $z$ or $z'$, one point which is $p_a$, $(k-1)$ points in the interior of $\overline{zp_a}$, and $(k-1)$ points in the interior of $\overline{z'p_a}$ which is the image of the previous $(k-1)$ points under the homeomorphism $\phi:\overline{zp_a}\to\overline{z'p_a}$. As we were assuming  that $\xi^\pm_1$ is closest to $z_c$, one of $\xi^\pm_1$, say $\xi^+_1$, must be one of $\{z,z'\}$. Assume without loss of generality that $\xi^+_1=z$. As all the $\alpha_i$'s are parallel to each other, the $2k$ points $\{\xi^+_i\}_{i=1}^k$ must be ordered as $\{\xi^+_1,\ldots,\xi^+_k,\xi^-_k,\ldots,\xi^-_1\}$ in the interval $\pi_0$. Thus $p_a=\xi^-_k$. Let $A$ be the only annulus region of $\cS_0\setminus(\cup_{i=1}^k\alpha_i)$. Then $p_c\in A$, and $\partial A\cap\partial \cS_0$ has two components, the first one $\gamma_1$ is the arc from $\xi^+_k$ to $\xi^-_k=p_a$, and the second one $\gamma_2$ is the arc from $\xi^+_1$ to $\xi^-_1$ (passing through $z_c$ and $z'$). 

As $\tau_0\cap \tau_3\neq\emptyset$, we know the trace $R_3$ of $\tau_3$ starting at $p_c$ must end in an interior point of $\overline{\xi^+_k\xi^-_k}$ or $\overline{z'\xi^-_1}$. If $R_3$ ends in an interior point of $\overline{z'\xi^-_1}$, then this point is identified with an interior point of $\overline{\xi^+_1\xi^+_2}$ via $\phi:\overline{zp_a}\to\overline{z'p_a}$. As $\tau_3\cap \tau_2=\emptyset$ and the intersection of $\tau_3$ and $\tau_1$ is minimal, the next trace of $\tau_3$, denoted by $S_3$, is squeezed by $\alpha_1$ and $\alpha_2$, ending in an interior point of $\overline{\xi^-_1\xi^-_2}$. In particular $S_3$ is a good trace and the two endpoints of $S_3$ are in different sides of $p_a$ in $\pi_0$. As $\tau_1$ is an arc in $\cS_0$ which is disjoint from all traces of $\tau_2$, we have $\tau_1\subset A$. However, $S_3\cap A=\emptyset$. Thus $\tau_1\cap S_3=\emptyset$ and we are in case 1 in the statement of Lemma~\ref{lem:S3}. Then case $R_3$ ends in $\overline{\xi^+_k\xi^-_k}$ is similar.

\smallskip
\noindent
\underline{Case 2: there are exactly two parallel classes of traces of $\tau_2$.} Again we assume $p_c$ is not squeezed by two parallel traces of $\tau_2$. Let $\{\alpha_i\}_{i=1}^n$ and $\{\beta_j\}_{j=1}^m$ be the two parallel classes of traces. Note that for each $\alpha_i$, the two endpoints of $\alpha_{i'}$ with $i'\neq i$ are contained in the same component of $\partial \cS_0\setminus\partial\alpha_i$; and the two endpoints of any $\beta_j$ are contained in different components of $\partial \cS_0\setminus\partial\alpha_i$. Let $\xi^\pm_i=\partial \alpha_i$ and $\eta^\pm_j=\partial \beta_j$. Then we can order $\alpha_i$ and $\beta_j$ such that their boundary points in $\partial \cS_0$ is as in Figure~\ref{fig:12} (III) (see also Figure~\ref{fig:12} (IV)) - here we are assuming without loss of generality that $z\in \alpha_1$. The collection of boundary points of all $\alpha_i$ and $\beta_j$ contains $\{z,p_a\}$, with the remaining $2(m+n-1)$ paired up via $\phi:\overline{zp_a}\to \overline{z'p_a}$. A counting argument implies that $\xi^-_n=p_a$. This implies in particular that $n>1$, otherwise $\tau_2$ only has a single trace $\alpha_1$ going from $z$ to $p_a$.

Note that components $\cS'_0\setminus\tau_2$ are all disks. There is a unique component $D$ whose boundary have two $\alpha$ traces and two $\beta$ traces. More precisely, $\partial D$ is made of $\alpha_1,\alpha_n,\beta_1,\beta_m$, and 4 arcs in $\partial \cS_0$ as follows: $\gamma_1$ goes from $\eta^-_1$ to $z$ (via $z'$ and $z_c$), $\gamma_2=\overline{\xi^+_n\eta^+_1}$, $\gamma_3=\overline{\eta^+_m\xi^-_n}$ and $\gamma_4=\overline{\xi^-_1\eta^-_m}$. By our assumption on $p_c$, we know $p_c\in D$. Let $R_3$ be the trace of $\tau_3$ containing $p_c$. By a similar argument as before, we know $R_3\cap \tau_1=\{p_c\}$. As $\tau_1\cap\tau_3\neq\emptyset$, $R_3$ ends on an interior point $x$ of either $\overline{z'\eta^-_1}$, or $\gamma_2$, or $\gamma_3$, or $\gamma_4$. If $x$ is in $\overline{z'\eta^-_1}$ or $\gamma_3$, then $\phi:\overline{zp_a}\to\overline{z'p_a}$ identifies $x$ with another point $x'$ which is squeezed by two $\alpha$ traces (this uses that $n>1$). Thus the trace $S_3$ of $\tau_3$ starting at $x'$ is squeezed by two $\alpha$ traces, in particular $S_3$ is a good. By Figure~\ref{fig:12} (III), the two endpoints of $S_3$ are in two different sides of $p_a$, thus we are in Lemma~\ref{lem:S3} (1). It remains to consider the case $x\in \gamma_2$ or $\gamma_4$.

Note that $\phi$ identifies $\gamma_2$ and $\gamma_4$ if and only if $n-1=m$. Thus if $n-1\neq m$, then $\phi$ identify $x$ with a point $x'$ which is either squeezed by two $\alpha$ traces, or squeezed by $\beta$ traces, and similar as before by taking $S_3$ to the trace starting at $x'$ we are in Lemma~\ref{lem:S3} (1). Now assume $n-1=m$. We first look at the case $x\in \gamma_2$. Then $x'\in \gamma_4$. Let $T$ be the trace of $\tau_3$ containing $x'$. Then $T\subset D\cup \partial D$. The possibilities of the endpoint $x''$ of $T$ are (1) $x''=z_c$; (2) $x''\in \overline{z'\eta^-_1}$; (3) $x''\in\gamma_4$; (4) $x''\in \gamma_3$; (5) $x''\in\gamma_2$. If $(1)$ happens, then the minimal intersection assumption of $\tau_1$ and $\tau_3$ implies that $T\cap \tau_1=\{z_c\}$, and we are in Lemma~\ref{lem:S3} (3) by taking $T=S_3$. If $(2)$ or $(4)$ happens, then we are reduced to the previous paragraph. If $(3)$ happens, 
then $T\cap\tau_2=\emptyset$, hence $T$ and $\gamma_4$ bound a disk $D'$ in $\cS'_0$. As $\gamma_3\subset\pi_0$, Lemma~\ref{lem:boundary parallel} imply that $p_c\in D'$. However, this is impossible as $T\cap R_3=\emptyset$. If $(5)$ happens, then we look at the next trace of $\tau_3$ and repeat the previous discussion, and we are done after finitely many steps. The case $x\in\gamma_4$ is similar.

\smallskip
\noindent
\underline{Case 3: there are exactly three parallel classes of traces of $\tau_2$.} Suppose as before that $p_c$ is not squeezed by two parallel traces of $\tau_2$.  Let $\{\alpha_i\}_{i=1}^n$, $\{\beta_j\}_{j=1}^m$ and $\{\gamma_\ell\}_{\ell=1}^r$ be the parallel classes. Suppose $\partial \alpha_i=\xi^\pm_i,\partial \beta_j=\eta^\pm_j$ and $\partial \gamma_\ell=\zeta^\pm_\ell$. We assume without loss of generality that $z\in\alpha_1$, and assume the traces and their boundaries are arranged in Figure~\ref{fig:12} (V) and (VI). A counting argument implies that $\xi^-_n=p_a$, and a similar argument as before implies that $n>1$.
Let $D$ be the connected component of $\cS_0\setminus\tau_2$ that contains $p_c$. Then $\partial D$ is made of $\alpha_1,\gamma_1$ and $\beta_m$, together with the following three arcs on $\partial \cS_0$: $\theta_1$ going from $\zeta^-_1$ to $z$ via $z'$ and $z_c$, $\theta_2=\overline{\eta^+_m\zeta^+_1}$ and $\theta_3=\overline{\xi^-_1\eta^-_m}$. Let $R_3$ be the trace of $\tau_3$ starting at $p_c$ and ending at $x$. Then $x$ is an interior point of either $\overline{z'\zeta^-_1}$, or $\theta_2$, or $\theta_3$. If $x\in \overline{z'\zeta^-_1}$, then we can argue as in Case 2, using $n>1$. 

Assume $x\in\theta_2$ or $\theta_3$. Note that $\phi:\overline{zp_a}\to\overline{z'p_a}$ identifies $\theta_2$ and $\theta_3$ if and only if $n-1=r$, in which case $\eta^+_m$ and $\eta^-_m$ are identified. This contradicts that $\tau_2$ is a simple arc in $\cS$. Thus $n-1\neq r$. Let $x'$ be the point which is identified with $x$ via $\phi$, and let $S_3$ be the trace of $\tau_3$ containing $x'$. Then $S_3$ and $\tau_1$ are contained in different components of $\cS_0\setminus\tau_2$. Similar to previous cases, $S_3$ is a good trace. As $\partial S_3\subset \pi_0$, we are either in Lemma~\ref{lem:S3} (1) or Lemma~\ref{lem:S3} (2). 
\end{proof}


%

\begin{cor}
	\label{cor:S4}
Suppose $\omega$ is of type $I$ or $II$.	
Suppose $\tau_0\cap \tau_3\neq\emptyset$.  Suppose $\tau_3$ has a trace $S_3$ satisfying at least one of the cases of Lemma~\ref{lem:S3}. Then in each cases of Lemma~\ref{lem:S3}, there exists a trace $S_4$ of $\tau_4$ such that one of the following holds:
\begin{enumerate}
	\item $S_4$ is good, and $S_3$ and $S_4$ are not parallel;
	\item $S_4$ is good, $S_3$ and $S_4$ are parallel and $p_c$ is not squeezed by $S_3$ and $S_4$;
	\item $S_4$ is $c$-boundary-parallel in $\cS'_0$.
\end{enumerate}
\end{cor}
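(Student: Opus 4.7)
The plan is to analyze the traces of $\tau_4$ inside $\cS_0$ by a case analysis that parallels the proof of Lemma~\ref{lem:S3}, exploiting the structural data on $S_3$ provided by each of its three subcases. Since $\tau_3$ and $\tau_4$ are adjacent in $\omega$, they are disjoint, so every trace of $\tau_4$ is confined to a single connected component of $\cS_0\setminus\tau_3$. The arc $\tau_4$ has type $\hat a$, hence contributes one trace ending at $p_a$ together with a (possibly empty) collection of $\pi_0$-traces, and the latter are the candidates for $S_4$. A crucial initial reduction is to apply Lemma~\ref{lem:boundary parallel}(1) to the quadruple $(\tau_0,\tau_5,\tau_4,\tau_3)$ in place of $(\tau_0,\tau_1,\tau_2,\tau_3)$; the hypotheses hold because consecutive arcs in this list are disjoint in $\omega$. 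This yields that no trace of $\tau_4$ is $a$-boundary-parallel in $\cS'_0$ and no trace is boundary-parallel in $\cS_0$, so every $\pi_0$-trace of $\tau_4$ is either a good trace or is $c$-boundary-parallel in $\cS'_0$. The latter already matches conclusion~(3), so the remaining task is to produce a good trace satisfying (1) or (2) whenever the $c$-boundary-parallel alternative does not occur.

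I would then treat the three subcases of Lemma~\ref{lem:S3} in turn. In Case~(1), where $S_3$ has endpoints in opposite components of $\pi_0\setminus\{p_a\}$ and is disjoint from $\tau_1$, the trace $T_0$ of $\tau_4$ at $p_a$ is confined to one component of $\cS_0\setminus\tau_3$; by following $\tau_4$ and tracking where its $\pi_0$-traces must re-appear on $\pi_0$, one produces $S_4$ that is either non-parallel to $S_3$ (yielding (1)) or parallel to $S_3$, in which case $\tau_1$ separates $p_c$ from the disk squeezed by $S_3$ and $S_4$, yielding (2). In Case~(2), the additional trace $R_3$ emanating from $p_c$ locates $p_c$ in a distinguished component of $\cS_0\setminus\tau_3$, and the same trace-following argument—adapted to the finer decomposition—again produces $S_4$. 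Case~(3) is structurally analogous, with the endpoint $z_c$ of $S_3$ playing the role previously taken by an endpoint on $\pi_0$.

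The main obstacle is the degenerate configuration in which every $\pi_0$-trace of $\tau_4$ is $S_3$-parallel and each of the corresponding squeezed disks contains $p_c$. Here I would adapt the innermost-trace technique of Case~0 in the proof of Lemma~\ref{lem:S3}: picking the two $\tau_4$-traces innermost about $p_c$, together with the position of the trace $T_0$ ending at $p_a$, one forces either the existence of a further $\pi_0$-trace of $\tau_4$ landing outside the $S_3$-parallel family (giving (1) or (2)), or a $\pi_0$-trace of $\tau_4$ cobounding a disk in $\cS'_0$ with a boundary arc through $z_c$, i.e.\ a $c$-boundary-parallel trace, giving (3). The delicate point is verifying, subcase by subcase, that such a $c$-boundary-parallel trace is actually compatible with $\tau_4\cap\tau_3=\tau_4\cap\tau_5=\emptyset$ and with the particular geometric data supplied by $S_3$ and $R_3$, rather than producing a contradiction that would force a better trace to have existed in an earlier step.
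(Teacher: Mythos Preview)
Your initial reduction via Lemma~\ref{lem:boundary parallel} is exactly right and matches the paper: once conclusion~(3) is ruled out, every trace of $\tau_4$ is good. But from there the paper's argument is far more direct than the trace-following programme you outline, and your Case~(1) step contains an actual error.

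You write that when $S_4$ is parallel to $S_3$, ``$\tau_1$ separates $p_c$ from the disk squeezed by $S_3$ and $S_4$''. This is not correct: $\tau_1$ is an arc from $p_c$ to $z_c$, so $p_c$ is an \emph{endpoint} of $\tau_1$ and cannot be separated by it from anything. More to the point, the information that $S_3\cap\tau_1=\emptyset$ (from Lemma~\ref{lem:S3}(1)) does not by itself control the position of $p_c$ relative to the disk between $S_3$ and an arbitrary parallel good trace $S_4$ of $\tau_4$. So the mechanism you propose for reaching conclusion~(2) does not work as stated.

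The paper avoids both this issue and the elaborate innermost-trace analysis by exploiting a simple structural fact you overlook: since $x_4$ has type $\hat a$, the arc $\tau_4$ runs from $p_a$ to $z_a$, so among its traces there is one, say $\tau'$, containing $p_a$ and one, say $\tau$, containing $z$ or $z'$. Once all traces of $\tau_4$ are assumed good and parallel to $S_3$ (i.e.\ conclusions~(1) and~(3) both fail), one uses the specific endpoint data of $S_3$ supplied by each subcase of Lemma~\ref{lem:S3} to see that $\tau\neq\tau'$ and that $S_3$ is squeezed between $\tau$ and $\tau'$; hence $p_c$ cannot be squeezed by $S_3$ together with both of them, and one of $\tau,\tau'$ serves as $S_4$ for conclusion~(2). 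In subcases~(2) and~(3) of Lemma~\ref{lem:S3} the auxiliary trace $R_3$ from $p_c$ is used to pin down which side of $S_3$ the point $p_c$ lies on. No tracking of $\tau_4$ through $\pi_0$ and no innermost-disk argument is needed.
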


\begin{proof}
Suppose Case 3 does not happen. Then Lemma~\ref{lem:boundary parallel} implies that all traces of $\tau_4$ are good.	
Suppose Case 1 does not happen. As $S_4\cap \tau_3=\emptyset$, we can assume all traces of $\tau_4$ are parallel to $S_3$. As $x_4$ is of type $\hat a$, one trace of $\tau_4$, denoted $\tau$, contains one of $\{z,z'\}$ (see Figure~\ref{fig:11} (II)). One trace of $\tau_4$, denoted $\tau'$, contains $p_a$. 

In Case (1) of Lemma~\ref{lem:S3}, as two endpoints of $\tau$ (or $\tau'$) are in the same component of $\partial \cS_0\setminus S_3$, we know $\tau\neq \tau'$ and $S_3$ is squeezed by $\tau$ and $\tau'$. Thus $p_c$ is not squeezed by $S_3$ and one of $\{\tau,\tau'\}$. In Case (2) of Lemma~\ref{lem:S3}, suppose without loss of generality that $\partial S_3=\{y,y'\}\subset \overline{z'p_a}$. Then $x\in \overline{zp_a}$ ($x$ is defined in Lemma~\ref{lem:S3} (2)). If $p_c$ is not squeezed by $\tau'$ and $S_3$, then we are done. Otherwise, $R_3$ is squeezed by $\tau'$ and $S_3$ as $R_3\cap (\tau'\cup S_3)=\emptyset$.  Let $C$ be the component of $\partial \cS_0\setminus \partial S_3$ that contains $p_a$. Then the two endpoints of $\tau'$ are contained in the same component of $C\setminus\{x\}$.  As one endpoint of $\tau$ (which is either $z$ or $z'$) and $\partial\tau'$ are contained in different components of $C\setminus\{x\}$, we know $x$, hence $R_3$ and $p_c$, are squeezed by $\tau$ and $\tau'$. Moreover, $\tau$ is squeezed by $S_3$ and $\tau'$. Thus $p_c$ is not squeezed by $\tau$ and $S_3$. Case (3) of Lemma~\ref{lem:S3} can be treated similarly.
\end{proof}

\subsection{Big points on the cycle $\omega$}
Let $\omega$ be a 6-cycle of type I or II.
We define the 6-cycle $\omega$ is \emph{big} at $x_i$, if $x_{i-1}$ and $x_{i+1}$ are not adjacent to a common vertex of type $\hat d$ in $\Delta_\Lambda$, otherwise $\omega$ is \emph{small} at $x_i$.
A simple arc in $\cS_0$ is \emph{good}, if it starts and ends at two different points in $\partial \cS_0$, and it is not boundary parallel in the surface $\cS'_0$ defined in Definition~\ref{def:boundary parallel}.

\begin{lem}
	\label{lem:not big}
Consider an embedded edge path in $\Delta_\Lambda$ with consecutive vertices $x_1,x_0,x_5$ with $x_1$ and $x_5$ having type $\hat c$ and $x_0$ having type $\hat a$.	Let $\tau_1,\tau_0,\tau_5$ be the associated arcs in $\cS$. 
Suppose the trace of $\tau_1$ and $\tau_5$ in $\cS_0$ satisfy one of the following conditions. Then $x_1$ and $x_5$ are adjacent to a common vertex of type $\hat d$ in $\Delta_\Lambda$.
\begin{enumerate}
	\item There is a good arc $\alpha\subset\cS_0$ such that $\alpha\cap \tau_i=\emptyset$ for $i=1,5$.
	\item The arcs $\tau_1$ and $\tau_5$ fit together to form a homotopically non-trivial simple closed curve on $\cS_0$ which is not homotopic to $\partial \cS_0$.
	\item There are two good arcs $\alpha\neq \alpha'$ of $\cS_0$ such that $\partial\alpha\cup\partial\alpha'$ gives four distinct points on $\partial\cS_0$, $\alpha$ and $\alpha'$ are parallel, $\tau_1\cap \alpha\subset\{z_c\}$, $\tau_5\cap \alpha'\subset\{z_c\}$ and $p_c$ is not squeezed by $\alpha$ and $\alpha'$.
    \item There is a good arc $\alpha\subset\cS_0$ such that $\alpha\cap \tau_i\subset\{z_c\}$ for $i=1,5$.
\end{enumerate}	
\end{lem}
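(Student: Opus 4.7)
My plan is to apply the labeled $4$-wheel condition (Theorem~\ref{thm:4 wheel}) to an induced $4$-cycle $x_1 x_0 x_5 y_b$ of type sequence $\hat c\hat a\hat c\hat b$, where $y_b$ is a common $\hat b$-neighbor of $x_1$ and $x_5$ to be constructed in each case. The vertex $x_0$ is automatically a common $\hat a$-neighbor since $x_1 x_0 x_5$ is the given edge path. Because the smallest subtree of the $D_4$ Dynkin diagram containing $\{a,b,c\}$ is the whole diagram, Theorem~\ref{thm:4 wheel} furnishes a vertex $z$ adjacent to all four with type in $\{\hat a,\hat b,\hat c,\hat d\}$; adjacency to vertices of types $\hat a$, $\hat b$, and $\hat c$ excludes those three, leaving type $\hat d$.

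To produce $y_b$, by Lemma~\ref{lem:bar correspondence} together with the $\hat b$-analogue of Lemma~\ref{lem:disjoint}, it suffices to exhibit a simple arc $\sigma$ in $\cS$ from $p_a$ to $p_c$ disjoint from $\tau_1$ and $\tau_5$ except at $p_c$. In Case (1), I take the good arc $\alpha\subset \cS_0$; since $\alpha$ is not boundary-parallel in $\cS'_0$, the puncture $p_c$ lies in a component of $\cS_0\setminus(\alpha\cup\tau_1\cup\tau_5)$ that touches $\pi_0$, and a cut-and-paste along a sub-arc of $\pi_0\cong\tau_0$ near $p_a$ produces $\sigma$. Case (4) is essentially identical, with extra care when $z_c$ is an endpoint of $\alpha$. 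For Case (3), the parallel arcs $\alpha,\alpha'$ bound a strip in $\cS_0$, and the non-squeezing of $p_c$ places it in the complementary region, so the analogous routing yields $\sigma$ avoiding both $\tau_1$ and $\tau_5$. For Case (2), the simple closed curve $\tau_1\cup\tau_5$ in $\cS_0$ is essential and not homotopic to $\partial\cS_0$, hence cuts out a subsurface whose topology forces an arc from $p_c$ to $\pi_0$; concatenating with a subarc of $\tau_0$ produces $\sigma$.

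The only diagonal that could spoil the inducedness of the $4$-cycle $x_1 x_0 x_5 y_b$ is $x_0 y_b$, which exists iff the isotopy class of $\sigma$ admits a representative disjoint from $\tau_0$. In that degenerate case, I would pass to the link of the edge $x_0 y_b$, which by Lemma~\ref{lem:link} is an Artin complex of type $A_2$ (a Bass-Serre tree for $B_3$); in this tree $x_1,x_5$ appear as type $\hat c$ vertices, and the desired common $\hat d$-neighbor exists iff they are at tree distance $2$. I expect to enforce this condition by using the freedom to replace $\sigma$ with a suitable Dehn-twist modification along $\tau_0$ (changing $y_b$ accordingly) until either the $4$-cycle becomes induced or the distance-$2$ condition in the $A_2$-link is directly verified from the surface data.

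The main obstacle I anticipate is verifying simplicity and disjointness in the surgery-based constructions of $\sigma$, particularly in Case (2), where the relevant subsurface of $\cS_0$ depends sensitively on whether the simple closed curve $\tau_1\cup\tau_5$ is separating, and in Case (3), where the interaction of $\alpha,\alpha'$ with the endpoints at $z_c$ must be tracked carefully through the gluing. A secondary subtlety is the uniform handling of the degenerate case above, ensuring that an appropriate $y_b$ can always be selected to meet the requirements of the $4$-wheel argument.
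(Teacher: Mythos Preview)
Your approach has a genuine gap at the very first step: there is no ``$\hat b$-analogue of Lemma~\ref{lem:disjoint}'' at the level of $\Delta_\Lambda$. The correspondence between arcs from $p_a$ to $p_c$ and type $\hat b$ vertices exists only in the \emph{quotient} $\bar\Delta_0=\Delta_0/Z_{A_\Lambda}$ (Lemma~\ref{lem:bar correspondence}), not in $\Delta_\Lambda$ itself. Exhibiting an arc $\sigma$ from $p_a$ to $p_c$ disjoint from $\tau_1,\tau_5$ only produces a vertex $\bar y_b\in\bar\Delta_0$ adjacent to $\bar x_1,\bar x_5$; to lift this to a single $y_b\in\Delta_\Lambda$ adjacent to both $x_1$ and $x_5$ you must verify that the loop $\bar x_1\bar x_0\bar x_5\bar y_b$ is null-homotopic in $\bar\Delta_0$, which by Lemma~\ref{lem:generator} amounts to checking that the triangles $\bar y_b\bar x_0\bar x_1$ and $\bar y_b\bar x_0\bar x_5$ are non-degenerate (i.e.\ the concatenations of the corresponding arcs are homotopically non-trivial in $\bar\cS'$). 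This is exactly the extra condition (2) that appears in Corollary~\ref{cor:b} and is used carefully in the proof of Corollary~\ref{cor:goodpoint}. Without it, the lift of $\bar y_b$ adjacent to $x_1$ may differ from the lift adjacent to $x_5$ by a power of the Garside element, and your $4$-cycle never forms.

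The paper sidesteps this lifting problem entirely by working with type $\hat a$ vertices, where the arc correspondence (Lemma~\ref{lem:correspondence}, Lemma~\ref{lem:disjoint}) is genuinely at the level of $\Delta_\Lambda$: it shows that under each hypothesis there are \emph{infinitely many} homotopy classes of arcs from $p_a$ to $z_a$ disjoint from $\tau_1\cup\tau_5$ (produced by Dehn twists along a boundary component after scissoring along the good arc $\alpha$), hence infinitely many type $\hat a$ common neighbors of $x_1,x_5$. Picking any two distinct ones $y,y'$ yields an embedded $4$-cycle $x_1 y x_5 y'$ of type $\hat c\hat a\hat c\hat a$ which is automatically induced (both diagonals join vertices of equal type), and Theorem~\ref{thm:4 wheel} forces the center to have type $\hat d$. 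This also eliminates your ``degenerate case'' analysis altogether, since the induced condition comes for free.
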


\begin{proof}
For Assertion 1, we claim that in $\cS$, there are infinitely many homotopy classes of arcs from $p_a$ to $z_a$ such that each class contain a representative that is disjoint from $\tau_1\cup\tau_5$. This claim and Lemma~\ref{lem:disjoint} imply there are infinitely many vertices of type $\hat a$ which are adjacent to both $x_1$ and $x_5$ in $\Delta_\Lambda$, and Assertion 1 now follows from Theorem~\ref{thm:4 wheel}. It remains to show the claim. 

We first arrange $\alpha$ such that its two endpoints are contained a small open interval $I\subset \partial \cS_0$ containing $z_c$ with $I\cap \pi_0=\emptyset$, lying in different components of $I\setminus\{z_c\}$. To see this, we scissor $\cS_0$ along $\alpha$ to obtain an annulus $A_0$ punctured at $p_c$. Then $z_c$ is contained one boundary component $C$ of $A_0$, and $\tau_1$ and $\tau_5$ are simple arcs of $A_0$ from $p_c$ to $z_c$. Thus we can take $\alpha$ starting at a point on $C$ very close to $z_c$, going around $A_0$ (and staying close to $C$) once, and ending at a point on $C$ very close to $z_c$, but on different side. We still have $\alpha\cap \tau_i=\emptyset$ for $i=1,5$. This new choice of $\alpha$ can also be viewed as an arc on $\cS$ connecting two points on the boundary, such that $\alpha\cap (\tau_1\cup\tau_5)=\emptyset$ and $z_a$ and $z_c$ are in different components of $\partial \cS\setminus \alpha$. We scissor $\cS$ along $\alpha$ to obtain the annulus $A$ with two punctures $p_a$ and $p_c$. Note that $z_a$ and $z_c$ are contained in different boundary components of $A$, and $\tau_1$ and $\tau_5$ are simple arcs in $A$ from $p_c$ to $z_c$. As there is one simple arc, namely $\tau_0$, goes from $p_a$ to $z_a$ avoiding $\tau_1\cup\tau_5$, then there are infinitely many isotopy classes of simple arcs in $A$ traveling from $p_a$ to $z_a$ avoiding $\tau_1\cup\tau_5$, by taking $\tau_0$ and applying powers of Dehn twist along the boundary component of $A$ containing $z_a$. Thus the claim is proved.

Assertion 2 follows from Assertion 1 as one can find a good arc as in Assertion 1 under the assumption of Assertion 2.

For Assertion 3, if $z_c$ is squeezed by $\alpha$ and $\alpha'$, then $\tau_1\cup\tau_5$ gives a simple closed curve on $\cS_0$ which is not homotopic to $\partial \cS_0$ and we are reduced to Assertion 2. If $z_c\notin (\alpha\cup\alpha')$ and $z_c$ is not squeezed by $\alpha$ and $\alpha'$, then we can find a normal open neighborhood $N$ of $\alpha$ in $\cS_0$ with $\alpha'\subset N$ and $p_c,z_c\notin N$. As $\tau_1\cap \alpha=\emptyset$ and $\partial \tau_1\cap N=\emptyset$, up to homotopy we can assume $\tau_1\cap N=\emptyset$. Hence $\tau_1\cap \alpha'=\emptyset$. As $\tau_5\cap\alpha'=\emptyset$, we are reduced to Assertion 1.


It remains to treat the situation that $z_c\in (\alpha\cup\alpha')$. Assume without loss of generality that $z_c\in \alpha'$. By the argument in the previous paragraph, we can assume $\tau_1\cap \alpha'=\{z_c\}$. We scissor $\cS_0$ along $\alpha'$ to obtain an punctured annulus $B_0$, in which we have two copies of $\alpha'$ inside two boundary components of $B_0$, which are denoted by $(\alpha')^+$ and $(\alpha')^-$. Similarly we define $z^+_c\in (\alpha')^+$ and $z^-_c\in (\alpha')^-$. 
Then $\tau_i$ for $i=1,5$ can be viewed as a simple arc in $B_0$ with starting point $p_c$ and endpoint being either $z^+_c$ or $z^-_c$. If $\tau_1$ and $\tau_5$ in $B_0$ have the same endpoint, then we are reduced to Assertion 1; if $\tau_1$ and $\tau_5$ in $B_0$ have different endpoints, then we are reduced to Assertion 2.

Assertion 4 is similar to Assertion 3, and it is left to the reader.
\end{proof}

\begin{cor}
	\label{cor:boundary parallel 24}
Suppose $\omega$ is of type I.
Suppose $\tau_0\cap\tau_3\neq\emptyset$. Suppose at least one of $\tau_2$ and $\tau_4$ contains a trace that is boundary parallel in $\cS'_0$. Then $\omega$ is small at $x_0$.
\end{cor}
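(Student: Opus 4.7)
The plan is to use the boundary-parallel trace to trap $\tau_1$ and $\tau_5$ inside a disk $D$, produce a good arc $\beta$ in the complement disjoint from $\tau_1 \cup \tau_5$, and then invoke Lemma~\ref{lem:not big}(1) applied to the sub-path $x_1 x_0 x_5$ of $\omega$.

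First, I reduce to the case where the boundary-parallel trace belongs to $\tau_2$. Reversing the cyclic order of $\omega$ (i.e.\ relabelling $x_i \mapsto x_{-i}$) exchanges $\tau_2 \leftrightarrow \tau_4$ and $\tau_1 \leftrightarrow \tau_5$, fixes $\tau_0$ and $\tau_3$, and preserves both the hypothesis $\tau_0 \cap \tau_3 \neq \emptyset$ and the conclusion of being small at $x_0$. So without loss of generality $\tau_2$ has a trace $\alpha$ that is boundary parallel in $\cS'_0$. By Lemma~\ref{lem:boundary parallel}(1), $\alpha$ is neither $a$-boundary parallel in $\cS'_0$ nor boundary parallel in $\cS_0$, so $\alpha$ must be $c$-boundary parallel in $\cS'_0$: there is a closed disk $D \subset \cS'_0$ with $\partial D = \alpha \cup A$, where $A$ is a boundary arc with $z_c \in A$, and with $p_c \in \mathrm{int}(D)$.

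For each $i \in \{1,5\}$, the arc $\tau_i$ runs in $\cS_0$ from $p_c$ to $z_c$, is disjoint from $\tau_2 \supset \alpha$, and its interior misses $\partial \cS_0$; together with $p_c \in \mathrm{int}(D)$ and $z_c \in A \subset \partial D$ this forces $\tau_i \subset D$. The surface $P := \cS'_0 \setminus \mathrm{int}(D)$ is a torus with one boundary component: $\alpha$ is a separating arc of $\cS'_0$ that splits off the disk $D$, and an Euler characteristic computation gives $\chi(P) = -1$ with one boundary circle. Inside $P$, I pick a simple non-separating arc $\beta$ with endpoints on $\partial P \cap (\partial \cS_0 \setminus A)$; such an arc exists because $P$ has genus $1$. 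Since $\beta$ is non-separating in $P$ and $D$ reattaches to $P$ along $\alpha$ (which is disjoint from $\beta$), the complement $\cS'_0 \setminus \beta$ remains connected, so $\beta$ is non-separating in $\cS'_0$, hence not boundary parallel there, i.e., a good arc in $\cS_0$. Because $\beta \subset P$, $\tau_1 \cup \tau_5 \subset D$, and $P \cap D = \partial D$, we obtain $\beta \cap (\tau_1 \cup \tau_5) \subset \alpha \cap (\tau_1 \cup \tau_5) = \emptyset$.

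Applying Lemma~\ref{lem:not big}(1) to the sub-path $x_1 x_0 x_5$ now yields a vertex of type $\hat d$ in $\Delta_\Lambda$ adjacent to both $x_1$ and $x_5$, which is exactly what is needed for $\omega$ to be small at $x_0$. The main subtlety is ensuring that $\beta$, which is essential in $P$, remains essential in $\cS'_0$: essential-but-separating arcs in $P$ can become boundary parallel in $\cS'_0$ after reattaching $D$, but the stronger non-separation condition survives attachment of a contractible piece along an arc in the complement of $\beta$, so choosing $\beta$ \emph{non-separating} sidesteps this pitfall.
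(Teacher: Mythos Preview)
There is a genuine gap in your argument. You assert that for each $i\in\{1,5\}$ the arc $\tau_i$ ``is disjoint from $\tau_2\supset\alpha$'', but this is only true for $i=1$. In the $6$-cycle $x_0x_1x_2x_3x_4x_5$ the vertex $x_5$ is adjacent to $x_0$ and $x_4$, not to $x_2$; hence $\tau_5\cap\tau_0=\emptyset$ and $\tau_5\cap\tau_4=\emptyset$, but there is no reason whatsoever for $\tau_5\cap\tau_2$ to be empty. So after your WLOG reduction you have trapped $\tau_1$ in the disk $D$ bounded by $\alpha$, but $\tau_5$ may well cross $\alpha$ and escape $D$, and then your good arc $\beta\subset P$ can meet $\tau_5$. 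The symmetry $x_i\mapsto x_{-i}$ does not help: it trades the problem with $\tau_5$ for the same problem with $\tau_1$.

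This asymmetry is exactly what forces the paper's proof to be substantially longer. The paper never tries to use a trace of $\tau_2$ or $\tau_4$ directly as the good arc; instead it manufactures a good trace $S_3$ of $\tau_3$, since $\tau_3$ is the only arc among $\tau_2,\tau_3,\tau_4$ that is guaranteed disjoint from \emph{both} $\tau_2$ and $\tau_4$. When (say) $\tau_4$ has a $c$-boundary-parallel trace $S_4$ and $\tau_2$ has all good traces, the paper traps $\tau_5$ in the disk cut off by $S_4$ (using $\tau_5\cap\tau_4=\emptyset$), then uses Lemma~\ref{lem:S3} to produce a good $S_3$ with $S_3\cap\tau_1\subset\{z_c\}$, and finally observes $S_3\cap\tau_5=\emptyset$ because $S_3$ (being good) must lie outside that disk. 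The remaining cases --- when the ``good'' side has only a single trace, or when both $\tau_2$ and $\tau_4$ have bad traces --- require separate and more delicate arguments. Your shortcut would eliminate all of this, but it rests on the false premise $\tau_5\cap\tau_2=\emptyset$.
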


\begin{proof}
Suppose exactly one of $\tau_2$ and $\tau_4$, say $\tau_2$, enjoys the property that all traces are good. We first consider the situation that $\tau_2$ has multiple traces. Then let $S_3$ be as in Lemma~\ref{lem:S3}. Let $S_4$ be a trace of $\tau_4$ that is boundary parallel in $\cS'_0$. By Lemma~\ref{lem:boundary parallel}, there is an arc $\gamma\subset \partial\cS_0$ with $z_c\in\gamma$ such that $S_4$ and $\gamma$ bound a punctured disk $D\subset \cS_0$ with $p_c$ inside. As $\gamma_5$ is a simple arc from $p_c$ to $z_c$ avoiding $S_4$, we know $\gamma_5\subset D$. As $S_3\cap S_4=\emptyset$ and $S_3$ is not boundary parallel in $\cS'_0$, we know $S_3$ and $\gamma_5$ are in different components of $\cS_0\setminus S_4$. In particular $\gamma_5\cap S_3=\emptyset$.  On the other hand, $\gamma_1\cap S_3\subset \{z_c\}$. By possibly perturbing one endpoint of $S_3$ by a small amount, we can use Lemma~\ref{lem:not big} (1) to conclude that $\omega$ is small at $x_0$.

Now we consider the situation that $\tau_2$ has a single trace. Let $S_4,\gamma,D$ be as before. Note that $\tau_2$ and $S_4$ are in minimal position in $\cS'_0$, otherwise they form a bigon in $\cS'_0$ with the puncture $p_c$ inside the bigon, which is impossible as a trace of $\tau_3$ must start from $p_c$ and end in a point in $\partial \cS_0$ without intersecting the bigon because we are assuming $\tau_0\cap\tau_3\neq\emptyset$. As $S_4$ is isotopic to $\gamma$ in $\cS'_0$ rel endpoints, the number of intersection points between $S_4$ and $\tau_2$ is equal to the number of intersection points between $\gamma$ and $\tau_2$. 

Next we analyze the position of $\gamma$ and $S_4$. Note that $\tau_4$ must contain a trace $S'_4$ that contains one of $\{z,z'\}$. As $\{z,z'\}\subset D$, we know $S'_4$ is inside $D$ if $S'_4\neq S_4$, which implies $S'_4$ is $c$-boundary-parallel in $\cS'_0$. Thus up to replacing $S_4$ by $S'_4$, we can assume $S_4$ contains one of $\{z,z'\}$. If $S_4$ contains $z$, then the other endpoint $x'$ of $S_4$ is contained in $\overline{z'p_a}$, otherwise $\overline{z'p_a}\subset\gamma\subset D$ and $x'$ is identified via $\phi:\overline{zp_a}\to\overline{z'p_a}$ to a point $x''\in \overline{zp_a}$, which is not possible as the trace of $\tau_4$ containing $x''$ will be trapped in $D$ and have nowhere to go given that it must be $c$-boundary-parallel in $\cS'_0$ but not $a$-boundary-parallel. Similarly, if $S_4$ contains $z'$, then the other endpoint of $S_4$ is contained in $\overline{zp_a}$. 

First we consider the case $z\in S_4$ and $z\in \tau_2$. We claim that $\tau_2\cap S_4$ does not contain a point in the interior of $\tau_2$. Assume by contradiction that such a point exists. Let $w'$ be the point in $\tau_2$ after $z$ that is in $S_4\cap \tau_2$, and let $\tau_2'$ be the subarc of $\tau_2$ from $z$ to $w'$. Then $\tau_2'\cap D=\{z,w'\}$, otherwise $\tau_2'\subset D$, hence the minimal position between $\tau_2$ and $\tau_4$ means that $\tau_2'$ and part of $S_4$ form a bigon with $p_c$ inside, contradicting that $\tau_3$ has a trace from $p_c$ to a point in $\partial \cS_0$ avoiding $\tau_2\cup\tau_4$. Let $w''$ be the next point of $\tau_2$ after $w'$ that is in $S_4\cap \tau_2$ (note that $w''$ exists as $\tau_2$ does not end in the interior of $D$). Let $\tau_2''$ be the subarc of $\tau_2$ from $w'$ to $w''$. As $\tau_2'$ is outside $D$, by minimal position between $\tau_2$ and $\tau_4$, we know $\tau_2''\subset D$. We reach a contradiction by using the bigon argument. The claim follows. This claim implies that $\tau_5\cap \tau_2=\emptyset$, then $\omega$ is small at $x_0$ by $\tau_1\cap \tau_2=\emptyset$ and Lemma~\ref{lem:not big} (1).

Second we consider the case $z\in S_4$ and $z'\in \tau_2$. We claim $S_4\cap \tau_2$ has one point in the interior of $\tau_2$. Indeed, $S_4\cap \tau_2$ has at least one point in the interior of $\tau_2$, otherwise $\tau_0=\tau_2$.
Note that $S_4\cap \tau_2$ cut $\tau_2$ into subarcs. 
If $S_4\cap\tau_2$ has at least two points in the interior of $\tau_2$, then there are at least three such subarcs of $\tau_2$. Let $\tau'_2,\tau''_2$ and $\tau'''_2$ be the first three subarcs, starting from $z'\in \tau'_2\subset \tau_2$. Note that $p_c$ and $z_c$ are in different components of $D\setminus \tau'_2$, otherwise using $\tau_3\cap (\tau_2\cup\tau_4)=\emptyset$ we can conclude that $\tau_3=\tau_5$. As $\tau'_2\subset D$, we know $\tau''_2$ is outside $D$ excepts its endpoints, and $\tau'''_2\subset D$. As $\tau_2$ does not end in an interior point of $D$, we can reach a contradiction by considering the bigon between $\tau'''_2$ and a subarc of $S_4$, indeed, the trace of $\tau_3$ containing $p_c$ is trapped in this bigon, and it can not end at any point in $\cS_0$ except $p_a$, which is impossible. Thus the claim is proved. It follows that $\tau_5\cap \tau_2$ is exactly one point. We scissor $\cS_0$ along $\tau_2$ to obtain a punctured annulus $A_0$, with $z_c$ in one boundary component $C$ of $A_0$. As $\tau_5\subset D$, we know the trajectory of $\tau_5$ in $A_0$ first goes from $z_c\in C$ to another point in $C$, then jump to another boundary component and travel to $p_c$ via a simple arc. On the other hand, $\tau_1$ is a simple arc from $z_c$ to $p_c$ in $A_0$. Thus $\tau_1\cup\tau_5$ gives a homotopically non-trivial simple closed curve on $\cS_0$ which is not homotopic to $\partial \cS_0$. Hence $\omega$ is small at $x_0$ by Lemma~\ref{lem:not big} (2).

The remaining two cases ($z'\in S_4$ and $z'\in \tau_2$, and $z'\in S_4$ and $z\in \tau_2$) follow by symmetry. This finishes the discussion when $\tau_2$ has a single trace.

Suppose for $i=2,4$, $\tau_i$ contains at least one trace $\tau'_i$ which is not good. Then we deduce as before that for $i=2,4$, $\tau'_i$ and an arc $\gamma_i\subset \partial \cS_0$ containing $z_c$ bound a punctured disk $D_i$ with $p_c$ inside. Moreover, $\tau_1\subset D_2$ and $\tau_5\subset D_4$. Note that $\tau'_2\cap\tau'_4$ is at most one point in their interior, otherwise they form a bigon containing $p_c$ due to the minimal intersection of $\tau_2$ and $\tau_4$, which is impossible as $p_c$ is connected to a point in $\cS_0$ without touching $\tau'_2\cup\tau'_4$ by our $\tau_0\cap\tau_3\neq\emptyset$ assumption. It follows that $\tau'_2$ and $\tau_5$ has at most one interior intersection point, however, one interior intersection point is impossible as this would imply $p_c\notin D_2$. Thus $\tau'_2\cap 
\tau_5=\emptyset$.
This implies that $\tau_1$ and $\tau_5$ are isotopic rel endpoints in $\cS_0$, contradicting that the 6-cycle $\omega$ is embedded.
\end{proof}

\subsection{Filling type I 6-cycles}
\label{subsec:typeI}
The goal of this subsection is prove Proposition~\ref{prop:typeI}.
\begin{lem}
	\label{lem:not sequeeze 44}
Suppose $\omega$ is of type I.	
Suppose $\tau_0\cap \tau_3\neq\emptyset$.  Suppose $\tau_3$ has a trace $S_3$ satisfying at least one of the cases of Lemma~\ref{lem:S3}. Assume in additional that the 6-cycle $\omega$ is big at $x_0$. Then $p_c$ is not squeezed by two parallel traces of $\tau_4$.
\end{lem}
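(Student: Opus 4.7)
\smallskip
\noindent
\textbf{Proof proposal.} The plan is to argue by contradiction: suppose $p_c$ is squeezed by two parallel traces $\alpha_+,\alpha_-$ of $\tau_4$, and deduce that $x_1$ and $x_5$ have a common neighbor of type $\hat d$, contradicting the hypothesis that $\omega$ is big at $x_0$.

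First I would choose $\alpha_+,\alpha_-$ to be an \emph{innermost} such pair, so that the closed region $D\subset\cS_0$ bounded by $\alpha_+\cup\alpha_-$ together with two arcs $A_1,A_2\subset\partial\cS_0$ contains $p_c$ in its interior but no other trace of $\tau_4$ between $\alpha_+$ and $\alpha_-$. Since $\omega$ is big at $x_0$ and $\tau_0\cap\tau_3\ne\emptyset$, Corollary~\ref{cor:boundary parallel 24} forces every trace of $\tau_4$ to be good, so $\alpha_\pm$ are good arcs of $\cS_0$. Because $\tau_5\cap\tau_4=\emptyset$ and $p_c\in\tau_5\cap\mathrm{int}(D)$, the whole arc $\tau_5$ lies in $D$, and in particular $z_c\in\bar D$; tracing $\partial\cS_0$ then shows that the component $A_1$ of $\partial D\cap\partial\cS_0$ containing $z_c$ must swallow the entire ``original boundary'' piece of $\partial\cS_0$, so $z,z',z_c\in A_1\subset\bar D$.

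Next I would locate the trace of $\tau_4$ that meets $\{z,z'\}$: since $x_4$ has type $\hat a$, exactly one of $\{z,z'\}$ is an endpoint of some trace $T'$ of $\tau_4$, and repeating the ``$T'$ cannot avoid $\alpha_\pm$ without being squeezed'' counting argument of Case~0 in the proof of Lemma~\ref{lem:S3} gives $T'\in\{\alpha_+,\alpha_-\}$. Assume $z\in\alpha_+$ without loss of generality.

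The heart of the argument is the claim that $\tau_1\subset D$. I would argue by contradiction: if $\tau_1$ exits $D$, then a subarc of $\tau_1$ lies outside $\bar D$ with both endpoints on $\alpha_+\cup\alpha_-$. I would then bring in the trace $S_3$ (and, in cases~(2) and~(3) of Lemma~\ref{lem:S3}, the companion trace $R_3$): since $S_3$ is good and any arc in $D$ with endpoints on $\partial D$ would be boundary parallel in $\cS'_0$, we must have $S_3$ outside $D$; in cases~(2) and~(3), $R_3$ starts at $p_c$ and is disjoint from $\tau_4$, so $R_3\subset\bar D$. Using the restrictive intersections $S_3\cap\tau_1\subset\{z_c\}$ and $R_3\cap\tau_1\subset\{p_c,z_c\}$ provided by Lemma~\ref{lem:S3}, together with the minimal position of the whole family $\{\tau_i\}$, I would run through the three cases of Lemma~\ref{lem:S3}: in each case the excursion of $\tau_1$ outside $D$ produces either a bigon with a subarc of $\alpha_\pm$ (violating minimal position of $\tau_1$ and $\tau_4$), or a forced extra intersection of $\tau_1$ with $S_3$ or $R_3$ (violating the intersection bounds above). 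This is the technical bulk of the proof.

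Finally, with $\tau_1\subset D$ in hand, the good arc $\alpha_+$ satisfies $\alpha_+\cap\tau_5=\emptyset$ (from $\tau_4\cap\tau_5=\emptyset$) and $\alpha_+\cap\tau_1=\emptyset$ (since $\tau_1\subset D$ while $\alpha_+\subset\partial D$, and the endpoints of $\tau_1$ lie off $\alpha_+$). Then Lemma~\ref{lem:not big}(1) applied to $\alpha=\alpha_+$ yields a common type-$\hat d$ neighbor of $x_1$ and $x_5$, contradicting that $\omega$ is big at $x_0$.

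The main obstacle is establishing $\tau_1\subset D$: there is no direct disjointness constraint between $\tau_1$ and $\tau_4$, so the impossibility of $\tau_1$ crossing $\alpha_\pm$ must be extracted indirectly from the placement of $S_3$ and $R_3$ supplied by Lemma~\ref{lem:S3}, plus a careful bigon analysis in each of the three cases (with particular care at the shared endpoint $z_c$ in case~(3)).
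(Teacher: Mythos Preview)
Your overall contradiction setup and the observation that $\tau_5\subset D$ are fine, but the core of your plan---showing $\tau_1\subset D$---has a real gap. There is no disjointness between $\tau_1$ and $\tau_4$, and minimal position only rules out bigons, not crossings; so $\tau_1$ may well cross $\alpha_\pm$ many times. Your sketch that an excursion of $\tau_1$ outside $D$ ``produces either a bigon with $\alpha_\pm$ or a forced extra intersection with $S_3$ or $R_3$'' is not justified: $\tau_1$ can leave $D$, wander around the genus of $\cS_0$, and return without forming a bigon and without meeting $S_3$ (which in general controls $\tau_1$ only up to $\{z_c\}$, and says nothing about how $\tau_1$ behaves away from $S_3$). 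There is also a specific error in your sub-claim that $S_3$ must lie outside $D$: an arc in $D$ with endpoints on $\partial\cS_0$ that is \emph{parallel} to $\alpha_\pm$ is a good arc, not boundary parallel in $\cS'_0$, so $S_3$ can perfectly well sit inside $D$.

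The paper avoids the $\tau_1\subset D$ issue entirely by working with $S_3$ directly and splitting on whether $S_3$ is squeezed by $T_4,T'_4$. If $S_3$ is \emph{not} squeezed, then $S_3$ and $\tau_5$ lie in different components of $\cS_0\setminus(T_4\cup T'_4)$, so $S_3\cap\tau_5=\emptyset$; since then $z_c\notin S_3$ we are in case~(1) or~(2) of Lemma~\ref{lem:S3}, giving $S_3\cap\tau_1=\emptyset$, and Lemma~\ref{lem:not big}(1) applies with $\alpha=S_3$. If $S_3$ \emph{is} squeezed, then $S_3$ is parallel to $T_4,T'_4$, and for one of $T_4,T'_4$ the pair with $S_3$ does not squeeze $p_c$; since $\tau_1\cap S_3\subset\{z_c\}$ and $\tau_5\cap(T_4\cup T'_4)=\emptyset$, Lemma~\ref{lem:not big}(3) applies. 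Either way $\omega$ is small at $x_0$, a contradiction. The key idea you are missing is to use $S_3$ itself (and the already-known bound $\tau_1\cap S_3\subset\{z_c\}$) as the good arc, rather than trying to force $\tau_1$ into $D$.
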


\begin{proof}
By Corollary~\ref{cor:boundary parallel 24}, all traces of $\tau_4$ in $\cS_0$ are good.	
We argue by contradiction and assume $T_4$ and $T'_4$ are two parallel traces of $\tau_4$ that squeeze $p_c$. As $\tau_5$ is a simple arc in $\cS_0$ from $p_c$ to $z_c$ and $\tau_5\cap \tau_4=\emptyset$, we know $\tau_5$ is squeezed by $T_4$ and $T'_4$. 
Let $S_3$ be as in Lemma~\ref{lem:S3}. Then $S_3\cap \tau_4=\emptyset$. If $S_3$ is not squeezed by $T_4$ and $T'_4$, then $S_3$ and $\tau_5$ are contained in different components of $\cS_0\setminus(T_4\cup T'_4)$. Thus $S_3\cap \tau_5=\emptyset$, moreover, as $z_c\notin S_3$, we are in Lemma~\ref{lem:S3} (1) or (2). Thus $\tau_1\cap S_3=\emptyset$ by Lemma~\ref{lem:S3}. By Lemma~\ref{lem:not big} (1), $\omega$ is small at $x_0$, contradiction.
It remains to consider the case when $S_3$ is squeezed by $T_4$ and $T'_4$. Then $S_3$ is parallel to $T_4$ and $T'_4$. It follows that either $p_c$ is not squeezed by $S_3$ and $T_4$, or $p_c$ is not squeezed by $S_3$ and $T'_4$. As $\tau_1\cap S_3\subset\{z_c\}$ by Lemma~\ref{lem:S3} and $\tau_5\cap (T_4\cup T'_4)=\emptyset$, we deduce from Lemma~\ref{lem:not big} (3) that $\omega$ is small at $x_0$, contradiction. \end{proof}
\begin{lem}
	\label{lem:nb}
	Suppose $\omega$ is of type I or II.
	Suppose $\tau_0\cap \tau_3\neq\emptyset$. Suppose $\tau_3$ has a trace $S_3$ satisfying at least one of the cases of Lemma~\ref{lem:S3}. If there is a trace $S_4$ of $\tau_4$ parallel to $S_3$ such that $p_c$ is not squeezed by $S_3$ and $S_4$, then $\omega$ is small at $x_0$.
\end{lem}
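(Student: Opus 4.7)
The plan is to apply Lemma~\ref{lem:not big} (3) with $\alpha = S_3$ and $\alpha' = S_4$. By hypothesis, $S_3$ and $S_4$ are parallel, and $p_c$ is not squeezed by them, so they bound a disk $D\subset \cS_0$ with $p_c\notin D$. Moreover, the following four conditions of Lemma~\ref{lem:not big} (3) follow immediately from the set-up: since $\tau_3\cap \tau_4=\emptyset$ (adjacent arcs in $\omega$), the traces $S_3$ and $S_4$ are disjoint, so $\partial S_3\cup\partial S_4$ consists of four distinct points on $\partial \cS_0$; $\tau_5\cap S_4\subset \tau_5\cap \tau_4=\emptyset$; and $\tau_1\cap S_3\subset \{z_c\}$ in each of the three cases of Lemma~\ref{lem:S3} (case~1 gives $\tau_1\cap S_3=\emptyset$; in case~2, $p_c\notin S_3$ so $\tau_1\cap (S_3\cup R_3)=\{p_c\}$ forces $\tau_1\cap S_3=\emptyset$; in case~3, $\tau_1\cap S_3\subset \{z_c\}$). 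The arc $S_3$ itself is good by Lemma~\ref{lem:S3}.

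The one remaining condition to check is that $S_4$ is a good arc, i.e.\ not boundary parallel in $\cS'_0$. If $\omega$ is of type I, this is the key step that is handled \emph{for free}: if $S_4$ were boundary parallel in $\cS'_0$, then $\tau_4$ would contain a non-good trace, and Corollary~\ref{cor:boundary parallel 24} would already conclude that $\omega$ is small at $x_0$, so we are done. If $\omega$ is of type II, one argues by hand: a non-good $S_4$ would, together with an arc $\gamma\subset \partial\cS_0$ containing $z_c$, bound a punctured disk $D'\subset \cS_0$ with $p_c\in D'$; then $\tau_5$ is an arc from $p_c$ to $z_c$ disjoint from $S_4$, so $\tau_5\subset D'$, while $S_3$ is good and disjoint from $S_4$, hence $S_3\subset \cS_0\setminus D'$. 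Combining this with the parallel disk $D$ (which has $p_c$ outside) and a short bigon/intersection-minimization argument similar to Corollary~\ref{cor:boundary parallel 24}, one verifies directly that $\tau_1,\tau_5$ admit a common disjoint good arc and concludes smallness at $x_0$ from Lemma~\ref{lem:not big} (1) or (2).

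Once $S_4$ is good, Lemma~\ref{lem:not big} (3) applies and yields a vertex of type $\hat d$ adjacent to both $x_1$ and $x_5$ in $\Delta_\Lambda$, which by definition means $\omega$ is small at $x_0$.

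The main obstacle is the type~II portion of verifying $S_4$ is good, since Corollary~\ref{cor:boundary parallel 24} is stated only for type~I cycles; however, the geometric picture (parallel disk $D$ containing neither puncture paired with a putative bad disk $D'$ trapping $p_c$ and $\tau_5$) forces enough disjointness between $\tau_1,\tau_5$ and a suitable good arc that the type~II case reduces to a direct application of Lemma~\ref{lem:not big}. All other steps are straightforward bookkeeping.
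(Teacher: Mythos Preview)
Your approach is correct and matches the paper's: apply Lemma~\ref{lem:not big}~(3) with $\alpha=S_3$, $\alpha'=S_4$. The paper's entire proof is one line: ``As $S_4\cap\tau_5=\emptyset$, and $S_3\cap\tau_1\subset\{z_c\}$, we are done by Lemma~\ref{lem:not big}~(3).''

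Your worry about whether $S_4$ is good, and the resulting type~I/type~II case split, is unnecessary. The hypothesis already forces $S_4$ to be good: since $S_3$ and $S_4$ are parallel and $p_c$ is not squeezed, they cobound a genuine disk $D\subset\cS_0$ with $p_c\notin D$, and this $D$ is also a disk in $\cS'_0$. Hence $S_3$ and $S_4$ represent the same class in $H_1(\cS'_0,\partial\cS'_0)$, so one is boundary parallel in $\cS'_0$ if and only if the other is. Since $S_3$ is good (in every case of Lemma~\ref{lem:S3}), so is $S_4$. This removes any need for Corollary~\ref{cor:boundary parallel 24} or a separate type~II argument; in particular, the vague ``short bigon/intersection-minimization argument'' you sketch for type~II is not needed and, as written, is not a proof.
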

\begin{proof}
As $S_4\cap\tau_5=\emptyset$, and $S_3\cap\tau_1\subset\{z_c\}$, we are done by Lemma~\ref{lem:not big} (3). 
\end{proof}
\begin{lem}
	\label{lem:not squeeze 34}
	Suppose $\omega$ is of type I.
	Suppose $\tau_0\cap \tau_3\neq\emptyset$.  Suppose $\tau_3$ has a trace $S_3$ satisfying at least one of the cases of Lemma~\ref{lem:S3}.  
	Assume in additional that the 6-cycle $\omega$ is big at $x_0$. Then in each case of Lemma~\ref{lem:S3}, $p_c$ is not squeezed by $S_3$ and a trace  of $\tau_4$ which is parallel to $S_3$.
\end{lem}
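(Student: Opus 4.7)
The plan is to argue by contradiction: suppose there is a trace $T_4$ of $\tau_4$ parallel to $S_3$ such that $p_c$ is squeezed by $S_3$ and $T_4$ in a disk $D\subset \cS_0$ with $\partial D = S_3\cup T_4\cup \beta_1\cup \beta_2$ and $\beta_1,\beta_2\subset \partial \cS_0$, and derive that $\omega$ is small at $x_0$, contradicting the bigness hypothesis. The two standing inputs are: since $\omega$ is big at $x_0$, Corollary~\ref{cor:boundary parallel 24} forces every trace of $\tau_4$ to be good, and Lemma~\ref{lem:not sequeeze 44} forbids any two parallel traces of $\tau_4$ from squeezing $p_c$.

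The first step is to locate $\beta_1,\beta_2$ according to the three cases of Lemma~\ref{lem:S3}. In cases (1) and (2) all four endpoints of $S_3$ and $T_4$ lie in $\pi_0$, so $\beta_1\cup\beta_2\subset \pi_0$ and $z_c\notin \partial D\cup D$; in case (3), $z_c$ is an endpoint of $S_3$ and therefore lies on $\partial D$, with one $\beta_i$ emanating from $z_c$ along $\partial\cS_0$ through $z$ or $z'$ into $\pi_0$. With this, the natural candidate in Lemma~\ref{lem:not big} is $\alpha = T_4$, which is good and satisfies $\alpha\cap \tau_5\subset \tau_4\cap\tau_5=\emptyset$. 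Combining Lemma~\ref{lem:S3}'s bound $\tau_1\cap S_3\subset \{z_c\}$ with $\tau_1\cap \partial\cS_0\subset \{z_c\}$ (since $\tau_1\cap \tau_0=\emptyset$ and $\beta_i\subset \pi_0\cup\{z_c\}$) yields $\tau_1\cap \partial D\subset \{z_c\}\cup (\tau_1\cap T_4)$. Hence if $\tau_1\cap T_4=\emptyset$ then $\tau_1\subset D\cup\{z_c\}$, and Lemma~\ref{lem:not big}~(4) applied to $\alpha=T_4$ immediately gives $\omega$ small at $x_0$, the desired contradiction.

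The remaining case is when $\tau_1$ exits $D$ through $T_4$. My plan is to look for a second trace $T'_4$ of $\tau_4$ parallel to $T_4$: by Lemma~\ref{lem:not sequeeze 44}, $T'_4$ must lie on the far side of $T_4$ from $p_c$, so $p_c$ is not squeezed by $T_4,T'_4$. One then takes $\alpha=T_4,\alpha'=T'_4$ in Lemma~\ref{lem:not big}~(3), where $\tau_5\cap \alpha'=\emptyset$ is automatic, and the condition $\tau_1\cap \alpha\subset \{z_c\}$ is arranged by an innermost/outermost-disk modification on the intersections $\tau_1\cap T_4$ (passing to the outermost subarc of $\tau_1$ between its last intersection with $T_4$ and $z_c$, and enlarging $T_4$ accordingly within its parallel class). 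If no second parallel trace $T'_4$ exists, one uses instead that the forced exit of $\tau_1$ and $\tau_5$ out of $D$ through opposite arcs of $\partial D$ ($T_4$ versus $S_3$, respectively) makes $\tau_1\cup\tau_5$ into a homotopically non-trivial simple closed curve on $\cS_0$ which is not homotopic to $\partial \cS_0$, triggering Lemma~\ref{lem:not big}~(2). The hard part I anticipate is cleanly performing the innermost-disk surgery in case (3) of Lemma~\ref{lem:S3}, where the extra coincidence $z_c\in S_3\subset \partial D$ creates degenerate endpoint configurations that must be handled separately from cases (1) and (2).
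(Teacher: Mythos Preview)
Your reduction to Lemma~\ref{lem:not big} via the disk $D$ bounded by $S_3$, $T_4$ and two boundary arcs is the right starting picture, but the argument collapses almost immediately. In cases (1) and (2) of Lemma~\ref{lem:S3}, the arcs $\beta_1,\beta_2$ lie in $\pi_0$, while $z_c\notin\pi_0$, $z_c\notin S_3$, and $z_c\notin T_4$ (all endpoints of $\tau_4$-traces are in $\pi_0$). Thus $z_c\notin\overline{D}$, and since $\tau_1$ runs from $p_c\in D$ to $z_c$, it \emph{must} cross $T_4$. So your ``easy case'' $\tau_1\cap T_4=\emptyset$ is vacuous in the two principal cases, and all the content lies in the ``remaining case''.

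For that case, neither of your two fallbacks works. The ``innermost/outermost-disk modification'' is not a well-defined operation here: $T_4$ is a fixed trace of $\tau_4$, and you cannot ``enlarge $T_4$ within its parallel class'' to make it miss $\tau_1$; nor is there any reason a nearby parallel abstract arc $\alpha$ would satisfy $\tau_1\cap\alpha=\emptyset$ while remaining parallel to a genuine trace $T_4'$ of $\tau_4$ on the correct side of $p_c$. The alternative via Lemma~\ref{lem:not big}~(2) requires that $\tau_1\cup\tau_5$ be a simple closed curve, but you have no control over $\tau_5\cap S_3$: Lemma~\ref{lem:S3} bounds $\tau_1\cap S_3$, not $\tau_5\cap S_3$, so $\tau_5$ may exit $D$ through $S_3$ many times and the curve need not be simple, let alone homotopically nontrivial.

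The paper's proof takes an entirely different route that you are missing. After the setup, it first shows $z_c\notin S_3$ (ruling out case (3) altogether), then proves that not all traces of $\tau_4$ are parallel to $S_3$ and produces a \emph{second} trace $S_3'$ of $\tau_3$, not parallel to $S_3$, with $S_3'\cap\tau_5\subset\{z_c\}$. This pins down $\tau_5$ precisely. The contradiction is then obtained by bringing in $\tau_2$: one shows all traces of $\tau_2$ must be parallel to $S_3$ and that $p_c$ is squeezed by two of them, which contradicts Lemma~\ref{lem:not sequeeze 44} applied with $\tau_2$ in place of $\tau_4$. The symmetry between $\tau_2$ and $\tau_4$, and the auxiliary trace $S_3'$, are the essential ideas---your plan touches neither.
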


\begin{proof}
By Corollary~\ref{cor:boundary parallel 24}, all traces of $\tau_2$ and $\tau_4$ are good.	
By contradiction we assume $p_c$ is squeezed by $S_3$ and a trace of $T_4$ of $\tau_4$. Note that not all traces of $\tau_4$ are parallel to $S_3$, otherwise by Corollary~\ref{cor:S4}, there is a trace $S_4$ of $\tau_4$ parallel to $S_3$ such that $p_c$ is not squeezed by $S_3$ and $S_4$. This and Lemma~\ref{lem:nb} lead to a contradiction.

Recall that one of $z$ and $z'$ is contained in a trace of $\tau_4$. We assume without loss of generality that $z$ is contained in a trace $T'_4$ of $\tau_4$. Let $P_{T_4}$ be the collection all traces of $\tau_4$ that are parallel to $T_4$. Similarly we define $P_{T'_4}$. Then $p_c$ is squeezed by $S_3$ and any member of $P_{T_4}$ (otherwise we will have a contradiction with Lemma~\ref{lem:not sequeeze 44}). So we are free to replace by $T_4$ by any other member of $P_{T_4}$, and the following holds.
\begin{claim}
	\label{claim:boundary position}
All boundary points of elements in $P_{T_4}$ are contained in the same component of $\partial \cS_0\setminus \partial S_3$, and any trace of $\tau_4$ not in $P_{T_4}$ has its two boundary points in different components of $\partial \cS_0\setminus \partial S_3$. Moreover, $p_c$ and $S_3$ are in the same component of $\cS_0\setminus\tau_4$.
\end{claim}


\begin{claim}
	\label{claim:zc}
The point $z_c$ is not squeezed by $S_3$ and a member of $P_{T_4}$. Moreover, $z_c\notin S_3$, hence $S_3$ is in Lemma~\ref{lem:S3} (1) or (2).
\end{claim}

\begin{proof}
We argue by contradiction and assume either $z_c\in S_3$, or $z_c$ is squeezed by $S_3$ and a member $T$ of $P_{T_4}$. Let $\tau'_5$ be a simple arc from $p_c$ to $z_c$ which is squeezed by $S_3$ and $T$ (except possibly at $z_c$). 
Take a trace $T'$ of $\tau_4$ which is not parallel to $T$. As $T'\cap (T\cup S_3)=\emptyset$, we know $T'$ and $\tau'_5\setminus\{z_c\}$ are in different components of $\cS_0\setminus(S_3\cup T)$. In particular, $\tau'_5\cap (T\cup T')=\emptyset$. We also have $\tau_5\cap (T\cup T')=\emptyset$. If we scissor $\cS_0$ along $(T\cup T')$, we obtain a disk $D$ puncture at $p_c$. In $D$ there is only one homotopy class of arcs from $p_c$ to $z_c\in \partial D$. Thus we can assume $\tau_5=\tau'_5$. In particular, $\tau_5\cap S_3\subset \{z_c\}$. On the other hand $\tau_1\cap S_3\subset \{z_c\}$ by Lemma~\ref{lem:S3}. Then Lemma~\ref{lem:not big} (4) implies that $\omega$ is not big at $x_0$, contradiction.
\end{proof}

\begin{claim}
	\label{claim:np}
$T'_4$ is not parallel to $S_3$. In particular, $z\notin T_4$ and $z'\notin T_4$.
\end{claim}

\begin{proof}
We argue by contradiction and assume $T'_4$ is parallel to $S_3$. Then $P_{T_4}=P_{T'_4}$. We repeat the discussion in Lemma~\ref{lem:S3} with $\tau_2$ replaced by $\tau_4$.
By Lemma~\ref{lem:not sequeeze 44}, the boundaries of traces of $\tau_4$ are as in Figure~\ref{fig:12} (III) or (V), where $\xi^\pm_i$ is the boundary of an element in $P_{T'_4}$. Let $x$ and $x'$ be two endpoints of $S_3$. By Claim~\ref{claim:boundary position} and Claim~\ref{claim:zc}, if we are in Figure~\ref{fig:12} (III), then up to exchanging $x$ and $x'$, either $x\in \overline{\xi^+_n\eta^+_1}$ and $x'\in \overline{\eta^+_m\xi^-_n}$, or $x\in \overline{\xi^-_1\eta^-_m}$ and $x'\in\overline{z'\eta^-_1}$. In the former case, $S_3$ must belong to Lemma~\ref{lem:S3} (2). Let $R_3$ be as in Lemma~\ref{lem:S3} (2). As $p_c$ is squeezed by $S_3$ and $T$, where $T\in P_{T'_4}$ goes from $\xi^+_n$ to $\xi^-_n$, and $R_3\cap (T\cup S_3)=\emptyset$, we know that $R_3$ is squeezed by $S_3$ and $T$.
As $x$ is aligned with $\xi^+_n$ and $x'$ is alighted with $\xi^-_n$, we know $R_3$ ends in a point in $\overline{\xi^+_n\eta^+_1}\cup\overline{\eta^+_m\xi^-_n}$. Thus the endpoint of $R_3$ and $\partial S_3$ are contained in the same side of $p_a$, contradicting Lemma~\ref{lem:S3} (2). In the latter case, let $T'\in P_{T'_4}$ be the trace going from $\xi^+_1$ to $\xi^-_1$ we still know $p_c$ is squeezed by $T'$ and $S_3$.  As $x'$ is aligned with $\xi^+_1$ and $x$ is alighted with $\xi^-_1$, we know $z_c$ is also squeezed by $T'$ and $S_3$, contradicting Claim~\ref{claim:zc}.
If we are in Figure~\ref{fig:12} (V), either $x\in \overline{\xi^+_n\eta^+_1}$ and $x'\in\overline{\zeta^+_r\xi^-_n}$, or $x\in\overline{\xi^-_1\eta^-_m}$ and $x'\in\overline{z'\zeta^-_1}$. This can be treated as before.
\end{proof}

\begin{claim}
	\label{claim5}
{\tiny }There exists a trace $S'_3$ of $\tau_3$ between two points in $\partial\cS_0$ such that $S'_3$ is not boundary parallel in $\cS'_0$ (see Definition~\ref{def:boundary parallel}), $S'_3$ is not parallel to $S_3$, and $S'_3\cap \tau_5\subset\{z_c\}$.
\end{claim}

\begin{proof}
We repeat the discussion of Lemma~\ref{lem:S3} with $\tau_2$ replaced by $\tau_4$.	
If $\tau_4$ has exactly two parallel families of traces, then by Lemma~\ref{lem:not sequeeze 44}, the boundaries of traces of $\tau_4$ is as in Figure~\ref{fig:12} (III), where $\eta^\pm_j$ is the boundary of an element in $P_{T_4}$ and $\xi^\pm_i$ is the boundary of an element in $P_{T'_4}$. By Claim~\ref{claim:boundary position}  and Claim~\ref{claim:zc}, $S_3$ either goes from an interior point of $\overline{\eta^+_m\xi^-_n}$ to an interior point of $\overline{\xi^-_1\eta^-_m}$, or from a point in $\overline{\eta^+_1\xi^+_n}$ to a point in $\overline{z'\eta^-_1}$. In each of the cases, one endpoint of $S_3$ is identified via $\phi:\overline{zp_a}\to\overline{z'p_a}$ to a point which is squeezed by two elements in $P_{T'_4}$ (recall that $n>1$ in Figure~\ref{fig:12} (III)). Thus $S_3$ is followed by another trace $S'_3$ of $\tau_3$ that is squeezed by two elements of $P_{T'_4}$. In particular, $S'_3$ is not boundary parallel in $\cS'_0$. Recall that $\tau_4\cap (\tau_3\cup\tau_5)=\emptyset$. As $p_c$ (hence $\tau_5$) and $S_3$ are in the same component of $\cS_0\setminus\tau_4$ (Claim~\ref{claim:boundary position}), we know $\tau_5$ and $S'_3$ are in different components of $\cS_0\setminus \tau_4$. Thus $\tau_5\cap S'_3=\emptyset$. Claim~\ref{claim:np} implies that $S'_3$ and $S_3$ are not parallel.

Now we assume $\tau_4$ has three parallel families of traces, then by Lemma~\ref{lem:not sequeeze 44}, the boundaries of traces of $\tau_4$ is as in Figure~\ref{fig:12} (V), where $\xi^\pm_i$ is the boundary of an element in $P_{T'_4}$. There are two subcases to consider. 
For the first subcase, suppose $\zeta^\pm_\ell$ is the boundary of an element in $P_{T_4}$. By Claim~\ref{claim:boundary position} and Claim~\ref{claim:zc}, $S_3$ goes from 
a point in $\overline{\zeta^+_1\eta^+_m}$ to a point in $\overline{z'\zeta^-_1}$. The rest of the argument is the same as the previous graph. For the second subcase, suppose $\eta^\pm_j$ is the boundary of an element in $P_{T_4}$. By Claim~\ref{claim:boundary position} and Claim~\ref{claim:zc}, $S_3$ goes from an interior point of $\overline{\eta^+_m\zeta^+_1}$ to an interior point of $\overline{\xi^-_1\eta^-_m}$.
Let $m,n,r$ be as in Case 3 of the proof of Lemma~\ref{lem:S3} and recall that $n-1\neq r$. Then either $y$ is identified via $\phi:\overline{zp_a}\to\overline{z'p_a}$ to a point between $\xi^-_1$ and $\xi^-_n$, or $y'$ is identified via $\phi$ to a point between $\zeta^+_1$ and $\zeta^+_r$ (this uses $n>1$). Thus $S_3$ is followed by a trace of $\tau_3$ which is squeezed by two parallel traces of $\tau_4$ that are not parallel to $S_3$, and we can finish in the same way as the previous paragraph.  \end{proof}

\begin{figure}[h]
	\centering
	\includegraphics[scale=0.81]{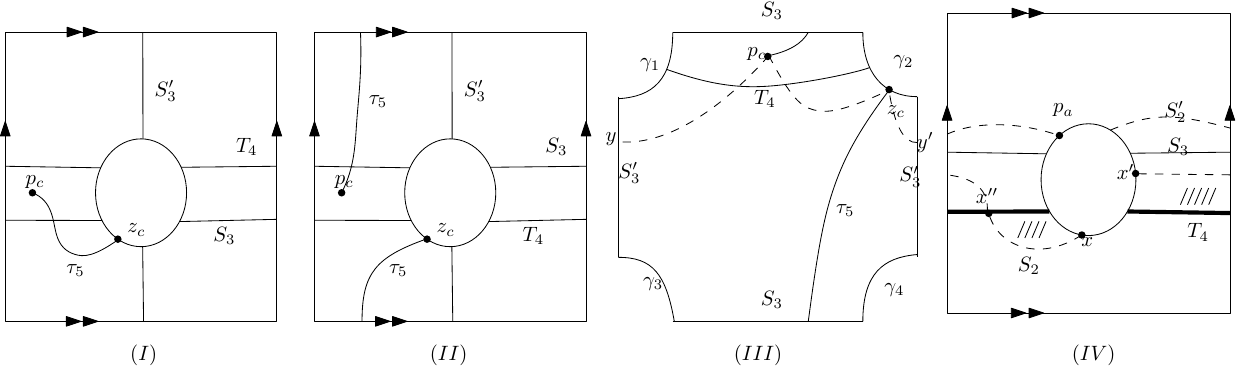}
	\caption{Proof of Lemma~\ref{lem:not squeeze 34}}
	\label{fig:squeeze}
\end{figure}


\begin{claim}
	\label{claim:T4S3}
The two endpoints of $T_4$ are contained in different components of $\pi_0\setminus\{p_a\}$. The same statement holds for $S_3$.
\end{claim}

\begin{proof}
The $T_4$ case follows from Claim~\ref{claim:np}, Lemma~\ref{lem:not sequeeze 44} and Figure~\ref{fig:12} (III) (V). The $S_3$ case follows by looking at each cases in the proof of Claim~\ref{claim5}. 
\end{proof}

\begin{claim}
	\label{claim:sides}
	The point $z_c$ and $\partial S_3$ are in different components of $\partial \cS_0\setminus\partial T_4$.
\end{claim}
\begin{proof}
By Claim~\ref{claim5}, $\tau_5\cap (T_4\cup S'_3)\subset \{z_c\}$. As $S'_3$ is not parallel to $T_4$, $\cS_0\setminus(T_4\cup S'_3)$ is an open disk with puncture $p_c$. Thus $T_4$ and $S'_3$ completely determine $\tau_5$, and up to a homeomorphism we can assume they are as in Figure~\ref{fig:squeeze} (I). As $S_3$ satisfies Lemma~\ref{lem:S3} (1) or (2), $\tau_1\cap S_3=\emptyset$. Thus $\tau_1\cup\tau_5$ forms a simple closed curve in $\cS_0$ which is homotopically non-trivial, and not homotopic to $\partial\cS_0$. The claim now follows from Lemma~\ref{lem:not big} (2).
\end{proof}


As $z_c$ and $\partial S_3$ are in different components of $\partial \cS_0\setminus\partial T_4$, up to a homeomorphism, we can assume we are in  Figure~\ref{fig:squeeze} (II). As in Claim~\ref{claim:sides}, $\tau_5$ is determined by $T_4$ and $S'_3$.
\begin{claim}
	\label{claim:npnb}
If $\tau_2$ has a trace $\gamma$ which is not parallel to $S_3$, then $\omega$ is small at $x_0$.
\end{claim}

\begin{proof}
 We scissor $\cS_0$ along $(S_3\cup S'_3)$ to obtain a closed disk $D$ with a puncture $p_c$, see Figure~\ref{fig:squeeze} (III). Let $\{\gamma_i\}_{i=1}^4$ be four corner arcs in Figure~\ref{fig:squeeze} (III). By Claim~\ref{claim:sides} and Claim~\ref{claim:zc}, $z_c$ must be in either $\gamma_1$ and $\gamma_2$ and it lies below the endpoints of $T_4$. Assume without loss of generality that $z_c\in \gamma_2$. As before, $\tau_5$ is completely determined by $T_4$ and $S'_3$, hence $\tau_5$ is as in Figure~\ref{fig:squeeze} (III). As $\gamma\cap(S_3\cup S'_3)=\emptyset$, $\gamma$ is a good trace in $\cS_0$, and $\gamma$ is not parallel to $S_3$, we know $\gamma$ starts in an interior point $x$ of $\gamma_1$ or $\gamma_2$, and ends in an interior $x'$ of $\gamma_3$ or $\gamma_4$. Let $\tau'_1$ be the dashed arc from $p_c$ to $z_c$ in Figure~\ref{fig:squeeze} (III), and let $\tau''_1$ be the union of the dashed arc from $p_c$ to $y$, and the dashed arc from $y'$ to $z_c$. We assume $y$ and $y'$ are identified if we glued sides of $D$ to form $\cS_0$, so $\tau''_1$ actually give an arc in $\cS_0$. We claim up to an isotopy, $\gamma$ has empty intersection with one of $\tau'_1$ and $\tau''_1$. Assuming this claim, noting that $\tau_1$ is characterized (up to isotopy) as the arc having empty intersection with $\gamma$ and $S_3$, thus $\tau_1=\tau'_1$ or $\tau''_1$ up to isotopy, and $\tau_1\cup\tau_5$ forms a simple closed curve on $\cS_0$ satisfying the requirement of  Lemma~\ref{lem:not big} (2), implying $\omega$ is small at $x_0$.

It remains to prove the claim. If $x$ is above $T_4$, then the claim is clear. If $x$ is contained or below $T_4$, then the claim reduces to showing that $\gamma\cap T_4$ is either empty or equal to $\{x\}$. If this is not the case, then $\gamma$ forms a bigon with $T_4$, with the puncture $p_c$ insider the bigon. Let $R_3$ be the trace of $\tau_3$ containing $p_c$. As $\tau_3\cap (\tau_2\cup \tau_4)=\emptyset$, we know $R_3\cap (\gamma\cup T_4)=\emptyset$. It follows that $R_3$ is contained in the interior of $\cS_0$. Since we assume $\tau_0\cap \tau_3\neq\emptyset$, $R_3$ ends on a point in $\cS_0$. This is a contradiction. 
\end{proof}

The only possibility left is that all traces of $\tau_2$ are parallel to $S_3$. Let $S'_2$ be the trace of $\tau_2$ containing $p_a$, and let $S_2$ be the trace of $\tau_2$ containing one of $\{z,z'\}$.



\begin{claim}
	\label{claim:squeeze}
Suppose all traces of $\tau_2$ are parallel to $S_3$.  Then $S'_2\neq S_2$ and $p_c$ is squeezed by $S'_2$ and $S_2$.  
\end{claim}

\begin{proof}
From now on, we choose $T_4$ such that there are no other traces of $\tau_4$ which is squeezed by $T_4$ and $S_3$. Claim~\ref{claim:T4S3} and Claim~\ref{claim:sides} still hold with such choice. By Claim~\ref{claim:T4S3}, $\{z,z'\}$ and $p_a$ are in different sides of $\partial\cS_0\setminus\partial S_3$. As $S'_2$ and $S_2$ are parallel to $S_3$, $\partial S'_2$ and $\partial S_2$ are in different sides of $\partial\cS_0\setminus\partial S_3$. Thus $S'_2\neq S_2$, and $S_3$ is squeezed by $S'_2$ and $S_2$.

Now suppose $\tau_4$ has two parallel families of traces. Thus we assume we are in Case 2 of the proof of Lemma~\ref{lem:S3}, with the role of $\tau_2$ in Lemma~\ref{lem:S3} replaced by $\tau_4$. We will use the same notation as in Lemma~\ref{lem:S3}, see Figure~\ref{fig:12} (III). By our choice of $T_4$, either $\partial T_4=\eta^\pm_m$ or $\partial T_4=\eta^\pm_1$.
Note that $S_3$ must go from an interior point of $\overline{\eta^+_m\xi^-_n}$ to an interior point of $\overline{\xi^-_1\eta^-_m}$, as the other possibility  -  $S_3$ going from a point in $\overline{\eta^+_1\xi^+_n}$ to a point in $\overline{z'\eta^-_1}$ (see the proof of Claim~\ref{claim5}), is ruled out by Claim~\ref{claim:sides}. It follows that $\partial T_4=\eta^\pm_m$. Then $\partial T_4$ and $p_a$ are in different components of $\partial \cS_0\setminus \partial S_3$. As $p_a\in \partial S'_2$ and $S'_2$ and $S_3$ are parallel, we know $\partial T_4$ and $\partial S'_2$ are in different components of $\partial \cS_0\setminus \partial S_3$. As each of $T_4$ and $S'_2$ is parallel and disjoint from $S_3$, we know $S'_2\cap T_4=\emptyset$,  and $S_3$ is squeezed by $S'_2$ and $T_4$.


We show Claim~\ref{claim:squeeze} holds that if $\partial S_2\cap\partial T_4=\emptyset$ and $\partial S_2$ is contained in the same component of $\partial \cS_0\setminus \partial T_4$. To see this, first we show $S_2\cap T_4=\emptyset$. Indeed, if we scissor $\cS_0$ along $S_3$ to obtain an annulus $A_0$ with its boundary components $C^+$ and $C^-$, then one of $\{T_4,S_2\}$ has its endpoints in $C^+$, another one has endpoints in $C^-$. Thus the only way for $T_4$ and $S_2$ to intersect, is that they form a bigon with $p_c$ inside. However, this is impossible as the trace of $S_3$ starting at $p_c$ is trapped in the interior of this bigon (as $\tau_3\cap (\tau_2\cup\tau_4)=\emptyset$), hence this trace can not reach a point in $\partial \cS_0$, contradicting that $\tau_0\cap \tau_3\neq\emptyset$. Thus $S_2\cap T_4=\emptyset$. Claim~\ref{claim:sides} and Claim~\ref{claim:T4S3} imply that $\{z,z_c,z'\}$ and $\partial S_3$ are in different components of $\partial\cS_0\setminus\partial T_4$. As $S_2$ contains one of $\{z,z'\}$, $\partial S_2$ and $\partial S_3$ are in different components of $\partial\cS_0\setminus\partial T_4$. This together with $S_2\cap T_4=\emptyset$ imply that $S_3$ is squeezed by $S_2$ and $T_4$. As $p_c$ is squeezed by $S_3$ and $T_4$, $p_c$ is squeezed by $S'_2$ and $S_2$. 

If $\partial S_2\cap\partial T_4\neq\emptyset$, then we can deduce Claim~\ref{claim:squeeze} by a similar argument.

It remains to consider $\partial S_2\cap\partial T_4=\emptyset$ and the two endpoints of $S_2$ are contained in different components of $\partial \cS_0\setminus \partial T_4$. We refer to Figure~\ref{fig:squeeze} (IV) for the following discussion. Let $\partial S_2=\{x,x'\}$ and suppose $x\in \{z,z'\}$. Then $x'$ is squeezed by $S_3$ and $T_4$. By looking at the annulus $A_0$ defined before, we know $S_2$ and $T_4$ intersect at exactly one point, denoted $x''$. Note that $S_2, T_4$ and part of $\partial \cS_0$ together bound two triangular regions with apex $x''$ (see the two shaded regions in Figure~\ref{fig:squeeze} (IV)). Let $D$ (resp. $D'$) be the triangular region containing $x$ (resp. $x'$).
If $p_c$ is not squeeze by $S'_2$ and $S_2$, then $p_c\in D'$. In the rest of the proof, we will assume $p_c\in D'$ and deduce a contradiction.


Let $\{y,y'\}=\partial S_3$. By previous discussion, we assume $y\in \overline{\eta^+_m\xi^-_n}$ and $y'\in \overline{\xi^-_1\eta^-_m}$. Let $L$ be the trace of $S_3$ containing $p_c$.
As $\tau_3\cap (\tau_2\cup\tau_4)=\emptyset$, $L$ ends at a point $w$ in $D'\cap\partial\cS_0$.
There are two possibilities of the location of $\eta^\pm_m=\partial T_4$ to consider.

 Suppose $\eta^+_m\in D$ and $\eta^-_m\in D'$. Then 
	$w$ is an interior point of $\overline{\eta^-_my'}$. Note that the arc $D\cap \partial\cS_0$ goes from $\eta^+_m$ to $x$, passing all the $\eta^+$ and $\xi^+$ points. 	As $n>1$ in Figure~\ref{fig:12} (III), the points in the interval $\overline{\xi^-_1\eta^-_m}$ (in particular $w$) are identified via $\phi:\overline{zp_a}\to\overline{z'p_a}$ to points which are contained in $D\cap \partial\cS_0$. We define \emph{the trace of $\tau_3$ following $(L,w)$} to be the trace of $\tau_3$ containing $w'$, where $w'$ is identified to $w$ via $\phi$.
	As $\tau_3\cap (S_{2}\cup T_{4})=\emptyset$, the trace of $S_3$ following $(L,w)$ is trapped in $D$. The only way this could happen is $z_c\in D\cap \partial\cS_0$ and this trace ends at $z_c$, however, this contradicts the existence of $S_3$. 

 Suppose $\eta^-_m\in D$ and $\eta^+_m\in D'$. Then 
	$w$ is an interior point of $\overline{\eta^+_my}$. By the argument in the first paragraph of the proof of Claim~\ref{claim5}, the traces $L_1$ and $L_2$ of $\tau_3$ following
 $(S_3,y)$ and $(L,w)$ respectively are squeezed by two elements of $P_{T'_4}$. Note that in $\overline{\eta^+_m\xi^-_n}$ we have $x'$ sitting between $w$ and $y$. As $\tau_2\cap\tau_3=\emptyset$, the trace of $\tau_2$ following $(S_2,x')$ is squeezed by $L_1$ and $L_2$, contradicting that all traces of $\tau_2$ are parallel to $S_3$.

Assume $\tau_4$ has three parallel families of traces. We use the same notation as Case 3 of proof of Lemma~\ref{lem:S3}, with $\tau_2$ replaced by $\tau_4$. By the proof of Claim~\ref{claim5}, either $S_3$ goes from 
a point in $\overline{\zeta^+_1\eta^+_m}$ to a point in $\overline{z'\zeta^-_1}$, or $S_3$ goes from a point of $\overline{\eta^+_m\zeta^+_1}$ to a point of $\overline{\xi^-_1\eta^-_m}$. However, the former can be ruled out by Claim~\ref{claim:sides}. Let $\{y,y'\}=\partial S_3$.
Up to exchange the role of $y$ and $y'$, we have $y\in\overline{\eta^+_m\zeta^+_1}$, $y'\in\overline{\xi^-_1\eta^-_m}$, $n-1\neq r$ and $\partial T_4=\eta^\pm_m$. We can repeat the previous argument, until we reach the case  $\partial S_2\cap\partial T_4=\emptyset$ and the two endpoints of $S_2$ are contained in different components of $\partial \cS_0\setminus \partial T_4$. Let $D,D',L$ be defined as before.

Suppose $\eta^+_m\in D$ and $\eta^-_m\in D'$. Then 
$w$ is an interior point of $\overline{\eta^-_my'}$. The the arc $D\cap \partial\cS_0$ goes from $\eta^+_m$ to $x$, passing all the $\eta^+$ and $\xi^+$ points. If $n-1>r$, then the points in the interval $\overline{\xi^-_1\eta^-_m}$ (in particular $w$) are identified via $\phi:\overline{zp_a}\to\overline{z'p_a}$ to points which are contained in $D\cap \partial\cS_0$, and we deduce a contradiction as before. If $n-1<r$, as $n>1$, interior points $\overline{\xi^-_1\eta^-_m}$ are identified via to points which are squeezed by two of $\{\gamma_k\}_{k=1}^r$ (as defined in Lemma~\ref{lem:S3} Case 3). Then the traces $L_1$ and $L_2$ of $\tau_3$ following
$(S_3,y')$ and $(L,w)$ respectively are squeezed by two elements of $\{\gamma_k\}_{k=1}^r$. In $\overline{\xi^-_1\eta^-_m}$ we have $x'$ sitting between $w$ and $y$. As $\tau_2\cap\tau_3=\emptyset$, the trace of $\tau_2$ following $(S_2,x')$ is squeezed by $L_1$ and $L_2$, contradicting that all traces of $\tau_2$ are parallel to $S_3$.


Suppose $\eta^-_m\in D$ and $\eta^+_m\in D'$. If $n-1>r$, then we conclude in the same way as the case of $\tau_4$ having two parallel families. If $n-1<r$, then the points in the interval $\overline{\eta^+_m\xi^-_n}$ (in particular $w$) are identified via $\phi$ to points which are contained in $D\cap \partial\cS_0$, and we reach a contradiction as before.
\end{proof}

If we apply Lemma~\ref{lem:not sequeeze 44} with the role of $\tau_4$ replaced by $\tau_2$, we know $p_c$ is not squeezed by two parallel traces of $\tau_2$, which contradicts Claim~\ref{claim:squeeze}. This finishes the proof.
\end{proof}


Let $\bar\cS$ be the three punctured torus defined in Section~\ref{subsec:translation}. Let $\bar\cS'$ be $\bar\cS$ with its punctures filled in. For an arc $\tau\in \cS$, let $\bar\tau$ be the associated arc in $\bar\cS$.
\begin{cor}
	\label{cor:b}
Suppose $\omega$ is of type I.	
Suppose $\tau_0\cap\tau_3\neq\emptyset$.  Suppose $\tau_3$ has a trace $S_3$ satisfying at least one of the cases of Lemma~\ref{lem:S3}. Assume in addition that the 6-cycle $\omega$ is big at $x_0$. Then there exists a simple arc $\tau$ in $\cS$ from $p_a$ to $p_c$ such that
\begin{enumerate}
	\item $\tau$ has empty intersection with $\tau_i$ for $i=1,0,5,4$ except at endpoints;
	\item the concatenation of $\bar\tau,\bar\tau_1,\bar\tau_0$ gives homotopically non-trivial loop on $\cS'$, moreover, the same holds true for $(\bar\tau,\bar\tau_0,\bar\tau_5)$ and $(\bar\tau,\bar\tau_5,\bar\tau_4)$.
\end{enumerate}
\end{cor}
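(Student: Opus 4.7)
The plan is to use the previous non-squeezing and non-boundary-parallel lemmas to reduce to a clean configuration of $S_3$ together with a companion trace of $\tau_4$, then to construct $\tau$ by a topological argument in $\cS_0$, and finally to verify the non-degeneracy of the three triangles. First, under the bigness of $\omega$ at $x_0$, Corollary~\ref{cor:boundary parallel 24} forces every trace of $\tau_4$ in $\cS_0$ to be good. Invoking Corollary~\ref{cor:S4} then produces a trace $S_4$ of $\tau_4$ that is good and either not parallel to $S_3$, or parallel to $S_3$ with $p_c$ not squeezed between the two. The second alternative would, by Lemma~\ref{lem:nb}, return smallness at $x_0$ and hence is ruled out. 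Thus I may assume $S_3$ and $S_4$ are both good and non-parallel proper arcs in the once-punctured torus with boundary $\cS'_0$.

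Next, I would construct $\tau$. Cutting $\cS'_0$ along $S_3\cup S_4$ yields a disk $\mathsf D$; the puncture $p_c$ survives inside $\mathsf D$, and $\pi_0\subset \partial\cS_0$ appears as a union of arcs in $\partial\mathsf D$ that contains the point $p_a$. The remaining traces of $\tau_4$ (those distinct from $S_4$, all good, each either parallel to $S_4$ or not) become a disjoint family of proper arcs in $\mathsf D$. Lemma~\ref{lem:not sequeeze 44} and Lemma~\ref{lem:not squeeze 34} together prevent $p_c$ from being squeezed off from $p_a$ by any two such arcs, so the component $U$ of $\mathsf D\setminus\tau_4$ containing $p_c$ also has $p_a$ on its boundary. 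Since $\tau_1,\tau_5\subset U$ are simple arcs from $p_c$ to $z_c$, the complement $U\setminus(\tau_1\cup\tau_5)$ still possesses a component having $p_c$ in its interior and $p_a$ on its boundary; take $\tau$ to be any simple arc in this component from $p_a$ to $p_c$. By construction $\tau$ satisfies (1).

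Finally, I would verify (2) by showing each of the triples $(\bar\tau,\bar\tau_1,\bar\tau_0)$, $(\bar\tau,\bar\tau_0,\bar\tau_5)$, $(\bar\tau,\bar\tau_5,\bar\tau_4)$ concatenates to a simple loop in the torus $\bar\cS'$ that does not bound a disk there. A degenerate such triple would bound an embedded disk in $\bar\cS'$, and by inspecting what arcs must lie inside this disk one would force either an isotopy between two of the $\tau_i$'s contradicting the embeddedness of $\omega$, or (for the triple involving $\bar\tau_4$) an identification of $S_3$ and $S_4$ up to parallelism, contradicting the choice made in the first step. The main obstacle will be the construction step, specifically the delicate bookkeeping of the positions of the boundary points of all parallel classes of traces of $\tau_4$ in $\pi_0$ relative to $\partial S_3$, $p_a$, and $z_c$; the two non-squeezing lemmas just established are designed precisely to make this bookkeeping tractable.
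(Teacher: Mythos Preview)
Your first reduction can be sharpened, and this matters for what follows. From Lemma~\ref{lem:not squeeze 34} together with Lemma~\ref{lem:nb} one gets the stronger statement that \emph{no} trace of $\tau_4$ is parallel to $S_3$ (not merely that some $S_4$ is non-parallel). The paper uses this immediately to bound the number of parallel families of $\tau_4$ to at most two, and then splits into the cases of one or two families; this case split is essential for pinning down where $p_a$ sits on $\partial\cS_0$ relative to the traces of $\tau_4$ (namely $p_a=\xi^-_n$ in the notation of Figure~\ref{fig:12}).

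The real gap is in your construction step. You assert ``$\tau_1,\tau_5\subset U$'', but neither inclusion is justified: $\tau_5$ may cross $S_3$ (the arcs $\tau_5$ and $\tau_3$ are not adjacent in $\omega$, and Lemma~\ref{lem:S3} only controls $S_3\cap\tau_1$), and $\tau_1$ may cross $S_4$ and indeed any trace of $\tau_4$ (again $\tau_1,\tau_4$ are not adjacent). So after cutting along $S_3\cup S_4$ and the remaining traces of $\tau_4$, the arcs $\tau_1,\tau_5$ need not lie in the component $U$ containing $p_c$, and even their intersections with $U$ may consist of several subarcs. Consequently your sentence ``the complement $U\setminus(\tau_1\cup\tau_5)$ still possesses a component having $p_c$ in its interior and $p_a$ on its boundary'' is not supported. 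The non-squeezing lemmas control the position of $p_c$ relative to pairs of parallel traces; they do not, in the abstract, force $p_a$ to lie on the boundary of the $p_c$-component, nor do they control $\tau_1$.

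The paper's argument is quite different from your abstract plan: it scissors $\cS_0$ along $S_3$ to obtain a punctured annulus $A_0$, draws the traces of $\tau_4$ as vertical arcs, and then explicitly enumerates the possible positions of a trace $T'$ from the second parallel family (Figure~\ref{fig:guan}). Three of the four positions are eliminated by Lemma~\ref{lem:not big}, because in those configurations $\tau_5$ (which is completely determined by $P_T\cup T'$) together with $\tau_1$ would witness smallness at $x_0$. In the sole surviving configuration the component $D'$ of $\cS_0\setminus(\tau_4\cup S_3)$ containing $p_c$ visibly has $p_a$ on its boundary, and $\tau$ is taken inside $D'$; the verifications of (1) and (2) are then read off the picture. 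The single-parallel-family case is handled similarly but is easier. In short, the ``delicate bookkeeping'' you anticipate is not bypassed but carried out explicitly via this figure-based case analysis, and your attempt to replace it by a clean topological argument does not go through as written.
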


\begin{proof}
By Corollary~\ref{cor:boundary parallel 24}, all traces of $\tau_2$ and $\tau_4$ are good. Note that $S_3$ is not parallel to a trace of $\tau_4$, otherwise $\omega$ is small at $x_0$ by Lemma~\ref{lem:nb} and Lemma~\ref{lem:not squeeze 34}. In particular, as $S_3\cap \tau_4=\emptyset$, $\tau_4$ can not have three parallel families of traces.
\begin{figure}[h]
	\centering
	\includegraphics[scale=0.76]{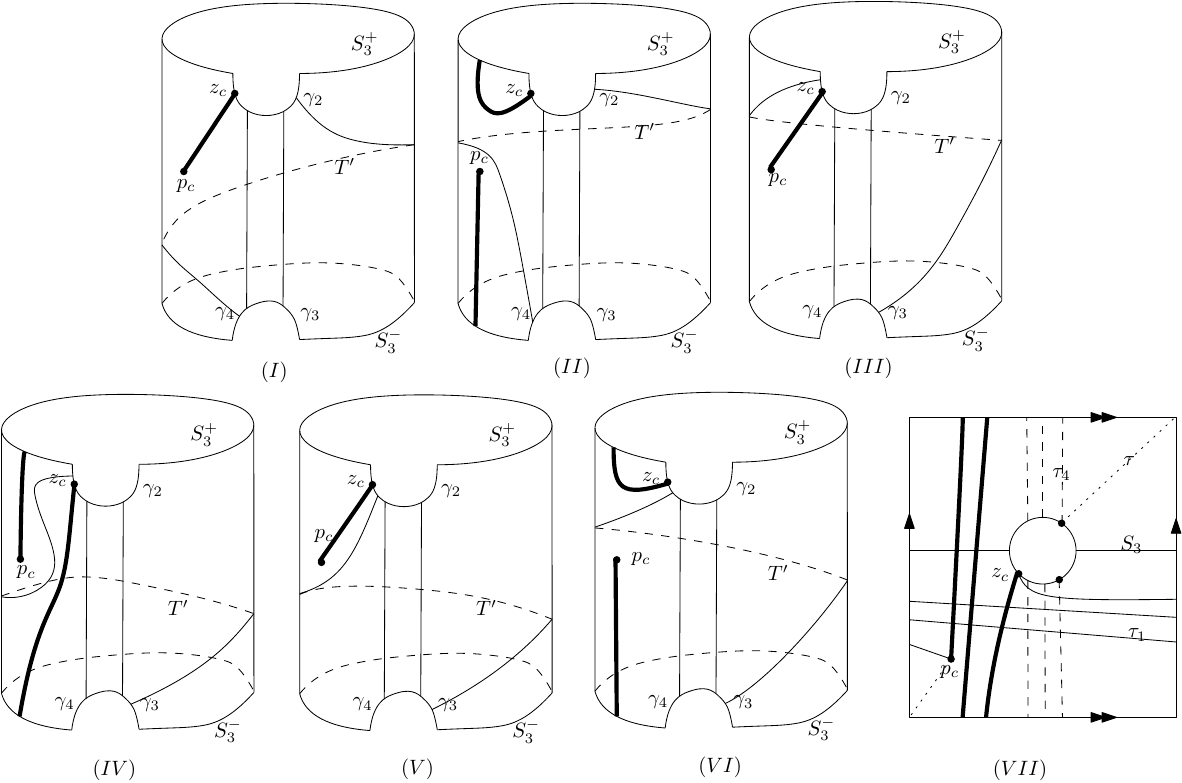}
	\caption{The thickened arcs are possibilities of $\tau_5$.}
	\label{fig:guan}
\end{figure}

Suppose $\tau_4$ has two parallel families of traces. Let $T$ and $T'$ be two traces of $\tau_4$ that are not parallel, and let $P_T$ be the traces of $\tau_4$ that are parallel to $T$. Similarly we define $P_{T'}$. As $S_3$ is not parallel to $T$, components of $\cS_0\setminus (S_3\cup P_T)$ are (possibly once punctured) open disks. Let $D$ be the component of $\cS_0\setminus (S_3\cup P_T)$ containing $p_c$. By Lemma~\ref{lem:not sequeeze 44}, $D$ is also the only component of $\cS_0\setminus (S_3\cup P_T)$ such that $\partial D\cap \partial \cS_0$ has four connected components, denoted $\{\gamma_i\}_{i=1}^4$. Suppose $z_c\in \gamma_1$. We scissor $\cS_0$ along $S_3$ to obtain a punctured annulus $A_0$, with each of its boundary components contain a copy of $S_3$. Let $C^+$ be the boundary component of $A_0$ containing $z_c$. Then $\tau_1$ is a simple arc in $A_0$ from $p_c$ to $z_c\in C^+$. Each arc of $P_T$ connects two boundary components of $A_0$, and up to homeomorphism, we can assume they are vertical arcs as in Figure~\ref{fig:guan}. Let $\{\gamma_i\}_{i=1}^4$ be components in $\partial D\cap \partial \cS_0$ as in Figure~\ref{fig:guan}. As $T'$ is not parallel to $T$ and $S_3$, $T'\subset D\cup\partial D$. Thus either $T'$ goes from a point in $\gamma_1$ to a point in $\gamma_3$, or from a point in $\gamma_2$ to a point in $\gamma_4$. Once we know the endpoints of $T'$, this is enough to determine $T'$ up to two different possibilities, depending on the position of $p_c$ relative to $T'$. We refer to Figure~\ref{fig:guan} for possibilities of $T'$ in $A_0$. Note that $T'$ and $P_T$ completely determine $\tau_5$, as $\tau_5$ is disjoint from them. By Lemma~\ref{lem:not big} (1) or (2), if $\omega$ is big at $x_0$, Figure~\ref{fig:guan} (IV) is the only possibility. Let $\gamma'_1$ be the subarc of $\gamma_1$ between $z_c$ and an endpoint of $S^+_s$. By the discussion in Case 2 of Lemma~\ref{lem:S3} (with role of $\tau_2$ replaced by $\tau_4$), $z_c$ is not squeezed by parallel traces of $\tau_4$. So each trace of $P_{T'}$ gives a simple arc in $A_0$ starting in an interior point of $\gamma'_1$, and ending in a point in $\gamma_3$. Let $D'$ be component of $\cS_0\setminus (P_T\cup P_{T'}\cup S_3)$ containing $p_c$. We also view $D'$ as a subset of $A_0$. As $p_c$ is not squeezed by two traces in $P_{T'}$ by Lemma~\ref{lem:not sequeeze 44}, we know $\partial D'\cap \gamma_3$ is a non-trivial subarc of $\gamma_3$, and $\partial D'\cap \gamma_3$ corresponds to the arc $\overline{\eta^+_m\xi^-_n}$ in Figure~\ref{fig:12} (III). Hence $p_a\in \partial D'\cap \gamma_3$, and there is a simple arc $\tau$ in $D'$ from $p_c$ to $p_a$. One readily verifies that $\tau$ satisfies the desired properties (note that the case of $(\bar\tau,\bar\tau_5,\bar\tau_4)$ follows from that one of $P_T$ and $P_{T'}$ contain at least two traces, as in Case 2 of Lemma~\ref{lem:S3}).




If $\tau_4$ only has one parallel class of traces, then let $D$ be the component of $\cS_0\setminus (S_3\cup\tau_4)$ that contains $p_c$. By the discussion in Case 1 of Lemma~\ref{lem:S3}, $p_a\in \partial D$. Let $\tau$ be a simple arc in $D$ from $p_c$ to $p_a$. See Figure~\ref{fig:guan} (VII). Then it satisfies all the desired properties.
\end{proof}

\begin{lem}
	\label{lem:replace}
Suppose $\omega$ has type I and $\omega$ is big at $x_0$.
Suppose $\tau_2$ has a single trace on $\cS_0$, and $\tau_4$ has multiple traces on $\cS_0$. Then there is a simple arc $\tau'_1$ on $\cS_0$ from $p_c$ to $z_c$ such that $\tau_2\cap \tau'_1=\emptyset$, and at least one of the three cases of Lemma~\ref{lem:S3} holds with $\tau_1$ replaced by $\tau'_1$.
\end{lem}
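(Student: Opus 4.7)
The plan is to apply Lemma~\ref{lem:S3} after interchanging the roles of $\tau_2$ and $\tau_4$ in the 6-cycle $\omega$, and then transport the resulting intersection property from $\tau_5$ to a new arc $\tau'_1$ that is disjoint from $\tau_2$.

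First I would verify that the 4-tuple $(\tau_0,\tau_5,\tau_4,\tau_3)$ satisfies the hypotheses of Lemma~\ref{lem:S3} in the roles of $(\tau_0,\tau_1,\tau_2,\tau_3)$. One has $[\tau_0],[\tau_4]\in\Omega_a$ and $[\tau_5],[\tau_3]\in\Omega_c$; the three pairs $(\tau_0,\tau_5),(\tau_5,\tau_4),(\tau_4,\tau_3)$ are pairwise disjoint by adjacency in $\omega$; $\tau_0\cap\tau_3\neq\emptyset$ (inherited from the ambient case analysis framing this lemma); $\tau_4$ has multiple traces by hypothesis; and Corollary~\ref{cor:boundary parallel 24} (applicable because $\omega$ is big at $x_0$) guarantees that all these traces are good. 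Applying Lemma~\ref{lem:S3} to this tuple yields a trace $S_3$ of $\tau_3$ in $\cS_0$ (together with an auxiliary trace $R_3$ in cases~(2) and~(3)) satisfying the structural conditions of Lemma~\ref{lem:S3}, but with the intersection condition rephrased as $(S_3\cup R_3)\cap\tau_5\subseteq\{p_c,z_c\}$.

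The second step is to manufacture $\tau'_1$ from this information. The key observation is that $\tau_2$ is adjacent to $\tau_3$ in $\omega$, so $\tau_2\cap\tau_3=\emptyset$, and in particular $\tau_2$ is disjoint from both $S_3$ and $R_3$. Since $\tau_2$ has a single trace in $\cS_0$, it is a simple arc with both endpoints in $\pi_0$. Let $U$ denote the component of $\cS_0\setminus(S_3\cup R_3)$ that contains $p_c$. The arc $\tau_5$ certifies that $z_c$ lies in $\overline{U}$ as well (since $\tau_5$ is disjoint from $\tau_4$ and meets $S_3\cup R_3$ only at the allowed endpoints), so it suffices to show that $p_c$ and $z_c$ lie in the same component of $U\setminus\tau_2$; any simple arc between them inside that component will then serve as $\tau'_1$.

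The main obstacle will be this non-separation claim for $p_c$ and $z_c$ inside $U\setminus\tau_2$. I would attack it case-by-case according to the three cases of Lemma~\ref{lem:S3}. In each case, assuming for contradiction that $\tau_2$ separates $p_c$ from $z_c$ in $U$, the union of a separating sub-arc of $\tau_2\cap U$ with suitable pieces of $\partial U$ would bound a (possibly punctured) disk in $\cS_0$. Such a disk either forces the single trace of $\tau_2$ to be boundary-parallel in $\cS'_0$, contradicting Lemma~\ref{lem:boundary parallel} applied to the configuration $(\tau_0,\tau_1,\tau_2,\tau_3)$, or it produces a good arc in $\cS_0$ disjoint from $\tau_1\cup\tau_5$ in the sense of Lemma~\ref{lem:not big}(1) or~(2), which would make $\omega$ small at $x_0$ and violate the standing hypothesis. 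Organising these contradictions carefully across the three cases is the principal combinatorial work of the proof.
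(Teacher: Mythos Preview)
Your opening move---applying Lemma~\ref{lem:S3} to the tuple $(\tau_0,\tau_5,\tau_4,\tau_3)$ in the roles of $(\tau_0,\tau_1,\tau_2,\tau_3)$---is exactly how the paper begins. The divergence comes in the second step.

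Rather than cutting $\cS_0$ along $S_3\cup R_3$ as you propose, the paper cuts along $\tau_2$ to obtain a punctured annulus $A_0$ with boundary components $C^+,C^-$. If some trace of $\tau_3$ in $A_0$ travels from $C^+$ to $C^-$, one can choose $\tau'_1$ immediately. If no trace of $\tau_3$ crosses, then in particular $S_3$ has both endpoints on the same component of $\partial A_0$; since $S_3$ is good in $\cS'_0$, this forces $S_3$ to be parallel to $\tau_2$. The paper then invokes Lemma~\ref{lem:not squeeze 34} (with the roles of $\tau_2,\tau_4$ and of $\tau_1,\tau_5$ swapped) to conclude that $p_c$ is not squeezed by $S_3$ and $\tau_2$, and then Lemma~\ref{lem:nb} (likewise swapped) gives $\omega$ small at $x_0$, a contradiction.

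Your proposed dichotomy for the non-separation claim---either $\tau_2$ is forced to be boundary-parallel in $\cS'_0$, or one finds a good arc disjoint from $\tau_1\cup\tau_5$ via Lemma~\ref{lem:not big}(1) or (2)---does not cover the crucial case. When $\tau_2$ is parallel to $S_3$ and $p_c$ is squeezed between them while $z_c$ is not, $\tau_2$ is good (so the first branch fails), yet $\tau_2$ may meet $\tau_5$ and $S_3$ may meet $\tau_1$, so neither arc on its own witnesses Lemma~\ref{lem:not big}(1) or (2). What one actually needs here is Lemma~\ref{lem:not big}(3), whose hypothesis that $p_c$ is \emph{not} squeezed is exactly what Lemma~\ref{lem:not squeeze 34} supplies; the proof of that lemma is one of the longer arguments in the section and is not subsumed by the tools you list. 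Without invoking Lemma~\ref{lem:not squeeze 34} (or reproducing its content), your case analysis has a genuine gap.
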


\begin{proof}
As $\omega$ is big at $x_0$, all traces of $\tau_2$ and $\tau_4$ in $\cS_0$ are good by Corollary~\ref{cor:boundary parallel 24}. We scissor $\cS_0$ along $\tau_2$ to obtain a punctured torus $A_0$. Let $C^+$ and $C^-$ be the two boundary components of $A_0$. We assume without loss of generality that $z_c\in C^+$ and $\overline{zp_a}\subset C^+$. Then $\overline{z'p_a}\subset C^-$. As $\tau_2\cap\tau_3=\emptyset$, each trace of $\tau_3$ gives a simple arc in $A_0$.
The conclusion of the lemma is clear when there is a trace of $\tau_3$ traveling from a point in $C^+$ to a point in $C^-$. It remains to consider the case that no traces of $\tau_3$ travel from a point in $C^+$ to a point in $C^-$. We will show this is not possible by deducing a contradiction.
As $\tau_4$ has multiple traces which are all good, we apply Lemma~\ref{lem:S3} with the role of $\tau_2$ played by $\tau_4$, and let $S_3$ be a trace of $\tau_3$ satisfying the conclusion of Lemma~\ref{lem:S3}, with $\tau_1$ replaced by $\tau_5$. As the two endpoints of $S_3$ are in the same component of $A_0$, and $S_3$ is not boundary parallel in $\cS'_0$, we know $S_3$ and $\tau_2$ are parallel. 
Applying Lemma~\ref{lem:not squeeze 34} with the role of $\tau_4$ replaced by $\tau_2$, we know $p_c$ is not squeezed by $S_3$ and $\tau_2$. Applying Lemma~\ref{lem:nb} with the role of $\tau_4$ replaced by $\tau_2$, we know $\omega$ is small at $x_0$, contradiction.
\end{proof}




A type I $6$-cycle $\omega$ has a \emph{good} point at $x_0$, if $\omega$ is big at $x_0$ and at least one of the following is true:
\begin{enumerate}
	\item both $\tau_2$ and $\tau_4$ have multiple traces on $\cS_0$;
	\item one of $\{\tau_2,\tau_4\}$, say $\tau_4$, has multiple traces on $\cS_0$, however, the other one, say $\tau_2$ only has unique trace on $\cS_0$; moreover, after the replacement in Lemma~\ref{lem:replace}, the new 6-cycle $\omega'$ obtained by replacing $x_1$ by $x'_1$ is still big at $x_0$;
	\item exactly one of $\{\tau_2,\tau_4\}$ has multiple traces on $\cS_0$; moreover, $\omega$ is small at both $x_1$ and $x_5$.
\end{enumerate}

\begin{cor}
	\label{cor:goodpoint}
Suppose $\omega$ has type I with a good point at $x_0$. Then $\omega$ has property $(*)$.
\end{cor}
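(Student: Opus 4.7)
My plan is to extract a type-$\hat b$ candidate vertex from Corollary~\ref{cor:b} in each case and use the interplay between the quotient $\bar\Delta_0 \cong \bar\Delta_0'$ and its universal cover $\Delta_0$ to promote it to a vertex of $\Delta_\Lambda$ adjacent to $\{x_1, x_3, x_5\}$. Case (1) carries the main content, Case (2) is reduced to it by the replacement of Lemma~\ref{lem:replace}, and Case (3) uses the smallness hypothesis more directly.

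For Case (1), both $\tau_2$ and $\tau_4$ have multiple traces on $\cS_0$ and $\omega$ is big at $x_0$. By Corollary~\ref{cor:boundary parallel 24} every trace of $\tau_2$ and $\tau_4$ is good. I split according to whether $\tau_0 \cap \tau_3 = \emptyset$. If yes, then $x_0 \sim x_3$ by Lemma~\ref{lem:disjoint}, and the 6-cycle decomposes into three 4-cycles, each handled by Theorem~\ref{thm:4 wheel} together with Lemma~\ref{lem:link}. Otherwise, the hypotheses of Lemma~\ref{lem:S3} are met, yielding a trace $S_3$ of $\tau_3$ satisfying one of the three cases there; Corollary~\ref{cor:b} then produces a simple arc $\tau$ from $p_a$ to $p_c$ disjoint from $\tau_0, \tau_1, \tau_4, \tau_5$ except at endpoints, with the three concatenations $(\bar\tau, \bar\tau_1, \bar\tau_0), (\bar\tau, \bar\tau_0, \bar\tau_5), (\bar\tau, \bar\tau_5, \bar\tau_4)$ homotopically non-trivial in $\bar\cS'$. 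Its class gives a type-$\hat b$ vertex $\bar z \in \bar\Delta_0' \cong \bar\Delta_0$ adjacent to $\bar x_0, \bar x_1, \bar x_4, \bar x_5$ by the disjointness, and by Definition~\ref{def:complex 3punctured} the three non-triviality conditions fill the three 3-cycles $(\bar z, \bar x_1, \bar x_0), (\bar z, \bar x_0, \bar x_5), (\bar z, \bar x_5, \bar x_4)$ with 2-simplices in $\bar\Delta_0$. Since $\Delta_0$ is the universal cover of $\bar\Delta_0$ (deck group $Z_{A_\Lambda} \cong \mathbb Z$) and the three 2-cells share consecutive edges, they lift coherently once the edge $(\bar x_0, \bar x_1)$ is lifted to $(x_0, x_1)$, yielding a single $z \in \Delta_\Lambda$ with $z \sim x_0, x_1, x_4, x_5$.

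Case (2) reduces to Case (1) by Lemma~\ref{lem:replace}, using that the good-point definition guarantees bigness at $x_0$ is preserved under the replacement. For Case (3), smallness at $x_1$ and $x_5$ produces type-$\hat d$ vertices $y_1, y_5$ with $y_1 \sim x_0, x_2$ and $y_5 \sim x_0, x_4$; Lemma~\ref{lem:link}~(3) in the links $\lk(y_1)$ and $\lk(y_5)$ (which are joins $K_a * K_b * K_c$) supplies type-$\hat a$ or $\hat b$ neighbors adjacent to the appropriate triples, and a short combinatorial amalgamation using Corollary~\ref{cor:b}-style data from the side with multiple traces assembles these into a single vertex adjacent to $\{x_1, x_3, x_5\}$. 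In all three cases, the final promotion $z \sim x_3$ is carried out by applying the labeled 4-wheel condition of Theorem~\ref{thm:4 wheel} to the 4-cycle $(z, x_1, x_2, x_3)$ of type sequence $\hat b \hat c \hat a \hat c$: adjacency-of-distinct-types constraints force the 4-wheel center to have type $\hat d$, and then Lemma~\ref{lem:link}~(3) in its link (a join $K_a * K_b * K_c$) makes any type-$\hat b$ vertex automatically adjacent to any type-$\hat c$ vertex, giving $z \sim x_3$. The degenerate sub-cases where the 4-cycle fails to be induced (essentially $z \sim x_2$) are resolved by reading off the analogous data from the symmetric 4-cycle $(z, x_5, x_4, x_3)$.

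\textbf{Main obstacle.} The hardest step is the last one: Corollary~\ref{cor:b} gives no control whatsoever over $\tau \cap \tau_3$, so $z \sim x_3$ is not built into the construction. The argument must carefully navigate whether the relevant 4-cycles $(z, x_1, x_2, x_3)$ and $(z, x_5, x_4, x_3)$ are induced, coordinate the 4-wheel centers on both sides (both forced to type $\hat d$), and ensure that descent through their links via Lemma~\ref{lem:link}~(3) gives a consistent vertex of type $\hat a$ or $\hat b$ common to $\{x_1, x_3, x_5\}$; it is this combinatorial bookkeeping, rather than the arc-theoretic input, that constitutes the main technical work.
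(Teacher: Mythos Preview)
Your Case 1 argument has a genuine gap at the very step you flag as the main obstacle. After one application of Corollary~\ref{cor:b} you obtain $z$ of type $\hat b$ adjacent to $\{x_0,x_1,x_4,x_5\}$. At this stage the ``4-cycle'' $(z,x_1,x_2,x_3)$ you propose is not a cycle: you know $z\sim x_1$, $x_1\sim x_2$, $x_2\sim x_3$, but neither $z\sim x_3$ (that is what you are trying to prove) nor $z\sim x_2$. So Theorem~\ref{thm:4 wheel} cannot be invoked, and the subsequent link argument never gets off the ground. The symmetric fallback $(z,x_5,x_4,x_3)$ has exactly the same defect.

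The paper closes this gap by using the full strength of Case~1: since \emph{both} $\tau_2$ and $\tau_4$ have multiple (good) traces, Corollary~\ref{cor:b} can be applied a second time with the roles of $\tau_2$ and $\tau_4$ exchanged, producing a second vertex $z'$ of type $\hat b$ adjacent to $\{x_0,x_1,x_2,x_5\}$. Bigness at $x_0$ then forces $z=z'$ (if $z\neq z'$ the embedded 4-cycle $x_1zx_5z'$ would, via Theorem~\ref{thm:4 wheel}, yield a type-$\hat d$ vertex adjacent to both $x_1$ and $x_5$). Now $z\sim x_2$ and $z\sim x_4$, so $z\,x_2\,x_3\,x_4$ is a genuine 4-cycle of type $\hat b\,\hat a\,\hat c\,\hat a$, and Lemma~\ref{lem:special4cycle} gives $z\sim x_3$. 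Your proposal never produces the adjacency $z\sim x_2$, and without it there is no valid 4-cycle through which to reach $x_3$. Case~2 inherits the same gap. For Case~3 your sketch is too vague to evaluate, but note that the paper's argument there is of a different nature: it first shows $\tau_2\cap\tau_4=\emptyset$ (except at endpoints), manufactures a type-$\hat c$ vertex adjacent to $\{x_0,x_2,x_4\}$, and then passes to a type-$\hat b$ vertex via Theorem~\ref{thm:weakflagD} before applying Lemma~\ref{lem:special4cycle}; it is not assembled from Corollary~\ref{cor:b}-style data at all.
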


\begin{proof}
We can assume $\tau_0\cap \tau_3\neq\emptyset$, otherwise $\omega$ already satisfies property $(*)$. Let $\bar \Delta_0$ be as in Section~\ref{subsec:relarc}. If we are in Case 1, by Lemma~\ref{lem:S3} and Corollary~\ref{cor:b}, there is a vertex $\bar z$ of type $\hat b$ which is adjacent to each of $\{\bar x_1,\bar x_0,\bar x_5,\bar x_4\}$ in $\bar \Delta_0$, where $\bar x_i$ denotes the image of $x_i$ under $\Delta_0\to\bar\Delta_0$. Moreover, Corollary~\ref{cor:b} implies that each of the 3-cycles $\bar z\bar x_1\bar x_0$, $\bar z\bar x_0\bar x_5$, $\bar z\bar x_5\bar x_4$ bound a 2-face. Thus there is a lift $z$ of $\bar z$ in $\Delta_0\subset \Delta_\Lambda$ such that $z$ is adjacent to each of $\{x_1,x_0,x_5,x_4\}$. Similarly, if we exchange the role of $\tau_2$ and $\tau_4$, we know there is a vertex $z'$ of type $\hat b$ which is adjacent to each of $\{x_5,x_0,x_1,x_2\}$. Note that $z=z'$, otherwise the 4-cycle $x_1zx_5z'$ would be embedded and Theorem~\ref{thm:4 wheel} implies that there is a vertex $y$ of type $\hat d$ adjacent to each of $\{x_1,z,x_5,z'\}$, contradicting that $\omega$ is big at $x_0$. Now we consider the 4-cycle $x_2x_3x_4z$ with vertex types being $\hat a,\hat c,\hat a,\hat b$. As this 4-cycle is embedded, Lemma~\ref{lem:special4cycle} implies that $z$ is adjacent to $x_3$. Thus $z$ is adjacent to each of $\{x_1,x_3,x_5\}$, and $\omega$ has property $(*)$. 

If we are in Case 2, we apply the previous argument to $\omega'$, namely, by Lemma~\ref{lem:S3} and Corollary~\ref{cor:b}, there is a vertex $z$ of type $\hat b$ which is adjacent to each of $\{x'_1,x_0,x_5,x_4\}$, and there is a vertex $z'$ of type $\hat b$ which is adjacent to each of $\{x_5,x_0,x'_1,x_2\}$. As $\omega'$ is big at $x_0$, $z=z'$ by the same argument as before, and we deduce as before that $z$ is adjacent to each of $x'_1,x_3,x_5$. As $z$ is adjacent to each of $x_0$ and $x_2$, by considering the embedded 4-cycle $zx_0x_1x_2$ and applying Lemma~\ref{lem:special4cycle}, we know $z$ is adjacent to $x_1$, as desired.

Suppose we are in Case 3. Assume without loss of generality that $\tau_2$ has multiple traces on $\cS_0$ and $\tau_4$ has single trace on $\cS_0$. We claim $\tau_2\cap\tau_4=\emptyset$ except at endpoints.
As $\omega$ is small at $x_1$, there are infinitely many vertices of type $\hat c$ that are adjacent to both $x_0$ and $x_2$. Thus in $\cS_0$ there are infinitely many homotopy classes of simple arcs from $p_c$ to $z_c$ such that each class has a representative with empty intersection with $\tau_2$. As $\omega$ is big at $x_0$, by Corollary~\ref{cor:boundary parallel 24}, all traces of $\tau_2$ and $\tau_4$ in $\cS_0$ are good. Thus among all the cases in Lemma~\ref{lem:S3}, the only possibility for $\tau_2$ is that $\tau_2$ has only one parallel class of traces, moreover, $p_c$ is not squeezed by any two traces of $\tau_2$. By the proof of Lemma~\ref{lem:S3}, there exists a trace $S_3$ of $\tau_3$ such that $S_3$ is squeezed by two traces of $\tau_2$. As all traces of $\tau_2$ are parallel to $S_3$ and $p_c$ is not squeezed by any two traces of $\tau_2$, there is an open tubular neighborhood $N$ of the submanifold $S_3$ in $\cS_0$ bounded by $\alpha_1$ and $\alpha_n$, where $\alpha_1$ and $\alpha_n$ are as in Case 1 of the proof of Lemma~\ref{lem:S3}. As $\tau_4$ has single trace, it gives an simple arc in $\cS_0$ from one of $\{z,z'\}$ to $p_a$. As none of $\{z,z',p_a\}$ is squeezed by two traces of $\tau_2$, and $\tau_4\cap S_3=\emptyset$, so we can homotopy $\tau_4$ outside $N$, which implies the claim.

We scissor $\cS_0$ along $\tau_4$ to obtain a punctured annulus $A_0$. As $\tau_2\cap(\tau_0\cup\tau_4)=\emptyset$ and $\tau_5\cap(\tau_0\cap\tau_4)=\emptyset$ except possibly at endpoints, we know the trajectory of each of $\tau_2$ and $\tau_5$ in $A_0$ is a simple arc from $p_c$ to a point in $\partial A_0$. Using the topology of punctured annulus, we can replace $\tau_5$ by another simple arc $\tau'_5$ in $A_0$ from $p_c$ to $z_c$ such that $\tau'_5\cap\tau_2=\emptyset$. Then $\tau'_5$ also gives a simple arc in $\cS_0$ such that $\tau'_5\cap (\tau_0\cup\tau_4)=\emptyset$. This means that there exists a vertex $z\in \Delta_\Lambda$ of type $\hat c$ such that $z$ is adjacent to each of $\{x_0,x_2,x_4\}$. Assume $z\notin \{x_1,x_3,x_5\}$, otherwise $\omega$ readily has property $(*)$. Applying Theorem~\ref{thm:4 wheel} to embedded 4-cycles $zx_0x_1x_2$, $zx_2x_3x_4$ and $zx_4x_5x_0$, we know that for $i=1,3,5$, there exists vertex $x'_i$ of type $\hat d$ such that $x'_i$ is adjacent to each of $\{z,x_{i-1},x_i,x_{i+1}\}$. Apply Theorem~\ref{thm:weakflagD} to the 6-cycle $x_0x'_1x_2x'_3x_4x'_5$ inside $\Delta_{\Lambda,\{a,d,b\}}$ to deduce that there is a vertex $z'$ of type $\hat b$ such that $z'$ is adjacent to each of $\{x_0,x_2,x_4\}$. Applying Lemma~\ref{lem:special4cycle} to the 4-cycles $z'x_0x_1x_2$, $z'x_2x_3x_4$ and $z'x_4x_5x_0$, we know $z'$ is adjacent to each of $\{x_1,x_3,x_5\}$.
\end{proof}



\begin{lem}
	\label{lem:b}
	Let $\tau_0,\tau_5,\tau_4,\tau_3$ be four arcs in $\cS$ corresponding to consecutive vertices $x_0,x_5,x_4,x_3$ in an edge path in $\Delta_\Lambda$ such that the vertices have type $\hat a,\hat c,\hat a,\hat c$ respectively. Suppose the following are true:
	\begin{enumerate}
		\item $\tau_0\cap \tau_4=\emptyset$ except at endpoints and $\tau_4$ is a good trace in $\cS_0$;
		\item $\tau_3\cap \tau_0$ has exactly one interior intersection point;
		\item a subarc of $\tau_0$ containing $z_a$, a subarc of $\tau_3$ containing $z_c$ and one of the two arcs in $\partial \cS$ from $z_a$ to $z_c$ bound a disk in $\cS$.
	\end{enumerate}
	Then there is a vertex of type $\hat b$ adjacent to each of $\{x_0,x_5,x_4,x_3\}$.
\end{lem}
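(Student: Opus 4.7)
The plan is to exhibit a simple arc $\tau \subset \cS$ from $p_a$ to $p_c$ whose interior is disjoint from each of $\tau_0, \tau_3, \tau_4, \tau_5$. Via the dictionary of Section~\ref{subsec:translation} and Lemma~\ref{lem:bar correspondence}, such a $\tau$ gives a vertex $\bar z$ of type $\hat b$ in $\bar \Delta_0$ adjacent to each of $\bar x_0, \bar x_5, \bar x_4, \bar x_3$; one then lifts $\bar z$ to a vertex $z \in \Delta_\Lambda$ of type $\hat b$ adjacent to all four $x_i$ by verifying the $2$-face condition of Definition~\ref{def:complex 3punctured} for each consecutive triple of arcs along the edge path, i.e., that each relevant concatenation closes to a homotopically non-trivial simple closed curve in the torus $\bar\cS'$.

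To construct $\tau$, let $q$ be the unique interior intersection of $\tau_0$ and $\tau_3$, and split $\tau_0 = \tau_0^1 \cup \tau_0^2$ and $\tau_3 = \tau_3^1 \cup \tau_3^2$ at $q$, with $\tau_0^1$ running from $p_a$ to $q$ and $\tau_3^1$ from $p_c$ to $q$. Assumption~(3) forces $\tau_0^2 \cup \tau_3^2$, together with a subarc $\gamma \subset \partial \cS$ from $z_a$ to $z_c$, to bound an unpunctured disk $D \subset \cS$. Define $\tau$ to be the concatenation $\tau_0^1 \cdot (\tau_3^1)^{-1}$ with the corner at $q$ rounded into the sector opposite $D$, followed by a small tubular perturbation. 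Then $\tau$ is a simple arc from $p_a$ to $p_c$ lying in a small neighborhood of $\tau_0^1 \cup \tau_3^1$, automatically disjoint from $\tau_0 \cup \tau_3$ except at its endpoints.

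Disjointness of $\tau$ from $\tau_4$ is routine: the edges $x_5 x_4$ and $x_4 x_3$ together with Assumption~(1) force $\tau_4 \cap (\tau_0 \cup \tau_3) \subseteq \{p_a, z_a\}$, so the interior of $\tau_0^1 \cup \tau_3^1$ is positively separated from $\tau_4$, and a sufficiently small tubular neighborhood (with the right germ chosen at $p_a$) yields $\tau \cap \tau_4 = \{p_a\}$. The delicate verification is $\tau \cap \tau_5 = \{p_c\}$. Since $\tau_5$ is disjoint from $\tau_0$ and $\tau_4$ in their interiors, the only possible crossings of $\tau$ with $\tau_5$ come from $\tau_5 \cap \tau_3^1$. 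The key is that $D$ has no puncture and $p_c \notin D$, so the relative position of $\tau_3^2$ and $\tau_5$ (both terminating at $z_c \in \partial D$) is constrained; combined with the minimal-position choice of the representatives and the good-trace property of $\tau_4$ (which prevents $\tau_0 \cup \tau_4$ from cobounding a disk in $\cS'_0$), one should isotope $\tau$ past any innermost bigon-type subregion cobounded by $\tau_5$ and $\tau_3^1$ while remaining in the chosen neighborhood of $\tau_0^1 \cup \tau_3^1$.

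The main obstacle is precisely this $\tau_5$-step: the hypotheses do not directly bound $\tau_5 \cap \tau_3$, and the argument will have to combine the disk structure from Assumption~(3), the good-trace condition on $\tau_4$, and innermost-bigon reasoning in the spirit of the proofs of Corollary~\ref{cor:boundary parallel 24} and Corollary~\ref{cor:b}. Once the arc $\tau$ is produced, the lifting step is then essentially formal, using that the non-trivial disk $D$ built from Assumption~(3) supplies the homotopically non-trivial loops in $\bar\cS'$ needed to invoke Definition~\ref{def:complex 3punctured}.
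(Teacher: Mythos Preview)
Your overall plan---produce an arc $\tau$ from $p_a$ to $p_c$ disjoint from $\tau_0,\tau_3,\tau_4,\tau_5$, then lift via non-degenerate $3$-cycles---matches the paper's strategy. But the specific construction you propose, and in particular the $\tau_5$-step, does not go through as written.

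The gap is exactly where you flag it. Your $\tau$ is a small perturbation of $\tau_0^1\cup\tau_3^1$, and you propose to remove intersections with $\tau_5$ by ``isotoping past innermost bigons cobounded by $\tau_5$ and $\tau_3^1$ while remaining in the chosen neighborhood of $\tau_0^1\cup\tau_3^1$.'' These two requirements are incompatible: nothing in the hypotheses bounds the geometric intersection $i(\tau_5,\tau_3)$, and $\tau_5$ and $\tau_3$ are already in minimal position, so crossings of $\tau_5$ with $\tau_3^1$ need not come from bigons between $\tau_5$ and $\tau_3^1$ at all. Any essential crossing forces $\tau$ to cross $\tau_5$, and pushing $\tau$ across such a crossing necessarily leaves the tubular neighborhood of $\tau_3^1$, at which point you lose control of $\tau\cap\tau_3$ and $\tau\cap\tau_4$. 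The proposal as stated does not explain how to repair this.

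The paper avoids this difficulty by working not in $\cS$ but in the doubly-cut surface: scissor $\cS$ along $\tau_0$ to get $\cS_0$, then along the good trace $\tau_4$ to get a punctured annulus $A_0$. In $A_0$ both $\tau_5$ and the two traces $\tau_{31},\tau_{32}$ of $\tau_3$ become single simple arcs, and Assumption~(3) is used to rule out one of the two combinatorial configurations of $\tau_{31}$ (it forces $\tau_{32}\cap\tau_4\neq\emptyset$, a contradiction). In the surviving configuration one can draw $\tau$ explicitly in $A_0$ (see Figure~\ref{fig:2intersect}) so that it avoids $\tau_5$ and $\tau_{31}$ by inspection; no bigon argument is needed. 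The three non-triviality checks for the triples $(\bar\tau,\bar\tau_0,\bar\tau_5)$, $(\bar\tau,\bar\tau_5,\bar\tau_4)$, $(\bar\tau,\bar\tau_4,\bar\tau_3)$ are then read off directly from the annulus picture, not from the disk $D$ of Assumption~(3) as you suggest. Your lifting paragraph is too vague on this point as well: it is these three specific concatenations (one per edge of the path $x_0x_5x_4x_3$) that must be verified non-trivial in $\bar\cS'$, and that verification is what makes the lift land on a single vertex adjacent to all four $x_i$.
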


	\begin{figure}[h]
		\centering
		\includegraphics[scale=1]{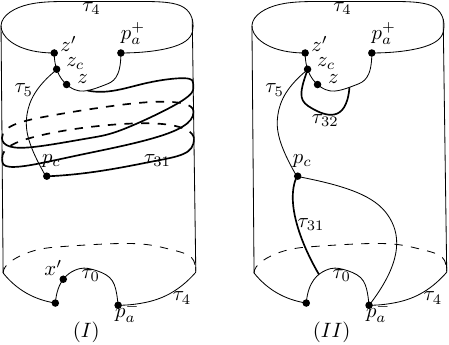}
		\caption{Proof of Lemma~\ref{lem:b}.}
		\label{fig:2intersect}
	\end{figure}

\begin{proof}
By Assumption 2, $\tau_3$ has two traces in $\cS_0$, one of them, denoted by $\tau_{31}$, contains $p_c$, and another trace $\tau_{32}$ contains $z_c$. 
Let $x\in \cS_0$ be the other endpoint of $\tau_{31}$. Then $x$ is identified to $x'\in \pi_0$ via $\phi:\overline{zp_a}\to\overline{z'p_a}$, and $\tau_{32}$ goes from $x'$ to $z_c$. We scissor $\cS_0$ along $\tau_4$ to obtained a punctured torus $A_0$ (by Assumption 1). Let $C^+$ and $C^-$ be the two boundary components of $A_0$.
Assume without loss of generality that $z_c\in C^+$. First we consider the case $x\in \overline{zp_a}$, see igure~\ref{fig:2intersect} (I). Then $x'\in \overline{z'p_a}$. Then the trajectory of $\tau_{31}$ in $A_0$ goes from $p_c$ to $x\in C^+$. Assumption 3 implies that $\tau_{32}$ and a subarc of $\partial \cS_0$ bound a disk in $\cS_0$. As the two endpoints of $\tau_{32}$ are contained in different components of $\partial\cS_0\setminus\partial \tau_4$, we know $\tau_{32}\cap\tau_4\neq\emptyset$, which is a contradiction. The remaining case is that $x\in \overline{z'p_a}$. Then $x'\in \overline{zp_a}$. Assumption 3 implies that $\overline{x'z}$, $\tau_{32}$ and $\overline{zz_c}$ bound a disk in $\cS_0$. Up to a homeomorphism of $A_0$ fixing the boundary, we can assume the trajectories of $\tau_5$ and $\tau_{31}$ are in Figure~\ref{fig:2intersect} (II). Let $\tau$ be the arc in Figure~\ref{fig:2intersect} (II) from $p_c$ to $p^-_a$ (which is the copy of $p_a$ in $C^-$) in $A_0$. Let $\bar\tau$  be the image of $\tau$ under $\cS\to \bar\cS$. 
Note that the concatenation of $\bar\tau,\bar\tau_0,\bar\tau_5$, the concatenation of $\bar\tau,\bar\tau_5,\bar\tau_4$ and the concatenation of $\bar\tau,\bar\tau_4,\bar\tau_3$ give three homotopically non-trivial simple loops in $\bar\cS'$ (which is defined to be $\bar\cS$ with punctured added back to the surface). Thus in $\bar\Delta'_0$, there exists a vertex $\bar z$ of type $\hat b$ represented by $\bar\tau$ satisfying that the 3-cycle $\bar z,\bar x_0,\bar x_5$, the 3-cycle $\bar z,\bar x_5,\bar x_4$ and the 3-cycle $\bar z,\bar x_4,\bar x_3$ are all non-degenerate. Then a lift of $\bar z$ in $\Delta_0$ will satisfy the desired properties in the lemma. 
\end{proof}

%
%



%

\begin{prop}
	\label{prop:typeI}
Suppose $\omega$ is a 6-cycle of type I in $\Delta_\Lambda$. Then there exists a vertex $z\in \Delta_\Lambda$ of type $\hat b$ or $\hat a$ such that $z$ is adjacent to each of $\{x_1,x_3,x_5\}$.
\end{prop}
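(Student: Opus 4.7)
The plan is to argue by case analysis on the six arcs $\tau_0,\ldots,\tau_5$ in $\cS$ associated to the vertices $x_0,\ldots,x_5$ of $\omega$ (as in Section~\ref{subsec:translation}), together with the big/small structure of $\omega$ at the type-$\hat a$ vertices $x_0, x_2, x_4$.

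First, if any of $\tau_0\cap\tau_3$, $\tau_2\cap\tau_5$, or $\tau_4\cap\tau_1$ is empty, then by Lemma~\ref{lem:disjoint} the corresponding type-$\hat a$ vertex (say $x_0$) is adjacent in $\Delta_\Lambda$ to the opposite type-$\hat c$ vertex ($x_3$), so $x_0$ is adjacent to each of $\{x_1, x_3, x_5\}$ and directly gives property $(*)$ with a vertex of type $\hat a$. Using the rotational symmetry of $\omega$, I may assume all three intersections are non-empty. Next, by Corollary~\ref{cor:goodpoint} together with the three-fold cyclic symmetry of $\omega$ that rotates $x_0, x_2, x_4$, if $\omega$ has a good point at any of these vertices then property $(*)$ holds; so assume this is not the case.

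If $\omega$ is small at each of $x_0, x_2, x_4$, then there are three type-$\hat d$ vertices $y_0, y_2, y_4$ with $y_{2k}$ adjacent to both $x_{2k-1}$ and $x_{2k+1}$ (indices mod $6$), assembling into a 6-cycle $x_1 y_2 x_3 y_4 x_5 y_0$ whose vertices alternate between types $\hat c$ and $\hat d$. Identifying the central vertex $d$ of the $D_4$ diagram as $\delta_3$ and using the $S_3$ (triality) symmetry of $D_4$ to take $c = \delta_1$ and $b = \delta_2$, Theorem~\ref{thm:weakflagD} applied to this alternating 6-cycle produces a center of type $\hat b$, which is a vertex adjacent to each of $x_1, x_3, x_5$.

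Otherwise $\omega$ is big at some $x_i$ with $i \in \{0,2,4\}$; by symmetry I take $i=0$. Unpacking the failure of the three conditions defining a good point at $x_0$ leaves two subcases: either (i) both $\tau_2$ and $\tau_4$ have a single trace on $\cS_0$, or (ii) exactly one of $\tau_2, \tau_4$ (say $\tau_4$) has multiple traces, the Lemma~\ref{lem:replace} replacement produces a cycle small at $x_0$, and $\omega$ is big at one of $x_1, x_5$. In each configuration the single-trace condition forces the corresponding arc to cobound a disk in $\cS$ with $\tau_0$, which is the restrictive topology needed for Lemma~\ref{lem:b}. Applying Lemma~\ref{lem:b} to the two four-vertex sub-paths $(x_0, x_5, x_4, x_3)$ and $(x_0, x_1, x_2, x_3)$ of $\omega$ yields type-$\hat b$ vertices $z_1, z_2$ adjacent to these sub-paths respectively; since $\tau_0\cap\tau_3\neq\emptyset$ means $x_0$ and $x_3$ are not adjacent, the 4-cycle $z_1 x_0 z_2 x_3$ of type $\hat b, \hat a, \hat b, \hat c$ cannot be embedded without contradicting Lemma~\ref{lem:special4cycle}, forcing $z_1 = z_2$. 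The common vertex $z$ is then of type $\hat b$ and adjacent to every vertex of $\omega$, in particular to $\{x_1, x_3, x_5\}$.

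The main obstacle is the last subcase: one has to verify that the hypotheses of Lemma~\ref{lem:b}, in particular that $\tau_3\cap\tau_0$ has exactly one interior intersection and that the disk condition of Assumption (3) holds, are actually met under the single-trace configurations for $\tau_2$ and $\tau_4$. This will require a careful topological analysis of how the single-trace arcs constrain $\tau_1, \tau_3, \tau_5$, and may require further refinement of the choice of $\tau_1$ or $\tau_5$ via additional applications of Lemma~\ref{lem:replace} in case (ii) before Lemma~\ref{lem:b} can be invoked.
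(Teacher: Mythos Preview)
Your overall architecture---first reducing to $\tau_0\cap\tau_3\neq\emptyset$, then handling the all-small case via Theorem~\ref{thm:weakflagD}, then invoking Corollary~\ref{cor:goodpoint} when $\omega$ has a good point---matches the paper. The dichotomy (i)/(ii) you extract from the failure of goodness at $x_0$ is also correct. The problems are in how you propose to finish each branch.

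In case (i) you jump directly to the double application of Lemma~\ref{lem:b}. The paper does run exactly this argument, but only after a further split on whether $\omega$ is small or big at $x_3$. When $\omega$ is big at $x_3$, that hypothesis is what drives the verification of conditions (2) and (3) of Lemma~\ref{lem:b}: one scissors $\cS$ along $\tau_2$, and ``big at $x_3$'' forces $[\tau_3]$ to be the \emph{unique} class of arc avoiding the traces of $\tau_4$ in $\cS_2$, which pins down the component containing $p_c$ as a punctured disk and yields the single-intersection and disk conditions. When $\omega$ is small at $x_3$ this uniqueness fails and there is no reason for Lemma~\ref{lem:b} to apply; the paper instead finds a common type-$\hat c$ neighbor of $x_0,x_2,x_4$, promotes it to a type-$\hat d$ configuration via Theorem~\ref{thm:4 wheel}, and finishes with Theorem~\ref{thm:weakflagD}. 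Your acknowledged ``obstacle'' is therefore not just a verification to be filled in: it requires identifying ``big at $x_3$'' as the missing hypothesis and supplying a separate argument for the complementary subcase.

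Case (ii) is more seriously off. If (say) $\tau_4$ has multiple traces in $\cS_0$, then $\tau_0\cap\tau_4\neq\emptyset$ and hypothesis (1) of Lemma~\ref{lem:b} fails outright for the sub-path $(x_0,x_5,x_4,x_3)$, so the double-Lemma~\ref{lem:b} argument cannot even be started. The paper's route here is genuinely different: one uses Lemma~\ref{lem:replace} to replace $x_5$ by $x'_5$ so that the new cycle $\omega'$ is small at $x_0$, then \emph{pivots away from $x_0$} to a neighboring vertex (say $x_1$ or $x_2$) where $\omega'$ is big, and applies the Case-1 analysis there, possibly iterating the replacement once more. Finally Lemma~\ref{lem:6cycle replace} (which you do not mention) is used to transport property $(*)$ from the modified cycle back to $\omega$. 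Your suggestion to ``further refine $\tau_1$ or $\tau_5$ via Lemma~\ref{lem:replace}'' gestures in the right direction but misses that the analysis must move to a different base vertex and that a separate transfer lemma is needed to undo the replacements.
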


\begin{proof}
If $\omega$ is small at $x_0,x_2$ and $x_4$, then for $i=0,2,4$, there exists $x'_i$ of type $\hat d$ such that $x'_i$ is adjacent to each of $x_{i-1}$ and $x_{i+1}$. We apply Theorem~\ref{thm:weakflagD} to the 6-cycle $x'_0x_1x'_2x_3x'_4x_5$ in $\Delta_{\Lambda,\{c,d,b\}}$ to find a vertex $z$ of type $\hat b$ such that $z$ is adjacent to each of $\{x_1,x_3,x_5\}$.

Now we assume $\omega$ is big at one of $\{x_0,x_2,x_4\}$. Suppose without loss of generality that $\omega$ is big at $x_0$. If $\omega$ is good at $x_0$, then we are done by Corollary~\ref{cor:goodpoint}. Now assume $\omega$ is not good at $x_0$. We claim $\omega$ is small at least one of $\{x_1,x_5\}$. We will show if $\omega$ is big at $x_1$, then $\tau_2$ has multiple traces in $\cS_0$. Indeed, in this case Theorem~\ref{thm:4 wheel} implies that there is a unique vertex of type $\hat c$ which is adjacent to both $x_0$ and $x_2$, and this implies $[\tau_1]$ is the only homotopy class of simple arcs from $p_c$ to $z_c$ in $\cS_0$ that has a representative which is disjoint from all traces of $\tau_2$ in $\cS_0$. This is only possible when $\tau_2$ has multiple traces as all traces of $\tau_2$ in $\cS_0$ are good by Corollary~\ref{cor:boundary parallel 24} and our assumption that $\omega$ is big at $x_0$. Similarly, if $\omega$ is big at $x_5$, then $\tau_4$ has multiple traces in $\cS_0$. Thus the claim follows.

\smallskip
\noindent
\underline{Case 1: $\omega$ is small at both $x_1$ and $x_5$.} If $\omega$ is small at $x_3$, then by the argument in the last paragraph of the proof of Corollary~\ref{cor:goodpoint}, $\omega$ has property $(*)$. Now assume $\omega$ is big at $x_3$. As $\omega$ is not good at $x_0$, we know $\tau_2$ and $\tau_4$ have single trace in $\cS_0$. We claim that $\tau_0\cap\tau_3$ has at most one interior intersection point, and in the case when $\tau_0\cap \tau_3\neq\emptyset$, assumption (3) of Lemma~\ref{lem:b} is satisfied. To see this, we scissor $\cS$ along $\tau_2$ to obtain $\cS_2$. As $\omega$ is big at $x_3$, by the argument in the previous paragraph, $[\tau_3]$ is the only homotopy class of simple arcs in $\cS_2$ from $p_c$ to $z_c$ which has a representative avoiding all traces of $\tau_4$ in $\cS_2$. Let $K$ be the component of $\cS_2\setminus \tau_4$ containing $p_c$. Then $K$ must be a punctured disk. As $\tau_0\cap(\tau_2\cup\tau_4)=\emptyset$ except at endpoints and $\tau_3\cap(\tau_2\cup\tau_4)=\emptyset$, we know $\tau_0$ is a simple arc in $K\cup \partial K$ from one of $\{z,z'\}$ in $\partial K$ to $p_a\in \partial K$, and $\tau_3$ is a simple arc in $K$ from $p_c$ to $z_c\in \partial K$. As $K$ is a punctured disk, we know that either $\tau_0\cap\tau_3=\emptyset$; or $\tau_0\cap\tau_3$ has exactly one point. In the latter case, a subarc of $\tau_3$ containing one of $\{z,z'\}$, a subarc of $\tau_0$ containing $z_c$, and one of $\overline{zz_c}$ or $\overline{z'z_c}$ bound a disk in $K\cup \partial K$. Thus the claim is proved.

If $\tau_0\cap\tau_3=\emptyset$, then $\omega$ readily satisfies property $(*)$. If $\tau_0\cap\tau_3$ is exactly one point, then applying Lemma~\ref{lem:b} twice, to $\{\tau_0,\tau_5,\tau_4,\tau_3\}$, and to $\{\tau_0,\tau_1,\tau_2,\tau_3\}$, we find a vertex $z$ of type $\hat b$ adjacent to each of $\{x_0,x_5,x_4,x_3\}$, and a vertex $z'$ of type $\hat b$ adjacent to each of $\{x_0,x_1,x_2,x_2\}$. Note that $z=z'$, otherwise Lemma~\ref{lem:special4cycle} applying to the 4-cycle $x_0zx_3z'$ implies that $x_0$ and $x_3$ are adjacent, contradicting $\tau_0\cap\tau_3\neq\emptyset$. Thus $\omega$ has property $(*)$.

The above argument implies the following claim. The ``more generally'' part follows from the first part of the claim, by possibly applying a symmetry of the Dynkin diagram.
\begin{claim}
Suppose $\omega$ has type I. If $\omega$ is small at $x_1,x_5$ and big at $x_0$, then $\omega$ has property $(*)$. More generally, if there are three consecutive vertices
$\{x_{i-1},x_i,x_{i+1}\}$ of $\omega$ such that $\omega$ is small at $x_{i-1},x_{i+1}$ and $\omega$ is big at $x_i$, then $\omega$ has property $(*)$.
\end{claim}

\smallskip
\noindent
\underline{Case 2: $\omega$ is big at exactly one of $\{x_1,x_5\}$.} As $\omega$ is not good at $x_0$, up to symmetry, we can assume $\omega$ is big at $x_1$ and small at $x_5$, $\tau_2$ has multiple traces in $\cS_0$ and $\tau_4$ has single trace in $\cS_0$. Moreover, up to replacing $x_5$ by $x'_5$ and $\omega$ by $\omega'=x_0x_1x_2x_3x_4x'_5$, we can assume $\omega'$ is small at both $x_5$ and $x_0$, though still big at $x_1$. We also assume $\omega'$ is embedded, otherwise $\omega$ has property $(*)$. If $\omega'$ is small at $x_2$. Then we can use the claim above to $x_0$, $x_1$ and $x_2$, to see that $\omega'$ has property $(*)$, hence $\omega$ satisfies property $(*)$ by Lemma~\ref{lem:6cycle replace} below.

Now we assume $\omega'$ is big at $x_2$. If $\omega'$ is good at $x_2$, then $\omega'$ satisfies property $(*)$ by Corollary~\ref{cor:goodpoint}, which implies $\omega$ has property $(*)$ by the same argument as before.
If $\omega'$ is not good at $x_2$, as $\omega'$ is already big at $x_1$, we must have $\omega'$ small at $x_3$, $\tau_4$ has single trace in $\cS_2$, and $\tau_0$ has multiple traces in $\cS_2$. Moreover, we can replace $x_3$ by $x'_3$ and $\omega'$ by $\omega''=x_0x_1x_2x'_3x_4x'_5$ so that $\omega''$ is small at $x'_3,x_2$. By previous discussion, $\omega''$ is small at $x_0$ and big at $x_1$. Now we apply the claim at the end of Case 1 to $\omega''$ at $x_0,x_1$ and $x_2$, implying $\omega''$ has property $(*)$. By applying Lemma~\ref{lem:6cycle replace} twice, we know $\omega'$ and $\omega$ both has property $(*)$. 
\end{proof}

\begin{lem}
	\label{lem:6cycle replace}
Suppose $\omega$ is a 6-cycle of type I. Let $x'_5$ be a vertex of type $\hat c$ that is adjacent to both $x_0$ and $x_4$, and let $\omega'$ be an embedded 6-cycle obtained from $\omega$ by replacing $x_5$ by $x'_5$. If $\omega'$ has property $(*)$, then $\omega$ has property $(*)$.
\end{lem}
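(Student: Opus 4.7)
Let $z$ be a vertex of type $\hat a$ or $\hat b$ adjacent to $\{x_1, x_3, x'_5\}$, furnished by property $(*)$ applied to $\omega'$. We may assume $x_5 \ne x'_5$, otherwise $\omega = \omega'$ and $z^* := z$ trivially satisfies property $(*)$ for $\omega$. The strategy splits on the type of $z$.

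\textbf{Case 1: $z$ has type $\hat b$.} The plan is to apply Lemma~\ref{lem:special4cycle} three times, each application picking up a new adjacency and finally arriving at $z$ adjacent to $x_5$. First, note that the embedded 4-cycle with consecutive vertices $x_1, x_0, x'_5, z$ has types $\hat c, \hat a, \hat c, \hat b$; taking the lemma's $(x_1,x_2,x_3,x_4)$ to be $(x_1, x_0, x'_5, z)$, we have $\type(x_1) = \type(x_3) = \hat c$ and $\type(x_2), \type(x_3), \type(x_4) = \hat a, \hat c, \hat b$ mutually distinct, so Lemma~\ref{lem:special4cycle} yields $x_0$ adjacent to $z$. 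Symmetrically, applying the lemma to the embedded 4-cycle $x_3, x_4, x'_5, z$ gives $x_4$ adjacent to $z$. These new adjacencies turn $x_0, z, x_4, x_5$ into a genuine embedded 4-cycle of types $\hat a, \hat b, \hat a, \hat c$; a third application of Lemma~\ref{lem:special4cycle} with $(x_1,x_2,x_3,x_4) := (x_0, z, x_4, x_5)$ yields $z$ adjacent to $x_5$. So $z^* = z$ works.

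\textbf{Case 2: $z$ has type $\hat a$.} Here Lemma~\ref{lem:special4cycle} is not directly applicable because the analogous 4-cycles have types $\hat a, \hat c, \hat a, \hat c$ with repeated type $\hat a$. The plan is to produce a type $\hat b$ witness of property $(*)$ for $\omega'$, reducing to Case 1. To this end, apply Theorem~\ref{thm:4 wheel} to the induced 4-cycle $(z, x_1, x_0, x'_5)$ of type $(\hat a, \hat c, \hat a, \hat c)$; a type-exclusion argument (ruling out centers of type $\hat a$, which would force $z = x_0$ contradicting $\omega$ being induced, and type $\hat c$, which would force $x_1 = x'_5$ contradicting embeddedness of $\omega'$) shows the resulting center $y$ must have type $\hat d$. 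By Lemma~\ref{lem:link}(3), since removing $d$ from the $D_4$ Dynkin diagram leaves three isolated vertices, $\lk(y, \Delta_\Lambda) \cong K_a * K_b * K_c$ is a join of three infinite discrete sets. Any vertex $z' \in K_b$ is then a type $\hat b$ vertex adjacent to every vertex in $K_a \cup K_c$, hence to $\{z, x_0, x_1, x'_5\}$. The plan is then to identify $z'$ (or a carefully chosen type $\hat b$ vertex) as also being adjacent to $x_3$, at which point $z'$ becomes a type $\hat b$ witness of property $(*)$ for $\omega'$ and Case 1 applies to produce $z'$ adjacent to $x_5$, finishing the proof with $z^* = z'$.

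\textbf{Main obstacle.} Case 2 is the crux. To ensure the $\hat b$-type vertex $z'$ is adjacent to $x_3$, one must either show that the analogous $\hat d$-type center $y'$ arising from the induced 4-cycle $(z, x_3, x_4, x'_5)$ coincides with $y$, or combine the join structures of $\lk(y)$ and $\lk(y')$ by appealing to rank-2 analysis in the link of the common edge $\{z, x'_5\}$ (an Artin complex of dihedral type $I_2(3)$) together with further applications of Theorem~\ref{thm:4 wheel} on auxiliary 4-cycles formed from $y, y'$, and $\omega$-vertices. The delicate point is that distinct type $\hat d$ vertices can coexist as common neighbors of a shared edge in a rank-2 Artin complex, so the identification $y = y'$ (or the existence of a common $\hat b$-extension) must be forced using the full embeddedness of $\omega$ and $\omega'$ and the specific combinatorics of $D_4$.
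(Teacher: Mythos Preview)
Your Case 1 is correct and in fact slightly more efficient than the paper's version (three applications of Lemma~\ref{lem:special4cycle} versus six).

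Case 2, however, has a genuine gap, and you have correctly identified where: you cannot expect to force $y = y'$, and the ad hoc link arguments you sketch do not close. The missing idea is Theorem~\ref{thm:weakflagD}. The paper does not try to compare two $\hat d$-centers; instead (assuming first $z\notin\{x_0,x_2,x_4\}$) it applies Theorem~\ref{thm:4 wheel} to \emph{all three} embedded $4$-cycles $zx_1x_0x'_5$, $zx_3x_2x_1$, $zx'_5x_4x_3$ to obtain $\hat d$-type vertices $y_0,y_2,y_4$ adjacent respectively to $\{x_1,x'_5\}$, $\{x_1,x_3\}$, $\{x_3,x'_5\}$. This produces a $6$-cycle $x'_5y_0x_1y_2x_3y_4$ in $\Delta_{\Lambda,\{c,d,b\}}$ alternating between types $\hat c$ and $\hat d$, to which Theorem~\ref{thm:weakflagD} applies (with $c,d,b$ playing the roles of $\delta_1,\delta_3,\delta_2$) and yields a center $z'$ of type $\hat b$ adjacent to $\{x_1,x_3,x'_5\}$. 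This reduces to Case~1.

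You also need to treat the boundary case $z\in\{x_0,x_2,x_4\}$ separately, since then the $4$-cycles above are not embedded. If $z=x_0$ or $z=x_4$ then $z$ is already adjacent to $x_5$ and property $(*)$ for $\omega$ is immediate. If $z=x_2$, then $x'_5$ is a type $\hat c$ vertex adjacent to all of $\{x_0,x_2,x_4\}$; one then runs the same three-$4$-cycles-plus-Theorem~\ref{thm:weakflagD} argument (this time centered at $x'_5$ and using $\{x_0,x_2,x_4\}$) inside $\Delta_{\Lambda,\{a,d,b\}}$ to obtain a $\hat b$-vertex adjacent to $\{x_0,x_2,x_4\}$, and three more applications of Lemma~\ref{lem:special4cycle} promote this to adjacency with $\{x_1,x_3,x_5\}$.
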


\begin{proof}
There are two cases to consider.  If there is a vertex $z$ of type $\hat b$ adjacent to each of $\{x_1,x_3,x'_5\}$, then by applying Lemma~\ref{lem:special4cycle} to $zx'_5x_0x_1$, $zx_1x_2x_3$ and $zx_3x_4x'_5$, we know $z$ is adjacent to $\{x_0,x_2,x_4\}$. By applying Lemma~\ref{lem:special4cycle} to $zx_0x_5x_4$, $zx_4x_3x_2$ and $zx_2x_1x_0$, we know $z$ is adjacent to $\{x_1,x_3,x_5\}$, implying $\omega$ has property $(*)$. 

 Suppose there is a vertex $z$ of type $\hat a$ adjacent to each of $\{x_1,x_3,x'_5\}$. First we suppose $z\notin \{x_0,x_2,x_4\}$. Then by applying Theorem~\ref{thm:4 wheel} to $zx_1x_0x'_5$, $zx'_5x_4x_3$ and $zx_3x_2x_1$, we know for $i=0,2,4$, there is a vertex $y_i$ of type $\hat d$ such that $y_0$ is adjacent to $x_1$ and $x'_5$, $y_2$ is adjacent to $x_1$ and $x_3$, and $y_4$ is adjacent to $x_3$ and $x'_5$. Applying Theorem~\ref{thm:weakflagD} to the 6-cycle $x'_5y_0x_1y_2x_3y_4$ in $\Delta_{\Lambda,\{c,d,b\}}$, we know there is a vertex $z'$ of type $\hat b$ adjacent to each of $\{x_1,x_3,x_5\}$, reducing to the previous case. It remains to consider $z\in \{x_0,x_2,x_4\}$. If $z=x_0$ or $x_4$, then $\omega$ readily has property $(*)$. If $z=x_2$, then $x'_5$ is adjacent to each of $x_0,x_2,x_4$. We assume $x'_5\notin \{x_1,x_3,x_5\}$, otherwise $\omega$ readily has property $(*)$. Then we use the same argument as the last paragraph of the proof of Corollary~\ref{cor:goodpoint} to deduce that $\omega$ has property $(*)$.
\end{proof}

\section{Six-cycles of type II in the $D_4$ complex}
\label{sec:abccycle}
Throughout this section, $\Lambda$ is the Dynkin diagram of type $D_4$ with its vertex set $\{a,b,c,d\}$ such that $\{a,b,c\}$ are leaf vertices.
The goal of this section is to prove the following.
\begin{prop}
\label{prop:type II}
Suppose $\omega$ is an embedded 6-cycle in $\Delta_\Lambda$ of type II. Suppose $x_0$ and $x_3$ are not adjacent in $\Delta_\Lambda$. Then either $x_1$ and $x_5$ are adjacent to common vertex of type $\hat b$, or $\omega$ has property $(*)$.
\end{prop}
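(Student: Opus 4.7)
The plan is to reduce Proposition~\ref{prop:type II} to Proposition~\ref{prop:typeI} by exploiting the structural dichotomy at the unique type $\hat b$ vertex $x_2$ provided by Lemma~\ref{lem:typeIIblocal}: either (A) there is a vertex $y$ of type $\hat a$ adjacent to each of $\{x_1,x_2,x_3\}$, or (B) there is an edge path $x_1,y_1,y_2,y_3,x_3$ of length four in $\lk(x_2,\Delta_\Lambda)$ with $y_1,y_3$ of type $\hat a$ and $y_2$ of type $\hat c$. I would dispose of Case (A) first, then treat Case (B) by a further reduction.

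In Case (A), the key observation is that $\omega' = x_0\, x_1\, y\, x_3\, x_4\, x_5$ has types alternating as $\hat a,\hat c,\hat a,\hat c,\hat a,\hat c$, making it a type~I $6$-cycle. Since same-type pairs are never adjacent and $x_0,x_3$ are non-adjacent by hypothesis, $\omega'$ is embedded whenever $y\ne x_4$; in this generic sub-case Proposition~\ref{prop:typeI} directly yields a vertex of type $\hat a$ or $\hat b$ adjacent to $x_1,x_3,x_5$, which is exactly property $(\ast)$ for $\omega$. The degenerate sub-case $y=x_4$ forces $x_1$ adjacent to $x_4$, after which Lemma~\ref{lem:special4cycle} applied to the embedded $4$-cycle $x_1 x_2 x_3 x_4$ (types $\hat c,\hat b,\hat c,\hat a$, with $\type(x_1)=\type(x_3)$ and the remaining types mutually distinct) gives $x_2$ adjacent to $x_4$. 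A further case split on whether $x_2 \sim x_5$ (in which event $x_2$ itself supplies property $(\ast)$), combined with the labeled $4$-wheel condition (Theorem~\ref{thm:4 wheel}) applied to the induced $4$-cycle $x_0\, x_1\, x_4\, x_5$ of types $\hat a,\hat c,\hat a,\hat c$ and, when a type-$\hat d$ common neighbor arises, Theorem~\ref{thm:weakflagD} on a suitable alternating $(\hat a,\hat d)$-type $6$-cycle, then extracts either a common type $\hat b$ neighbor of $x_1,x_5$ (conclusion~1) or property $(\ast)$.

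In Case (B), I first reduce to Case (A) whenever possible: if $y_1=y_3$, or $y_1$ is adjacent to $x_3$, or $y_3$ is adjacent to $x_1$, then that vertex plays the role of Case (A)'s $y$. Otherwise, the path is genuinely of length four, and I construct auxiliary type~I $6$-cycles from it, most usefully $y_1\, x_1\, x_0\, x_5\, y_3\, y_2$ of alternating types $\hat a,\hat c,\hat a,\hat c,\hat a,\hat c$, which closes into a $6$-cycle provided $y_3$ is adjacent to $x_5$. In that event, Proposition~\ref{prop:typeI} furnishes a vertex $z$ of type $\hat a$ or $\hat b$ adjacent to $\{x_1,x_5,y_2\}$, and applying Lemma~\ref{lem:special4cycle} to the $4$-cycle $y_2\, z\, y_3\, x_3$ (cyclically $\hat c,\hat b\text{ or }\hat a,\hat a,\hat c$ with opposite $\hat c$-pair $y_2,x_3$) promotes this to adjacency with $x_3$, yielding property $(\ast)$. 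When the required auxiliary adjacencies such as $y_3\sim x_5$ (or the symmetric $y_1\sim x_0$) are unavailable, they must be produced by additional applications of the $4$-wheel condition to intermediate $4$-cycles, or, failing that, by a surface-topological analysis of the six arcs $\tau_0,\ldots,\tau_5$ on $\cS$ in the spirit of Section~\ref{sec:D4}---scissoring along $\tau_2$ and analyzing the trace patterns.

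The principal obstacle I anticipate is Case (B): the genuinely length-four path $x_1 y_1 y_2 y_3 x_3$ creates substantial geometric freedom, and the auxiliary $6$-cycles I would like to feed into Proposition~\ref{prop:typeI} generically fail to close without further adjacencies whose existence is delicate to establish, potentially requiring the surface machinery developed in Section~\ref{sec:D4}. A secondary difficulty is correctly extracting the \emph{weaker} conclusion---a common type $\hat b$ neighbor of only $x_1$ and $x_5$ rather than all three of $x_1,x_3,x_5$---in those sub-cases where property $(\ast)$ genuinely fails, a situation that is a hallmark of type~II cycles and has no direct analogue in the type~I setting.
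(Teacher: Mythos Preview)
Your approach is genuinely different from the paper's, and the difference is instructive. The paper does not use Lemma~\ref{lem:typeIIblocal} at all in the proof of Proposition~\ref{prop:type II}; instead it proceeds entirely by surface topology, scissoring $\cS$ along $\tau_0$ and performing a case analysis on the trace structure of $\tau_2$ and $\tau_4$ in $\cS_0$. This requires the six technical results of Section~\ref{sec:abccycle} (Corollary~\ref{cor:tau2 has a pi0 trace} and Lemmas~\ref{lem:20intersect}, \ref{lem:20e40ne}, \ref{lem:24 unique}, \ref{lem:4notgood}, with Lemma~\ref{lem:25} as a common engine). Your Case~(A) is a clean shortcut the paper does not take: when $y$ of type $\hat a$ is adjacent to $x_1,x_2,x_3$, the cycle $x_0x_1yx_3x_4x_5$ is type~I and Proposition~\ref{prop:typeI} applies directly (and note the degenerate subcase $y=x_4$ is actually immediate, since then $x_4$ itself witnesses property~$(\ast)$; your handling of it is over-elaborate).

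The genuine gap is Case~(B), and you correctly identify it as the obstacle. Your proposed auxiliary cycle $y_1x_1x_0x_5y_3y_2$ requires $y_3\sim x_5$, which has no reason to hold: the path $x_1y_1y_2y_3x_3$ lives entirely in $\lk(x_2)$ and carries no information about $x_4,x_5,x_0$. Repeated applications of the $4$-wheel condition cannot manufacture this adjacency, because $y_3$ and $x_5$ share no known common neighbors to form a $4$-cycle with. Your fallback---``surface-topological analysis\ldots scissoring along $\tau_2$''---is not a plan: it is precisely the content of the six lemmas the paper develops (and the paper scissors along $\tau_0$, not $\tau_2$, which matters because $\tau_2$ is an arc between two punctures rather than from a puncture to the boundary). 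In short, Case~(B) is not a residual technicality but the heart of the proposition; your reduction via Lemma~\ref{lem:typeIIblocal} disposes of the easy half but leaves the hard half untouched.
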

\begin{proof}
We consider the trace of $\tau_2$ in $\cS_0$. If $\tau_2$ has a $\pi_0$-trace which is not good, then we are done by Corollary~\ref{cor:tau2 has a pi0 trace}. If $\tau_2$ has a $\pi_0$-trace and all $\pi_0$-traces of $\tau_2$ are good, then we are done by Corollary~\ref{cor:tau2 has a pi0 trace} and Lemma~\ref{lem:20intersect}. It remains to consider the case that $\tau_2$ does not have any $\pi_0$-traces, i.e. $\tau_2$ has unique trace on $\cS_0$. If all traces of $\tau_4$ in $\cS_0$ are good and $\tau_4$ has at least two traces, then proposition follows from Lemma~\ref{lem:20e40ne} below; if $\tau_4$ only has a single trace in $\cS_0$ and it is a good trace, then the proposition follows from Lemma~\ref{lem:24 unique} below; if $\tau_4$ has a trace in $\cS_0$ which is not good, then the proposition follows from Lemma~\ref{lem:4notgood} below.
This finishes the proof.
\end{proof}

\begin{cor}
	\label{cor:type II}
Suppose $\omega$ is a 6-cycle in $\Delta_\Lambda$ of type II. Then $\omega$ has property $(*)$.
\end{cor}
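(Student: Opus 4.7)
The plan is to combine Proposition~\ref{prop:type II} with Proposition~\ref{prop:typeI}, exploiting the $S_3$-symmetry of the $D_4$ Dynkin diagram permuting its three leaves $\{a,b,c\}$.

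The easy subcases dispose of themselves. If $\omega$ is not embedded, then a pair of non-adjacent vertices coincide; in each such case one of $\{x_0, x_2, x_4\}$ is already adjacent to the reduced set of distinct odd-position vertices and witnesses property~$(*)$. If $\omega$ is embedded and $x_0 \sim x_3$, then $x_0$ (of type $\hat a$) is adjacent to each of $x_1$, $x_3$, $x_5$, so $x_0$ itself witnesses property~$(*)$.

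The substantive case is $\omega$ embedded with $x_0 \not\sim x_3$. Invoking Proposition~\ref{prop:type II}, either property~$(*)$ holds directly, or there exists $z$ of type $\hat b$ adjacent to both $x_1$ and $x_5$. Lemma~\ref{lem:special4cycle} applied to the embedded 4-cycle $x_1\,z\,x_5\,x_0$ of types $(\hat c, \hat b, \hat c, \hat a)$ then gives $z \sim x_0$. If $z = x_2$ or $x_2 \sim x_5$, then $x_2$ itself is the desired vertex. Otherwise $\omega' := z\,x_1\,x_2\,x_3\,x_4\,x_5$ is a genuine embedded 6-cycle of types $(\hat b,\hat c,\hat b,\hat c,\hat a,\hat c)$. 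Applying the Dynkin diagram automorphism $\sigma$ of $D_4$ that transposes $a$ and $b$, and re-indexing $\sigma(\omega')$ to start at $\sigma(x_2)$, produces a type II 6-cycle $\omega''$ whose antipodal pair at positions $0,3$ is $(\sigma(x_2),\sigma(x_5))$, which is non-adjacent by the standing assumption $x_2 \not\sim x_5$. Proposition~\ref{prop:type II} applied to $\omega''$ now yields either property~$(*)$ for $\omega''$, which pulls back through $\sigma^{-1}$ to property~$(*)$ for $\omega$, or a vertex $z'$ of type $\hat a$ adjacent to both $x_1$ and $x_3$.

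In this final sub-case, type considerations together with $x_0 \not\sim x_3$ yield $z' \notin \{x_0, x_1, x_2, x_3, x_5\}$. If $z' = x_4$ or $z' \sim x_5$, then $z'$ (respectively $x_4$) is already of type $\hat a$ and adjacent to $\{x_1,x_3,x_5\}$. Otherwise $\omega''' := z'\,x_1\,x_0\,x_5\,x_4\,x_3$ is an embedded 6-cycle of alternating types $\hat a, \hat c, \hat a, \hat c, \hat a, \hat c$, i.e., of type I. Proposition~\ref{prop:typeI} then produces a vertex of type $\hat a$ or $\hat b$ adjacent to the three odd-position vertices $\{x_1,x_5,x_3\}$, completing property~$(*)$ for $\omega$. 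The principal obstacle I foresee is bookkeeping around the various degenerate configurations ($z = x_2$, $x_2 \sim x_5$, $z' = x_4$, $z' \sim x_5$, coincidences in the non-embedded case): one must verify at each symmetrization step that the auxiliary 6-cycles $\omega'$, $\omega''$, $\omega'''$ are non-degenerate and that every collapse produces property~$(*)$ directly, so that the chain of reductions genuinely terminates without looping back.
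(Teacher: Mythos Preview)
Your proof is correct. The overall architecture matches the paper's: two applications of Proposition~\ref{prop:type II} followed by one application of Proposition~\ref{prop:typeI}, with Dynkin-diagram symmetry used to convert between cycle types. The execution differs in a minor but noticeable way. The paper obtains its second vertex of type $\hat b$ by applying Proposition~\ref{prop:type II} to the \emph{reflection} of $\omega$ itself (relabeling $y_i=x_{4-i}$, which is again type~II with positions $0,3$ being $x_4,x_1$), yielding a common $\hat b$-neighbour $x'_4$ of $x_3,x_5$; it then feeds the $\hat b/\hat c$-alternating cycle $x'_0x_1x_2x_3x'_4x_5$ into Proposition~\ref{prop:typeI} via the $a\leftrightarrow b$ symmetry. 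You instead build the auxiliary cycle $\omega'=zx_1x_2x_3x_4x_5$, push it through the $a\leftrightarrow b$ automorphism first, and only then invoke Proposition~\ref{prop:type II}; your final type~I cycle $\omega'''=z'x_1x_0x_5x_4x_3$ is already $\hat a/\hat c$-alternating, so no further symmetry is needed there. Both routes are equally valid; the paper's is slightly more economical because it avoids constructing and verifying the embeddedness of the intermediate cycle $\omega'$.

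One small remark: the step where you invoke Lemma~\ref{lem:special4cycle} on the 4-cycle $x_1zx_5x_0$ to deduce $z\sim x_0$ is correct but never used downstream; your argument goes through without it.
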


\begin{proof}
If $x_0$ and $x_3$ are adjacent in $\Delta_\Lambda$, then $\omega$ has property $(*)$. Now we assume $x_0$ and $x_3$ are not adjacent.	
By Proposition~\ref{prop:type II} and symmetry, we know either $x_3$ and $x_5$ are adjacent to common vertex of type $\hat b$, or $\omega$ has property $(*)$. This together with Proposition~\ref{prop:type II} imply that either $\omega$ has property $(*)$, or $x_5,x_3$ are adjacent to a common vertex $x'_4$ of type $\hat b$ and $x_5,x_1$ are adjacent to a common vertex $x'_0$ of type $\hat b$. In the latter case, by applying Proposition~\ref{prop:typeI} to the 6-cycle $x'_0x_1x_2x_3x'_4x_5$ and noting the symmetry of the Dynkin diagram, we know there is a vertex of type $\hat b$ or $\hat a$ that is adjacent to each of $\{x_1,x_3,x_5\}$, and property $(*)$ follows.
\end{proof}
In the rest of this section, we prove Corollary~\ref{cor:tau2 has a pi0 trace}, Lemma~\ref{lem:20intersect}, Lemma~\ref{lem:20e40ne},  Lemma~\ref{lem:24 unique} and Lemma~\ref{lem:4notgood}. We start with several preparatory lemmas for Corollary~\ref{cor:tau2 has a pi0 trace}.

\begin{lem}
	\label{lem:S3'}
	Suppose $x_0,x_1,x_2,x_3$ are of type $\hat a, \hat c, \hat b,\hat c$ respectively in $\Delta$ such that $x_i$ and $x_{i+1}$ are adjacent in $\Delta$. Let $\{\tau_i\}_{i=0}^3$ be the associated arcs in $\cS$. We define $\cS_0$ as before.
	Suppose $\tau_0\cap\tau_3\neq\emptyset$.  Assume $\tau_2$ has at least two $\pi_0$ traces and all $\pi_0$ traces of $\tau_2$ are good. Then the conclusion of Lemma~\ref{lem:S3} holds true.
\end{lem}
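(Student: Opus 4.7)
The plan is to adapt the proof of Lemma~\ref{lem:S3} to the present setting, tracking one structural difference: here $\tau_2$ runs from $p_a$ to $p_c$ (as $x_2$ has type $\hat b$) instead of from $p_a$ to $z_a$. Consequently in $\cS_0$ the arc $\tau_2$ has exactly one trace $T_{p_c}$ terminating at the puncture $p_c$ (rather than at $\{z,z'\}\subset\pi_0$), while every other trace is a $\pi_0$-trace. By hypothesis every $\pi_0$-trace of $\tau_2$ is good, and since a torus with one boundary component supports at most three parallel classes of pairwise disjoint non-boundary-parallel simple arcs, the $\pi_0$-traces of $\tau_2$ fall into at most three parallel classes.

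First I will observe that Case~0 of Lemma~\ref{lem:S3} (where $p_c$ is squeezed by two parallel traces of $\tau_2$) does not arise in its original form, because $p_c$ is itself an endpoint of $T_{p_c}\subset\tau_2$. Let $D$ denote the unique component of $\cS_0\setminus\tau_2$ whose closure contains $p_c$; then $T_{p_c}$ contributes one arc to $\partial D$. Since $\tau_3\cap\tau_2=\{p_c\}$ (adjacent vertices give arcs that are disjoint except at shared endpoints) and since $\tau_0\cap\tau_3\neq\emptyset$, the trace $R_3$ of $\tau_3$ starting at $p_c$ is trapped in $D$ and must exit through an interior point $x$ of some subarc of $\pi_0\cap\partial D$.

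Next I split into the three cases depending on how many parallel classes the $\pi_0$-traces of $\tau_2$ form, mirroring Cases~1--3 of Lemma~\ref{lem:S3} and redrawing the configuration pictures analogous to Figure~\ref{fig:12} with $T_{p_c}$ in place of the trace containing $\{z,z'\}$. In each subcase I identify $x$ via the gluing $\phi\colon\overline{zp_a}\to\overline{z'p_a}$ with a point $x'\in\pi_0$, locate the trace $S_3$ of $\tau_3$ containing $x'$, and argue that $S_3$ is squeezed between two parallel $\pi_0$-traces of $\tau_2$, hence is a good trace. The control on $S_3\cap\tau_1$ follows from $\tau_1\subset D$ (since $\tau_1$ is a simple arc from $p_c$ to $z_c$ disjoint from $\tau_2$) together with the minimal-intersection assumption on the pair $\tau_1,\tau_3$. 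The output is a pair $(R_3,S_3)$ satisfying one of the three alternatives in the conclusion of Lemma~\ref{lem:S3}.

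The step I expect to be the main obstacle is the combinatorial counting of how the trace endpoints $\{\xi^\pm_i,\eta^\pm_j,\zeta^\pm_\ell\}$ distribute along $\pi_0$: because one endpoint of $\tau_2$ is now $p_c$ rather than a point of $\pi_0$, the parity of boundary endpoints paired under $\phi$ shifts by one. Thus I must re-verify directly that the extremal position conclusion ``$\xi^-_n=p_a$'' used in Case~1 of Lemma~\ref{lem:S3} still holds in a suitably modified form, and likewise that the genericity conditions ``$n>1$'' and ``$n-1\neq r$'' used in the identification arguments of Cases~2--3 remain valid. Once this bookkeeping is settled, the geometric arguments tracing $R_3,S_3$ through the regions cut out by $\tau_2$ transfer essentially verbatim from the proof of Lemma~\ref{lem:S3}.
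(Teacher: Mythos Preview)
Your dismissal of Case~0 is incorrect and constitutes the main gap. The fact that $p_c$ is an endpoint of the trace $T_{p_c}$ does not prevent $p_c$ from being squeezed between two parallel $\pi_0$-traces of $\tau_2$: the trace $T_{p_c}$ itself then lies inside the disk they bound. The paper handles this case explicitly, and it is where the genuinely new work occurs. The key structural difference from Lemma~\ref{lem:S3} is that the trace through $z$ is no longer one of the squeezing traces $\alpha_{\pm1}$; instead the endpoint $\theta\in\pi_0$ of $T_{p_c}$ is identified with $\xi^+_1$ via $\phi$. This creates new landing zones for the endpoint of $R_3$ (namely $\overline{z'\theta}$ and $\overline{\theta\xi^+_{-1}}$) that did not exist in the original Case~0.

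When $R_3$ lands in these new zones, the next trace of $\tau_3$ can get trapped in a nested sequence of disks whose boundary points march toward $z_c$. The paper rules this out by showing that in such a configuration $\tau_3$ would be obtained from $\tau_1$ by a nonzero power of the Dehn twist along $\partial\cS$, and then invoking Lemma~\ref{lem:common adjacent} together with the adjacency of $x_1,x_3$ to the common type-$\hat b$ vertex $x_2$ to derive a contradiction. This Dehn-twist argument is the essential new ingredient, and it reappears in the modified Cases~1 and~2 as well (Case~2 in particular acquires a new subcase (0) requiring a more elaborate version of it). Your claim that the arguments ``transfer essentially verbatim'' is therefore too optimistic: the endpoint bookkeeping you flag is a minor issue, while the Dehn-twist/Lemma~\ref{lem:common adjacent} mechanism is what you are missing.
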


\begin{proof}
	As in $\cS$ the arc $\tau_2$ travels from $p_c$ to $p_a$, we know that in $\cS_0$, $\tau_2$ has exactly one trace $\tau'_2$ which goes from $p_c$ to a point in $\pi_0$. All other traces of $\tau_2$ are $\pi_0$-traces, which are good traces by assumption. There are at most three parallel classes of good traces of $\tau_2$. Given a trace $\tau'_3$ of $\tau_3$ with one endpoint being $x\in \pi_0$, we define \emph{the trace after $(\tau'_3,x)$} to be the trace of $\tau_3$ containing $x'$, where $x'$ and $x$ are identified via $\phi:\overline{zp_a}\to\overline{z'p_a}$. Let $R_2$ be the trace of $\tau_2$ containing $p_c$ and denote the other endpoint of $R_2$ by $\theta$.
	\begin{figure}[h]
		\centering
		\includegraphics[scale=0.8]{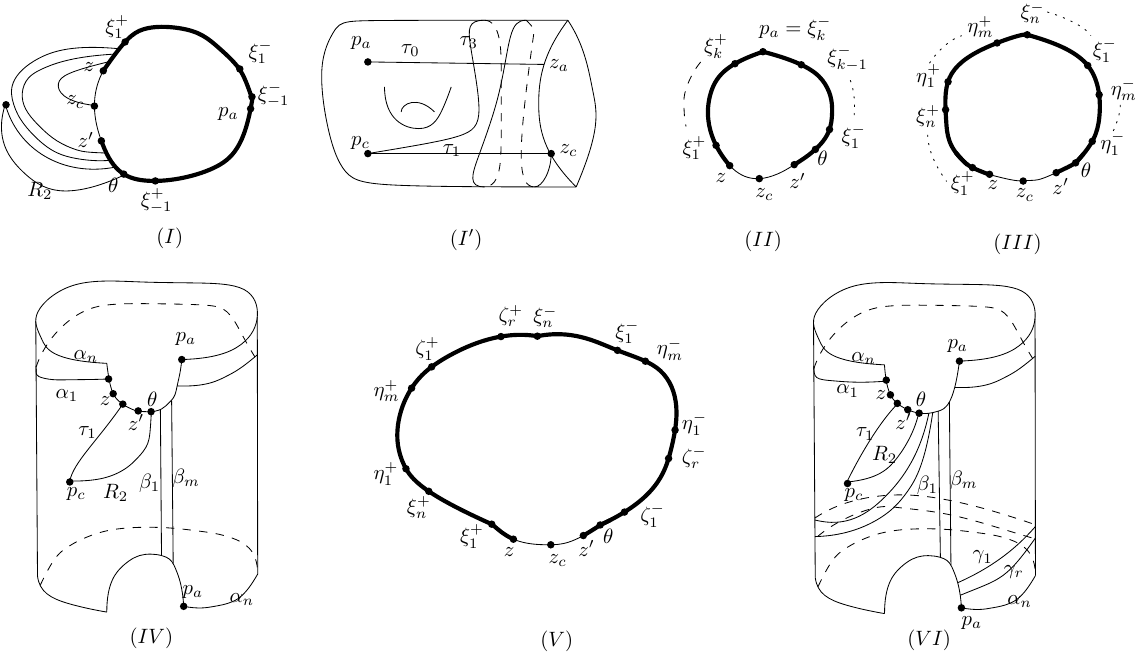}
		\caption{Proof of Lemma~\ref{lem:S3'}}
		\label{fig:12'}
	\end{figure}
	
	Suppose we are in Case 0 of the proof of Lemma~\ref{lem:S3}, i.e. $p_c$ is squeezed by two parallel $\pi_0$-traces of $\tau_2$. We use the same notation as in Lemma~\ref{lem:S3}. The difference with Lemma~\ref{lem:S3} is that $z=\xi^+_1$ is no longer true.
	Instead, $\xi^+_1$ and $\theta$ are identified via $\phi$. Moreover, $R_2$ is squeezed by $\alpha_1$ and $\alpha_{-1}$. If $R_3$ ends in $\overline{\xi^-_1\xi^-_{-1}}$ or $\overline{\theta\xi^+_{-1}}$, then we argue in the same way as in Lemma~\ref{lem:S3}. 
	
	Now we assume $R_3$ ends in $\overline{z'\theta}$ at point $x$, then $x$ is identified with $x'\in \overline{z\xi^+_1}$ via $\phi$. 
	Let $R'_3$ be the trace after $(R_3,x)$. If $R'_3$ ends in a point  $x''\in\overline{\xi^-_1\xi^-_{-1}}$ or $\overline{\theta\xi^+_{-1}}$, then we apply the argument in Lemma~\ref{lem:S3} to the trace after $(R'_3,x'')$. 
	We now show $x''\notin \overline{z\xi^+_1}$. If $x''\in \overline{z\xi^+_1}$, then Lemma~\ref{lem:boundary parallel} implies that $R'_3$ is not boundary parallel in $\cS_0$. As $\tau_3\cap\tau_2=\emptyset$, $R'_3$ is squeezed by $\alpha_1$ and $\alpha_{-1}$, thus $R'_3$ and a subarc of $\overline{z\xi^+_1}$ bound a disk with $p_c$ inside, which contradicts that $R'_3\cap R_3=\emptyset$.
	
	The only possibility left for $x''$ is $x''\in \overline{z'\xi^+_{-1}}$ or $x''=z_c$. We will show this is impossible. See Figure~\ref{fig:12'} (I). As $R'_3\cap R_3=\emptyset$, we know  $x''$ is closer to $z_c$ than $x$. 
	Moreover, $R'_3$ and a subarc of $\overline{\xi^+_{-1}\xi^+_1}$ bound a disk $D$ with $p_c\notin D$. Then $x''$ is identified via $\phi$ to $x'''\in\overline{z\xi^+_1}$ such that $x'''$ is closer to $z_c$ than $x'$. Let $R''_3$ be the trace after $(R'_3,x'')$. As $R''_3\cap R'_3=\emptyset$, we know $R''_3\subset D$, and $R''_3$ ends in a point $x''''$ in $\overline{z'\theta}$ which is closer to $z_c$ than $x''$. Repeating this process, we know $\tau_3$ is made of sequence of traces with endpoints closer and closer to $z_c$, until it eventually ends in $z_c$. This implies that in $\cS$, $\tau_3$ can be obtained from $\tau_1$ by applying a non-zero power of the Dehn twist along $\partial \cS$, see Figure~\ref{fig:12'} (I'). As this Dehn twist corresponds to the Garside element in $A_\Lambda$, we have a contradiction with that $x_1$ and $x_3$ are adjacent to a common vertex of type $\hat b_2$ by Lemma~\ref{lem:common adjacent}.
	
	It remains to consider that $R_3$ ends in $\overline{z'\xi^+_1}$ at point $x$. As $R'_3\cap (R_3\cup R_2)=\emptyset$, we know $R'_3$ is contained in the disk bounded by $R_3$, $R_2$ and a subarc of $\overline{\theta\xi^+_1}$. By a similar argument as in the previous paragraph, we deduce that $\tau_3$ and $\tau_1$ differ by a non-zero power of Dehn twist along $\partial\cS$. Thus $R_3$ can not end in $\overline{z'\theta}$.
	
If $p_c$ is not squeezed by two parallel traces of $\tau_3$ and $\tau_3$ only has one parallel class of $\pi_0$-traces, then we use a similar argument as in Case 1 of the proof of Lemma~\ref{lem:S3}, combined with the argument in the previous paragraphs. See Figure~\ref{fig:12'} (II) for an adjusted picture.

Suppose $p_c$ is not squeezed by two parallel traces of $\tau_3$ and $\tau_3$ has two parallel classes of $\pi_0$-traces. We will use the same notation as in Case 2 of Lemma~\ref{lem:S3}, and see Figure~\ref{fig:12'} (III) and (IV) for the adjusted pictures. Note that $z\neq \xi^+_1$, and we modify the definition of $\gamma_1$  (in the proof of Lemma~\ref{lem:S3}) accordingly so it goes from $\eta^-_1$ to $\xi^+_1$ (via $\theta,z',z_c$ and $z$). We still have $n>1$.
Note that $R_3$ ends on an interior point $x$ of either $\overline{\xi^+_1z}$, or $\overline{z'\theta}$, or $\overline{\theta\eta^-_1}$, or $\gamma_2$, or $\gamma_3$, or $\gamma_4$. If either $x\in \gamma_1\cup\gamma_3\cup\gamma_4$ or $n-1\neq m$, then we can either argue as in the previous paragraphs, or as in Case 2 of Lemma~\ref{lem:S3}. Now we assume $x\in \gamma_2$ and $n-1=m$. Then $\phi$ identifies $x$ with $x'\in \gamma_4$. Let $T$ be the trace of $\tau_3$ containing $x'$. Then $T\subset D\cup \partial D$, where $D$ is defined as in Case 2 of Lemma~\ref{lem:S3}. The possibilities of the other endpoint $x''$ of $T$ are (0) $x''\in \overline{z'\theta}$; (1) $x''=z_c$ or $x''\in \overline{\xi^+_1z}$; (2) $x''\in \overline{\theta\eta^-_1}$; (3) $x''\in \gamma_4$; (4) $x''\in \gamma_3$; (5) $x''\in \gamma_2$. These are similar to Case 2 of Lemma~\ref{lem:S3} except (0) and (5). If (5) happens, then $T\cap \tau_1=\emptyset$ as $T\cap R_2=\emptyset$, as desired. Suppose (0) happens. Then $T$ and $\alpha_1$ are parallel. We define an auxiliary arc, $S$, which is an arc in $D\cup \partial D$ from $x'$ to $z_c$ that is disjoint from $R_2$. Let $T'$ be the trace after $(T,x'')$. Then $T'$ is squeezed by $T$ and $\alpha_1$. Thus either $T'$ goes from a point in $\overline{\xi^+_1z}$ to a point in $\gamma_4$, in which case $T'\cap \tau_1=\emptyset$ and we are done; or $T'$ goes from a point in $\overline{\xi^+_1z}$ to either $z_c$ or a point in $\overline{z'x''}$, in which case the argument before implies that $\tau_3$ (viewed as an arc in $\cS$) is obtained from $\tau'_3$ by applying a nonzero power of Dehn twists along $\partial \cS$, where $\tau'_3$ is the concatenation of $R_3$ and $S$. Now we show the latter case is impossible. As $\tau_1\cup\tau'_3$ gives a homotopically non-trivial simple closed curve in $\cS'$ which is not homotopic to $\partial \cS'$ (recall that $\cS'$ is obtained from $\cS$ by filling back all the punctures), we know there are infinitely many elements in $\Omega_a$ which have representatives that are disjoint from both $\tau_1$ and $\tau'_3$. Let $x'_3$ be the vertex of type $\hat a$ in $\Delta_{\Lambda}$ associated with $[\tau'_3]$. Then there are infinitely many different vertices of type $\hat a$ in $\Delta_\Lambda$ that are adjacent to both $x_1$ and $x'_3$. By Lemma~\ref{lem:common adjacent}, we know $x_1$ and $x_3$ are adjacent to a common vertex of type $\hat b$ if and only if they are adjacent to common vertex of type $\hat a$. However, by definition of $\tau_1$ and $\tau_3$, there does not exist an arc in $\cS$ from $z_a$ to $p_a$ avoiding both $\tau_1$ and $\tau_3$, contradiction.

The case $p_c$ is not squeezed by two parallel traces of $\tau_3$ and $\tau_3$ has three parallel classes of $\pi_0$-traces is similar to  Lemma~\ref{lem:S3}, see Figure~\ref{fig:12'} (V) and (VI) for adjustments.
\end{proof}

\begin{lem}
	\label{lem:not big 24'}
	Given a cycle $\omega$ of type II. Let $\{\tau_i\}_{i=0}^5$ be the associated arcs in $\cS$. We define $\cS_0$ as before. Suppose $\tau_0\cap\tau_3\neq\emptyset$. Suppose at least one of the following holds:
	\begin{enumerate}
		\item $\tau_4$ has a trace in $\cS_0$ which is not good, and $\tau_2$ has at least one $\pi_0$ trace in $\cS_0$;
		\item $\tau_2$ has at least one $\pi_0$ trace, and $\tau_2$ has a $\pi_0$ trace which is not good.
	\end{enumerate} 
	Then $\omega$ is small at $x_0$.
\end{lem}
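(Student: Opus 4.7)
The plan is to mirror the proof of Corollary~\ref{cor:boundary parallel 24} in the type II setting. In both cases of the hypothesis, the idea is that a non-good $\pi_0$-trace of $\tau_4$ (or $\tau_2$) must be $c$-boundary-parallel in $\cS_0$ by the analog of Lemma~\ref{lem:boundary parallel}, so it cuts off a punctured disk $D\subset\cS_0$ with $p_c\in D$; this traps $\tau_5$ (respectively $\tau_1$) inside $D$. Meanwhile, a $\pi_0$-trace of $\tau_2$ (respectively $\tau_4$) will allow us, via Lemma~\ref{lem:S3'} (or its direct adaptation), to produce a trace $S_3$ of $\tau_3$ that sits in the complementary side of $\cS_0\setminus D$ and satisfies $\tau_1\cap S_3\subset\{z_c\}$ (respectively $\tau_5\cap S_3\subset\{z_c\}$), to which Lemma~\ref{lem:not big}(1) or (4) will apply to conclude that $\omega$ is small at $x_0$.

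Concretely, for Case~1, I would first apply the analog of Lemma~\ref{lem:boundary parallel}(1) with $\tau_4$ playing the role of $\tau_2$ there (both lie in $\Omega_a$, and $\tau_5\in\Omega_c$ replaces $\tau_1$), yielding that a non-good $\pi_0$-trace $S_4$ of $\tau_4$ is $c$-boundary-parallel; write $D\subset\cS_0$ for the resulting disk, so $p_c\in D$ and hence $\tau_5\subset D$. When $\tau_2$ has at least two $\pi_0$-traces, all good, Lemma~\ref{lem:S3'} produces the required $S_3$; since $S_3$ is not boundary parallel in $\cS'_0$ and $S_3\cap\tau_4=\emptyset$, the trace $S_3$ and $\tau_5$ lie in different components of $\cS_0\setminus S_4$, giving $S_3\cap\tau_5=\emptyset$, and Lemma~\ref{lem:not big}(1) or (4) applies after perturbing an endpoint of $S_3$ slightly. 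When $\tau_2$ has exactly one $\pi_0$-trace, which is then good in this sub-case, that trace together with the trace $R_2$ of $\tau_2$ containing $p_c$ still yields an $S_3$ by retracing the ``single parallel family'' portion of the proof of Lemma~\ref{lem:S3'}. If instead $\tau_2$ carries a non-good $\pi_0$-trace, we pass to Case~2.

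For Case~2, by Lemma~\ref{lem:boundary parallel}(2), a non-good $\pi_0$-trace $S_2$ of $\tau_2$ is $c$-boundary-parallel in $\cS_0$, cutting off a disk $D'\subset\cS_0$ with $p_c\in D'$ and hence $\tau_1\subset D'$. When $\tau_4$ has no non-good $\pi_0$-trace, applying Lemma~\ref{lem:S3} with the role of $\tau_2$ there played by $\tau_4$ and the role of $\tau_1$ played by $\tau_5$ produces a trace $S_3$ of $\tau_3$ with $\tau_5\cap S_3\subset\{z_c\}$; the containment $\tau_1\subset D'$, together with $S_3\cap\tau_2=\emptyset$, then places $\tau_1$ and $S_3$ on opposite sides of $S_2$, so $S_3\cap\tau_1=\emptyset$, and Lemma~\ref{lem:not big} finishes the argument.

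The main obstacle is the remaining situation in which both $\tau_2$ and $\tau_4$ carry non-good $\pi_0$-traces, so that $\tau_1$ and $\tau_5$ are simultaneously trapped in overlapping punctured disks $D'$ and $D$. Here I would adapt the final paragraph of the proof of Corollary~\ref{cor:boundary parallel 24}: using the minimal intersection of $\tau_2$ and $\tau_4$, together with the fact that the trace of $\tau_3$ emerging from $p_c$ must reach $\pi_0$ without crossing $\tau_2\cup\tau_4$ (which is guaranteed by $\tau_0\cap\tau_3\neq\emptyset$), I aim to force $S_2\cap S_4$ to consist of at most one interior intersection. A careful bookkeeping of how $\partial S_2$ and $\partial S_4$ sit in $\pi_0$ under the identification $\phi$ should then exhibit an isotopy from $\tau_1$ to $\tau_5$ rel endpoints within $D\cap D'$, contradicting that $\omega$ is embedded. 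This endpoint bookkeeping, which is slightly more intricate than in the type~I setting because $\tau_2$ now has a distinguished endpoint trace landing at $p_c$, is the delicate combinatorial step I expect to require the most care.
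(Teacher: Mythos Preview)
Your overall strategy---mirroring Corollary~\ref{cor:boundary parallel 24}---is exactly what the paper does, and your handling of the situation where both $\tau_2$ and $\tau_4$ carry non-good $\pi_0$-traces is indeed the last paragraph of Corollary~\ref{cor:boundary parallel 24} verbatim. There are, however, two genuine gaps in the sub-case analysis.

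First, in Case~1, your treatment of the sub-case where $\tau_2$ has exactly one good $\pi_0$-trace does not work as stated. Lemma~\ref{lem:S3'} (and the underlying Lemma~\ref{lem:S3}) needs at least two $\pi_0$-traces of $\tau_2$ in order to \emph{squeeze} a trace $S_3$ of $\tau_3$ between them and thereby separate $S_3$ from $\tau_1$. With only one $\pi_0$-trace $\tau'_2$, the trace $R_2$ containing $p_c$ is not a $\pi_0$-trace and cannot serve as a second squeezing arc in the same parallel family; no $S_3$ with the required disjointness falls out. The paper does not attempt to produce $S_3$ here: instead it runs the single-trace argument from Corollary~\ref{cor:boundary parallel 24} directly on $\tau'_2$ (which now goes from an interior point of $\overline{zp_a}$ or $\overline{z'p_a}$ to $p_a$, rather than from $z$ or $z'$), counting the intersection points of $\tau'_2$ with $S_4$ and concluding via Lemma~\ref{lem:not big}(1) or (2).

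Second, in Case~2, your plan to invoke Lemma~\ref{lem:S3} with $\tau_4$ in the role of $\tau_2$ presupposes that $\tau_4$ has more than one trace in $\cS_0$, which is not part of the hypotheses of Assertion~2; when $\tau_4$ has a single trace you have no mechanism to produce $S_3$. The paper sidesteps this entirely: rather than arguing directly, it uses the reflection symmetry of the 6-cycle (interchanging positions $2\leftrightarrow 4$ and $1\leftrightarrow 5$) so that the $\hat b$ vertex sits at position~4 and the non-good $\pi_0$-trace now belongs to $\tau_4$. This reduces Assertion~2 to the template of Assertion~1, after which the same three sub-cases on $\tau_2$ (now of type $\hat a$) apply. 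The only additional wrinkle, in the single-trace sub-case, is to first arrange that the chosen non-good $\pi_0$-trace $S_4$ has its endpoints in different components of $\pi_0\setminus\{p_a\}$, so that the intersection-count argument of Corollary~\ref{cor:boundary parallel 24} goes through unchanged.
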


\begin{proof}
	We start with Assertion 1. The case when all $\pi_0$-traces of $\tau_2$ are good and there are at least two of them is identical to the first paragraph of the proof of Corollary~\ref{cor:boundary parallel 24}, except we use Lemma~\ref{lem:S3'} to obtain $S_3$ instead of Lemma~\ref{lem:S3}. The case that $\tau_2$ has exactly one $\pi_0$-trace $\tau'_2$ which is good, is similar to the argument in Corollary~\ref{cor:boundary parallel 24}, except we will work with $\tau'_2$ instead of $\tau_2$, and $\tau'_2$ starts in a point in the interior of either $\overline{zp_a}$ or $\overline{z'p_a}$, and ends at $p_a$ (in particular one of $\{z,z'\}$ is in $\tau'_2$ is no longer true, but this is harmless). The case $\tau_2$ has a $\pi_0$-trace which is not good is identical to the last paragraph of the proof of Corollary~\ref{cor:boundary parallel 24}. 
	
	For Assertion 2, up to a symmetry, we can assume $x_4$ has type $\hat b$ and $x_2$ has type $\hat c$ (types of other vertices remain unchanged), and in Assertion 2 we assume $\tau_4$ has a $\pi_0$-trace which is not good. Again if $\tau_2$ has a trace which is not good, or all traces of $\tau_2$ are good and $\tau_2$ has at least two traces, then we can argue in the same way as in Corollary~\ref{cor:boundary parallel 24}. Suppose $\tau_2$ has exactly one trace which is good. Let $S_4$ be a $\pi_0$-trace of $\tau_4$ which is not good. Up to exchange $z$ and $z'$, we can assume one endpoint of $S_4$ is contained in the interior $\overline{zp_a}$, and another endpoint is contained in $\overline{z'p_a}\setminus\{z'\}$. Indeed, if $\partial S_4$ are contained in the interior of one of $\overline{zp_a}$ or $\overline{z'p_a}$, then Lemma~\ref{lem:boundary parallel} implies that $S_4$ and an arc $\gamma\subset \partial\cS_0$ with $z_c\in \gamma$ bound a punctured disk $D$, and we can find the desired trace of $\tau_4$ in $D$. Given such choice of $S_4$, the argument in the proof of Corollary~\ref{cor:boundary parallel 24} goes through.
\end{proof}

\begin{lem}
	\label{lem:2142}
	Let $\omega$ be a cycle of type II with its consecutive vertices being $\{x_i\}_{i=0}^5$. Let $\{\tau_i\}_{i=0}^5$ be the associated arcs in $\cS$. We define $\cS_0$ as before.
	Suppose $\tau_0\cap\tau_3\neq\emptyset$.  Assume $\tau_2$ has exactly one $\pi_0$ trace which is a good trace. Assume all traces of $\tau_4$ are good, and $\tau_4$ has at least two traces. Then $x_1$ and $x_5$ are adjacent to a common vertex of type $\hat b$.
\end{lem}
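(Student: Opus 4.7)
The plan is to first produce a common neighbor $v$ of $x_1$ and $x_5$ of type $\hat d$, and then extract from $\lk(v,\Delta_\Lambda)$ a type $\hat b$ vertex that is automatically also a common neighbor of $x_1$ and $x_5$. Indeed, $\Lambda\setminus\{d\}$ consists of three isolated vertices $a,b,c$, so Lemma~\ref{lem:link}(3) gives $\lk(v,\Delta_\Lambda)=K_a\ast K_b\ast K_c$; the factor $K_b$ is nonempty (it contains the coset $g A_{\hat b}$ for any representative $g$ of $v$), and any $z\in K_b$ is joined to $x_1,x_5\in K_c$ by the join structure. So it suffices to produce $v$.

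To produce $v$ I will invoke Lemma~\ref{lem:not big} on the length-two sub-path $x_1,x_0,x_5$ of $\omega$, which requires exhibiting one of its four arc configurations in $\cS_0$. Two observations set the stage: the unique good $\pi_0$-trace $T_2$ of $\tau_2$ is disjoint from $\tau_1$ because $\tau_1,\tau_2$ are adjacent in $\omega$; and every (good) trace of $\tau_4$ is disjoint from $\tau_5$ because $\tau_4,\tau_5$ are adjacent in $\omega$. If $T_2\cap\tau_5=\emptyset$, or if some trace of $\tau_4$ is disjoint from $\tau_1$, then Lemma~\ref{lem:not big}(1) applies directly with $\alpha$ equal to that arc.

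The substantial case is when $T_2$ meets $\tau_5$ and every trace of $\tau_4$ meets $\tau_1$. Here I apply Lemma~\ref{lem:S3} to the type-I sub-configuration $(\tau_0,\tau_5,\tau_4,\tau_3)$: the required disjointness for consecutive arcs holds because they come from consecutive vertices of $\omega$, the assumption $\tau_0\cap\tau_3\ne\emptyset$ is the standing hypothesis, and $\tau_4$ has more than one good trace on $\cS_0$. The lemma yields a good trace $S_3$ of $\tau_3$ satisfying one of its three canonical cases, so in particular $S_3\cap\tau_5\subseteq\{p_c,z_c\}$. From the adjacencies $\tau_3,\tau_2$ and $\tau_3,\tau_4$ in $\omega$ we also obtain $S_3\cap T_2=\emptyset=S_3\cap\tau_4$; hence $T_2$, $S_3$, and the traces of $\tau_4$ form a pairwise disjoint family of good arcs on $\cS_0$.

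I then run the parallel-class analysis of the proof of Lemma~\ref{lem:S3} with $\tau_4$ playing the role of the middle arc, tracking the position of $T_2$ with respect to the parallel families of $\tau_4$ and of $S_3$. In each resulting configuration, the outcome is either (a) a trace of $\tau_4$ parallel to $T_2$ with $p_c$ not squeezed by the two, yielding Lemma~\ref{lem:not big}(3) via $\alpha=T_2$ and $\alpha'$ this trace; or (b) a component $D$ of $\cS_0\setminus(\tau_4\cup T_2\cup S_3)$ whose closure contains a good arc meeting neither $\tau_1$ nor $\tau_5$, yielding Lemma~\ref{lem:not big}(1). The main obstacle will be (b): pinning down where $\tau_1$ sits inside this subdivision. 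Since $\tau_1$ is a simple arc from $p_c$ to $z_c$ disjoint from both $\tau_0$ and $T_2$, and since $\omega$ is embedded so $[\tau_1]\ne[\tau_5]$, a minimal-position and bigon-free argument in the spirit of Corollary~\ref{cor:boundary parallel 24} and Lemma~\ref{lem:not squeeze 34} forces $\tau_1$ into the component of $\cS_0\setminus(\tau_4\cup T_2)$ containing $p_c$, and the presence of at least two non-parallel traces of $\tau_4$ then frees up a boundary arc of a neighboring component to serve as $\alpha$. Once $v$ is produced, the first paragraph closes the proof.
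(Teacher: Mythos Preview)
Your reduction in the first paragraph is fine, and your invocation of Lemma~\ref{lem:S3} on the sub-configuration $(\tau_0,\tau_5,\tau_4,\tau_3)$ is exactly what the paper does: it yields a trace $S_3$ of $\tau_3$ with $S_3\cap\tau_5\subset\{z_c\}$.

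The gap is in your ``substantial case'' analysis.  Two concrete problems:
\begin{itemize}
\item You assert that ``the presence of at least two non-parallel traces of $\tau_4$'' will finish things, but the hypothesis only guarantees that $\tau_4$ has at least two traces; they can all be parallel (this is Case~1 in the proof of Lemma~\ref{lem:S3}).
\item Your claim that a bigon argument ``forces $\tau_1$ into the component of $\cS_0\setminus(\tau_4\cup T_2)$ containing $p_c$'' is not justified.  There is no disjointness between $\tau_1$ and $\tau_4$ (the vertices $x_1$ and $x_4$ are not adjacent in $\omega$), so $\tau_1$ can cross every trace of $\tau_4$; nothing in Corollary~\ref{cor:boundary parallel 24} or Lemma~\ref{lem:not squeeze 34} prevents this.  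Consequently you cannot produce the good arc $\alpha$ needed for Lemma~\ref{lem:not big}(1) this way.
\end{itemize}

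The paper's proof does not try to force everything through Lemma~\ref{lem:not big}.  Instead it compares $S_3$ with the good $\pi_0$-trace $\tau''_2$ of $\tau_2$ (your $T_2$) directly.  When $\tau''_2$ and $S_3$ are \emph{not} parallel, Lemma~\ref{lem:not big} is bypassed entirely: one builds an arc $\tau$ in $\cS_0$ from $p_c$ to $p_a$ disjoint from $\tau_1$ and $\tau_5$, so that the concatenations $\bar\tau\cup\bar\tau_0\cup\bar\tau_i$ ($i=1,5$) are homotopically nontrivial in $\bar\cS'$; this $\tau$ represents the desired $\hat b$ vertex via Lemma~\ref{lem:bar correspondence}.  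When $\tau''_2$ and $S_3$ \emph{are} parallel, there is a further case split on whether $p_c$ and $z_c$ are squeezed between them, and only in some of those subcases does Lemma~\ref{lem:not big} (parts (2) or (3)) apply; in the remaining subcases one again constructs $\tau$ explicitly using in addition the trace $R_3$ of $\tau_3$ through $p_c$.  Your outline misses this mechanism of directly exhibiting the $\hat b$ arc, which is precisely what carries the cases where no single good arc avoids both $\tau_1$ and $\tau_5$.
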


\begin{proof}
By	Lemma~\ref{lem:S3}, there is a trace $S_3$ of $\tau_3$ with  $\partial S_3\subset \partial \cS_0$ such that $\tau_5\cap S_3\subset \{z_c\}$ and $S_3$ is not boundary parallel in $\cS'_0$. Let $\tau'_2$ be the trace of $\tau_2$ containing $p_c$, and let $\tau''_2$ be the good $\pi_0$ trace of $\tau_2$.
	Assume without loss of generality that $\tau'_2$ goes from $p_c$ to $x\in \overline{zp_a}$, and $\tau''_2$ goes from $x'\in \overline{z'p_a}$ to $p_a$, where $x$ and $x'$ are identified via $\phi:\overline{zp_a}\to\overline{z'p_a}$. 
	
	If $\tau''_2$ is not parallel to $S_3$, then up to a homeomorphism, we can assume $\tau''_2$ and $S_3$ give a pair of generators of the fundamental group of the torus obtained by smashing $\partial\cS_0$ to a point and fill in back $p_c$. As $\tau_5\cap S_3\subset\{z_c\}$ and $\tau_1\cap\tau''_2=\emptyset$, we can always finds an arc $\tau$ in $\cS_0$ from $p_c$ to $p_a$ such that $\tau\cap\tau_i=\emptyset$ for $i=1,5$ except at endpoints, moreover, the concatenation $\tau,\tau_0,\tau_i$ gives a homotopically non-trivial simple loop in $\cS'$ for $i=1,5$. See Figure~\ref{fig:15} (I) (we allow $z_c\in S_3$). The dashed arc indicates the choice of $\tau$ - it can end different points in $\cS_0$.  Thus $x_1$ and $x_5$ are adjacent to a common vertex of type $\hat b$.
	
	Suppose $\tau''_2$ is parallel to $S_3$. If $p_c$ is not squeezed by $\tau''_2$ and $S_3$, then we are in (2) by the argument in Lemma~\ref{lem:nb} (with $\tau_2$ and $\tau_4$ exchanged), there is a vertex $x'_0$ of type $\hat d$ adjacent to each of $x_1$ and $x_3$. Now the lemma follows by considering any vertex $x''_0$ of type $\hat b$ in $\Delta_\Lambda$ adjacent to $x'_0$ and applying Lemma~\ref{lem:link} (3) at $\lk(x'_0,\Delta_\Lambda)$. 
	So we assume $p_c$ is squeezed by $\tau''_2$ and $S_3$. Then $\tau'_2$ is also squeezed by $\tau''_2$ and $S_3$.	If $z_c$ is not squeezed by $\tau''_2$ and $S_3$ and $z_c\notin S_3$, then $\tau_1\cap (\tau'_2\cup\tau''_2)=\emptyset$ implies that there are two possibilities of $\tau_1$ (see the two dashed arcs in Figure~\ref{fig:15} (II)). In each case a subarc of $\tau_1$, a subarc of $S_3$ and a subarc of $\partial \cS_0$ bound a disk. As $\tau_5\cap S_3=\emptyset$, $\tau_1$ and $\tau_5$ satisfy Lemma~\ref{lem:not big} (2). Thus $\omega$ is small at $x_0$ and the lemma follows.


	If $z_c$ is squeezed by $\tau''_2$ and $S_3$, then either $\partial S_3\subset \overline{zp_a}$ or $\partial S_3\subset \overline{z'p_a}$. Let $R_3$ be as in Lemma~\ref{lem:S3} (2). If $\partial S_3\subset \overline{zp_a}$, then $R_3$ ends in $\overline{z'p_a}$. As $\tau_5\cap(R_3\cup S_3)=\emptyset$, we know $R_3$ and $S_3$ determine $\tau_5$ up to two choices. We refer to Figure~\ref{fig:15} (III), the four gray curves are $\tau''_2,\tau'_2,S_3$ and $R_3$. The arc labeled $5$, and the dotted arc, are the two possibilities of $\tau_5$; and the dashed arc labeled $1$, and the dotted arc, are the two possibilities of $\tau_1$. If one of $\tau_1$ and $\tau_5$ is the dotted arc, then $\omega$ is small at $x_0$ by Lemma~\ref{lem:not big} (2). If $\tau_i$ is the arc labeled by $i$ for $i=1,5$, then we define $\tau$ to be thickened arc in Figure~\ref{fig:15} (III) from $p_c$ to $p_a$. The concatenation of $\tau_5,\tau_0,\tau$, and the concatenation of $\tau_1,\tau_0,\tau$ descend to two homotopically non-trivial simple loops on $\cS'$. Thus $x_1$ and $x_5$ are adjacent to a common vertex of type $\hat b$. The case $\partial S_3\subset \overline{z'p_a}$ is similar, see Figure~\ref{fig:15} (IV).
	\begin{figure}[h]
		\centering
		\includegraphics[scale=0.88]{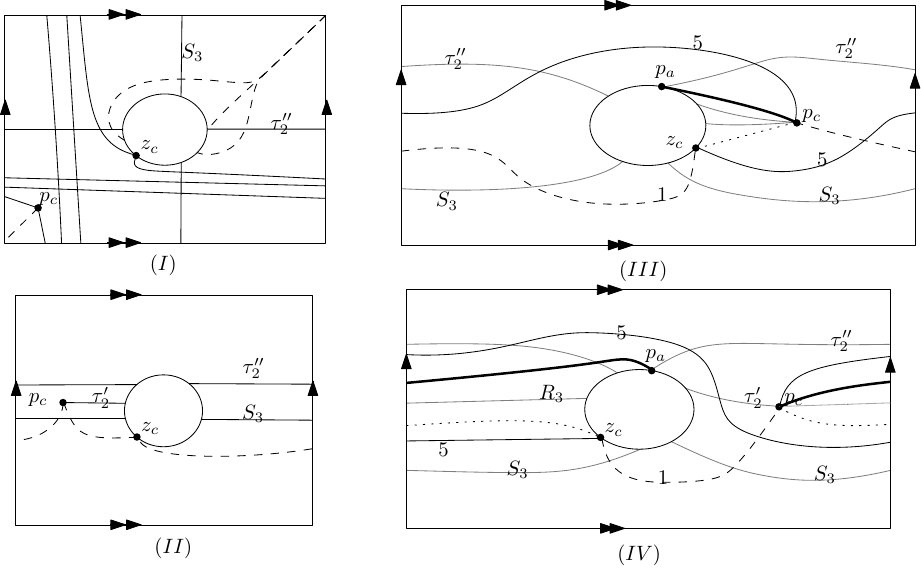}
		\caption{Proof of Lemma~\ref{lem:2142}}
		\label{fig:15}
	\end{figure} 
	
	The remaining case is $z_c\in S_3$. Figure~\ref{fig:15} stills apply to this case, except that we need to move $z_c$ so it coincidents with one endpoint of $S_3$. The possibilities of $\tau_1$ are as before. To analyze the possibilities of $\tau_5$, we scissor $\cS_0$ along $S_3$ to obtained a punctured annulus $A'_0$ with boundary components $C^+$ and $C^-$. Assume without loss of generality that $p_a\in C^-$. Then $R_3$ ends in $C^-$ and $\partial \tau''_2\subset C^-$. Note that $z_c$ gives $z^+_c\in C^+$ and $z^-_c\in C^-$. Thus the trajectory of $\tau_5$ in $A'_0$ either goes from $p_c$ to $z^+_c$ or from $p_c$ to $z^-_c$. In the former case, there are infinite many possible homotopy classes of $\tau_5$, however, in each case $\tau_1$ and $\tau_5$ satisfy Lemma~\ref{lem:not big} (2); in the latter case, there are only two possible homotopy classes of $\tau_5$ as $\tau_5\cap R_3=\emptyset$, and we are reduced to the previous paragraph.
\end{proof}

\begin{cor}
	\label{cor:tau2 has a pi0 trace}
	Given a cycle $\omega$ of type II in the Artin complex $\Delta_\Lambda$. Suppose $\tau_0\cap\tau_3\neq\emptyset$ and $\tau_2$ has at least one $\pi_0$-trace. Suppose one of the follow holds:
	\begin{enumerate}
		\item one of $\tau_2$ or $\tau_4$ has a $\pi_0$-trace which is not good;
		\item all $\pi_0$-traces of $\tau_2$ and $\tau_4$ are good, and one of $\tau_2$ and $\tau_4$ has at least two traces.
	\end{enumerate}
	 Then $x_1$ and $x_5$ are adjacent to a common vertex of type $\hat b$.
\end{cor}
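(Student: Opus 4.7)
The plan is a case analysis reducing to Lemma~\ref{lem:not big 24'} and Lemma~\ref{lem:2142}, together with a uniform link-based ``upgrade'' that converts a common type $\hat d$ neighbor into a common type $\hat b$ neighbor. Specifically, whenever we can show $\omega$ is small at $x_0$ (so there exists a vertex $y$ of type $\hat d$ adjacent to both $x_1$ and $x_5$), we pick any vertex $z$ of type $\hat b$ adjacent to $y$: since deleting the central node $d$ from the Dynkin diagram of $D_4$ leaves three isolated leaves $a,b,c$, Lemma~\ref{lem:link}(3) tells us $\lk(y,\Delta_\Lambda)$ is a join, so $z$ is automatically adjacent to $x_1$ and $x_5$. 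This reduces the whole corollary to proving $\omega$ is small at $x_0$ in some cases and directly invoking Lemma~\ref{lem:2142} in others.

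In case~(1), if the non-good $\pi_0$-trace belongs to $\tau_2$ we apply Lemma~\ref{lem:not big 24'}(2) directly, since the hypothesis ``$\tau_2$ has at least one $\pi_0$-trace'' is built into the corollary. If instead the non-good trace belongs to $\tau_4$, then $\tau_4$ has a non-good trace and the hypothesis on $\tau_2$ lets us apply Lemma~\ref{lem:not big 24'}(1). In either situation $\omega$ is small at $x_0$, and the link upgrade above finishes the argument.

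Case~(2) breaks into two subcases. If $\tau_2$ has exactly one $\pi_0$-trace, then the ``at least two traces'' hypothesis forces $\tau_4$ to have at least two traces; moreover, the non-$\pi_0$-trace of $\tau_4$ (the one containing a point of $\{z,z'\}$) must also be good, because if it were boundary parallel in $\cS'_0$ the argument in the last paragraph of the proof of Corollary~\ref{cor:boundary parallel 24} would reduce us to the conclusion of case~(1). Thus all traces of $\tau_4$ are good, and we are exactly in the setting of Lemma~\ref{lem:2142}, which yields the desired common $\hat b$-neighbor. Otherwise $\tau_2$ has at least two $\pi_0$-traces, all good; we then mimic the proof of Lemma~\ref{lem:2142} using Lemma~\ref{lem:S3'} (rather than Lemma~\ref{lem:S3}) to produce a trace $S_3$ of $\tau_3$ of one of the three standard types. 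Inspecting a $\pi_0$-trace $\tau''_2$ of $\tau_2$ with respect to $S_3$ exactly as in Figure in Lemma~\ref{lem:2142}: when $\tau''_2$ is not parallel to $S_3$ they span the quotient torus and furnish an arc $\tau$ from $p_c$ to $p_a$ disjoint from $\tau_1\cup\tau_5$ whose concatenations with the $\tau_i$'s give homotopically non-trivial simple loops in $\bar\cS'$, producing the required $\hat b$-vertex via Lemma~\ref{lem:bar correspondence}; when $\tau''_2$ is parallel to $S_3$, either $p_c$ is not squeezed (apply Lemma~\ref{lem:nb} and then the link upgrade) or $p_c$ is squeezed and we finish by the same sub-sub-case analysis as in Lemma~\ref{lem:2142}.

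The main obstacle I foresee is the very last sub-sub-case, where $\tau_2$ has multiple $\pi_0$-traces all parallel to $S_3$ and $p_c$ is squeezed: here one must check that the extra $\pi_0$-traces do not obstruct the construction of $\tau$, and must invoke Lemma~\ref{lem:common adjacent} to rule out configurations in which $\tau_3$ would differ from $\tau_1$ by a non-zero power of the Dehn twist along $\partial\cS$. This is exactly the iterated ``trace after $(\cdot,\cdot)$'' technique introduced in the proof of Lemma~\ref{lem:S3'}, and apart from bookkeeping it should go through with no new geometric input.
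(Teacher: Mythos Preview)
Your handling of case~(1) and the first subcase of case~(2) (when $\tau_2$ has exactly one $\pi_0$-trace) is essentially the paper's argument, modulo a minor confusion: for a type~II cycle $x_4$ has type~$\hat a$, so $\tau_4$ runs from $p_a$ to $z_a$ and \emph{all} of its traces in $\cS_0$ are $\pi_0$-traces; there is no ``non-$\pi_0$-trace of $\tau_4$'' to worry about.

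The genuine gap is in the last subcase, where $\tau_2$ has at least two good $\pi_0$-traces. You apply Lemma~\ref{lem:S3'} to the sequence $\tau_0,\tau_1,\tau_2,\tau_3$, obtaining $S_3$ with $\tau_1\cap S_3\subset\{z_c\}$, and then pair $S_3$ with a $\pi_0$-trace $\tau''_2$ of $\tau_2$, for which $\tau_1\cap\tau''_2=\emptyset$. Both constraints are on $\tau_1$; neither one says anything about $\tau_5$. The construction in the second paragraph of Lemma~\ref{lem:2142} works precisely because one arc controls $\tau_1$ and the other controls $\tau_5$: there $\tau_1\cap\tau''_2=\emptyset$ is paired with $\tau_5\cap S_3\subset\{z_c\}$, the latter obtained by applying Lemma~\ref{lem:S3} to the \emph{reflected} sequence $\tau_0,\tau_5,\tau_4,\tau_3$. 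Since $x_2$ and $x_5$ are not adjacent, $\tau_5$ may cross both your $S_3$ and your $\tau''_2$ freely, and there is no reason the arc $\tau$ you build avoids $\tau_5$. The same problem breaks your appeal to Lemma~\ref{lem:nb} in the parallel sub-sub-case: its proof rests on $S_4\cap\tau_5=\emptyset$, which has no analogue for $\tau''_2$.

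The paper repairs this by bringing in a trace of $\tau_4$ rather than a second trace of $\tau_2$. After Lemma~\ref{lem:S3'} produces $S_3$ (controlling $\tau_1$), one invokes Corollary~\ref{cor:S4} to obtain a trace $S_4$ of $\tau_4$, which automatically satisfies $\tau_5\cap S_4=\emptyset$. Case~(1) of Corollary~\ref{cor:S4} ($S_4$ good and not parallel to $S_3$) then feeds directly into the argument of the second paragraph of Lemma~\ref{lem:2142}, with $S_4$ playing the role that $\tau''_2$ plays there; case~(2) is exactly Lemma~\ref{lem:nb}; and case~(3) is excluded by the hypothesis that all $\pi_0$-traces of $\tau_4$ are good. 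No iterated ``trace-after'' bookkeeping is needed.
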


\begin{proof}
The first part of the corollary follows from Lemma~\ref{lem:not big 24'}. For the second part, if $\tau_2$ has exactly one $\pi_0$-trace, then the corollary follows from Lemma~\ref{lem:2142}. Now assume $\tau_2$ has more than one $\pi_0$-traces. By Lemma~\ref{lem:S3'}, we are in a position to apply Corollary~\ref{cor:S4}. If we are in Corollary~\ref{cor:S4} (1), then $x_1$ and $x_5$ are adjacent to a common vertex of type $\hat b$ by the argument in the second paragraph of the proof of Lemma~\ref{lem:2142}. If we are in Corollary~\ref{cor:S4} (2), then $\omega$ is small at $x_0$ by Lemma~\ref{lem:nb} and Lemma~\ref{lem:S3'}. Corollary~\ref{cor:S4} (3) is ruled out by assumption.
\end{proof}

\begin{lem}
	\label{lem:25}
Suppose $\omega$ is of type II, and $\tau_0\cap\tau_3\neq\emptyset$. Suppose $\tau_2\cap \tau_5=\emptyset$. Then one of the following holds:
\begin{enumerate}
	\item $x_1$ and $x_5$ are adjacent to a common vertex of type $\hat b$.
	\item $\omega$ satisfies property $(*)$.
\end{enumerate}
\end{lem}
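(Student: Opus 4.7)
The plan is to exploit the hypothesis $\tau_2\cap\tau_5=\emptyset$ by passing to the quotient complex $\bar\Delta_0\cong\bar\Delta'_0$ of Lemma~\ref{lem:bar correspondence}. Since $\bar\tau_2\in\bar\Omega_b$ and $\bar\tau_5\in\bar\Omega_c$ admit representatives disjoint in $\bar\cS$ (except at the shared puncture $\bar p_c$), the vertices $\bar x_2$ and $\bar x_5$ are adjacent in $\bar\Delta_0$. The central action of $Z_{A_\Lambda}=\langle\delta\rangle$ on $\Delta_0$ is free and simplicial, so $\pi_Z:\Delta_0\to\bar\Delta_0$ is a covering map; fixing $x_2$ as a lift of $\bar x_2$, the above edge lifts uniquely, producing an integer $n\in\mathbb{Z}$ such that $x_2$ is adjacent to $\delta^n x_5$ in $\Delta_\Lambda$.

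If $n=0$ then $x_2\sim x_5$ directly, and since $x_2$ has type $\hat b$ and is adjacent to $x_1$ by the cycle $\omega$, the vertex $x_2$ is the required common $\hat b$-neighbor of $x_1$ and $x_5$, giving conclusion (1).

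The principal work, and the main obstacle, is the case $n\neq 0$, where $x_2\not\sim x_5$ in general (consistent with Lemma~\ref{lem:common adjacent}(1) applied to the twist-equivalent pair $(x_5,\delta^n x_5)$) and one must instead derive $(*)$. Here $\tau_2\cup\tau_5$ is a simple path from $p_a$ through $p_c$ to $z_c$ in $\cS$ whose isotopy class differs from the standard $\tau_b\cup\tau_c$ by $n$ Dehn twists along $\partial\cS$. The plan is to combine this twist information with $\tau_0\cap\tau_3\neq\emptyset$: scissor $\cS$ along $\tau_2\cup\tau_5$, and analyze the trace structure of $\tau_0$ and $\tau_3$ in the resulting surface using the techniques developed for Lemma~\ref{lem:S3}. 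The twist discrepancy guarantees enough room to build an arc $\sigma$ from $p_a$ to $z_a$ (of type $\hat a$) that is disjoint from $\tau_1,\tau_3,\tau_5$, producing a common $\hat a$-neighbor of $\{x_1,x_3,x_5\}$ and hence $(*)$. Should the direct construction fail in certain degenerate configurations, one instead invokes Lemma~\ref{lem:common adjacent}(2) applied to $(x_5,\delta^n x_5)$ to convert the $\hat b$-adjacency $x_2\sim\delta^n x_5$ into $\hat a$-adjacency, after which Theorem~\ref{thm:4 wheel} can be iterated on the ensuing 4-cycles to promote the partial adjacencies to a full common neighbor.

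The hardest step is the topological construction in paragraph three, namely disentangling the $\delta^n$-twist from the surface data and exhibiting the arc $\sigma$ (or its $\hat b$-analogue) explicitly. The tools I expect to rely on are Lemma~\ref{lem:common adjacent}, Theorem~\ref{thm:4 wheel}, and the trace-based surface analysis used throughout Section~\ref{sec:D4}.
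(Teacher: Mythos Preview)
Your $n=0$ case is fine and matches one branch of the paper's argument. The problem is the $n\neq 0$ case: what you have written there is a wish list, not an argument. You assert that the twist discrepancy ``guarantees enough room'' to build an arc $\sigma$ disjoint from $\tau_1,\tau_3,\tau_5$, but you give no mechanism for producing $\sigma$, and the single integer $n$ you have extracted carries no direct information about $\tau_3$ at all. Your fallback to Lemma~\ref{lem:common adjacent}(2) has the same defect: that lemma has a nontrivial hypothesis (existence of more than one element of $\Omega_a$ disjoint from both $\tau_1$ and the twisted $\tau'_2$), and you neither verify it nor explain why it should hold here. As written, the $n\neq 0$ branch is a genuine gap.

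The paper's route is structurally different and explains why your single-edge lift is not enough. Before passing to $\bar\Delta'_0$, the paper first replaces $x_0$ by $x'_0$ and $x_4$ by $x'_4$ (still of type $\hat a$) so that $\tau_2$ is disjoint from $\tau'_0$ and $\tau'_4$ as well as from $\tau_1,\tau_3,\tau_5$. Now $\bar x_2$ is adjacent in $\bar\Delta'_0$ to all five of $\bar x_1,\bar x'_0,\bar x_5,\bar x'_4,\bar x_3$, so the image of $\omega$ is coned off by $\bar x_2$ into four $3$-cycles $120',\,520',\,4'25,\,4'32$. Each of these is either non-degenerate (bounds a $2$-face, contributes $0$ to $\pi_1(\bar\Delta'_0)\cong\mathbb Z$) or degenerate (contributes a generator, by Lemma~\ref{lem:generator}). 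Since $\bar\omega$ is null-homotopic, this gives sharp parity and sign constraints; combined with the separate Lemma~\ref{lem:degenerate triangles} ruling out the all-degenerate configuration, every case either yields a lift of $\bar x_2$ adjacent to both $x_1$ and $x_5$ (conclusion~(1)) or forces $x_2$ adjacent to $x_0$ and $x_5$, from which Lemma~\ref{lem:filling 5 cycle} gives $(*)$. The essential idea you are missing is this replacement-plus-triangulation step: it converts the covering-theoretic ambiguity into a finite combinatorial case analysis on four triangles rather than a bare integer $n$.
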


\begin{proof}
	We claim that up to replacing $x_4$ by $x'_4$ of type $\hat a$ adjacent to both $x_3$ and $x_5$, we can assume $\tau_2\cap \tau'_4=\emptyset$ except at endpoints, where $\tau'_4$ is an arc corresponding to $x'_4$. To see this, we consider $\cS_5$, which is obtained from $\cS$ by scissoring along $\tau_5$. If $\tau_3$ has a trace $\tau'_3$ on $\cS_5$ which is not good, then $\tau'_3$ and a part of $\partial \cS_5$ bound a disk $D'$ with $\tau_4\subset D'$. As $\tau_2\cap (\tau_3\cup \tau_5)=\emptyset$ except at endpoints, $\tau_2$ is trapped in $D'$. Thus $\tau_4\cap \tau_2=\emptyset$ except at endpoints. Now we assume all traces of $\tau_3$ are good in $\cS_5$. Then the component $K$ of $\cS_5\setminus \tau_3$ that contains $p_a$ is either a punctured disk or a punctured annulus. Note that $(\tau_4\cup\tau_2)\subset K$ except at endpoints. If $K$ is punctured disk, then $\tau_4\cap \tau_2=\emptyset$ (except at endpoints), as they have the same starting point $p_a$ but end at different points in $\partial K$. If $K$ is a punctured annulus $A_0$, then we can replace $\tau_4$ by an arc $\tau'_4$ from $p_a$ to $p_c$ so that $\tau_2\cap \tau_4=\emptyset$ except at endpoints.
	
	Similarly, up to replacing $x_0$ by $x'_0$ of type $\hat a$ adjacent to both $x_1$ and $x_5$, we can assume $\tau_2\cap \tau'_0=\emptyset$ except at endpoints, where $\tau'_0$ is an arc corresponding to $x'_0$. 	
	
	Let $\bar\Delta'_0$ be as Definition~\ref{def:complex 3punctured}. Note that $\bar\tau_1,\bar \tau_2,\bar\tau'_0$ gives a 3-cycle in $\bar\Delta'_0$, which we denote by $120'$. Similarly we define the 3-cycle $520'$ in $\bar\Delta'_0$. Recall that $120'$ is degenerate, if it does not bound a 2-face. If both $120'$ and $520'$ are non-degenerate, then the $\bar\tau_2$ vertex in $\bar\Delta'_0$ lifts under the covering $\Delta_0\to\bar\Delta_0\cong \bar\Delta'_0$ to a vertex of type $\hat b$ in $\Delta_0\subset\Delta_\Lambda$ which is adjacent to both $x_1$ and $x_5$. Suppose exactly one of $120'$ and $520'$, say $120'$, is a non-degenerate triangle. Then up to a homeomorphism, we can assume the arcs $\tau_1,\tau_2,\tau'_0$ in $\cS$ are as in Figure~\ref{fig:1250} (I). As $\bar \tau_2,\bar\tau'_0,\bar\tau_5$ bound a disk in $\bar\cS$, we know $\bar \tau_2$ and $\bar \tau_0$ determines the homotopy class of $\bar\tau_5$ (rel endpoints) up to two choices. Each choice of $\bar \tau_5$ determines $\tau_5$ up to powers of Dehn twists along $\partial \cS_0$, however, the extra constraint $\tau'_0\cap\tau_5=\emptyset$ implies that each choice of $\bar\tau_5$ determine a choice of $\tau_5$. Thus $\tau_2$ and $\tau'_0$ determine $\tau_5$ up to two choices, as in Figure~\ref{fig:1250} (I). After scissoring $\cS$ along $\tau_0$, we see that $\tau_1$ and $\tau_5$ satisfy Lemma~\ref{lem:not big} (2). Hence $x_1$ and $x_5$ are adjacent to a common vertex of type $\hat b$. It remains to consider the case that both $120$ and $520$ are degenerate.

	Now we can look at $3$-cycles $4'25$ and $4'32$ in $\bar\Delta'_0$, defined in the same fashion as before. If both $3$-cycles are non-degenerate, then by considering the lift of the 2-face in $\bar\Delta'_0$ filling $4'32$ under the covering map $\Delta_0\to \bar\Delta'_0$, we know $x_2$ and $x_4$ are adjacent in $\Delta_\Lambda$. Similarly, $x_2$ and $x_5$ are adjacent. Thus $x_0,x_1,x_2,x_5$ form a 4-cycle in $\Delta_\Lambda$. By Lemma~\ref{lem:special4cycle}, $x_2$ is adjacent to $x_0$. Thus $x_2$ is adjacent to each of $\{x_1,x_3,x_5\}$ and we are in Lemma~\ref{lem:25} (2).
    If exactly one of $4'25$ and $4'32$ is non-degenerate, then exactly one of the 3-cycles $520',120',4'25,4'32$ is non-degenerate. By Lemma~\ref{lem:generator}, each of these 3-cycles gives a generator in the fundamental group $\pi_1(\bar \tau_2,\bar\Delta'_0)\cong \mathbb Z$ if it is degenerate, and gives trivial element in the same group if it is non-degenerate. However, the image $\bar\omega$ of $\omega$ in $\bar\Delta'_0$ gives a trivial element in $\pi_1(\bar \tau_2,\bar\Delta'_0)$, which leads to a contradiction.
	The remaining case is that all of the 3-cycles $520',120',4'25,4'32$ in $\bar\Delta'_0$ are degenerate, but this can be ruled out by Lemma~\ref{lem:degenerate triangles} below.
\end{proof}

\begin{figure}[h]
	\centering
	\includegraphics[scale=0.86]{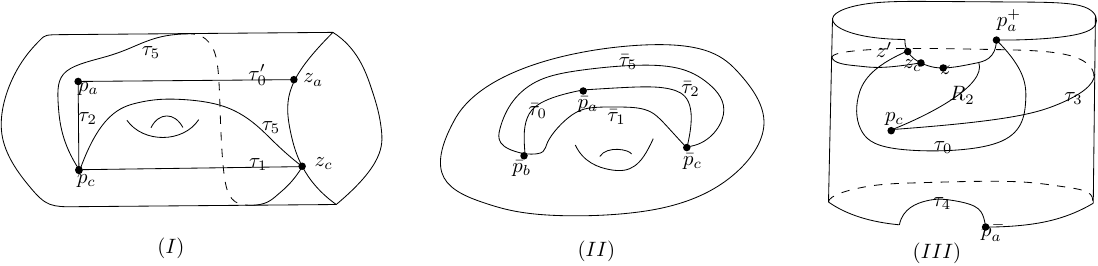}
	\caption{Some pictures.}
	\label{fig:1250}
\end{figure} 
\begin{lem}
	\label{lem:degenerate triangles}
	Suppose $\omega$ is type II. Assume $\tau_2\cap \tau_0=\emptyset$, $\tau_2\cap \tau_5=\emptyset$ and $\tau_2\cap \tau_4=\emptyset$ except at endpoints. Then it can not happen that all the 3-cycles $520,120,425,432$ in $\bar\Delta'_0$ are degenerate. 
\end{lem}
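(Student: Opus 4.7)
The plan is to suppose all four $3$-cycles $120,520,425,432$ are degenerate and derive a contradiction, combining a covering space computation in $\bar\Delta'_0$ with surface geometry on $\bar\cS'$. Since $\omega\subset\Delta_0$ and $\Delta_0$ is simply connected, the image $\bar\omega$ satisfies $[\bar\omega]=0$ in $\pi_1(\bar\Delta'_0)\cong\mathbb Z$. The disjointness hypotheses provide edges from $\bar x_2$ to each of $\bar x_0,\bar x_4,\bar x_5$ in $\bar\Delta'_0$, which in the $1$-skeleton decomposes $\bar\omega$ as the concatenation of the four $3$-cycles $120,520,425,432$. By Lemma~\ref{lem:generator}, each degenerate $3$-cycle contributes $\pm\xi$ to $[\bar\omega]$, where $\xi$ generates $\pi_1(\bar\Delta'_0)$.

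Next I would lift to the universal cover $\mathbb R^2$ of the torus $\bar\cS'$. Fix a lift $\tilde A$ of $\bar p_a$, lift $\bar\tau_2$ to end at a lift $\tilde C$ of $\bar p_c$, and lift $\bar\tau_0$ to start at $\tilde A$ and end at a lift $\tilde B$ of $\bar p_b$. The four degeneracies translate to: the lift of $\bar\tau_4$ starting at $\tilde A$ also ends at $\tilde B$, while the lifts of $\bar\tau_1,\bar\tau_3,\bar\tau_5$ ending at $\tilde C$ all start at $\tilde B$. In particular $\bar\tau_0$ is homotopic to $\bar\tau_4$ and $\bar\tau_1,\bar\tau_3,\bar\tau_5$ are mutually homotopic, all rel endpoints in $\bar\cS'$. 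Placing $\bar\tau_0,\bar\tau_4$ in minimal position in $\bar\cS'$ makes them disjoint except at endpoints, so they bound a bigon $\Omega\subset\bar\cS'$.

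The argument then splits on whether $\bar p_c\in\Omega$. If $\bar p_c\notin\Omega$, the bigon is unpunctured so $\bar\tau_0$ is homotopic to $\bar\tau_4$ rel endpoints in $\bar\cS$ itself; hence $\bar x_0=\bar x_4$ in $\bar\Delta_0$, so $x_4=g\cdot x_0$ in $\Delta_0$ for some non-trivial $g$ in the center of $A_\Lambda$. By the triality symmetry of the $D_4$ Dynkin diagram, Lemma~\ref{lem:common adjacent}(1) has a direct analog for type-$\hat a$ vertices, ruling out such $x_0$ and $x_4$ from sharing a common adjacent vertex of type $\hat c$. Since $x_5$ has type $\hat c$ and is adjacent to both $x_0$ and $x_4$ in $\omega$, this is a contradiction.

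If $\bar p_c\in\Omega$, then the disjointness conditions $\bar\tau_2\cap(\bar\tau_0\cup\bar\tau_4)=\bar\tau_5\cap(\bar\tau_0\cup\bar\tau_4)=\emptyset$ together with the endpoint constraints force $\bar\tau_2,\bar\tau_5\subset\Omega$, and $\bar\tau_2\cup\bar\tau_5$ cuts $\Omega$ into two triangular sub-disks $D_{520}$ and $D_{425}$ realizing the degenerate triangles $520$ and $425$. A parallel analysis using the homotopies $\bar\tau_1$-$\bar\tau_5$ and $\bar\tau_3$-$\bar\tau_5$ realizes the remaining two degenerate triangles as further sub-disks $D_{120}$ and $D_{432}$ nested inside (or around) $D_{520}$ and $D_{425}$ respectively. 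Because all four bounded disks sit inside the single oriented region $\Omega\subset\bar\cS'$, the four contributions $\pm\xi$ all carry the same sign induced by the orientation of $\Omega$, giving $[\bar\omega]=\pm 4\xi\neq 0$, which contradicts $[\bar\omega]=0$. The main obstacle is the sign bookkeeping in this second case: one must trace through the explicit generator produced in Lemma~\ref{lem:generator} and use the nesting of disks to verify that the orientations of the four $3$-cycles induced from $\bar\omega$ agree with the boundary orientations of the corresponding disks inside $\Omega$, so that the contributions genuinely add rather than cancel.
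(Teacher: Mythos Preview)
Your high-level strategy matches the paper's: decompose $[\bar\omega]=0$ as a sum of the four $3$-cycle classes, each equal to $\pm\xi$ by Lemma~\ref{lem:generator}, and show all four carry the same sign to obtain $\pm 4\xi\neq 0$. Your lift-to-$\mathbb{R}^2$ computation is also correct: the four degeneracies do force $\bar\tau_0\sim\bar\tau_4$ and $\bar\tau_1\sim\bar\tau_3\sim\bar\tau_5$ rel endpoints in $\bar\cS'$, and your Case~1 (when the bigon $\Omega$ avoids $\bar p_c$) is essentially fine.

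The genuine gap is in Case~2, and it is the heart of the matter. The sign of a degenerate $3$-cycle in $\pi_1(\bar\Delta'_0)\cong\mathbb{Z}$ is \emph{not} determined by the orientation of the disk it bounds in $\bar\cS'$: that disk has all three punctures on its boundary, so its orientation carries no information about how the triangle lifts to $\Delta_0$, which is what the sign records. (A naive type-orientation count would even suggest the four triangles alternate in sign, since $201$ and $245$ have cyclic type $\hat b\hat a\hat c$ while $250$ and $234$ have cyclic type $\hat b\hat c\hat a$.) Your nesting picture also breaks down before the sign step: you have no disjointness hypothesis between $\bar\tau_1$ (or $\bar\tau_3$) and $\bar\tau_0\cup\bar\tau_4$, so there is no reason the bigons $\bar\tau_1\cup\bar\tau_5$ and $\bar\tau_3\cup\bar\tau_5$ sit inside $\Omega$ at all.

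The paper handles the sign equality for each consecutive pair (e.g.\ $120$ and $520$) by a different mechanism. First it observes that, since both complete $(\bar\tau_0,\bar\tau_2)$ to a degenerate triangle, $\bar\tau_1$ and $\bar\tau_5$ are the two distinct arcs in $\bar\cS$ doing so, and hence bound a disk in $\bar\cS'$ with $\bar p_a$ in its interior. Then, assuming opposite signs, the $4$-cycle $\bar x_1\bar x_2\bar x_5\bar x_0$ is null-homotopic, hence lifts to a $4$-cycle in $\Delta_0\subset\Delta_\Lambda$; Theorem~\ref{thm:4 wheel} produces a type $\hat d$ center, yielding a type $\hat b$ arc $\bar\gamma$ for which both $(\bar\gamma,\bar\tau_0,\bar\tau_1)$ and $(\bar\gamma,\bar\tau_0,\bar\tau_5)$ are \emph{non}-degenerate triangles. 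This is impossible given that $\bar\tau_1\cup\bar\tau_5$ encloses $\bar p_a$. The invocation of the labeled $4$-wheel condition is the key idea you are missing; a purely $\bar\cS'$-based orientation argument does not seem to suffice.
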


\begin{proof}
	As the 3-cycle $520$ in $\bar\Delta'_0$ is degenerate, we know the homotopy class of $\bar\tau_5$ is determined by $\bar\tau_2$ and $\bar\tau_0$ up to two choices. Similar $\bar\tau_1$ is determined by $\bar\tau_2$ and $\bar\tau_0$ up to two choices. If $\bar\tau_1$ is homotopic rel endpoints in $\bar\cS$ to $\bar\tau_5$, then $\tau_1$ and $\tau_5$ differ by a power of Dehn twist along $\partial\cS$. However, this is impossible by Lemma~\ref{lem:common adjacent} as $x_1$ and $x_5$ are adjacent to a common vertex $x_0$ of type $\hat a$. Thus $\bar\tau_5$ and $\bar\tau_1$ are not homotopic rel endpoints in $\bar\cS$. Thus up to a homeomorphism, we can assume we are in Figure~\ref{fig:1250} (II). In particular, $\bar\tau_1$ and $\bar\tau_5$ bound a disk with $\bar p_a$ inside.
	
	By Lemma~\ref{lem:generator}, each of the 3-cycle $520$ and the 3-cycle $120$ give a generator in $\pi_1(\bar\tau_2,\bar\Delta'_0)$. We claim these two generators are of the same sign. If this is not true, then the 4-cycle in $\bar\Delta'_0$ with vertices represented by $\bar \tau_1,\bar\tau_2,\bar\tau_5,\bar\tau_0$ is null-homotopic in $\bar\Delta'_0$. We denote this 4-cycle by $1250$. Thus any lift of $1250$ to $\Delta_0$ gives a 4-cycle. We consider a particular lift $\omega'\subset \Delta_0$ of $1250$ such that the vertex $\bar\tau_1$ lifts to the the vertex $x_1\in \Delta_0$ represented by $\tau_1$. As the path $x_1\to x_0\to x_5$ in $\Delta_0$ maps the path $\bar\tau_1\to\bar\tau_0\to\bar\tau_5$ in $\bar\Delta'_0$, we know that in $\omega'$, $\bar\tau_i$ lifts to $x_i$ for $i=0,5$. Let $y$ be the lift of $\bar\tau_2$ in $\omega'$. By \cite[Proposition 2.8]{huang2023labeled} applying to the 4-cycle $\omega'$ in $\Delta_\Lambda$, we know there exists $z\in \Delta_\Lambda$ of type $\hat d$ such that $z$ is adjacent to each of $x_1,x_0,x_5,y$. This implies that $y$ is adjacent to each of $\{x_0,x_1,x_5\}$ in $\Delta_0$. Thus there exists $\bar y$ in $\bar\Delta'_0$ such that $\bar y,\bar\tau_0,\bar\tau_1$ form a non-degenerate 3-cycle, and $\bar y,\bar\tau_0,\bar\tau_5$ form a non-degenerate 3-cycle. Thus the arc $\bar \gamma$ in $\bar\cS$  associated with $\bar y\in \bar\Delta'_0$ is disjoint from $\bar\tau_0,\bar\tau_1,\bar\tau_5$ except at endpoints, moreover the concatenation of $\bar \gamma,\bar\tau_0,\bar\tau_1$ and the concatenation of $\bar\gamma,\bar\tau_0,\bar\tau_5$ gives two homotopically non-trivial loop in the torus $\bar\cS'$.
	However, this is impossible as $\bar\tau_1$ and $\bar\tau_5$ bound a disk with $\bar p_a$ inside.
	
	Similarly, the 3-cycle $520$ and the 3-cycle $524$ give generators of the same sign in $\pi_1(\bar\tau_2,\bar\Delta'_0)$; and the 3-cycle $524$ and the 3-cycle $324$ give generators of the same sign in $\pi_1(\bar\tau_2,\bar\Delta'_0)$. This implies that the image of $\omega$ in $\bar\Delta'_0$ is not homotopically trivial - it represents $4$ times a generator in $\pi_1(\bar\tau_2,\bar\Delta'_0)$), a contradiction.
\end{proof}

\begin{lem}
	\label{lem:20e40ne}
	Suppose $\omega$ is of type II, and $\tau_0\cap\tau_3\neq\emptyset$.
	Suppose $\tau_2$ has a unique trace in $\cS_0$, all traces of $\tau_4$ are good, and $\tau_4$ has at least two traces. Then one of the following holds true:
	\begin{enumerate}
		\item $\omega$ satisfies property $(*)$;
		\item $x_1$ and $x_5$ is adjacent to a common vertex of type $\hat b$.
	\end{enumerate}
\end{lem}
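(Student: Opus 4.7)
The plan is to mimic the strategy of Lemma~\ref{lem:replace} and Lemma~\ref{lem:6cycle replace} from the type I case: first replace $x_5$ by a suitable vertex $x'_5$ of type $\hat c$, adjacent to both $x_0$ and $x_4$, whose associated arc $\tau'_5$ is disjoint from $\tau_2$ (except at the common endpoint $p_c$); then form $\omega' = x_0 x_1 x_2 x_3 x_4 x'_5$ and apply Lemma~\ref{lem:25}; finally transfer the conclusion from $\omega'$ back to $\omega$ by a replacement argument in the style of Lemma~\ref{lem:6cycle replace}.

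To produce $\tau'_5$, I would scissor $\cS_0$ along the unique trace of $\tau_2$. Since $\tau_2$ connects the interior puncture $p_c$ to the boundary point $p_a \in \pi_0$, the cut creates a new surface $\cS_{02}$ in which $p_c$ has been pushed onto the boundary and $\tau_2$ is doubled as two boundary subarcs $\tau_2^+, \tau_2^-$. The traces of $\tau_4$ in $\cS_{02}$ remain good and still come in at least two different parallel classes or at least two traces of the same class. Using the topological flexibility provided by these traces, combined with a change-of-coordinates argument on $\cS_{02}$ analogous to the last paragraph of the proof of Lemma~\ref{lem:replace}, I would locate a simple arc in $\cS_{02}$ from the boundary arc coming from $p_c$ to $z_c$ that avoids all traces of $\tau_4$. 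Glued back to $\cS_0$, this yields the desired $\tau'_5$, disjoint from $\tau_0 \cup \tau_2 \cup \tau_4$ in the interior.

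Given $x'_5$, Lemma~\ref{lem:25} applied to $\omega'$ gives either property $(*)$ for $\omega'$ or a common type-$\hat b$ neighbor $z$ of $x_1$ and $x'_5$. In the latter case, I would apply Lemma~\ref{lem:special4cycle} to the embedded 4-cycle $z\,x'_5\,x_4\,x_5$ of type $\hat b\,\hat c\,\hat a\,\hat c$ to conclude $z$ is adjacent to $x_5$, giving option (2) of the lemma. In the former case, a transfer argument patterned on Lemma~\ref{lem:6cycle replace} (repeated applications of Lemma~\ref{lem:special4cycle} and Theorem~\ref{thm:4 wheel} to the embedded 4-cycles relating $x_5, x'_5, x_0, x_4$ and their putative common neighbors) upgrades property $(*)$ for $\omega'$ to property $(*)$ for $\omega$.

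The main obstacle is handling the ``bad'' configurations where no suitable $\tau'_5$ exists, typically those where the parallel classes of traces of $\tau_4$ force any arc from $p_c$ to $z_c$ disjoint from $\tau_4$ to cross $\tau_2$. In such cases I expect to argue directly, following the pattern of Corollary~\ref{cor:boundary parallel 24} and Lemma~\ref{lem:not squeeze 34}, that the constrained picture forces $\omega$ to be small at $x_0$; that is, $x_1$ and $x_5$ share a common neighbor $y$ of type $\hat d$. Then Lemma~\ref{lem:link}(3) applied at $\lk(y,\Delta_\Lambda)$ produces a type-$\hat b$ vertex adjacent to both $x_1$ and $x_5$, matching option (2). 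The detailed case analysis paralleling Lemma~\ref{lem:S3} (with the role of $\tau_2$ there played by $\tau_4$ here) is expected to be the technical bulk of the proof.
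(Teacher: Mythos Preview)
Your replacement-then-transfer strategy misses the main point: under the stated hypotheses one can show directly that $\tau_2\cap\tau_5=\emptyset$ except at endpoints, so Lemma~\ref{lem:25} applies to $\omega$ itself and no replacement is needed. The paper's argument is short: apply Lemma~\ref{lem:S3} to the sequence $\tau_0,\tau_5,\tau_4,\tau_3$ (so $\tau_4$ plays the role of $\tau_2$ there and $\tau_5$ plays the role of $\tau_1$) to obtain a good trace $S_3$ of $\tau_3$ with $\tau_5\cap S_3\subset\{z_c\}$; then scissor $\cS_0$ along $S_3$ to get a punctured annulus $A_0$. Both $\tau_2$ and $\tau_5$ become simple arcs in $A_0$ from the interior puncture $p_c$ to points on $\partial A_0$ (since each is disjoint from $\tau_0\cup S_3$ except at endpoints). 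If they land on different boundary components they are automatically disjoint; if they land on the same component, the trace $R_3$ of Lemma~\ref{lem:S3} (also disjoint from both) lands there too and separates them. Either way $\tau_2\cap\tau_5=\emptyset$ away from $p_c$, and Lemma~\ref{lem:25} finishes. The correct object to cut along is a trace of $\tau_3$, not of $\tau_2$.

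Beyond missing this simplification, your transfer step has concrete gaps. First, ``the embedded 4-cycle $z\,x'_5\,x_4\,x_5$'' is not known to be a 4-cycle: the edge $z$--$x_5$ is precisely what you are trying to establish, so the appeal to Lemma~\ref{lem:special4cycle} is circular; and even granting the cycle, that lemma (with the equal-type pair being $x'_5,x_5$) would yield $z$ adjacent to $x_4$, not to $x_5$. Second, Lemma~\ref{lem:6cycle replace} is proved for type~I cycles, where $x_2$ has type $\hat a$; its proof applies Lemma~\ref{lem:special4cycle} to the 4-cycle $z\,x_1\,x_2\,x_3$, but for type~II this cycle has type pattern $\hat b\,\hat c\,\hat b\,\hat c$, which fails the ``mutually distinct types'' hypothesis of Lemma~\ref{lem:special4cycle}. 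So the transfer from $\omega'$ back to $\omega$ is not available off the shelf, and your fallback case analysis remains a sketch. The cleanest fix is to abandon the replacement entirely and prove $\tau_2\cap\tau_5=\emptyset$ as above.
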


\begin{proof}
	First we show $\tau_5\cap \tau_2=\emptyset$ except possibly at endpoints. As $\tau_4$ has at least two traces, Lemma~\ref{lem:S3} holds true with $\tau_1$ replaced by $\tau_5$. We scissor $\cS_0$ along $S_3$ to obtain a punctured annulus $A_0$. As $\tau_5\cap (\tau_0\cup S_3)=\emptyset$ and $\tau_2\cap (\tau_0\cup S_3)=\emptyset$ except possibly at endpoints, we know the trajectory of $\tau_5$ or $\tau_2$ in $A_0$ is an arc from $p_c$ to a point on $\partial A_0$. If $\tau_5$ and $\tau_2$ end in different components of $\partial A_0$, then $\tau_5\cap\tau_2=\emptyset$ except at endpoints. If $\tau_5$ and $\tau_2$ end in the same component, say $C$, of $\partial A_0$, then either $z_c\in S_3$, or $z_c$ and $p_a$ are in the same component of $\partial\cS_0\setminus \partial S_3$. It follows that the trajectory of $R_3$ (as defined in Lemma~\ref{lem:S3}) in $A_0$ ends in $C$ as well. As $\tau_5\cap R_3=\emptyset$ and $\tau_2\cap R_3=\emptyset$ except at endpoints, we know $\tau_2\cap \tau_5=\emptyset$ except at endpoints. Now the lemma follows from Lemma~\ref{lem:25}.
\end{proof}



\begin{lem}
	\label{lem:intersect03}
	Suppose $\omega$ is of type II,  $\tau_0\cap\tau_3\neq\emptyset$, and $\tau_4$ has a unique trace in $\cS_0$ which is good. Assume $\tau_2\cap\tau_4$ has an interior intersection point. Suppose
	\begin{enumerate}
		\item either $\tau_2$ has unique trace in $\cS_0$;
		\item or $\tau_2$ has two traces in $\cS_0$ and we are not in the situation that $\tau_2$ has a unique $\pi_4$-trace in $\cS_4$ which is a good trace in $\cS_4$.
	\end{enumerate}
	Then there is a vertex of type $\hat b$ adjacent to each of $\{x_0,x_5,x_4,x_3\}$.
\end{lem}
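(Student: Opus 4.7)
The plan is to reduce to Lemma~\ref{lem:b} applied to the quadruple $(\tau_0,\tau_5,\tau_4,\tau_3)$. Hypothesis (1) of Lemma~\ref{lem:b} already holds by assumption, so the task is to verify hypothesis (2) -- that $\tau_3\cap\tau_0$ consists of exactly one interior intersection point -- and hypothesis (3) -- the disk-bounding condition relating subarcs of $\tau_0$, $\tau_3$, and $\partial \cS$. I would work primarily on the surface $\cS_4$ obtained by scissoring $\cS$ along $\tau_4$, since the hypothesis on $\tau_2$ is naturally phrased there: because $\tau_4$ has a unique trace in $\cS_0$, $\tau_0\cap\tau_4=\emptyset$ except at endpoints, so $\tau_0$ gives a single trace in $\cS_4$ from one point of $\pi_4$ to $p_a$, and $\pi_4$ consists of the two copies of $\tau_4$ in $\partial\cS_4$.

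First I would analyze where $\tau_2$ lies in $\cS_4$. In case (1), $\tau_2\cap\tau_0=\emptyset$ except at endpoints, so every interior intersection of $\tau_2$ with $\partial\cS_4$ comes from $\tau_4$; combined with $\tau_2\cap\tau_4\neq\emptyset$ in the interior, $\tau_2$ has one trace from $p_c$ to $\pi_4$, a (possibly empty) sequence of $\pi_4$-traces, and a final trace from $\pi_4$ to $p_a$. In case (2), $\tau_2$ also has a single interior intersection with $\tau_0$, but the hypothesis rules out the configuration where all $\pi_4$-traces degenerate to a single good one; the failure of ``unique good $\pi_4$-trace'' forces either at least two $\pi_4$-traces of $\tau_2$ (which by the argument of Lemma~\ref{lem:S3'}/Lemma~\ref{lem:boundary parallel} must bend in a specific parallel pattern with respect to $\pi_4$), or a boundary-parallel $\pi_4$-trace. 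I would then use the minimal-position assumption for $\tau_2$ and $\tau_4$ together with $\tau_3\cap(\tau_2\cup\tau_4)=\emptyset$ and $\tau_1\cap(\tau_0\cup\tau_2)=\emptyset$ to narrow down the components of $\cS_4\setminus(\tau_0\cup\tau_2)$ that can host $\tau_1,\tau_3$.

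Next I would locate $\tau_3$ in $\cS_4$. Since $\tau_3\cap\tau_4=\emptyset$, $\tau_3$ is a single arc in $\cS_4$ from $p_c$ to $z_c$; it is forced into the component of $\cS_4\setminus\tau_2$ containing $p_c$ by disjointness from $\tau_2$. Within this component, the requirement $\tau_3\cap\tau_0\neq\emptyset$ forces $\tau_3$ to cross the trace of $\tau_0$ in $\cS_4$. Using the topology of a (possibly punctured) disk component -- forced by the configurations identified above -- the intersection must be a single transverse point, because two transverse crossings of $\tau_3$ with $\tau_0$ inside a disk would create a bigon contradicting minimal position of $\tau_3$ and $\tau_0$ (or would trap $\tau_3$'s endpoint $z_c$ in the wrong region relative to the endpoints of $\tau_0$). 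The disk-bounding condition of Lemma~\ref{lem:b}(3) then follows by reading off, in this same disk component, which subarc of $\partial\cS$ together with the relevant subarcs of $\tau_0$ and $\tau_3$ close off a null-homotopic region containing neither puncture.

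The hardest part, which would occupy most of the proof, is case~(2) of the hypothesis: here $\tau_2$ can have several $\pi_4$-traces in $\cS_4$ with a mix of parallel classes and possibly boundary-parallel ones, and one must rule out by hand those configurations that would produce multiple $\tau_0$--$\tau_3$ crossings or violate the disk-bounding condition. I would handle this via a parallel-class analysis analogous to Lemma~\ref{lem:S3} and its adaptation in Lemma~\ref{lem:S3'}, combined with the observation that any bigon between $\tau_3$ and $\tau_0$ would trap a subarc of $\tau_1$ or of a trace of $\tau_2$ against the puncture $p_c$, contradicting either minimality of intersections or the disjointness relations encoded in the $6$-cycle $\omega$. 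Once all configurations are eliminated, Lemma~\ref{lem:b} applies directly and produces the desired vertex of type $\hat b$.
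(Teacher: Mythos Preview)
Your overall strategy is correct and matches the paper: reduce to Lemma~\ref{lem:b} by verifying its hypotheses (2) and (3), working inside $\cS_4$ and analyzing the component $K$ of $\cS_4\setminus(\text{$\pi_4$-traces of }\tau_2)$ containing $p_c$, which hosts both $\tau_3$ and (at least part of) $\tau_0$.

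There is, however, a genuine gap in your claim that $K$ is ``a (possibly punctured) disk component --- forced by the configurations identified above.'' This is not forced. When all $\pi_4$-traces of $\tau_2$ in $\cS_4$ are good and lie in a single parallel class, the complement $K$ is a punctured \emph{annulus}, not a disk, and your bigon argument no longer gives uniqueness of $\tau_0\cap\tau_3$: two arcs in an annulus starting from the same boundary component can intersect multiple times without forming an inessential bigon. The paper handles this annulus case by tracking which boundary component $C^\pm$ of $K$ contains each of $z_c$, $\theta$, $\{z,z'\}$, and $p_a$ (using the endpoint pattern of Figure~\ref{fig:12'}(II)), and then ruling out the one remaining bad configuration by observing it would force $\tau_0$ to be a non-good trace in $\cS_4$, contradicting that $\tau_4$ is good in $\cS_0$.

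You also misidentify where the difficulty lies. You expect case~(2) of the hypothesis to be hardest and propose a parallel-class analysis in the style of Lemma~\ref{lem:S3}; in fact, the paper dispatches case~(2) in one line: the exclusion in hypothesis~(2) exactly says that $\cS_4\setminus\tau_g$ has more than one component, so the second piece $\tau_0''$ of $\tau_0$ (after it crosses $\tau_2$) lies outside $K$ and cannot contribute extra intersections with $\tau_3$. The work is in the annulus subcase of case~(1), not in case~(2).
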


\begin{proof}
	By Lemma~\ref{lem:b}, it suffices to prove that
	$\tau_0\cap\tau_3$ has exactly one interior intersection point. Moreover, a subarc of $\tau_0$ containing $z_a$, a subarc of $\tau_3$ containing $z_c$ and one of the two arcs in $\partial \cS$ from $z_a$ to $z_c$ bound a disk in $\cS$. 
	
As $\tau_2\cap\tau_4$ has an interior intersection point, $\tau_2$ has at least one $\pi_4$-trace in $\cS_4$.
 Let $\tau_g$ be the union of all $\pi_4$-traces of $\tau_2$, and let $K$ be the component of $\cS_4\setminus \tau_g$ that contains the puncture $p_c\in \cS_4$. As $\tau_3\cap (\tau_4\cup \tau_2)=\emptyset$ except at endpoints, we know $\tau_3\subset K$ except at endpoints. In particular, $z_c\in K$. As $(\overline{zz_c}\cup\overline{z'z_c})\cap \tau_2=\emptyset$, we know $\{z,z'\}\subset K$. Let $\tau'_0$ be the subarc of $\tau_0$ starting from one of $\{z,z'\}$ until it hits $\tau_2$ the first time. Let $\tau''_0$ be the remaining subarc of $\tau_0$. Since we assume $\tau_2$ has at most two traces in $\cS_0$, either $\tau'_0=\tau_0$, or $\tau'_0\neq\tau_0$ and $\tau''_0\cap\tau_2=\emptyset$ except at endpoints.
Note that $\tau'_0\subset K$ except at endpoints. Also  $K$ is either a punctured disk or a punctured annulus.

We first consider the case $\tau_0=\tau'_0$. If $K$ is a punctured disk, then the claim is clear. If $K$ is a punctured annulus, then all $\pi_4$-traces of $\tau_2$ are good and they in the same parallel class. Thus the pattern of $\tau_2\cap\partial \cS_4$ is indicated in Figure~\ref{fig:12'} (II). Let $R_2$ be the trace of $\tau_2$ containing $p_c$ and ending at $\theta$. Let $C^+$ and $C^-$ be two boundary components of $K$. Then $\theta$ and $z_c$ are in the same component, say $C^+$, of $\partial K$ by Figure~\ref{fig:12'} (II). As $\tau_3\cap R_2=\emptyset$ except at endpoints and they end in $C^+$, we know $R_2$ determines $\tau_3$ up to two choices. As $\{z,z'\}\in C^+$, we know $\tau'_0$ starts from a point in $C^+$. If $\tau_2$ has more than one good $\pi_4$-traces, then $p_a\in C^-$. If $\tau_2$ has exactly one good $\pi_4$-trace, then $p_a$ gives $p^+_a\in C^+$ and $p^-_a\in C^-$.
If $\tau'_0$ ends in $C^-$, then the claim follows from $\tau'_0\cap\tau_2=\emptyset$ (except at endpoints). If $\tau'_0$ ends in $C^+$, then $\tau_2$ has exactly one good $\pi_4$ trace, and $\tau'_0$ goes from one of $\{z,z'\}$ to $p^+_a$. In this case, then only possibility for $\tau'_0\cap \tau_3$ to have more than one interior intersection point is as in Figure~\ref{fig:1250} (III). However, this can be ruled out since $\tau'_0$ is not a good trace in $\cS_4$, hence $\tau_4$ is not a good trace in $\cS_0$, contradiction.

If $\tau_0\neq\tau'_0$, then by the same argument in the previous paragraph, we know the conclusion of the lemma still holds, if $\tau''_0$ is not in $K$. This happens when $\cS_0\setminus\tau_g$ has more than one components. However, $\cS_0\setminus \tau_g$ has only one components if and only if $\tau_2$ has a unique $\pi_4$ in $\cS_4$ which is a good trace in $\cS_4$. Thus the lemma follows.
\end{proof}

\begin{lem}
	\label{lem:bg}
	Suppose $\omega$ is of type II. If there is a vertex $z$ of type $\hat b$ in $\Delta_\Lambda$ which is adjacent to each of $\{x_0,x_5,x_4,x_3\}$. Then $\omega$ has property $(*)$.
\end{lem}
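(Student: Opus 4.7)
The plan is to show that $z$ itself is adjacent to $x_1$ in $\Delta_\Lambda$: since $z$ is already adjacent to $x_3$ and $x_5$ by hypothesis, this will exhibit $z$ (of type $\hat b$) as the required witness for property $(\ast)$.

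I first dispose of the degenerate case $z=x_2$: then $x_2=z$ is of type $\hat b$, already adjacent to $x_1$ and $x_3$ via edges of $\omega$ and to $x_5$ by hypothesis, so $z$ is the witness for $(\ast)$ directly. Henceforth I assume $z \neq x_2$. Since two distinct vertices of the same type in an Artin complex correspond to two distinct left cosets of the same standard parabolic $A_{\hat b}$, and such distinct cosets have empty intersection, $z$ and $x_2$ are non-adjacent in $\Delta_\Lambda$. This is the source of the difficulty: the natural 4-cycle $(z,x_0,x_1,x_2)$ through which one would directly attack $z\sim x_1$ fails to close at the edge $z$–$x_2$.

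To get around this, I invoke Lemma~\ref{lem:typeIIblocal} applied to $\omega$: it yields one of two alternatives. In case (2) of that lemma there is a type-$\hat a$ vertex $y$ adjacent to each of $\{x_1,x_2,x_3\}$; then the triangle $\{x_0,z,x_3\}$ together with $y$ produces an induced 4-cycle (after verifying embeddedness) whose types force a type-$\hat d$ center $w$ by Theorem~\ref{thm:4 wheel}, and the join decomposition $\lk(w,\Delta_\Lambda)=K_a * K_b * K_c$ from Lemma~\ref{lem:link}(3) gives $z \sim x_1$ (or directly a type-$\hat b$ vertex in $\lk(w)$ adjacent to each of $\{x_1,x_3,x_5\}$). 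In case (1) of Lemma~\ref{lem:typeIIblocal}, the edge path $x_1,y_1,y_2,y_3,x_3$ in $\lk(x_2,\Delta_\Lambda)$ with types $\hat a,\hat c,\hat a$ provides a length-4 detour around the missing edge $z$–$x_2$; combining this detour with the edges $x_0$–$z$, $x_3$–$z$ and applying Lemma~\ref{lem:special4cycle} and Theorem~\ref{thm:4 wheel} to the resulting embedded 4-cycles (each containing exactly one pair of opposite type-$\hat c$ vertices, so that Lemma~\ref{lem:special4cycle}'s hypothesis is met) iteratively promotes the detour into the required adjacency.

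The main obstacle is precisely the non-adjacency of the two type-$\hat b$ vertices $z$ and $x_2$, which blocks every direct 4-wheel argument between $z$ and $x_1$. The route through type-$\hat d$ intermediaries is essentially forced: a vertex of type $\hat d$ is the only type whose link decomposes as a join placing $\hat b$ and $\hat c$ vertices in different factors, which is exactly the mechanism that promotes "all in $\lk(w)$" to the desired edge $z\sim x_1$ in $\Delta_\Lambda$.
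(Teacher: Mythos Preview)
Your proposal has a genuine gap. The case split via Lemma~\ref{lem:typeIIblocal} is a reasonable idea, but neither branch is actually carried through.

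In your Case~(2) (a type-$\hat a$ vertex $y$ adjacent to $x_1,x_2,x_3$), the claimed ``triangle $\{x_0,z,x_3\}$ together with $y$ produces an induced 4-cycle'' does not exist: $x_0$ and $x_3$ are not known to be adjacent (so there is no triangle), and $y$ is not known to be adjacent to either $x_0$ or $z$, so there is no 4-cycle on $\{x_0,z,x_3,y\}$ either. This branch \emph{can} be salvaged, but by a different argument: replacing $x_2$ by $y$ gives the 6-cycle $x_0x_1yx_3x_4x_5$ of type~I, and Proposition~\ref{prop:typeI} then yields property~$(*)$ directly---note that $z$ plays no role.

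In your Case~(1) the proposal is not a proof but a hope: you assert that Lemma~\ref{lem:special4cycle} and Theorem~\ref{thm:4 wheel} applied ``iteratively'' to unspecified 4-cycles will promote the length-4 detour $x_1,y_1,y_2,y_3,x_3$ inside $\lk(x_2,\Delta_\Lambda)$ into the adjacency $z\sim x_1$. But the only known neighbors of $z$ are $x_0,x_3,x_4,x_5$, and none of these is adjacent to any of the $y_i$ (which live in $\lk(x_2)$, while $z\notin\lk(x_2)$ since $z\neq x_2$ have the same type). So there is no embedded 4-cycle through $z$ and any $y_i$ to which either lemma applies; the ``iteration'' never gets started.

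More fundamentally, your opening plan---to show that $z$ itself is adjacent to $x_1$---is too strong: the paper's argument shows this need not hold. The paper instead forms the 5-cycle $\omega'=(x_0,z,x_3,x_2,x_1)$ of vertex types $(\hat a,\hat b,\hat c,\hat b,\hat c)$ and applies Lemma~\ref{lem:filling 5 cycle} (via the Dynkin symmetry permuting the leaves). That lemma gives a genuine dichotomy: either $z\sim x_1$, in which case $z$ witnesses $(*)$; or the unique-type vertex $x_0$ is adjacent to every vertex of $\omega'$, in particular $x_0\sim x_3$, so $x_0$ (type $\hat a$) is adjacent to $x_1,x_3,x_5$ and witnesses $(*)$. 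Your proposal never contemplates this second alternative.
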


\begin{proof}
We consider the 5-cycle $\omega'$ with consecutive vertices $\{x_0,z,x_3,x_2,x_1\}$, of type $\{\hat a,\hat b,\hat c,\hat b,\hat c\}$. We can assume $\omega'$ is embedded, otherwise $z=x_2$ and $x_2$ is adjacent to each of $\{x_1,x_3,x_5\}$, hence property $(*)$ follows.
By Proposition~\ref{prop:typeI} and Lemma~\ref{lem:filling 5 cycle} applied to $\omega'$ (while the statement of Lemma~\ref{lem:filling 5 cycle} is for cycles with vertex type $\hat b,\hat a,\hat c,\hat a,\hat c$, up to a symmetry of the Dynkin diagram $\Lambda$, it also applies to cycles with vertex type $\hat a,\hat c,\hat b,\hat c,\hat b$), either $x_1$ and $z$ are adjacent, which implies $z$ is adjacent to each of $\{x_1,x_3,x_5\}$ and property $(*)$ follows, or $x_2$ is adjacent to $x_5$ and property $(*)$ follows.
\end{proof}

\begin{lem}
	\label{lem:24 unique}
	Suppose $\omega$ is of type II.
	Suppose $\tau_2$ has a unique trace in $\cS_0$ and $\tau_4$ has a unique trace which is good. Then property $(*)$ holds for $\omega$.
\end{lem}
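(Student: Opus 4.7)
The argument proceeds by cases on the intersection patterns among the six arcs in $\cS$. First, if $\tau_0\cap\tau_3=\emptyset$, then by Lemma~\ref{lem:disjoint} the vertex $x_0$ is adjacent to $x_3$; since $x_0$ is already adjacent to $x_1$ and $x_5$, it witnesses property $(*)$. So we may assume $\tau_0\cap\tau_3\neq\emptyset$. Next, if $\tau_2\cap\tau_4$ contains at least one interior intersection point, then since $\tau_2$ has a unique trace in $\cS_0$ by hypothesis, clause (1) of Lemma~\ref{lem:intersect03} applies and produces a vertex of type $\hat b$ adjacent to each of $\{x_0,x_5,x_4,x_3\}$; Lemma~\ref{lem:bg} then upgrades this to property $(*)$.

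In the remaining case, $\tau_2\cap\tau_4=\emptyset$ except at the common endpoint $p_a$, so $\tau_0,\tau_2,\tau_4$ are pairwise disjoint except at shared endpoints and the loop $\gamma:=\tau_0\cup\tau_4$ is an embedded simple closed curve in $\cS$ whose image in $\cS'$ (the surface with $p_c$ filled in) is essential, since $\tau_4$ is a good trace in $\cS_0$. The strategy is to show $\tau_2\cap\tau_5=\emptyset$ except at $p_c$, so that Lemma~\ref{lem:25} applies. To do so, I scissor $\cS_0$ along the good arc $\tau_4$ to obtain a once-punctured annulus $A_0$ with boundary components $C^+,C^-$, where $z_c\in C^-$. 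Both $\tau_2$ and $\tau_5$ then descend to single simple arcs in $A_0$ emanating from $p_c$, thanks to their disjointness from $\tau_4$. If they terminate on different components of $\partial A_0$, minimal position forces $\tau_2\cap\tau_5=\{p_c\}$. If they terminate on the same component, I use the $p_c$-trace $R_3$ of $\tau_3$ as an auxiliary separator in $A_0$: $R_3$ is disjoint from both $\tau_2$ and $\tau_4$ by consecutivity in $\omega$, and because $\tau_0\cap\tau_3\neq\emptyset$ the endpoint of $R_3$ must lie on $\pi_0\subset\partial A_0$; a case analysis on which component of $\partial A_0$ contains this endpoint, modeled on the same-component analysis in the proof of Lemma~\ref{lem:20e40ne} but with $\tau_4$ itself playing the role of the separator (in place of a trace $S_3$ of $\tau_3$ obtained from Lemma~\ref{lem:S3}), yields the required disjointness.

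Once $\tau_2\cap\tau_5=\emptyset$ has been established, Lemma~\ref{lem:25} yields either property $(*)$ directly, or a common type $\hat b$ neighbor $z$ of $\{x_1,x_5\}$. To promote the latter to property $(*)$, I exploit the tripod-with-loop configuration of $\tau_0\cup\tau_2\cup\tau_4$ (with the loop part $\gamma$ essential) to build an auxiliary arc $\tau$ in $\cS$ from $p_a$ to $p_c$ disjoint from $\tau_0,\tau_3,\tau_4,\tau_5$ (except at common endpoints), such that the concatenations $\bar\tau\cup\bar\tau_0\cup\bar\tau_5$, $\bar\tau\cup\bar\tau_5\cup\bar\tau_4$, and $\bar\tau\cup\bar\tau_4\cup\bar\tau_3$ each represent homotopically nontrivial simple loops on the closed torus $\bar\cS'$; via the correspondence $\bar\Delta_0\cong\bar\Delta'_0$ of Section~\ref{subsec:relarc} (as used in Corollary~\ref{cor:b}), this $\tau$ gives a type $\hat b$ vertex adjacent to each of $\{x_0,x_5,x_4,x_3\}$, and Lemma~\ref{lem:bg} then delivers property $(*)$. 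The main obstacle is the same-boundary-component sub-case of the disjointness claim, where the constraint $\tau_0\cap\tau_3\neq\emptyset$ must be wielded delicately to force $R_3$ into a position that separates $\tau_2$ from $\tau_5$ in $A_0$; the hypothesis that $\tau_4$ has a unique good trace is used essentially here, since without it one would need the much stronger output of Lemma~\ref{lem:S3}, which is unavailable in this setting.
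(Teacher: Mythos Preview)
Your opening reductions (to $\tau_0\cap\tau_3\neq\emptyset$, and then via Lemma~\ref{lem:intersect03} plus Lemma~\ref{lem:bg} when $\tau_2\cap\tau_4$ has an interior point) match the paper. The remaining case, however, has two genuine gaps.

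\textbf{The disjointness claim $\tau_2\cap\tau_5=\{p_c\}$ is not established.} In your separator argument you cut $\cS_0$ along $\tau_4$ and propose to use the $p_c$-trace $R_3$ of $\tau_3$. You correctly note $R_3\cap\tau_2=\{p_c\}$ (consecutivity of $x_2,x_3$), but you also need $R_3\cap\tau_5=\{p_c\}$, and this is \emph{not} available: $x_3$ and $x_5$ are not adjacent in $\omega$, so $\tau_3$ and $\tau_5$ can meet in many interior points. The analogous step in Lemma~\ref{lem:20e40ne} works precisely because there $S_3,R_3$ come from Lemma~\ref{lem:S3} applied with $\tau_5$ in the role of $\tau_1$, which \emph{guarantees} $(S_3\cup R_3)\cap\tau_5\subset\{p_c,z_c\}$; you have no substitute for that here. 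Indeed the paper does not claim $\tau_2\cap\tau_5=\emptyset$; it instead \emph{replaces} $x_5$ by some $x'_5$ so that $\tau'_5\cap\tau_2=\emptyset$, and then works with the modified cycle.

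\textbf{The upgrade step is unjustified.} Even granting Lemma~\ref{lem:25}, its second alternative only gives a common $\hat b$-neighbour of $x_1,x_5$, not property~$(*)$. Your plan is to produce an arc $\tau$ from $p_a$ to $p_c$ disjoint from $\tau_0,\tau_3,\tau_4,\tau_5$ with the three nontriviality conditions, thereby obtaining a $\hat b$-vertex adjacent to $\{x_0,x_5,x_4,x_3\}$. But nothing controls $\tau_3$ relative to $\tau_5$ or to $\tau_0$ beyond the single assumption $\tau_0\cap\tau_3\neq\emptyset$; in the annulus obtained by cutting along the essential loop $\tau_0\cup\tau_4$, the arcs $\tau_3$ and $\tau_5$ can wind arbitrarily, and there is no reason such a $\tau$ exists. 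The paper avoids this entirely: after replacing $x_5$ by $x'_5$, it analyses the degenerate/non-degenerate status of the $3$-cycles $120$, $432$, $5'20$, $5'24$ in $\bar\Delta'_0$, invokes Lemma~\ref{lem:degenerate triangles} and Lemma~\ref{lem:generator} to force $5'20$ and $5'24$ both non-degenerate, manufactures auxiliary $\hat b$-vertices $x'_0,x'_4$, reduces to a type~I cycle via Proposition~\ref{prop:typeI}, and finally transports property~$(*)$ back to $\omega$ using Lemma~\ref{lem:6cycle replace}.
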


\begin{proof}
If  $\tau_0\cap\tau_3=\emptyset$, then $\omega$ readily has property $(*)$. Now we assume $\tau_0\cap\tau_3\neq\emptyset$.
If $\tau_2$ and $\tau_4$ have an interior intersection point, then the lemma follows from Lemma~\ref{lem:intersect03} and Lemma~\ref{lem:bg}. Now we assume $\tau_2\cap\tau_4$ except at endpoints.

We consider the 3-cycles $432$ and $120$ in $\bar\Delta'_0$. If at least one of them is non-degenerate, then $\omega$ has property $(*)$. Indeed, by symmetry we assume $120$ is non-degenerate. Then $x_2$ and $x_0$ are adjacent. By applying Proposition~\ref{prop:typeI} and Lemma~\ref{lem:filling 5 cycle} to the 5-cycle $x_2,x_0,x_5,x_4,x_3$, we know either $x_0$ and $x_3$ are adjacent, which is ruled out by our assumption, or $x_2$ is adjacent to $x_5$, and property $(*)$ follows. 

It remains to consider that both $432$ and $120$ are degenerate.
First we claim up to replacing $x_5$ by a vertex $x'_5$ of type $\hat c$ adjacent to both $x_0$ and $x_4$, we can assume the associated arc $\tau'_5$ in $\cS$ satisfies that $\tau'_5\cap\tau_2=\emptyset$ except at endpoints. For this, we consider $\cS_0$ and scissor it along $\tau_4$. As $\tau_4$ gives a single good trace in $\cS_0$, we obtain a punctured annulus $A_0$ with two components $C^+$ and $C^-$. The point $p_a$ gives $p^+_a\in C^+$ and $p^-_a\in C^-$. As $\tau_2\cap(\tau_0\cup\tau_4)=\emptyset$, and $\tau_5\cap(\tau_0\cup\tau_4)=\emptyset$, we know the trajectories of $\tau_2$ and $\tau_5$ go from $p_c$ to one of the boundary components of $A_0$. More precisely, if we assume without loss of generality that $z_c\in C^+$, then $\tau_5$ ends in $z_c$ and $\tau_2$ ends in either $p^+_a$ or $p^-_a$. If $\tau_2$ and $\tau_5$ end in different boundary components of $A_0$, then we already have $\tau_2\cap \tau_5=\emptyset$ except at endpoints. If they ends at the same boundary component of $A_0$, then there is an arc $\tau'_5$ in $A_0$ from $p_c$ to $z_c$ avoiding $\tau_2$. Thus the claim is proved.

By Lemma~\ref{lem:degenerate triangles}, it is impossible that each of $210,5'20,5'24$ and $324$ gives degenerate 3-cycle in $\bar\Delta'_0$. The case when exactly three of these four 3-cycles are degenerate is ruled out by Lemma~\ref{lem:generator}. Thus the only possibility left is that both $5'20$ and $5'24$ are non-degenerate. As $120$ is degenerate and $5'20$ is non-degenerate, by the third paragraph of the proof of Lemma\ref{lem:25}, $x_1$ and $x'_5$ are adjacent to a common vertex $x'_0$ of type $\hat b$. By symmetry, we know $x'_5$ and $x_3$ are adjacent to a common vertex $x'_4$ of type $\hat b$. As the Dynkin diagram $\Lambda$ has symmetries inducing arbitrary permutations of $\{a,b,c\}$, we can apply Proposition~\ref{prop:typeI} to the 6-cycle $x'_0x_1x_2x_3x'_4x'_5$ and deduce that there is a vertex $z$ of type $\hat a$ or $\hat b$ such that $z$ is adjacent to each of $\{x_1,x_3,x'_5\}$. Thus the 6-cycle $x_0x_1x_2x_3x_4x'_5$ has property $(*)$.
By Lemma~\ref{lem:6cycle replace}, $\omega$ has property $(*)$.
\end{proof}

\begin{lem}
	\label{lem:20intersect}
	Suppose $\omega$ is of type II,  $\tau_0\cap\tau_3\neq\emptyset$, and $\tau_4$ has a unique trace in $\cS_0$ which is good. If $\tau_2$ has a unique $\pi_0$-trace in $\cS_0$ which is good, then $\omega$ has property $(*)$.
\end{lem}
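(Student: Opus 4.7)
I will follow a strategy parallel to the proof of Lemma~\ref{lem:24 unique}.

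First, as a reduction: if $\tau_2\cap\tau_4$ has an interior intersection point and we are not in the exceptional configuration that $\tau_2$ has a unique $\pi_4$-trace in $\cS_4$ which is good, then Lemma~\ref{lem:intersect03} produces a vertex of type $\hat b$ adjacent to each of $\{x_0,x_5,x_4,x_3\}$, and Lemma~\ref{lem:bg} upgrades this to property $(*)$ for $\omega$. Hence I may assume one of two residual scenarios: \emph{(a)} $\tau_2\cap\tau_4=\emptyset$ except at the shared endpoint $p_a$, or \emph{(b)} $\tau_2$ crosses $\tau_4$ at exactly one interior point and its unique $\pi_4$-trace in $\cS_4$ is good.

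In case \emph{(a)}, scissor $\cS_0$ along the unique good trace $\tau_4$ to obtain a once-punctured annulus $A_0$ with boundary components $C^+\ni z_c$ and $C^-$. Under the assumption $\tau_2\cap\tau_4=\emptyset$ except at $p_a$, the two traces $R_2$ (from $p_c$ to a point of $\pi_0$) and $\tau''_2$ (the good $\pi_0$-trace, ending at $p_a$) of $\tau_2$ in $\cS_0$ survive in $A_0$ as disjoint arcs, with $R_2$ having an endpoint at the puncture $p_c$ and $\tau''_2$ having both endpoints on $\partial A_0$; the arc $\tau_5$ is a simple arc from $p_c$ to $z_c$ in $A_0$. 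Using that $A_0$ is a once-punctured annulus, I would argue that $\tau_5$ can be homotoped rel endpoints to an arc $\tau'_5\subset A_0$ that is disjoint from $R_2\cup\tau''_2$ apart from the shared endpoint $p_c$. The resulting $\tau'_5$ corresponds to a vertex $x'_5$ of type $\hat c$ adjacent to both $x_0$ and $x_4$ with $\tau'_5\cap\tau_2=\emptyset$ except at endpoints. Applying Lemma~\ref{lem:25} to the replaced 6-cycle $\omega'=x_0x_1x_2x_3x_4x'_5$ yields either property $(*)$ for $\omega'$, which is then promoted to $\omega$ via Lemma~\ref{lem:6cycle replace}, or a common type $\hat b$-neighbor $x'_0$ for $x_1$ and $x'_5$. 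In the latter subcase, a parallel argument with the roles of the two halves of $\omega$ swapped produces a common type $\hat b$-neighbor $x'_4$ for $x_3$ and $x'_5$; then Proposition~\ref{prop:typeI} applied to the type I 6-cycle $x'_0x_1x_2x_3x'_4x'_5$ yields a type $\hat a$ or $\hat b$ vertex adjacent to $\{x_1,x_3,x'_5\}$, and Lemma~\ref{lem:6cycle replace} promotes this to property $(*)$ for $\omega$.

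Case \emph{(b)} is the main obstacle, since the crossing pattern of $\tau_2$ with $\tau_0$ and $\tau_4$ is rigidly prescribed and the replacement argument above is unavailable. The plan is to decompose $\cS$ by the union $\tau_0\cup\tau_2\cup\tau_4$ into regions, carefully locate $p_c$ and the traces of $\tau_1,\tau_3,\tau_5$ relative to these regions, and then directly construct an auxiliary simple arc $\tau$ from $p_c$ to $p_a$ in $\cS$ which is disjoint (except at forced endpoints) from $\tau_0,\tau_1,\tau_4,\tau_5$, in the spirit of Corollary~\ref{cor:b}. Such an arc lifts to a type $\hat b$-vertex $z$ in $\Delta_\Lambda$ adjacent to each of the four consecutive vertices $\{x_4,x_5,x_0,x_1\}$ of $\omega$, and a cyclic relabeling of $\omega$ together with Lemma~\ref{lem:bg} then delivers property $(*)$.
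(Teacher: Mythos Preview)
Your initial reduction via Lemma~\ref{lem:intersect03} and Lemma~\ref{lem:bg} matches the paper's opening move. The divergence is in how the two residual scenarios are handled.

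For case \emph{(a)} ($\tau_2\cap\tau_4=\emptyset$ except at $p_a$), the paper dispatches this in one line: exchange the roles of $\{x_0,x_1\}$ and $\{x_4,x_3\}$ and work in $\cS_4$, where the hypotheses of Lemma~\ref{lem:24 unique} are met directly (both $\tau_0$ and $\tau_2$ then have a single trace). Your replacement argument via a modified $\tau'_5$ is more roundabout, and the step ``homotope $\tau_5$ to an arc disjoint from $R_2\cup\tau''_2$ in the punctured annulus $A_0$'' is asserted without justification; when $\tau''_2$ joins the two boundary circles of $A_0$ and $R_2$ obstructs the remaining region, this is not automatic.

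Case \emph{(b)} is the genuine gap. You write that the plan is to ``directly construct an auxiliary simple arc $\tau$'' from $p_c$ to $p_a$ disjoint from $\tau_0,\tau_1,\tau_4,\tau_5$, but this is a plan and not a proof, and the paper's actual argument shows why a uniform one-step construction is not available. The paper instead scissors $\cS_4$ along the good $\pi_4$-trace $S_2$ of $\tau_2$ to obtain a punctured annulus, observes that this pins each of $\tau_1$ and $\tau_3$ to at most two explicit homotopy classes, and then runs through roughly eight geometric configurations (depending on whether $\tau_0$ crosses $S_2$ or $R_2$, and on the relative positions, cf.\ Figure~\ref{fig:onetrace}). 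In most configurations the conclusion is that $\tau_1\cup\tau_3$ forms a simple closed curve in $\cS'$ not homotopic to $\partial\cS'$, so $x_1$ and $x_3$ share a common vertex of type $\hat a$ and Proposition~\ref{prop:typeI} finishes. In the remaining configurations one must invoke Lemma~\ref{lem:common adjacent}: either $\tau_1$ and $\tau_3$ differ by a boundary Dehn twist, which is forbidden since they already share the type-$\hat b$ neighbor $x_2$, or part~(2) of that lemma forces the common $\hat a$-neighbor indirectly. These last moves are not visible from your sketch, and in exactly those configurations the existence of your arc $\tau$ together with the non-degeneracy of all the relevant $3$-cycles in $\bar\Delta'_0$ (needed to lift back to $\Delta_0$) is unclear. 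The casework and the appeal to Lemma~\ref{lem:common adjacent} are where the content of the proof lives; a one-sentence existence claim does not substitute for them.
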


	\begin{figure}
	\centering
	\includegraphics[scale=0.9]{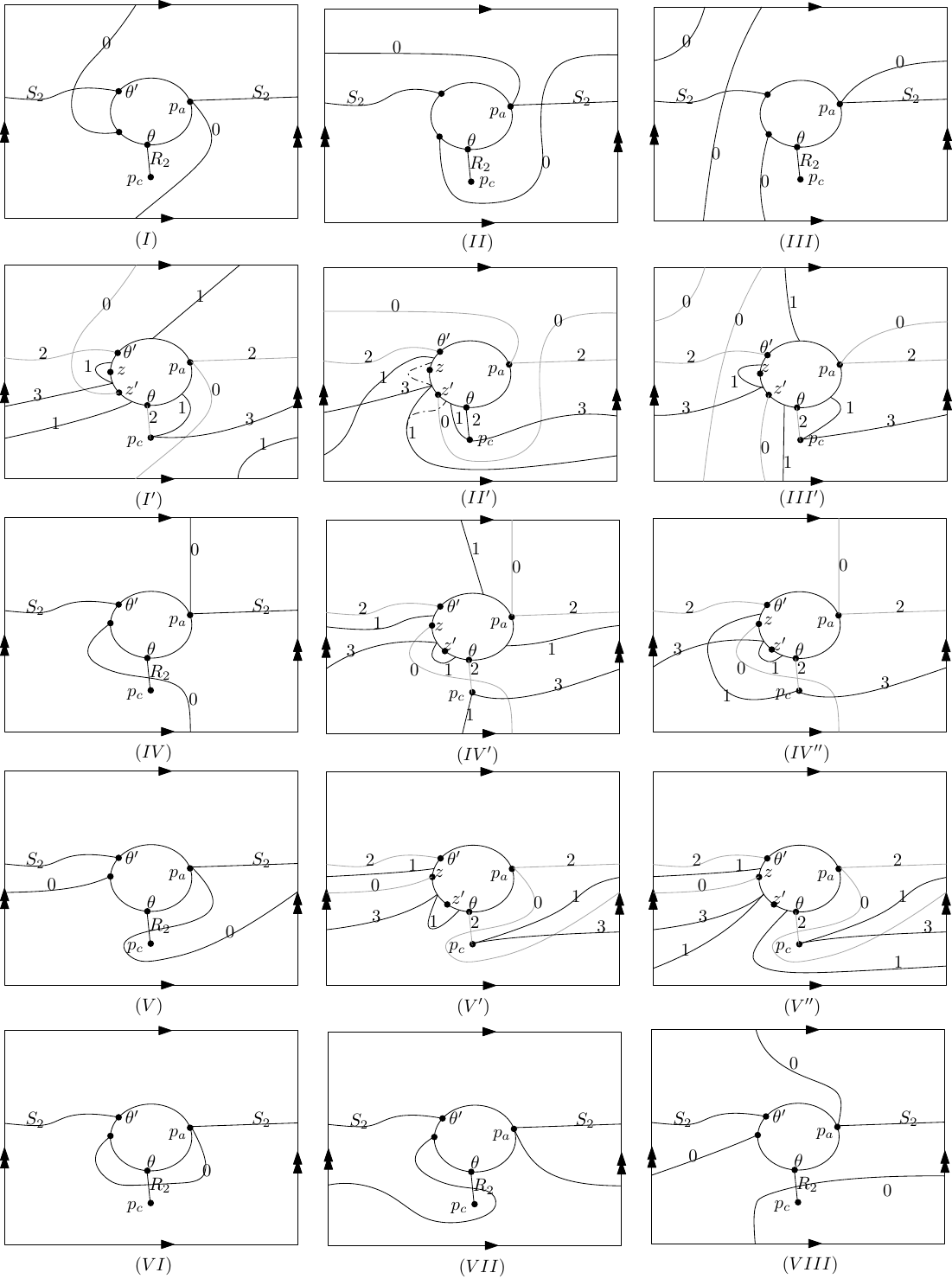}
	\caption{Proof of Lemma~\ref{lem:20intersect}}
	\label{fig:onetrace}
\end{figure} 

\begin{proof}
The case $\tau_2\cap \tau_4=\emptyset$ follows by applying Lemma~\ref{lem:24 unique} with the role of $\{x_0,x_1\}$ and $\{x_4,x_3\}$ exchanged. Now suppose $\tau_2\cap\tau_4$ has at least one interior intersection point. It suffices to consider the case that $\tau_2$ has a unique $\pi_4$-trace in $\cS_4$ which is good, otherwise we are done by Lemma~\ref{lem:intersect03} and Lemma~\ref{lem:bg}.
	
	Let $R_2$ be the trace of $\tau_2$ in $\cS_4$ containing $p_c$, and let $S_2$ be the other trace of $\tau_2$. We scissor $\cS_4$ along $S_2$ to obtain an annulus $A_0$, with two boundary components $C^+$ and $C^-$. We assume without loss of generality that $R_2$ ends on $\theta\in \overline{z'p_a}\subset C^+$. Then $z,z_c\in C^+$. Let $p_a^+$ and $p^-_a$ be the copy of $p_a$ in $C^+$ and $C^-$ respectively. Note that there are two homotopy classes of arcs (rel endpoints) in $A_0$ from $p_c$ to $z_c$ that have representatives disjoint from $R_2$. We choose a representative from each class and denote them $\gamma$ and $\beta$. Assume without loss of generality that $\gamma$, $R_2$ and $\overline{z_c\theta}\subset C^+$ bound a disk $D$. As $\tau_3\cap(\tau_2\cup\tau_4)=\emptyset$ except at endpoints, we can assume either $\tau_3=\gamma$ or $\tau_3=\beta$. By a similar argument, we also know that there are only two homotopy classes of arcs (rel endpoints) in $\cS$ from $z_c$ to $p_c$ which contain representatives avoiding both $\tau_0$ and $\tau_2$. Thus $\tau_0$ and $\tau_2$ determine $[\tau_1]$ up to two choices.

By our assumption, $\tau_0$ has a single trace in $\cS_4$ which is good. Moreover,  $\tau_0$ and $\tau_2$ have exactly one interior intersection point.

\medskip
\noindent
\underline{Case 1: $\tau_0$ and $S_2$ have an interior intersection point.} Let $\theta'$ be the endpoint of $S_2$ that is identified with $\theta$ via $\phi$. Then $\tau_0\cap R_2=\emptyset$. Suppose $\tau_3=\gamma$. If $z\in \tau_0$, then $\tau_0\cap \tau_3=\emptyset$, otherwise $\tau_0$ will enter the disk $D$. As $\tau_0\cap\partial\cS_4=\{z',p_a\}$ and $\tau_0\cap R_2=\emptyset$, we know $\tau_0$ will exit $D$ at another point, contradicting the minimal position assumption. If $z'\in \tau_0$, then by a similar argument we know the assumptions of Lemma~\ref{lem:b} are satisfied. Then $\omega$ has property $(*)$ by Lemma~\ref{lem:bg}.

Now we assume $\tau_3=\beta$. Suppose in $A_0$, $\tau_0$ goes from one of $\{z,z'\}$ and first hits $C^+$. As $\tau_0$ does not bound a (possibly punctured) disk with a subarc of $\partial \cS_4$ in $\cS_4$, up to a homeomorphism, we are either in Figure~\ref{fig:onetrace} (I) or Figure~\ref{fig:onetrace} (II). We only discuss the case $z'\in \tau_0$, as the other case is similar. Recall that up to homotopy, $\tau_1$ must be one of the two arcs in $\cS$ from $z_c$ to $p_c$ avoiding $\tau_0$ and $\tau_2$. Suppose we are in Figure~\ref{fig:onetrace} (I). In Figure~\ref{fig:onetrace} (I'), we display the less obvious possibility of $\tau_1$ when $z'\in \tau_0$, following the arcs labeled by $1$, and noting that $\overline{p_a\theta'}$ and $\overline{p_a\theta}$ are identified, and $\overline{\theta'z}$ and $\overline{\theta z'}$ are identified.
Note that both possibilities of $\tau_1$ will have empty intersection with $\tau_3$ (except at endpoints), moreover $\tau_1\cup\tau_3$ forms a homotopically non-trivial simple closed curve in $\cS'$ (which is obtained from $\cS'$ by filling back the punctures) which is not homotopic to $\partial \cS'$. Thus there is an arc in $\cS$ from $p_a$ to $z_a$ that is disjoint from both $\tau_1$ and $\tau_3$, implying $x_1$ and $x_3$ is adjacent to a common vertex of type $\hat a$ in $\Delta_\Lambda$. Then $\omega$ has property $(*)$ by Proposition~\ref{prop:typeI}. Now suppose we are in Figure~\ref{fig:onetrace} (II). Assume $z'\in \tau_0$ (the case $z\in \tau_0$ is similar and easier). In Figure~\ref{fig:onetrace} (II'), we display the less obvious possibility of $\tau_1$. Let $\tau'_1$ be the arc obtained by apply a Dehn twist along $\partial \cS$ to $\tau_1$ - this has the effect of replace the end of $\tau_1$ by the dashed part as displayed in Figure~\ref{fig:onetrace} (II). Note that there are infinitely many homotopy classes of arcs from $z'$ to $p_a$ which has representatives avoiding $(\tau'_1\cup\tau_3)$. Thus by Lemma~\ref{lem:common adjacent} (2), either $x_1$ and $x_3$ are not adjacent to a common vertex of type $\hat b$, which rules out this case, or $x_1$ and $x_3$ are adjacent to a common vertex of type $\hat a$, and we finish as before. For the other choice of $\tau_1$, $\tau_1\cup\tau_3$ forms a homotopically non-trivial simple closed curve in $\cS'$ which is not homotopic to $\partial \cS'$, and we finish as before.

Suppose in $A_0$, $\tau_0$ goes from one of $\{z,z'\}$ and first hits $C^-$. Then up to a homeomorphism, we are in Figure~\ref{fig:onetrace} (III). In this case, $\tau_1\cup\tau_3$ forms a homotopically non-trivial simple closed curve in $\cS'$ which is not homotopic to $\partial \cS'$ (see Figure~\ref{fig:onetrace} (III') for the less obvious possibility of $\tau_1$ in the case $z'\in \tau_0$), and we finish as before.

\medskip
\noindent
\underline{Case 2: $\tau_0$ and $R_2$ have one interior intersection point.}
Then $\tau_0\cap S_2=\emptyset$ except at endpoints. Up to a homeomorphism, we are in either Figure~\ref{fig:onetrace} (IV), (V), (VI), (VII) or (VIII). Note that (VII) is ruled out as the $\pi_0$-trace of $\tau_2$ on $\cS_0$ is a good trace, and (VI) is ruled out as the $\tau_0$ gives a good $\pi_0$ trace in $\cS_4$.
We will only discuss the case $z\in \tau_0$ as the case $z'\in \tau_0$ is similar. We will only discuss (IV) and (V) as (VIII) follows from a similar argument.
If we are in Figure~\ref{fig:onetrace} (IV) (resp. (V)), then the two possibilities of $\tau_1$ are displayed in Figure~\ref{fig:onetrace} (IV') and (IV'') (resp. (V') and (V'')). In Figure~\ref{fig:onetrace} (IV') and (V''), $\tau_1$ and $\tau_3$ give a homotopically non-trivial simple closed curve in $\cS'$ which is not homotopic to $\partial\cS'$, and we finish as before. In Figure~\ref{fig:onetrace} (IV''), $\tau_1$ and $\gamma$ differ by a power of Dehn twist along $\partial \cS$. By considering $\tau_1,\gamma,\tau_3$, Lemma~\ref{lem:common adjacent} (2) implies that either $x_1$ and $x_3$ are not adjacent to a common vertex of type $\hat b$, or $x_1$ and $x_3$ are adjacent to a common vertex of type $\hat a$, and we finish as before. In Figure~\ref{fig:onetrace} (V'), $\tau_1$ and $\tau_3$ differ by a Dehn twist along $\partial \cS$, however, this is impossible by Lemma~\ref{lem:common adjacent}. 
\end{proof}

\begin{lem}
	\label{lem:4notgood}
	Suppose $\omega$ has type II and $\tau_0\cap \tau_3\neq\emptyset$ in $\cS$.
	Suppose $\tau_4$ has a trace in $\cS_0$ which is not good. Suppose $\tau_2\cap\tau_0=\emptyset$ except at endpoints. Then one of the following holds true:
	\begin{enumerate}
		\item $x_1$ and $x_5$ are adjacent to a common vertex of type $\hat b$;
		\item $\omega$ satisfies condition $(*)$.
	\end{enumerate}
\end{lem}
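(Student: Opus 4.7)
The plan is to exploit the fact that a non-good trace of $\tau_4$ forces most of the configuration into a small punctured disk, leaving little room for disagreement among the arcs.

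First, combining Lemma~\ref{lem:boundary parallel}(1) (applied with the cyclic shift that reads our arcs as $\tau_0,\tau_5,\tau_4,\tau_3$ in the roles of $\tau_0,\tau_1,\tau_2,\tau_3$) with the definition of a good trace, any non-good trace $S_4$ of $\tau_4$ must be $c$-boundary-parallel in $\cS'_0$: there is an arc $\gamma\subset\partial\cS_0$ containing $z_c$ such that $S_4\cup\gamma$ bounds a punctured disk $D\subset\cS_0$ with $p_c$ in its interior. Using $\tau_5\cap\tau_4=\tau_2\cap\tau_4=\tau_3\cap\tau_4=\emptyset$ and the hypothesis that $\tau_2$ has a unique trace in $\cS_0$ going from $p_a$ to $p_c$, I deduce $p_a\in\gamma$ and that $\tau_5\subset\bar D$, $\tau_2\subset\bar D$, and every trace of $\tau_3$ is contained in $\bar D$. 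Since $\tau_0\cap\tau_3\neq\emptyset$, the arc $\tau_3$ zigzags through $\bar D$ via at least one pair of $\pi_0$-endpoints on $\gamma\cap\pi_0$ before terminating at $z_c$.

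Next, up to isotopy in $\bar D$ I arrange $\tau_5$, $\tau_2$, and the $p_c$-trace $R_3$ of $\tau_3$ to be pairwise disjoint except at $p_c$, dividing $\bar D$ into three sectors meeting at $p_c$, with boundary subarcs of $\gamma$ containing $z_c$, $p_a$, and the other endpoint $w$ of $R_3$ respectively. I then analyze $\tau_1$: it begins at $p_c$, satisfies $\tau_1\cap\tau_2=\tau_1\cap\tau_0=\emptyset$, so $\tau_1$ cannot touch $\pi_0$ nor cross $\tau_2$, but is permitted to cross $S_4$ and $\tau_4$ freely. I split into cases by the cyclic order of $\{z_c,p_a,w\}$ on $\gamma$ and by whether $\tau_1$ stays inside $\bar D$ or escapes through $S_4$. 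In each case, I aim to construct an auxiliary simple arc $\tau$ inside $\bar D$ from $p_c$ to $p_a$ avoiding $\tau_5$ and $\tau_1$ (and the relevant traces of $\tau_3$), so that the concatenations of $\bar\tau$ with appropriate pairs of $\bar\tau_0,\bar\tau_1,\bar\tau_5,\bar\tau_4,\bar\tau_3$ descend to homotopically nontrivial simple loops in $\bar\cS'$. Depending on which pairs of concatenations are nontrivial, this produces either a common type-$\hat b$ neighbor of $x_1$ and $x_5$ (by the mechanism of Lemma~\ref{lem:2142} combined with Lemma~\ref{lem:not big}) or, via Lemma~\ref{lem:b}, a type-$\hat b$ vertex adjacent to each of $\{x_0,x_5,x_4,x_3\}$, whence Lemma~\ref{lem:bg} delivers property $(*)$.

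When the direct construction fails, I would fall back on replacement moves: either pass to a homotopic $\tau_5$ avoiding $\tau_2$ via a punctured-annulus argument as in the proof of Lemma~\ref{lem:24 unique}, reducing to $\tau_2\cap\tau_5=\emptyset$ and invoking Lemma~\ref{lem:25}; or apply Lemma~\ref{lem:6cycle replace} to swap out $x_1$ or $x_5$ and reduce to a 6-cycle of type I already handled by Proposition~\ref{prop:typeI}. The main obstacle will be the degenerate configuration in which $\tau_3$ has many $\pi_0$-traces in $\bar D$ whose $\phi$-identifications force $\tau_3$ to differ from $\tau_1$ by a nonzero power of the boundary Dehn twist of $\cS$; this case should be ruled out via Lemma~\ref{lem:common adjacent} using the fact that $x_1$ and $x_3$ share the common type-$\hat b$ neighbor $x_2$, yielding the required contradiction.
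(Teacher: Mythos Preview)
Your setup contains an unjustified assumption that derails the argument. You write ``Using $\tau_5\cap\tau_4=\tau_2\cap\tau_4=\tau_3\cap\tau_4=\emptyset$'', but $x_2$ and $x_4$ are \emph{not} adjacent in a type II cycle (they are separated by $x_3$), so there is no reason for $\tau_2\cap\tau_4=\emptyset$. Consequently your conclusions ``$p_a\in\gamma$'' and ``$\tau_2\subset\bar D$'' are both false in general: with the choice made in the paper, the non-good trace $S_4$ runs from $z$ to a point $x\in\overline{z'p_a}$, so $\gamma$ is the boundary arc from $z$ through $z_c,z'$ to $x$ and $p_a$ lies on the complementary boundary arc, outside $D$. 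Since $\tau_2$ starts at $p_a\notin\bar D$ and ends at $p_c\in D$, it must cross $S_4$. Your entire sector-and-case analysis is built on the wrong picture.

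The paper's argument is much shorter and bypasses all of this. The single claim it proves is $\tau_2\cap\tau_5=\emptyset$ (except at $p_c$), after which Lemma~\ref{lem:25} finishes immediately. The point is that although $\tau_2$ does exit $D$ through $S_4$, it \emph{cannot reenter}: any reentry would force a bigon between $\tau_2$ and $S_4$ (since $\tau_2\cap\tau_0=\emptyset$ prevents $\tau_2$ from ending on $\partial\cS_0\cap\partial D$ away from $p_a$), and minimal position forces $p_c$ into that bigon, contradicting that a trace of $\tau_3$ runs from $p_c$ to a point of $\partial\cS_0\setminus\{p_a\}$ avoiding $\tau_2\cup\tau_4$. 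Hence $\tau_2\cap D$ is a single arc from $p_c$ to $S_4$, while $\tau_5\subset D$ is an arc from $p_c$ to $z_c$; two arcs in a disk from a common point to distinct boundary points, in minimal position, have no interior intersection. You actually mention the reduction to $\tau_2\cap\tau_5=\emptyset$ and Lemma~\ref{lem:25} as a ``fallback'', but that is the whole proof, and no replacement of $\tau_5$ is needed.
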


\begin{proof}
	We first claim $\tau_2\cap \tau_5=\emptyset$. Take a trace of $\tau'_4$ of $\tau_4$ which is not good. Lemma~\ref{lem:boundary parallel} implies that $\tau'_4$ and a subarc of $\partial \cS_0$ bound a punctured disk $D$ with $p_c\in D$. Then $\tau_5\subset D$. 
	As in Corollary~\ref{cor:boundary parallel 24} we assume without loss of generality that $\tau'_4$ goes from $z$ to a point $x\in \overline{z'p_a}$. If $\tau_2$ stays inside $D$, then the claim follows. If $\tau_2$ goes out of $D$, then it can not reenter again, as once it reenters, then it can not end in the interior of $D$, and it can not end in $\partial\cS_0\cap \partial D$ except at $p_a$  (this uses that $\tau_2\cap \tau_0=\emptyset$ except at endpoints), hence it forms a bigon with $\tau'_4$. We must have $p_c$ inside this bigon as $\tau_2$ and $\tau_4$ are in minimal position. However, this is a contradiction, as $\tau_0\cap\tau_3\neq\emptyset$ implies that a trace of $\tau_3$ goes from $p_c$ to a point in $\partial \cS_0\setminus\{p_a\}$ avoiding $\tau_2\cup\tau_4$. As $\tau_2$ can not reenter, the claim is proved. We are done by Lemma~\ref{lem:25}.
\end{proof}

\section{Ending remarks}
\label{sec:remark}
In this last section, we explain in general what are the types of 6-cycles we need to handle in proving $K(\pi,1)$-conjecture, and conjecture that all of these 6-cycles has a quasi-center, Conjecture~\ref{conj:zigzag}. We also explain the relationship between Conjecture~\ref{conj:zigzag} and a conjecture of Haettel, see Conjecture~\ref{conj:h}. We also deduce that these conjectures are true for type $D_n$ Artin groups with $n=3,4$.
\begin{definition}
	\label{def:zigzag}
	Let $\Lambda$ be the Dynkin diagram of type $D_n$ with its vertices as in Figure~\ref{fig:ad}. Let $\Delta_\Lambda$ be the associated Artin complex. We define a partial order on the set of types of vertices in $\Delta_{\Lambda}$ by declaring that 
	\begin{itemize}
		\item $
		\{\hat\delta_1,\hat\delta_2\}<\hat \delta_3<\hat \delta_4<\cdots<\delta_n$,
		\item $\hat\delta_1$ and $\hat\delta_2$ are not comparable.
	\end{itemize}
	A 6-cycle $\omega$ in $\Delta_\Lambda$ with consecutive vertices $\{x_i\}_{i\in \mathbb Z/6\mathbb Z}$ is \emph{admissible} if the types of $x_i$ and $x_{i+1}$ are comparable for each $i$. We say $x_i$ is a local max if the types of $x_{i-1}$ and $x_{i+1}$ are less than the type of $x_i$. Similarly we define local min vertex of $\omega$. A \emph{zigzag} 6-cycle in $\Delta_\Lambda$ is an admissible 6-cycle in $\Delta_\Lambda$ whose vertices alternate between local max and local min.
\end{definition}

\begin{conj}
	\label{conj:zigzag}
Suppose $\omega$ is a zigzag $6$-cycle in the type $D_n$ Artin complex $\Delta_\Lambda$. Then $\omega$ has a quasi-center which is adjacent to each of the local max vertices of $\omega$.
\end{conj}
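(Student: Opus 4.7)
My plan is to reduce Conjecture~\ref{conj:zigzag} to Theorem~\ref{thm:weakflagDintro} and Theorem~\ref{thm:flagD4intro} by case analysis on the common type of the local-max vertices, using link decompositions (Lemma~\ref{lem:link}) and the labeled 4-wheel condition (Theorem~\ref{thm:4 wheel}) to pass between cases. This strategy should give the $n=3,4$ case (that is, Corollary~\ref{cor:zigzagintro}) in a clean way, but will genuinely break down at the intermediate poset levels appearing when $n\ge 5$.

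Given a zigzag 6-cycle $\omega$ with local max $\{x_0,x_2,x_4\}$ and local min $\{x_1,x_3,x_5\}$, I would first handle the uniform case in which all three local max share a common type $\hat\delta_k$. If $k$ corresponds to a leaf of the Dynkin diagram then $\omega$ alternates between the leaf-type $\hat c=\hat\delta_k$ and non-$\hat c$: for $D_4$ with $k=4$ this is exactly the setup of Theorem~\ref{thm:flagD4intro}, which directly produces a quasi-center adjacent to the local max. If instead $k=3$, then the local min lie in $\{\hat\delta_1,\hat\delta_2\}$, and I would homogenize their types using Lemma~\ref{lem:link}, which decomposes each local max's link as a join with independent $\hat\delta_1$- and $\hat\delta_2$-factors. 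Concretely, I would try to replace each $\hat\delta_2$-local-min by a $\hat\delta_1$-local-min sharing the same two $\hat\delta_3$-neighbors, using Theorem~\ref{thm:4 wheel} on the filler 4-cycle built from the two local max, the original $\hat\delta_2$-vertex, and a new candidate $\hat\delta_1$-vertex. Once the cycle has uniform local-min type, Theorem~\ref{thm:weakflagDintro} provides a $\hat\delta_2$-center, hence in particular a quasi-center adjacent to the local max.

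For the mixed-type case (only possible once $n\ge 4$), I would push each local max of higher type down to one of the lowest occurring type by an analogous reduction: a link argument at the higher-type vertex produces a lower-type candidate, and the 4-wheel condition verifies it is adjacent to the same two local min. A finite iteration reduces to the uniform case above. For $n=4$ one does not leave the relative subcomplex $\Delta_{\Lambda,\{\delta_1,\delta_2,\delta_3\}}$ during this process, where Corollary~\ref{cor:cat(1)dintro} provides the CAT$(1)$ geometry that should make the successive replacements coherent.

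The step I expect to be the true obstacle, and the reason the conjecture remains open for $n\ge 5$, is the treatment of intermediate poset levels: when the local-max type is $\hat\delta_k$ with $4\le k<n$, neither main theorem applies. Theorem~\ref{thm:weakflagDintro} is confined to the bottom two levels of the chain, while Theorem~\ref{thm:flagD4intro} rests on Soroko's mapping-class-group identification, which is special to $D_4$. The most plausible way forward is to find a surface-theoretic realization of $A(D_n)$ extending Soroko's, in which each type-$\hat\delta_k$ vertex becomes a concrete arc, and then run a cut-and-paste argument in the spirit of Section~\ref{sec:D4}. A more algebraic alternative is to iterate the Lyndon--Sch{\"u}tzenberger analysis of Theorem~\ref{thm:weakflagD} inside a nested semidirect decomposition of $A(D_n)$, though the combinatorics of the resulting free-group equations grows rapidly with $k$ and one currently lacks an explicit enumeration of their solutions.
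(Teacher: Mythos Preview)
Your outline has the right skeleton---case analysis on the types of the local max vertices, reducing to Theorem~\ref{thm:weakflagD} and Theorem~\ref{thm:flagD4}---but two of your reduction steps have genuine gaps, and the paper's proof of Corollary~\ref{cor:zigzag} proceeds differently at exactly those points.

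First, in the uniform $\hat\delta_3$ case you propose to homogenize the local mins to a single type by applying the 4-wheel condition to ``the filler 4-cycle built from the two local max, the original $\hat\delta_2$-vertex, and a new candidate $\hat\delta_1$-vertex.'' But that 4-cycle exists only once the candidate is already adjacent to both local maxes, which is precisely what you are trying to establish; the argument is circular. The paper sidesteps homogenization entirely: since removing $\delta_3$ disconnects the $D_n$ diagram into three pieces, $\lk(\hat\delta_3)$ is a join, so two consecutive local mins of different types are automatically adjacent. A short application of Lemma~\ref{lem:special4cycle} then shows one of the local mins is itself the desired quasi-center.

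Second, in the mixed local-max case you push the higher type \emph{down}: replace a $\hat\delta_4$ local max by a $\hat\delta_3$ neighbor adjacent to the same two local mins. This fails because removing a leaf $\delta_4$ from $D_4$ leaves a connected $A_3$ diagram, so $\lk(\hat\delta_4)$ is not a join and no such $\hat\delta_3$-vertex is produced for free. The paper pushes \emph{up} instead: it replaces the $\hat\delta_3$ local max $y_3$ by any $\hat\delta_4$-neighbor $y'_3$, and the join structure of $\lk(y_3)$ guarantees $y'_3$ is adjacent to both flanking local mins. There is then a further subtlety you do not address: the quasi-center $z$ obtained from Theorem~\ref{thm:flagD4} for $\{y_1,y_2,y'_3\}$ need not be adjacent to the original $y_3$. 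The paper resolves this by using 4-wheel centers to build an auxiliary zigzag 6-cycle of uniform $\hat\delta_3$ type containing $y_3$, applies the already-settled uniform case to it, and transfers the resulting quasi-center back via the link join. Your outline would need this two-pass argument as well.
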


\begin{cor}
	\label{cor:zigzag}
This conjecture holds when $n=3,4$.
\end{cor}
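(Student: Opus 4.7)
The plan is to handle $n=4$ and $n=3$ separately, each time reducing to Theorem~\ref{thm:weakflagD} or Theorem~\ref{thm:flagD4} already proved in the paper.

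For $n=4$, let $\omega = x_0 x_1 x_2 x_3 x_4 x_5$ be a zigzag 6-cycle with local maxima at $x_0, x_2, x_4$. The partial order on types forces each local max to have type $\hat\delta_3$ or $\hat\delta_4$. If all three local maxima are of type $\hat\delta_4$, then $\omega$ alternates between $\hat\delta_4$ and not-$\hat\delta_4$ (since every local minimum has type strictly less than $\hat\delta_4$), so Theorem~\ref{thm:flagD4} with leaf choice $c = \delta_4$ directly yields a quasi-center adjacent to $x_0, x_2, x_4$. Otherwise, some local max $x_{2k}$ has type $\hat\delta_3$, and its local-min neighbors $x_{2k\pm1}$ have types in $\{\hat\delta_1, \hat\delta_2\}$. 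By Lemma~\ref{lem:link}(3), $\lk(x_{2k}, \Delta_\Lambda)$ decomposes as the join $V_1 * V_2 * V_4$ of three infinite discrete vertex sets, corresponding to the three isolated vertices $\delta_1, \delta_2, \delta_4$ of $\Lambda_{\delta_3}$. In particular, any vertex $y_{2k} \in V_4$ is automatically adjacent to both $x_{2k-1}$ and $x_{2k+1}$. I will replace each $\hat\delta_3$ local max with a suitable $y_{2k} \in V_4$, chosen so that the new 6-cycle $\omega'$ is embedded; then $\omega'$ has all three local max of type $\hat\delta_4$ and the previous case provides a candidate quasi-center $y$ of type $\hat\delta_1$ or $\hat\delta_2$ (the restricted type follows from the proof of Theorem~\ref{thm:flagD4} via property $(*)$).

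The hard step will be showing that $y$ is adjacent to the original $x_{2k}$ rather than merely to the replacement $y_{2k}$. Since both $y$ and $x_{2k}$ lie in $\lk(y_{2k}, \Delta_\Lambda)$, which is the $A_3$-type Artin complex on $\{\delta_1, \delta_2, \delta_3\}$, this reduces to an adjacency question inside that link. The approach is to apply the labeled 4-wheel condition (Theorem~\ref{thm:4 wheel}) successively to 4-cycles built from $y$, $y_{2k}$, $x_{2k}$, and the neighboring local-min vertices of $\omega$; the labeled 4-wheel forces any common vertex to have type inside the minimal subtree of $\Lambda$ containing the given types, and chaining these type-constrained adjacencies with the flag property of $\Delta_\Lambda$ should force $y \sim x_{2k}$.

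For $n=3$, zigzag 6-cycles always alternate between local max of type $\hat\delta_3$ and local min of types in $\{\hat\delta_1, \hat\delta_2\}$. If all three local min share the same type, Theorem~\ref{thm:weakflagD} (or its $\delta_1 \leftrightarrow \delta_2$ symmetric counterpart under the Dynkin-diagram automorphism of $D_3$) supplies a center, hence a quasi-center. For the mixed case, Lemma~\ref{lem:link}(3) gives $\lk(x_{2k}, \Delta_\Lambda) = V_1 * V_2$, and an analogous replacement-plus-4-wheel argument will transport a local min of one type to one of the other type, reducing to the uniform case and then pulling the resulting center back to the original cycle. The common engine across both values of $n$ is the join decomposition of links at type $\hat\delta_3$ vertices, the labeled 4-wheel condition, and the base theorems Theorem~\ref{thm:weakflagD} and Theorem~\ref{thm:flagD4}.
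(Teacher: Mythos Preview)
Your overall strategy---replace each type-$\hat\delta_3$ local maximum by an adjacent type-$\hat\delta_4$ vertex using the join decomposition of its link, invoke Theorem~\ref{thm:flagD4} on the resulting cycle, then pull the quasi-center back---matches the paper's line of attack. The gap is in the pull-back, which you identify as ``the hard step'' but underestimate. Your claim that successive applications of the labeled 4-wheel condition together with the flag property ``should force $y\sim x_{2k}$'' does not go through. Concretely (in the paper's notation for the case where $y_1,y_2$ have type $\hat\delta_4$ and $y_3$ has type $\hat\delta_3$): after replacing $y_3$ by $y'_3$ and obtaining $z$ adjacent to $y_1,y_2,y'_3$, the 4-wheel condition applied to the 4-cycles $y'_3x_3y_1z$ and $y'_3x_2y_2z$ produces intermediate vertices $x'_2,x'_3$ of type $\hat\delta_3$, but no further 4-cycle built from these data yields $z\sim y_3$. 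In fact the paper does \emph{not} show that the quasi-center $z$ of the replaced cycle is adjacent to the original $y_3$; instead it assembles the auxiliary zigzag 6-cycle $x_3x'_3zx'_2x_2y_3$, whose local maxima are \emph{all} of type $\hat\delta_3$, applies the all-$\hat\delta_3$ case (which rests on Theorem~\ref{thm:weakflagD}) to obtain a \emph{new} vertex $z'$ adjacent to $x'_3,x'_2,y_3$, and then uses the join structure of $\lk(x'_3)$ and $\lk(x'_2)$ to get $z'\sim y_1,y_2$. Thus the pull-back is not a chain of 4-wheel steps but a second invocation of a 6-cycle filling theorem, and the final quasi-center is $z'$ rather than $z$.

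This also means your $n=4$ argument is missing an essential base case: the all-$\hat\delta_3$ situation must be handled directly (via Theorem~\ref{thm:weakflagD} after normalizing the local-min types, with a short Lemma~\ref{lem:special4cycle} argument when the local-min types are mixed), since it is precisely this case that fuels the pull-back in the mixed cases. Your plan routes everything through Theorem~\ref{thm:flagD4} alone, which leaves you without the tool needed for the auxiliary 6-cycle. For $n=3$, your proposed replacement of a local-min vertex is also problematic: a new local-min vertex chosen in $\lk(x_{2k})$ is adjacent to $x_{2k}$ but not automatically to the other neighboring local-max $x_{2k\pm2}$. The paper instead observes that local mins of different types in $\{\hat\delta_1,\hat\delta_2\}$ are already adjacent (via the join in the link of the intervening $\hat\delta_3$ vertex), and then a single 4-wheel application finishes the mixed case without any replacement.
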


\begin{proof}
We will only treat $n=4$ case, as the $n=3$ case is much easier and follows from the same argument.	
	
Let the 6-cycle $\omega$ be $y_1x_1y_2x_2y_3x_3$ where for each $i$, $x_i$ is a local min and $y_i$ is a local max. By Lemma~\ref{lem:link} (3), up to replacing $x_i$ by a different vertex, we can assume without loss of generality that each $x_i$ has type $\hat \delta_1$ or $\hat \delta_2$. Then each $y_i$ has type $\hat \delta_3$ or $\hat \delta_4$.

The case when each $y_i$ has type $\hat \delta_3$.  Note that if $x_i$ and $x_{i+1}$ do not have the same type, then they are adjacent in $\Delta_\Lambda$ by looking at the link of a type $\hat \delta_3$ vertex in $\Delta_\Lambda$ and applying Lemma~\ref{lem:link} (3). If $\{x_1,x_2,x_3\}$ do not have the same type, then up to rotation of indices, we can assume $x_1$ and $x_3$ have the same type, but their type is different from that of $x_2$. Then $x_1$ and $x_2$ are adjacent in $\Delta_\Lambda$, and $x_2$ and $x_3$ are adjacent in $\Delta_\Lambda$. By Lemma~\ref{lem:special4cycle}, $x_2$ and $y_1$ are adjacent. Hence $x_2$ is a quasi-center of $\omega$.
It remains to consider that all $x_i$ have the same type, in which case we are done by Theorem~\ref{thm:weakflagD}.

The case that each $y_i$ has type $\hat \delta_4$ follows from Theorem~\ref{thm:flagD4} (where $\delta_4$ plays the role of $c$ in Theorem~\ref{thm:flagD4}).

Now suppose two of $\{y_1,y_2,y_3\}$, say $y_1$ and $y_2$, has type $\hat\delta_4$, but $y_3$ has type $\hat \delta_3$. Let $y'_3$ be an vertex of type $\hat \delta_4$ that is adjacent to $y_3$. We can assume $y'_3\neq y_i$ for $i=1,2$, indeed, if $y'_3=y_1$, then $y_3$ and $y_1$ are adjacent, and Lemma~\ref{lem:link} applied to $\lk(y_3,\Delta_\Lambda)$ implies that $y_1$ and $x_2$ are adjacent, hence $x_2$ is a quasi-center of $\omega$; a similar argument applies if $y'_3=y_2$.
By Theorem~\ref{thm:weakflagD}, there is a vertex $z$ of type $\hat \delta_1$ or $\hat \delta_2$ such that $z$ is adjacent to each of $\{y_1,y_2,y'_3\}$. If $z=x_2$ or $x_3$, then $z$ is a quasi-center of $\omega$. Thus we assume $z\neq x_i$ for $i=2,3$. Then $y'_3x_3y_1z$ and $y'_3x_2y_2z$ give two embedded 4-cycles. By Theorem~\ref{thm:4 wheel}, there exist are vertices $x'_2,x'_3$ of type $\hat \delta_3$ such that $x'_3$ is adjacent to each of $\{y'_3,x_3,y_1,z\}$ and $x'_2$ is adjacent to each of $\{y'_3,x_2,y_2,z\}$. Consider the 6-cycle $x_3x'_3zx'_2x_2y_3$. By previous discussion, there is a vertex $z'$ of type $\hat\delta_1$ or $\hat \delta_2$ such that $z'$ is adjacent to each of $\{x'_3,x'_2,y_3\}$. By applying Lemma~\ref{lem:link} (3) to $\lk(x'_3,\Delta)$, we know $z'$ is adjacent to $y_1$. Similarly $z'$ is adjacent to $y_2$. Thus $z'$ is a quasi-center of $\omega$.

The remaining case is that one of $\{y_1,y_2,y_3\}$, say $y_1$, has type $\hat \delta_4$, and the other two have type $\hat \delta_3$. We define $y'_3$ as in the previous paragraph. By the previous paragraph, there is a vertex $z$ of type $\hat \delta_1$ or $\hat \delta_2$ such that $z$ is adjacent to each of $\{y_1.y_2,y'_3\}$. Similarly, we can assume $y'_3\neq y_i$ for $i=1,2$ and $z\neq x_i$ for $i=2,3$. We define $x'_3$ as before. By considering the 6-cycle $x_3x'_3zy_2x_2y_3$, we know there is a vertex $z'$ of type $\hat \delta_1$ or $\hat \delta_2$ such that $z'$ is adjacent to each of $\{y_3,x'_3,y_2\}$. We know $z'$ is adjacent to $y_1$ by the same argument as in the previous paragraph. Thus $z'$ is a quasi-center of $\omega$.
\end{proof}

Conjecture~\ref{conj:zigzag} is a reformulation of a conjecture of Haettel as follows.

\begin{definition}
	\label{def:subdivision}
Suppose $A_\Lambda$ is an Artin group whose Dynkin diagram $\Lambda$ is of type $D_n$ with its vertex set as in Figure~\ref{fig:ad}. Let $\Delta=\Delta_{\Lambda}$ be the associated Artin complex. We subdivide each edge of $\Delta$ connecting a vertex of type $\hat \delta_1$ and a vertex of type $\hat \delta_2$. We say the middle point of such edge is of type $m$. Cut each top dimensional simplex in $\Delta$ into two simplices along the codimensional 1 simplex spanned by vertices of type $m$ and $\{\delta_i\}_{i=3}^{n}$. This gives a new simplicial complex, which we denoted by $\Delta'$. Define a map $t$ from the vertex set $V\Delta'$ of $\Delta'$ to $\{1,2,\ldots,n\}$ by sending vertices of type $\hat \delta_1,\hat \delta_2$ to $1$, vertices of type $m$ to $2$, vertices of type $\hat \delta_i$ to $i$ for $i\ge 3$. Define a relation $<$ on $V\Delta'$ as follows. For $x,y\in V\Delta'$, $x<y$ if $x$ and $y$ are adjacent and $t(x)<t(y)$. The simplicial complex $\Delta'$, together with the relation $<$ on its vertex set, is called the \emph{$(\delta_1,\delta_2)$-subdivision of $\Delta_{\Lambda}$}. 
\end{definition}

\begin{conj}[Haettel]
	\label{conj:h}
Suppose $\Lambda$ is of type $D_n$. The vertex set $V$ of the $(\delta_1,\delta_2)$-subdivision $\Delta'$ of $\Delta_\Lambda$, endowed with the partial order in Definition~\ref{def:subdivision}, is downward flag in the following sense: if three elements $\{y_1,y_2,y_3\}\subset V$ satisfy that each pair of them has a lower bound in $V$, then there is a common lower of $\{y_1,y_2,y_3\}$ in $V$.
\end{conj}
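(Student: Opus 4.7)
The plan is to prove Conjecture~\ref{conj:h} by reducing it to Conjecture~\ref{conj:zigzag}; combined with Corollary~\ref{cor:zigzag}, this will yield Haettel's conjecture for $D_3$ and $D_4$. The key observation is that a zigzag 6-cycle in $\Delta_\Lambda$ with local maxima $y_1, y_2, y_3$ and local minima $x_1, x_2, x_3$ is exactly the data of three vertices $y_i$ in $\Delta'$ whose pairwise lower bounds are provided by the $x_i$; a quasi-center of the 6-cycle adjacent to all the $y_i$ translates to a common lower bound in $\Delta'$.

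For the implication Conjecture~\ref{conj:zigzag} $\Rightarrow$ Conjecture~\ref{conj:h}, I would take a triple $y_1, y_2, y_3 \in V\Delta'$ with pairwise lower bounds $z_{ij}$. Using Lemma~\ref{lem:link}(3) and descending induction on $\max_i t(y_i)$, one reduces to the case where each $y_i$ has type $\hat\delta_j$ for some $j \ge 3$ and each $z_{ij}$ is of type $1$ (i.e., corresponds to a $\hat\delta_1$- or $\hat\delta_2$-vertex of $\Delta_\Lambda$). The resulting configuration gives a 6-cycle $y_1 z_{12} y_2 z_{23} y_3 z_{13}$ in $\Delta_\Lambda$ with strict alternation of local maxima and local minima in the partial order, hence a zigzag 6-cycle in the sense of Definition~\ref{def:zigzag}. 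If the 6-cycle fails to be embedded, case analysis via Theorem~\ref{thm:4 wheel} handles the collapsed subcases. Applying Conjecture~\ref{conj:zigzag} furnishes a quasi-center $z$ adjacent to each $y_i$; translating $z$ back to $\Delta'$, after m-vertex conversion if $z$ is among an edge of type $\hat\delta_1\hat\delta_2$, produces the desired common lower bound.

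For the converse direction, Conjecture~\ref{conj:h} $\Rightarrow$ Conjecture~\ref{conj:zigzag}, the translation is more direct. Given a zigzag 6-cycle, the local maxima pairwise share lower bounds (the local minima) in $\Delta'$, so Haettel's conjecture supplies a common lower bound $z \in V\Delta'$. If $z$ has type $\hat\delta_j$ for some $j$, then $z$ is itself a vertex of $\Delta_\Lambda$ adjacent to each $y_i$, i.e., a quasi-center. If $z$ is an m-vertex associated with an edge $\overline{v_1 v_2}$ of $\Delta_\Lambda$ (with $v_1, v_2$ of types $\hat\delta_1, \hat\delta_2$ respectively), then by the way top-dimensional simplices are cut in the subdivision, $y_i$ being adjacent to $z$ in $\Delta'$ forces $y_i$ to be adjacent to both $v_1$ and $v_2$ in $\Delta_\Lambda$; either endpoint then serves as the quasi-center.

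The main obstacle is the inductive reduction in the forward direction. One must ensure that at each step of reducing the type of some $y_i$, the descended vertex still forms a triple satisfying the pairwise lower bound hypothesis, using the join decomposition of links (Lemma~\ref{lem:link}(3)) to produce descendants compatible with each of the existing $z_{ij}$'s. A further subtlety is verifying that the quasi-center produced by Conjecture~\ref{conj:zigzag} has type strictly smaller than each $t(y_i)$ so as to be a lower bound in $\Delta'$; this requires tracing the type information through the proofs of Theorem~\ref{thm:weakflagD} and Theorem~\ref{thm:flagD4}, where the quasi-center is constructed with explicit type constraints. If these constraints turn out to be insufficient in general, one may need to further refine Conjecture~\ref{conj:zigzag} to specify the type of the quasi-center, so that the implication becomes an honest equivalence.
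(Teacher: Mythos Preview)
Your overall plan matches the paper's: the relevant content is the implication ``Conjecture~\ref{conj:zigzag} $\Rightarrow$ Conjecture~\ref{conj:h}'' (the paper proves only this direction; the converse you sketch is not needed and not addressed there). However, your reduction step has a genuine gap.

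The phrase ``descending induction on $\max_i t(y_i)$'' does not do what you want. The obstruction to forming a zigzag $6$-cycle is not that some $t(y_i)$ is large, but that some $y_i$ is an $m$-vertex (i.e.\ $t(y_i)=2$); lowering $\max_i t(y_i)$ does not address this. The paper instead performs a case analysis on the number of $m$-vertices among $\{y_1,y_2,y_3\}$, and the key observation you are missing is: if $t(y_i)=2$ then $y_i$ is the midpoint of an edge of $\Delta_\Lambda$ whose endpoints are precisely the two lower bounds $x_i,x_{i+1}$, so $x_i$ and $x_{i+1}$ are \emph{adjacent in $\Delta_\Lambda$}. When two or more $y_i$ are $m$-vertices this collapses the problem to a $4$-cycle handled by Theorem~\ref{thm:4 wheel}; when exactly one $y_i$ is an $m$-vertex one replaces it by an adjacent type-$\hat\delta_3$ vertex $y_i'$, applies the zigzag case to obtain a lower bound for $\{y_i',y_j,y_k\}$, and then uses Theorem~\ref{thm:4 wheel} again to push this back to a lower bound for the original triple. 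None of this is visible in your sketch.

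Your worry about the type of the quasi-center is legitimate: Conjecture~\ref{conj:zigzag} as stated does not constrain it, and adjacency to each $y_i$ in $\Delta_\Lambda$ is not the same as being a lower bound in $\Delta'$. The paper resolves this (somewhat tersely) by noting that one may always replace a lower bound $z$ by one with $t(z)=1$: if $t(z)\ge 3$, the join decomposition of $\lk(z,\Delta_\Lambda)$ from Lemma~\ref{lem:link}(3) places vertices of type $<t(z)$ and $>t(z)$ in separate join factors, so any type-$1$ vertex in $\lk(z)$ is automatically adjacent to every $y_i$ with $t(y_i)>t(z)$. There is no need to trace through the proofs of Theorems~\ref{thm:weakflagD} and~\ref{thm:flagD4} for this.
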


\begin{lem}
	If Conjecture~\ref{conj:zigzag} holds, then Conjecture~\ref{conj:h} holds.
\end{lem}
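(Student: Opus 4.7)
The plan is, given $y_1,y_2,y_3\in V\Delta'$ with pairwise lower bounds $z_{12},z_{13},z_{23}$, to form the 6-cycle $\omega=y_1\,z_{12}\,y_2\,z_{23}\,y_3\,z_{13}$, recognise it as a zigzag 6-cycle of $\Delta_\Lambda$ in the sense of Definition~\ref{def:zigzag}, apply Conjecture~\ref{conj:zigzag} to produce a quasi-center of $\omega$ adjacent to each $y_i$, and interpret that quasi-center as a common lower bound in $V\Delta'$.

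First I would dispose of degenerate cases: if two of the $y_i$ or two of the $z_{ij}$ coincide, the shared element is already a common lower bound.  Hence we may assume the six vertices of $\omega$ are pairwise distinct.  Since vertices of type $1$ have no lower bound at all, every $t(y_i)\ge 2$.  In the principal case, where no vertex of $\omega$ has type $m$, every $t(y_i)\ge 3$ and $\omega$ lies inside $\Delta_\Lambda$.  The partial orders on $V\Delta_\Lambda$ and on $V\Delta'$ coincide on non-$m$ vertices, and no $\hat\delta_1$-$\hat\delta_2$ edge occurs in $\omega$ (its endpoints would be $\le$-incomparable), so $\omega$ is a zigzag 6-cycle of $\Delta_\Lambda$.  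Conjecture~\ref{conj:zigzag} furnishes a vertex $c\in\Delta_\Lambda$ adjacent to each $y_i$.  If $t(c)<t(y_i)$ for all $i$, then $c$ is a common lower bound in $V\Delta'$: the adjacencies $c\sim y_i$ do not involve a $\hat\delta_1$-$\hat\delta_2$ edge and are therefore preserved by the subdivision.  If instead $t(c)\ge t(y_j)$ for some $j$, I would descend through $\lk(c,\Delta_\Lambda)$, identified via Lemma~\ref{lem:link} with a relative Artin complex, using the labeled 4-wheel condition (Theorem~\ref{thm:4 wheel}) applied iteratively to the induced 4-cycles $y_i\,z_{ij}\,y_j\,c$ to replace $c$ by a lower-type vertex still adjacent to all three $y_i$; the descent terminates when the type drops below every $t(y_i)$.

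The remaining configurations are those in which some vertex of $\omega$ has type $m$.  A type-$m$ vertex is the midpoint of a $\hat\delta_1$-$\hat\delta_2$ edge $\{a,b\}$ of $\Delta_\Lambda$, and its lower bounds in $V\Delta'$ are precisely $a$ and $b$.  Hence if some $y_i$ has type $m$, the elements $z_{ij},z_{ik}$ lie in $\{a_i,b_i\}$, and either they coincide (producing a common lower bound directly) or they are the two distinct endpoints, in which case one of $a_i,b_i$ is adjacent in $\Delta'$ to each $y_j$ and reduces the problem to a shorter instance of the principal argument.  A type-$m$ occurrence among the $z_{ij}$ can be absorbed analogously, by replacing it with an endpoint in $\Delta_\Lambda$ and closing up $\omega$ into a genuine zigzag 6-cycle of $\Delta_\Lambda$ through an auxiliary vertex of intermediate type furnished by Lemma~\ref{lem:link}.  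The hard part will be the descent step of the preceding paragraph: Conjecture~\ref{conj:zigzag} as stated does not a~priori bound the type of the quasi-center it produces, so one must argue, as in the proof of Corollary~\ref{cor:zigzag}, that the construction can be refined to yield a common neighbour of every $y_i$ whose type is strictly less than $\min_i t(y_i)$, and that this refined common neighbour remains adjacent in $\Delta'$ (not merely in $\Delta_\Lambda$) to each $y_i$.
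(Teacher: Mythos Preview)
Your overall plan matches the paper's: assemble the 6-cycle from the pairwise lower bounds, invoke Conjecture~\ref{conj:zigzag} in the principal (no type-$m$) case, and treat type-$m$ vertices by separate reductions. You also correctly flag a point the paper leaves implicit when it writes ``we are done by Conjecture~\ref{conj:zigzag}'': the quasi-center $c$ is only known to be adjacent to each local-max vertex $y_i$, with no bound on $t(c)$, so $c$ need not itself be a lower bound in $V\Delta'$.

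Two steps in your outline do not go through as written. First, the descent: applying the labeled 4-wheel to $y_i\,z_{ij}\,y_j\,c$ yields a center whose type lies in the subtree spanned by those four types (which can reach $\hat\delta_{\max(j_i,j_j,k)}$) and which is adjacent only to those four vertices, not to the remaining $y_\ell$; so neither ``lower type'' nor ``still adjacent to all three $y_i$'' follows. A descent that does work uses the join decomposition of links (Lemma~\ref{lem:link}~(3)): if $t(c)=k>t(y_1)$, then inside $\lk(y_1)$ the lower bounds $z_{12},z_{13}$ sit in the low-type join factor and $c$ in the high-type factor, forcing $z_{12},z_{13}$ adjacent to $c$; passing to $\lk(c)$ one then finds either that some $z_{ij}$ is already a common lower bound (when some $t(y_\ell)>k$), or that the entire zigzag 6-cycle lives in the $D_{k-1}$ factor of $\lk(c)$ and one recurses on $k$. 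Second, your type-$m$ case for $y_i$: the claim that ``one of $a_i,b_i$ is adjacent in $\Delta'$ to each $y_j$'' is unjustified, since $z_{ij}$ and $z_{ik}$ can be the two distinct endpoints of the subdivided edge and neither need be adjacent to both $y_j$ and $y_k$. The paper instead replaces $y_i$ by a type-$\hat\delta_3$ neighbour $y'_i$ (adjacent to both endpoints, hence to both relevant lower bounds), applies the principal case to $\{y'_i,y_j,y_k\}$, and then runs a specific 4-wheel and link argument to upgrade the resulting lower bound to one for the original triple.
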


\begin{proof}
Let $V$ and $\{y_1,y_2,y_3\}$ be as in Conjecture~\ref{conj:h}. For $i\in \mathbb Z/3\mathbb Z$, let $x_i$ be a lower bound of $y_{i-1}$ and $y_{i}$. We can assume without loss of generality that $t(x_i)=1$ for each $i$. If $t(y_i)\neq 2$ for each $i$, then $y_1x_1y_2x_2y_3x_3$ gives a zigzag 6-cycle in $\Delta_\Lambda$, and we are done by Conjecture~\ref{conj:zigzag}.

If $t(y_i)=2$, then $x_i$ and $x_{i+1}$ are adjacent in $\Delta_\Lambda$. Thus if at least two of $\{y_1,y_2,y_3\}$ satisfy $t(y_i)=2$, then we are reduce analyze 4-cycles in $\Delta_\Lambda$, and it is not hard to deduce from Theorem~\ref{thm:4 wheel} that $\{y_1,y_2,y_3\}$ has a common lower bound. 

It remains to consider exactly one of $\{y_1,y_2,y_3\}$, say $y_1$, satisfies $t(y_1)=2$. Let $y'_1$ be a vertex with $t(y'_1)=3$ such that $y'_1$ and $y_1$ are adjacent in $\Delta'$. By apply the previous discussion to the 6-cycle $y'_1x_1y_2x_2y_3x_3$, we know $\{y'_1,y_2,y_3\}$ has a common lower bound $z\in V$. We can assume without loss of generality that $t(z)=1$. We will assume $x_3$ and $z$ are not adjacent in $\Delta_\Lambda$, otherwise $\{x_3,z\}$ has a common upper bound $y'_3$ with $t(y'_2)$, and by applying the previous paragraph to $\{y_1,y_2,y'_3\}$, we know $\{y_1,y_2,y'_3\}$ has a lower bound, hence $\{y_1,y_2,y_3\}$ has a lower bound. Similarly, we assume $x_1$ and $z$ are not adjacent in $\Delta_\Lambda$. We will assume $y'_1\neq y_3$, otherwise $x_1$ is a common lower bound for $\{y_1,y_2,y_3\}$ in $V$. Similarly, we assume $y'_1\neq y_2$.

We claim $y'_1$ is adjacent to $y_3$ in $\Delta_\Lambda$.
By applying Theorem~\ref{thm:4 wheel} to the 4-cycle $x_3y'_1zy_3$, we know that either $y'_1$ and $y_3$ are adjacent in $\Delta_\Lambda$, or there is a vertex $w$ of type $\hat\delta_i$ which is adjacent to each of $\{x_3,y'_1,z,y_3\}$. As the type of $w$ is different from the types of each of $\{x_3,y'_1,z,y_3\}$, Theorem~\ref{thm:4 wheel} implies that $\delta_i$ is contained in the subsegment of $\Lambda$ between $\delta_3$ and $\delta_k$ where $y_3$ has type $\hat \delta_k$. Then Lemma~\ref{lem:link} implies that $y'_1$ and $y_3$ are adjacent in $\Delta_\Lambda$, as desired. 
This claim implies that $y_3\ge y'_1\ge x_1$, thus $x_1$ is a common lower bound for $\{y_1,y_2,y_3\}$, as desired.
\end{proof}

\begin{cor}
Conjecture~\ref{conj:h} holds when $n=3,4$.
\end{cor}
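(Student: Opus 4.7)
The proof is essentially a one-liner: the two preceding results do all the work. The immediately preceding lemma establishes the implication ``Conjecture~\ref{conj:zigzag} $\Rightarrow$ Conjecture~\ref{conj:h}'' in full generality (for arbitrary $n$), while Corollary~\ref{cor:zigzag} establishes Conjecture~\ref{conj:zigzag} in the special cases $n=3,4$. So the plan is simply to chain these together.

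More precisely, I would write a single short paragraph: ``This follows by combining Corollary~\ref{cor:zigzag} with the preceding lemma.'' That is all the proof needs, since the hard content (the structure of zigzag $6$-cycles for $D_3$ and $D_4$) has already been absorbed into Corollary~\ref{cor:zigzag}, which in turn rests on Theorem~\ref{thm:weakflagD} (handled in Section~\ref{sec:AD}) and Theorem~\ref{thm:flagD4} (handled across Sections~\ref{sec:6cycle}--\ref{sec:abccycle}).

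There is no main obstacle at this point; the entire technical burden has been pushed into the earlier sections. The only thing worth double-checking is that the hypothesis of the preceding lemma matches precisely what Corollary~\ref{cor:zigzag} provides (a quasi-center adjacent to all local max vertices, which is exactly the form used in the reduction). Since the reduction lemma uses zigzag $6$-cycles of the form $y_1 x_1 y_2 x_2 y_3 x_3$ with $x_i$ local min and $y_i$ local max, and extracts a common lower bound of $\{y_1, y_2, y_3\}$ from a quasi-center adjacent to $\{y_1, y_2, y_3\}$, the hypothesis is indeed exactly what Corollary~\ref{cor:zigzag} delivers in the $n=3,4$ cases.
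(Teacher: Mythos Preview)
Your proposal is correct and matches the paper's approach exactly: the paper gives no explicit proof of this corollary, leaving it as an immediate consequence of Corollary~\ref{cor:zigzag} together with the preceding lemma, which is precisely the chain you describe.
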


\bibliographystyle{alpha}
\bibliography{mybib}

\end{document}